\newtheorem{thm}{Theorem}[section]
\newtheorem{lema}{Lemma}[section]
\newtheorem{coro}{Corollary}[section]
\newtheorem{conj}{Conjecture}
\declaretheorem[style=definition,name=Definition,numberwithin=section]{Def}
\declaretheorem[style=definition,name=Example,numberwithin=section]{ex}
\declaretheorem[style=definition,name=Remark,numberwithin=section]{rmk}
\newcommand{\C}{\mathbb{C}}
\newcommand{\Q}{\mathbb{Q}}
\newcommand{\Z}{\mathbb{Z}}
\newcommand{\N}{\mathbb{N}}
\newcommand{\Var}{\mathrm{Var}}
\newcommand{\Hom}[2]{\operatorname{Hom}(#1,#2)}
\NewDocumentCommand{\Gr}{O{k} O{n}}{\operatorname{Gr}(#1,#2)}
\newcommand{\Res}{\operatorname{Res}}
\newcommand{\Ind}{\operatorname{Ind}}
\newcommand{\Per}{\operatorname{Per}}
\newcommand{\PGL}{\operatorname{PGL}}
\newcommand{\GL}{\operatorname{GL}}
\newcommand{\SL}{\operatorname{SL}}
\renewcommand{\S}{\mathbb{S}}
\newcommand{\tr}{\operatorname{Tr}}
\newcommand{\A}{\mathbb{A}}
\newcommand{\ov}[1]{\overline{#1}}
\renewcommand{\P}{\mathbb{P}}
\newcommand{\mcB}{\mathcal{B}}
\newcommand{\mcC}{\mathcal{C}}
\newcommand{\mcD}{\mathcal{D}}
\newcommand{\mcF}{\mathcal{F}}
\newcommand{\mcI}{\mathcal{I}}
\newcommand{\mcJ}{\mathcal{J}}
\newcommand{\mcL}{\mathcal{L}}
\newcommand{\mcM}{\mathcal{M}}
\newcommand{\mcN}{\mathcal{N}}
\newcommand{\mcO}{\mathcal{O}}
\newcommand{\mcP}{\mathcal{P}}
\newcommand{\mcR}{\mathcal{R}}
\newcommand{\mcT}{\mathcal{T}}
\title{On the motive of quotients of induced actions}
\author{Lucas de Amorin}
\email{ldamorin@dm.uba.ar}
\address{Departamento de Matemática-IMAS, Facultad de Ciencias Exactas y Naturales, Universidad de Buenos Aires, Argentina}
\begin{document}

\begin{abstract}
    We explore computational tools that allow us to compute the class on the Grothendieck ring of varieties of finite cyclic quotients in some interesting examples. As main application, we determine the motive of low rank representation varieties associated with torus knots and general linear groups using an equivariant analogue of the strategy for special linear groups due to A. González-Prieto and V. Muñoz. 
\end{abstract}

\maketitle

\setcounter{tocdepth}{1}
\tableofcontents

\section{Introduction}

A usual strategy to compute a motive\footnote{By motive we always mean class on the Grothendieck ring of varieties.} is to use cell decompositions and locally trivial fibrations. When dealing with quotients, it becomes harder to apply; usually one needs things to be equivariant. More precisely, a problem that arises is as follows. Take an abelian finite group $\Gamma$ and two $\Gamma$-varieties $X$ and $Y$ and assume one knows the motives of $X/\Gamma$ and $Y/\Gamma$. The objective is to compute the motive of $(X\times Y)/\Gamma$. However, the previous information is not enough in general. In cohomology, one knows that $H^\cdot(X/\Gamma)$ and $H^\cdot(Y/\Gamma)$ are the invariant subspaces of $H^\cdot(X)$ and $H^\cdot(Y)$ under the induced $\Gamma$-action. On the other hand, by Künneth's formula, 
\[H^\cdot((X\times Y)/\Gamma)=\bigoplus_\chi H^\cdot(X)_\chi \otimes H^\cdot(Y)_{\chi^{-1}}\] 
where $\chi$ varies along all irreducible characters of $\Gamma$ and $H^\cdot(X)_\chi$ and $H^\cdot(Y)_\chi$ are the isotypic pieces of $H^\cdot(X)$ and $H^\cdot(Y)$ associated with $\chi$. Hence, one can determine $H^\cdot((X\times Y)/\Gamma)$ from the isotypic decompositions of $H^\cdot(X)$ and $H^\cdot(Y)$. Now at level of motives there is no obvious candidate for what should be the isotypic pieces $[X]_\chi$ and $[Y]_\chi$. 

In his PhD thesis \cite{JesseVogel}, J. Vogel proposes a novel definition. His idea is to look for some class $[X]^\Gamma$ which will be a linear combination of products of motives and representations of $\Gamma$ in such a way one has that, for any subgroup $H$, $[X/H]$ can be recover by adding up all those motives that are together representations where $H$ acts trivially. It is not true that in general one has a Künneth formula
\[[X\times Y]^\Gamma=[X]^\Gamma\times[Y]^\Gamma\]
but it holds if $X$ or $Y$ are built up with linear actions on affine spaces. Hence, one can compute $[(X\times Y)/\Gamma]$ from $[X]^\Gamma$ and $[Y]^\Gamma$ in these cases. Now the question is: how to compute $[X]^\Gamma$? 

Before going on, we should say two things. One is that $[X]^\Gamma$ has less information than the equivariant motive of $X$ but more than just the plain one. Second, J.Vogel's definition needs some non-natural, non-canonical choices, except for cyclic groups where no choice is needed. For that reason we will stick to cyclic groups. But it is very reasonable to expect that similar results should hold for other families like symmetric groups where natural choices can be made.

Let us return to the question of how to compute $[X]^\Gamma$ and assume from now on that $\Gamma$ is cyclic. We do not know a general recipe, but there are three cases we can dealt with. As we will see they cover interesting examples. This first case is actually due to J. Vogel. If $X$ is a linear action on an affine space, $[X]^\Gamma$ is $[X]$ times the trivial representation. The second one is an induced action. Given a bigger cyclic group $\Gamma'\supset \Gamma$, one has a natural action of $\Gamma'$ on $\Ind_\Gamma^{\Gamma'}(X)=(X\times \Gamma')/\Gamma$. As expected, one has 
\[[\Ind_\Gamma^{\Gamma'}(X)]^{\Gamma'}=\Ind_\Gamma^{\Gamma'}([X]^\Gamma)\]
in other words, induction and motivic isotypic decomposition commute. Let us clarify that the second induction means that one changes in $[X]^\Gamma$ each representations of $\Gamma$ by its induced $\Gamma'$ one.

The more interesting case is the third one. As before, let $\Gamma'\supset \Gamma$ be a bigger cyclic group and call $K=\Gamma'/\Gamma$. Then one has a permutation of factors action of $\Gamma'$ on $X^K$ where elements of $\Gamma$ acts by acting in every coordinate and the ones of $\Gamma'\setminus\Gamma$ by shifting coordinates (for a precise definition see section \ref{permutaion-of-factors}). Denote it $\Per_\Gamma^{\Gamma'}(X)$. Our main tool to compute $[\Per_\Gamma^{\Gamma'}(X)]$, or $[\Per_\Gamma^{\Gamma'}(X)]^{\Gamma'}$, from $[X]$, respectively $[X]^\Gamma$, is the following. Note that a priori it is not even clear that the equivariant motive of $\Per_\Gamma^{\Gamma'}(X)$ only depends on the one of $X$. 

\begin{thm}\label{thm:summarize-per-action}
    For any two cyclic groups $\Gamma\subset\Gamma'$ there exists a unique function $\Per_\Gamma^{\Gamma'}:K_0^\Gamma(\Var_\C)\to K_0^{\Gamma'}(\Var_\C)$ such that:
    \begin{enumerate}
        \item $\Per_\Gamma^{\Gamma'}([X])$ is the class of the permutation of factors action on $X^{\Gamma'/\Gamma}$ for any $\Gamma$-variety $X$.
        \item $(\xi_1\xi_2)^K=\xi_1^K\xi_2^K$ for any $\xi_1,\xi_2$. 
        \item For any $\xi_1,\xi_2\in K_0^\Gamma(\Var_\C)$, the following binomial formula holds  
        \[\Per_\Gamma^{\Gamma'}(\xi_1+\xi_2)=\sum_{d|N}\frac{d}{N}\Ind_{\Gamma_d}^{\Gamma'}\left(\sum_{d|d'|N}\left(\Per_\Gamma^{\Gamma_d}\left(\xi_1\right)^{\frac{d'}{d}}+\Per_\Gamma^{\Gamma_d}\left(\xi_2\right)^{\frac{d'}{d}}\right)^{\frac{N}{d'}}\mu\left(\frac{d'}{d}\right)\right)\]
        where $N=|\Gamma'/\Gamma|$, $\Gamma_d$ is the unique index $d$ subgroup of $\Gamma'$, and $\mu$ is the Möbius function.
        \item If $\xi\in A_\Gamma$, $\Per_\Gamma^{\Gamma'}(\xi)\in A_{\Gamma'}$.
    \end{enumerate}
\end{thm}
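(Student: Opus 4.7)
\textit{Proof proposal.} I would proceed by induction on $N = |\Gamma'/\Gamma|$, with $N = 1$ being trivial: property (1) forces $\Per_\Gamma^\Gamma$ to be the identity. For $N > 1$, by induction $\Per_\Gamma^{\Gamma_d}$ has been constructed uniquely for every $d \mid N$ with $d > 1$, since $|\Gamma_d/\Gamma| = N/d < N$.

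\textbf{Existence.} Define $\Per_\Gamma^{\Gamma'}$ on effective classes via (1). The combinatorial heart of the argument is to verify (3) for $\xi_i = [X_i]$ by direct computation. Decomposing
\[
(X_1\sqcup X_2)^K \;=\; \bigsqcup_{f\colon K\to\{1,2\}}\;\prod_{k\in K} X_{f(k)},
\]
and grouping the pieces by the $K$-orbit of $f$ (with $K=\Gamma'/\Gamma$ acting by translation), each orbit whose stabilizer is $\Gamma_d/\Gamma$ (the unique index-$d$ subgroup of $K$) contributes a term of the form $\Ind_{\Gamma_d}^{\Gamma'}\bigl(\Per_\Gamma^{\Gamma_d}([X_1])^{a}\,\Per_\Gamma^{\Gamma_d}([X_2])^{d-a}\bigr)$: indeed, a $\Gamma_d/\Gamma$-invariant coloring of $K$ is constant on cosets, and the $\Gamma_d$-action on the factors indexed by any single coset is precisely the permutation action $\Per_\Gamma^{\Gamma_d}$. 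A Möbius inversion over the divisor lattice of $N$ converts the sum over orbits whose stabilizer \emph{contains} $\Gamma_d/\Gamma$ into the sum over those whose stabilizer \emph{equals} $\Gamma_d/\Gamma$ (primitive necklaces of length $d$), packaging the contributions into the RHS of (3). I then extend $\Per_\Gamma^{\Gamma'}$ to all virtual classes by using (3) as a recursive definition; well-definedness is verified via the analogous three-color orbit decomposition of $(X_1\sqcup X_2\sqcup X_3)^K$, which enforces an associativity compatibility for (3).

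\textbf{Uniqueness and remaining properties.} Uniqueness follows because (1) fixes $\Per_\Gamma^{\Gamma'}$ on effective classes, while (3) together with (2) and the induction hypothesis propagates the definition rigidly to all virtual classes (multiplicativity (2) controls the nonlinear ambiguity of (3) so that the recursive solve has a unique answer compatible with effective-class specializations). Property (2) on effective classes is the tautology $(X\times Y)^K = X^K\times Y^K$ as $\Gamma'$-varieties, and extends to virtual classes by the compatibility of (3) with ring multiplication. Property (4) is a closure check: both $\Ind_{\Gamma_d}^{\Gamma'}$ and power operations preserve $A$-classes, so (3) restricts correctly.

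\textbf{Main obstacle.} The crux is the combinatorial identity underlying (3) on effective classes. The coefficient $\tfrac{d}{N}$ and the Möbius weights $\mu(d'/d)$ are a priori rational, yet the combination must land integrally in $K_0^{\Gamma'}(\Var_\C)$—a manifestation of the Witt-type identity for primitive-necklace counts. Matching this identity precisely against the orbit-stabilizer decomposition is the technical heart of the argument, and the well-definedness of the virtual-class extension is a secondary hurdle handled by the three-color consistency check.
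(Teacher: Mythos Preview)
Your outline matches the paper's approach closely: the orbit decomposition of $(X_1\sqcup X_2)^K$ by stabilizer type is exactly Lemma~\ref{peraction}, and your three-color consistency check corresponds to the associativity computation carried out in the proof of Theorem~\ref{thm:per-act}, where the paper packages everything into an abelian group $(R,*)$ on restriction-compatible tuples in $\prod_{m\mid N}K_0^{\Gamma_m}(\Var_\C)$. One small point you gloss over: formula~(3) is polynomial, not linear, in $\Per_\Gamma^{\Gamma'}(\xi_1)$, so ``using (3) as a recursive definition'' on virtual classes implicitly requires that $*$-inverses exist and are unique---this is easy but should be stated.

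There is, however, a genuine gap in your treatment of (4). Closure of $A_{\Gamma'}$ under $\Ind$ and products handles only the inductive step of a structural induction on how $\xi\in A_\Gamma$ is built from generators; it does not address the base case. You must show directly that $\Per_\Gamma^{\Gamma'}\bigl([\Ind_H^\Gamma(\A^n)]\bigr)\in A_{\Gamma'}$ for each generator, and this requires a geometric argument: decompose $(\Ind_H^\Gamma(\A^n))^K$ into affine cells indexed by $\prod_{i\in K}i\Gamma/H$, observe that $\Gamma'$ permutes these cells, and verify that the stabilizer of each cell acts \emph{linearly} on the underlying $\A^{nN}$. This is precisely where the word ``linear'' in the definition of $A_\Gamma$ earns its keep, and (4) does not follow from (1)--(3) without it; the paper isolates this check as Corollary~\ref{peract-on-niceclass}.
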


In the previous result, $A_\Gamma$ denotes the subring of $K_0^\Gamma(\Var_\C)$ generated by classes of the form $[\Ind_H^\Gamma(\A^n)]$ where $H\subset \Gamma$ is a subgroup and $H$ acts linearly on $\A^n$. We additionally determine $\Per_\Gamma^{\Gamma'}([\Ind_H^\Gamma(\A^n)])$ in section \ref{permutaion-of-factors}. Therefore, one can compute $\Per_\Gamma^{\Gamma'}(\xi)$ for any $\xi\in A_\Gamma$.

The main motivation for this article comes from an example where these tools are useful. Namely, character varieties. Its definition is simple; the character variety $\mcM(\Gamma,G)$ is a moduli space of morphisms of a group $\Gamma$ into some reductive group $G$, up to conjugation. It parametrizes representations of $\Gamma$ with structure group $G$. 

Perhaps one of the most studied cases arises in the so-called non-abelian Hodge theory (\cite{SimpsonI}, \cite{SimpsonII}). In this case, $\Gamma$ is the fundamental group of some compact curve $C$ and $\mcM(\Gamma,G)$ is diffeomorphic to the moduli space of Higgs bundles. The cohomology of this space has been object of intense research. Starting with the work of Hitchin \cite{Hitchin} and going up to recent work by many mathematicians (for example \cite{HT-TMS}, \cite{GWZ}, \cite{MS-PW}, \cite{HMMS-PW}), very deep mathematics have been uncover (and some still remains to be uncover). In another direction, there have been applications to three dimensional topology. The pioneer work of Culler and Shallen \cite{CS} shown how simple geometric properties of character varieties, where $\Gamma$ is the fundamental group of a three manifold and $G$ is $\SL_2(F)$ for some field $F$, imply remarkable results as the Smith conjecture. 

In view of this a lot of effort has been put in computing cohomological invariants of character varieties (for example see  \cite{DWW}, \cite{Mellit2} for its Betti numbers, \cite{HRV}, \cite{HLRV1}, \cite{HLRV2}, \cite{MM} for its $E$-polynomials, and \cite{GP}, \cite{TQFT} for its motive). One may expect this to be easier if one has a simple presentation of $\Gamma$. In this direction it has been study what happens for free groups (\cite{FS21}, \cite{free1}, \cite{free2}, \cite{free3}, \cite{free4}) and for torus knots (\cite{su2}, \cite{LMN}, \cite{sl3}, \cite{PM}), i.e. groups with two generators $x,y$ and a unique relation,  $x^n=y^m$ for some coprime integers $n,m$. In the later case, a promising strategy have been develop by A. González-Prieto and V. Muñoz \cite{PM} for $\SL_r$. It has been completed so far for $r\leq 4$. In the present paper we extend their method to $\GL_r$ and make the computations for $r\leq 4$. 

We make three remarks. First, previously known cases for $\GL_r$ are $r=2$ \cite{LMN} and $r=3$ \cite{sl3}. Second, the difficulties for $\SL_r$ with $r\geq 4$ are about to be overcome by A. Calleja (private communication). We expect the same to happen for $\GL_r$. Third, \cite{PM} is based on a core fibration which is actually not well-defined. We suitable correct that. It is worthy to mention that their computations hold verbatim anyway.

Our results are as follows. Given coprime positive integers $n$ and $m$, denote with $\mcR^{irr}(G)$ and $\mcM^{irr}(G)$ the representation and character varieties of the fundamental group of the $(n,m)$ torus knot into $G$ respectively. We work over the complex numbers. 

\begin{thm}\label{coprime-case}
    Let $n,m,r$ be positive integers with $n$ and $m$ coprime. If $r$ is coprime with $n$ and $m$, 
    \[[\mcR^{irr}(\GL_r)]=\frac{q-1}{r}[\mcR^{irr}(\SL_r)]\]
    in $K_0(\Var_\C)$ where $q$ is the class of the affine line.
\end{thm}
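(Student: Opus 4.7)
The plan is to exhibit a direct geometric isomorphism $\mcR^{irr}(\GL_r)\cong \C^*\times F$ for a certain subvariety $F\subset \mcR^{irr}(\SL_r)$ satisfying $[F]=\tfrac{1}{r}[\mcR^{irr}(\SL_r)]$, from which the formula is immediate. The coprimality hypothesis $\gcd(nm,r)=1$ is exactly what makes the quotients appearing along the way trivial torsors, so no equivariant-motive machinery is needed for this particular case.

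First I separate the scalar and $\SL_r$ parts of a representation. Given $(A,B)\in\mcR(\GL_r)$, the relation $A^n=B^m$ gives $(\det A)^n=(\det B)^m$, and one may choose $r$-th roots $\alpha^r=\det A$, $\beta^r=\det B$ also satisfying $\alpha^n=\beta^m$; then $(A',B'):=(A/\alpha,B/\beta)\in\SL_r^2$ automatically obeys $(A')^n=(B')^m$. Such a choice is unique up to the free action of the order-$r$ cyclic group
\[
\Gamma_0:=\{(\zeta,\eta)\in\mu_r\times\mu_r:\zeta^n=\eta^m\}\cong\mu_r,
\]
acting by $(\zeta,\eta)\cdot(\alpha,\beta,A',B')=(\zeta\alpha,\eta\beta,\zeta^{-1}A',\eta^{-1}B')$, where both the existence of the choice and the isomorphism $\Gamma_0\cong\mu_r$ rely on the coprimality hypothesis. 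Setting $\mcR^{\C^*}:=\{(\alpha,\beta)\in(\C^*)^2:\alpha^n=\beta^m\}\cong\C^*$ (via $t\mapsto(t^m,t^n)$), one obtains an equivariant identification
\[
\mcR^{irr}(\GL_r)\cong(\mcR^{\C^*}\times\mcR^{irr}(\SL_r))/\Gamma_0,
\]
since scalar twisting preserves irreducibility.

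Next I trivialize the $\Gamma_0$-action on $\mcR^{irr}(\SL_r)$ using the central character. Since $x^n=y^m$ is central in $\Gamma$, Schur's lemma forces $(A')^n=(B')^m=\lambda I$ for any irreducible pair, and $\det A'=1$ forces $\lambda^r=1$. This yields a morphism $c:\mcR^{irr}(\SL_r)\to\mu_r$ which is $\Gamma_0$-equivariant for the target action $\lambda\mapsto\zeta^n\lambda$; coprimality of $n$ with $r$ makes the target action simply transitive. Consequently the clopen decomposition $\mcR^{irr}(\SL_r)=\bigsqcup_{\lambda\in\mu_r}c^{-1}(\lambda)$ furnishes a $\Gamma_0$-equivariant isomorphism $\mcR^{irr}(\SL_r)\cong\mu_r\times F$ with $F:=c^{-1}(1)$ and trivial $\Gamma_0$-action on $F$ (each $c^{-1}(\lambda)$ being isomorphic to $F$ via multiplication by a fixed scalar). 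In particular $[\mcR^{irr}(\SL_r)]=r[F]$.

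Assembling and invoking the standard identification $(X\times G)/G\cong X$ (valid whenever a finite group $G$ acts freely on $X$ and by left multiplication on itself, via $(x,g)\mapsto g^{-1}\cdot x$), one obtains
\[
\mcR^{irr}(\GL_r)\cong\bigl((\mcR^{\C^*}\times\mu_r)/\mu_r\bigr)\times F\cong \mcR^{\C^*}\times F\cong \C^*\times F,
\]
whence $[\mcR^{irr}(\GL_r)]=(q-1)[F]=\tfrac{q-1}{r}[\mcR^{irr}(\SL_r)]$. The main step requiring care is the equivariant product decomposition of $\mcR^{irr}(\SL_r)$ in step two: one must verify that $c$ and the identifications $c^{-1}(\lambda)\cong F$ are genuine morphisms of varieties rather than merely set-theoretic bijections, but this is immediate since $c$ is cut out by the algebraic equations $(A')^n=\lambda I$ and the identifications are given by scalar multiplication inside $\SL_r^2$.
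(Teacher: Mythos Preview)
Your proof is correct and takes a genuinely different, more elementary route than the paper's. The paper passes through the auxiliary variety $R_r^{irr}$ of irreducible $\GL_r$-representations of $\Z_n\star\Z_m$ together with its $\mu_{nm}$-action: it first shows $[\mcR^{irr}(\GL_r)]=(q-1)\langle T_{\mu_{nm}},[R_r^{irr}]^{\mu_{nm}}\rangle$, then uses that under $\gcd(r,nm)=1$ every configuration stabilizer $\Gamma_\kappa$ is trivial, reducing this to $\tfrac{q-1}{nm}\sum_\kappa[R_\kappa^{irr}]$; finally it invokes the formula for $[\mcR^{irr}(\SL_r)]$ from \cite{PM} and matches the two expressions by a combinatorial count over roots of unity. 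Your argument bypasses all of this: you compare the $\GL_r$ and $\SL_r$ varieties directly via the determinant, realize $\mcR^{irr}(\GL_r)$ as a free $\mu_r$-quotient of $\C^*\times\mcR^{irr}(\SL_r)$, and trivialize the residual $\mu_r$-action on the $\SL_r$ side using the clopen central-character decomposition. This is self-contained---no appeal to \cite{PM}, no motivic isotypic machinery---and makes the role of the hypothesis transparent. One small correction: simple transitivity of the $\Gamma_0$-action on the target $\mu_r$ requires $\gcd(mn,r)=1$ (the map $\omega\mapsto\omega^{mn}$ on $\mu_r$ must be bijective), not merely $\gcd(n,r)=1$ as you state; this is of course granted by the theorem's hypothesis. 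The paper's approach, while less direct here, is what sets up the equivariant framework that becomes indispensable once $\gcd(r,nm)\neq1$ and the stabilizers $\Gamma_\kappa$ are nontrivial.
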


We expect that the previous result holds for the character varieties after localization by $q$ and $q^i-1$, $i\geq 1$. Moreover, we prove it for $r\leq 3$ by showing that $[\mcR^{irr}_r(\GL)] = [\PGL_r(\C)]\cdot [\mcM^{irr}_r(\GL)]$. We expect this equality to hold for arbitrary $r$. On the other hand, when $r$ is not coprime with $n$ and $m$, the relation is not as simple. In this case, the computation heavily relies on these new tools. In the following result, note that the class of $\mcM^{irr}(\SL_4)$ is known \cite[Theorem 1.1]{PM}. 

\begin{thm}
    Let $n,m$ be coprime positive integers with $m$ odd. Denote with $q$ the class of the affine line in $K_0(\Var_\C)$. Then
    \begin{align*}
        &\frac{[\mcR^{irr}(\GL_4)]}{[\PGL_4(\C)]}- \frac{q-1}{4}[\mcM^{irr}(\SL_4)]=\delta_{2|n}\left(-
        \frac{(n-2)(m-1)}{16}(q - 1)   (q^5 + 2q^4 - q^3 + 3q^2 + 2q - 2)\right.\\
        &\left.+\frac{n-2}{8}\binom{m-1}{2}(q - 1) q  (q^4 - 2q^2 - 2)-\frac{(n-2)(m-1)}{8}
         (q - 1) q (3q^2 + 2)\right.\\
        &\left.+\frac{n-2}{32}\binom{m-1}{3} (q - 1) q^2 (q^7 + 2q^6 + 4q^5 + 5q^4 - 6q^3 - 6q^2 - 6q - 6)\right.\\
        &\left.-\frac{1}{2}\binom{m-1}{2}  (q-1)q  (q^2 + q + 3)+\frac{1}{8}\binom{m-1}{3}q^4 (q^2 + q - 5)
        \right)\\ 
        &+\delta_{4|n}\left( \frac{1}{128}\binom{m-1}{3} (q - 1) q^2 (q^7 + 2q^6 - 3q^4 + 6q^3 + 6q^2 + 6q + 6)\right.\\
        &\left.+\frac{1}{16}\binom{m-1}{2} q (3q^7 + 3q^6 + q^4 + 3q^3 - 4q^2 + 4q - 4) \right.\\
        &\left.+\frac{m-1}{32}(q^5 - 3q^4 + 4q^3 + q^2 - 4q + 2) +\frac{m-1}{16}(q - 1)q(3  q^2  +2q^2 + 2) \right)
    \end{align*}
    in the localization of $K_0(\Var_\C)$ by the multiplicative set generated by $q$ and $q^i-1$ for $i=1,2,3,4$.
\end{thm}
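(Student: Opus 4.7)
The plan is to extend the González-Prieto–Muñoz approach \cite{PM} from $\SL_4$ to $\GL_4$ by exploiting the presentation $\GL_4 \cong (\SL_4\times\C^*)/\mu_4$, with $\mu_4$ embedded antidiagonally via $\zeta \mapsto (\zeta I,\zeta^{-1})$. Given $(A,B)\in \mcR^{irr}(\GL_4)$, I would write $A=zA'$, $B=wB'$ with $A',B'\in\SL_4$ and $z,w\in\C^*$. Setting $(A')^n=\zeta_A I$ and $(B')^m=\zeta_B I$ for central characters $\zeta_A,\zeta_B\in\mu_4$, the torus-knot relation becomes the single scalar equation $z^n\zeta_A=w^m\zeta_B$; its solution set in $(\C^*)^2$ is isomorphic to $\C^*$ because $\gcd(n,m)=1$. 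Hence $\mcR^{irr}(\GL_4)$ is realized as the quotient by $\mu_4\times\mu_4$ of a total space $S$ fibred in copies of $\C^*$ over a $(\zeta_A,\zeta_B)$-stratified lift of $\mcR^{irr}(\SL_4)$.

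First, I would isolate the locus controlled by Theorem \ref{coprime-case}. Under the $\mu_4\times\mu_4$-action $(\zeta,\eta)\cdot(\zeta_A,\zeta_B)=(\zeta^n\zeta_A,\eta^m\zeta_B)$, the $\eta$-factor is transitive since $m$ is odd, so the orbit structure is governed by $d=\gcd(n,4)\in\{1,2,4\}$. For $d=1$ (the $n$ odd case) the action is free on irreducible reps and reproduces $\tfrac{q-1}{4}[\mcM^{irr}(\SL_4)]$ exactly. For $d\in\{2,4\}$ the residual $\mu_d\subset\mu_4$ stabilizer acts on each slice by $\zeta\cdot(A',z)=(\zeta A',\zeta^{-1}z)$, and the correction to the free-quotient formula is supported precisely on the locus where this action on $A'$ has nontrivial stabilizer, i.e.\ where the eigenvalue multiset of $A'$ is $\mu_d$-invariant.

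Next, I would stratify $\mcR^{irr}(\SL_4)_{\zeta_A,\zeta_B}$ by the Jordan types of $A'$ and $B'$, both semisimple with eigenvalues among the $n$-th and $m$-th roots of $\zeta_A$ and $\zeta_B$ respectively. Each stratum is a tower of locally trivial fibrations, with base a configuration space of eigenvalues and fibre a $\GL_4$-centralizer quotient. The eigenvalue-configuration bases are induced linear actions on affine spaces, so they lie in the subring $A_{\mu_d}$ of Theorem \ref{thm:summarize-per-action}(4); the $\mu_d$-equivariant class of each stratum is then obtained by combining the multiplicativity property (2) with the binomial identity (3) and the explicit values of $\Per_\Gamma^{\Gamma'}([\Ind_H^\Gamma(\A^n)])$ established in Section \ref{permutaion-of-factors}. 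The binomials $\binom{m-1}{k}$ in the statement arise from counting unordered $k$-subsets of distinct eigenvalues of $B'$ among the $m$-th roots of $\zeta_B$ (the $-1$ reflecting the single eigenvalue excluded by irreducibility), while factors such as $(n-2)$ enumerate orbit representatives of $A'$-eigenvalue multisets with trivial $\mu_d$-stabilizer in the $2\mid n$ case.

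The hard part is the combinatorial bookkeeping across the five Jordan types of $A'$ and $B'$, and in particular, for each $A'$-type with nontrivial $\mu_d$-stabilizer -- for instance the multiset $\{\alpha,-\alpha,\beta,-\beta\}$ when $d=2$, or $\{\alpha,i\alpha,-\alpha,-i\alpha\}$ when $d=4$ -- the identification of the correct equivariant structure on both the eigenvalue configuration base and the centralizer fibre, so that the $\Per$-calculus of Theorem \ref{thm:summarize-per-action} applies term-by-term. After using $[\mcR^{irr}(\GL_4)]=[\PGL_4(\C)]\cdot[\mcM^{irr}(\GL_4)]$ (valid because $\PGL_4$ acts freely on irreducible representations) and subtracting the free-quotient prediction $\tfrac{q-1}{4}[\mcM^{irr}(\SL_4)]$, the residual terms partition into the two blocks indexed by $\delta_{2\mid n}$ and $\delta_{4\mid n}$; the explicit polynomials in $q$ then emerge from evaluating the motives of the $\GL_4$-centralizer quotients and configuration spaces that appear.
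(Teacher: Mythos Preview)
Your reduction via $\GL_4 \cong (\SL_4\times\C^*)/\mu_4$ is a reasonable alternative to the paper's, which instead pulls back along $\omega\mapsto\omega^{nm}$ to identify $\hat\mcR^{irr}_4 \simeq \C^\times \times R_4^{irr}$, where $R_4^{irr}$ is the space of irreducible representations of $\Z_n\star\Z_m$ into $\GL_4$ (Lemma~\ref{lem:key-lemma}). The paper's route makes the finite group $\mu_{nm}$ act on a space with $A^n=B^m=\operatorname{Id}$, so eigenvalues are roots of unity and the stratification by eigenvalue configurations $\kappa$ is immediately finite and combinatorial; the correction formula (Lemma~\ref{big-correction-lemma}) then isolates exactly the $\kappa$ with nontrivial stabilizer $\Gamma_\kappa$, whose possible orders divide $\gcd(4,n)$, matching your $\mu_d$. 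Your parametrization is equivalent after unwinding, but the paper's makes the bookkeeping more transparent.

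Two points in your sketch are genuine gaps. First, the identity $[\mcR^{irr}(\GL_4)]=[\PGL_4(\C)]\cdot[\mcM^{irr}(\GL_4)]$ does \emph{not} follow from freeness of the $\PGL_4$-action: free quotients are not automatically multiplicative in $K_0(\Var_\C)$. This is precisely Conjecture~\ref{lem:mcM-vs-mcR}, which the paper proves only for $r\le 3$ (Section~\ref{sec:pglquotient}). Fortunately the theorem is stated for $[\mcR^{irr}(\GL_4)]/[\PGL_4(\C)]$ in the localized ring, so you should compute $[\mcR^{irr}(\GL_4)]$ and divide formally, never invoking $[\mcM^{irr}(\GL_4)]$.

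Second, and more seriously, the assertion that ``the eigenvalue-configuration bases are induced linear actions on affine spaces, so they lie in $A_{\mu_d}$'' is false as stated. The conjugacy classes $\GL_4/\prod_i\GL_{a_i}$ with their $\Gamma_\kappa$-action are not built from induced linear actions in any evident way; showing their classes lie in $A_{\Gamma_\kappa}$ requires the nontrivial cell-decomposition argument of Section~\ref{subsec:conjcl} (and even then the actual value is extracted via the equivariant $E$-polynomial, not from the decomposition directly). More importantly, your sketch does not address the heart of the computation: for each reducible type $\tau\in\mcT_\kappa^*$ with nontrivial stabilizer one must compute $[R(\tau)]^{\Gamma_\tau}$ equivariantly, and this is done via the \emph{equivariant} core fibration of Section~\ref{sec:corefib}. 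Establishing that this fibration is $\Gamma_\tau$-equivariantly locally trivial, with the permutation-of-factors action on the fibre $\mcM_\tau$ and on $\mcI(\tau)$, is exactly what makes the $\Per$-calculus of Theorem~\ref{thm:summarize-per-action} applicable to each piece of Corollary~\ref{cor:corefib}. Without that, the binomial formula alone gives you no handle on the equivariant classes of the reducible strata, hence none on $[R_\kappa^{irr}]^{\Gamma_\kappa}$, and the explicit polynomials in the statement cannot be recovered.
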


Let us say a few words about the proof. In \cite{PM}, they make a reduction to work with representations for $(\Z/n\Z) \star (\Z/m\Z)$. In our case, a similar reduction is possible. But two new difficulty appears, essentially because the center of $\SL_r$ is finite, while the one of $\GL_r$ is not. This forces us to not only work with $\mcM((\Z/n\Z)\star (\Z/m\Z),\GL_r)$, but equivariantly with respect to a cyclic group action. At the level of $E$-polynomials, Gónzales-Prieto--Muñoz's is simple to adapt because multiplicative properties for equivariant $E$-polynomials are known. However, in general, the $E$-polynomial does not determine the motive. But we show this is indeed true in our case. A good example of this strategy is subsection \ref{subsec:conjcl}. There we first construct a cell decomposition to show a motive is nice enough. However, it is combinatorially too complicated to actually use it to compute it. But at the level of $E$-polynomials, the computation is simple.

\medskip\noindent\it{Acknowledgements. }\rm Several computations have been check with the help of Sage. The author wishes to thank A. Calleja for introducing him to the subject and to A. González-Prieto and M. Mereb for very useful discussions. This research was supported by a PhD fellowship from CONICET.

\section{Preliminaries}

\subsection{Rational representations of cyclic groups}

We refer the reader to \cite{Serre} for a detailed exposition, here we just summarize what we are going to use. For a finite group $\Gamma$ denote with $R_\Q(\Gamma)$ the Grothendieck ring of its category of finite dimensional representations defined over $\Q$. For any divisor $d$ of $|\Gamma|$, we will denote with $\Q^d$ the permutation action induced by the unique quotient of $\Gamma$ of size $d$. Sometimes we will call $T_\Gamma$ the trivial representation $\Q^1$.

\begin{lema}
    Let $\Gamma$ be a finite cyclic group. Then $R_\Q(\Gamma)$ has a $\Z$-basis given by $\Q^d$, where $d$ runs over all positive divisors of $\Gamma$. 
\end{lema}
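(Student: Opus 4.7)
The plan is to compare the proposed family $\{\Q^d\}_{d\mid n}$ (where $n=|\Gamma|$) against the standard $\Z$-basis of $R_\Q(\Gamma)$ given by irreducible rational representations. Recall that for a cyclic group $\Gamma$ of order $n$, the irreducible $\Q$-representations are indexed by divisors $d$ of $n$: for each such $d$ one has an irreducible $V_d$ of dimension $\varphi(d)$ obtained from the Galois action on $\Q(\zeta_d)$ via the unique quotient $\Gamma\twoheadrightarrow\Gamma/\Gamma_d$ of order $d$ (equivalently, $V_d$ is the $\Q$-irreducible whose complexification is the sum of the faithful characters of $\Gamma/\Gamma_d$). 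That $\{V_d\}_{d\mid n}$ is a $\Z$-basis of $R_\Q(\Gamma)$ is standard.

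Next I would identify $\Q^d$ concretely as a rational representation. By definition $\Q^d$ is the permutation representation on $\Gamma/\Gamma_d$, equivalently $\Q^d=\mathrm{Ind}_{\Gamma_d}^\Gamma T_{\Gamma_d}$; since the action factors through $\Gamma/\Gamma_d\cong\Z/d\Z$, it is the pullback of the regular representation of $\Z/d\Z$. The regular representation of a cyclic group of order $d$ decomposes as the direct sum of all its irreducible rational representations, one per divisor of $d$. Pulling back along $\Gamma\twoheadrightarrow\Z/d\Z$, this yields
\[
\Q^d \;=\; \bigoplus_{e\mid d} V_e \qquad \text{in } R_\Q(\Gamma).
\]
(One can cross-check this by a short character computation, or by Frobenius reciprocity: $\langle\Q^d,V_e\rangle_\Gamma=\langle T_{\Gamma_d},\mathrm{Res}^{\Gamma}_{\Gamma_d} V_e\rangle_{\Gamma_d}$, which equals $1$ exactly when $\Gamma_d$ acts trivially on $V_e$, i.e.\ when $e\mid d$.)

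Finally I would package this as a base-change statement. Order the divisors of $n$ compatibly with divisibility. The matrix $M=(m_{d,e})_{d,e\mid n}$ defined by $m_{d,e}=1$ if $e\mid d$ and $m_{d,e}=0$ otherwise is lower triangular with $1$'s on the diagonal, hence $M\in\GL_{\tau(n)}(\Z)$. Since $M$ expresses $(\Q^d)_{d\mid n}$ in terms of the $\Z$-basis $(V_e)_{e\mid n}$, the family $(\Q^d)_{d\mid n}$ is itself a $\Z$-basis of $R_\Q(\Gamma)$.

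There is no real obstacle: the only content beyond bookkeeping is the decomposition $\Q^d=\bigoplus_{e\mid d}V_e$, and this is immediate once one notes the action factors through the cyclic quotient and invokes the decomposition of the regular representation of a cyclic group into its $\tau(d)$ distinct rational irreducibles.
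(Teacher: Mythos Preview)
Your proof is correct and follows essentially the same approach as the paper: both identify the irreducible rational representations $V_d\cong\Q[x]/\Phi_d(x)$ indexed by divisors of $|\Gamma|$, observe that $\Q^d$ decomposes as $\bigoplus_{e\mid d}V_e$, and conclude via the resulting unitriangular change-of-basis matrix. Your write-up is somewhat more explicit (giving the full decomposition and the Frobenius reciprocity check), but the argument is the same.
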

\begin{proof}
    All irreducible representations of $\Gamma$ defined over $\Q$ are of the form $\Q[x]/\Phi_d(x)$ where $d||\Gamma|$ and $\Phi_d(x)$ is the cyclotomic polynomial associated with $d$. Hence, we only need to note that the representation $\Q[x]/\Phi_d(x)$ appears with multiplicity one at $\Q^d$ and all other simple representations in $\Q^d$ are of the form $\Q[x]/\Phi_{d'}(x)$ for $d'<d$.
\end{proof}

\begin{lema}
    For any $d,d'|n$,
    \[\Q^d\cdot \Q^{d'}=\gcd(d,d')\Q^{\operatorname{lcm}(d,d')}\]
    in $R_\Q(\Gamma)$.
\end{lema}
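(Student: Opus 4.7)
The plan is to identify $\Q^d$ with the permutation representation $\Q[\Gamma/\Gamma_d]$, where $\Gamma_d\subset\Gamma$ denotes the unique subgroup of index $d$ (this is exactly the statement that $\Gamma$ acts on $\Q^d$ through its unique quotient of size $d$). Under this identification the product in $R_\Q(\Gamma)$ becomes
\[\Q^d\cdot\Q^{d'}=\Q\bigl[(\Gamma/\Gamma_d)\times(\Gamma/\Gamma_{d'})\bigr]\]
with the diagonal action, so the whole statement reduces to analyzing the orbit structure of this finite $\Gamma$-set.

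Since $\Gamma$ is abelian, the stabilizer of a pair $(g\Gamma_d,h\Gamma_{d'})$ equals $\Gamma_d\cap\Gamma_{d'}$ independently of the representatives, so every orbit is $\Gamma$-isomorphic to $\Gamma/(\Gamma_d\cap\Gamma_{d'})$. Via the standard bijection between subgroups of a cyclic group and divisors of its order, one has $\Gamma_d\cap\Gamma_{d'}=\Gamma_{\lcm(d,d')}$, so each orbit contributes precisely one copy of $\Q^{\lcm(d,d')}$. The number of orbits is then obtained by a size count: the ambient $\Gamma$-set has $dd'$ elements and each orbit has $\lcm(d,d')$ elements, yielding $dd'/\lcm(d,d')=\gcd(d,d')$ orbits. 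Combining these two observations gives the claimed identity.

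The only step that requires even a brief verification is $\Gamma_d\cap\Gamma_{d'}=\Gamma_{\lcm(d,d')}$, which follows immediately from the fact that in a cyclic group of order $|\Gamma|$ the subgroup of index $k$ has order $|\Gamma|/k$, together with $\gcd(|\Gamma|/d,|\Gamma|/d')=|\Gamma|/\lcm(d,d')$. No real obstacle is anticipated, as the argument is essentially Mackey's formula specialized to trivial isotropy considerations in the abelian case.
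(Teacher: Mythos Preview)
Your proof is correct and follows essentially the same approach as the paper: identify $\Q^d\otimes\Q^{d'}$ with the permutation representation on $(\Gamma/\Gamma_d)\times(\Gamma/\Gamma_{d'})$, observe that every orbit has size $\lcm(d,d')$, and count. The paper's proof is a one-line version of yours, simply noting that the basis vectors $e_i\otimes e_j$ are permuted with all orbits of size $\lcm(d,d')$.
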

\begin{proof}
    Indeed, the basis $e_i\otimes e_j$ of $\Q^d\otimes \Q^{d'}$ has its elements permuted with all orbits of size $\operatorname{lcm}(d,d')$.      
\end{proof}

\begin{lema}
    Let $\Gamma$ be a finite cyclic group. There is a division algorithm in $\Q[q]\otimes R_\Q(\Gamma)$; if $f=gh$, there is an algorithm to determine $h$ in function of $f$ and $g$.
\end{lema}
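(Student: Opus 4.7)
The plan is to exhibit an explicit ring isomorphism $\Q[q]\otimes R_\Q(\Gamma)\cong \prod_{d\mid N}\Q[q]$, where $N=|\Gamma|$ and the right-hand side carries componentwise multiplication, reducing the problem to ordinary polynomial exact division in $\Q[q]$.

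First I would rescale the basis from the previous lemma, setting $\tilde u_d:=\Q^d/d\in R_\Q(\Gamma)\otimes_\Z\Q$. Combining $\Q^d\cdot\Q^{d'}=\gcd(d,d')\,\Q^{\lcm(d,d')}$ with the elementary identity $dd'=\gcd(d,d')\lcm(d,d')$ yields the key relation
\[\tilde u_d\,\tilde u_{d'}=\tilde u_{\lcm(d,d')},\]
so each $\tilde u_d$ is idempotent and $\tilde u_1=1$. Möbius inversion on the divisor lattice then produces the refined family
\[\epsilon_d:=\sum_{d\mid e\mid N}\mu(e/d)\,\tilde u_e,\qquad \tilde u_d=\sum_{d\mid e\mid N}\epsilon_e.\]
Substituting the second relation into $\tilde u_e\tilde u_{e'}=\tilde u_{\lcm(e,e')}$, using $\lcm(e,e')\mid f\iff e\mid f$ and $e'\mid f$, and applying twice the Möbius identity $\sum_{d\mid e\mid f}\mu(e/d)=\delta_{d,f}$ gives $\epsilon_d\epsilon_{d'}=\delta_{d,d'}\epsilon_d$ and $\sum_d\epsilon_d=1$. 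Since the change of basis between $\{\tilde u_d\}$ and $\{\epsilon_d\}$ is the unimodular Möbius matrix of the divisor lattice, both families are $\Q$-bases and $\{\epsilon_d\}_{d\mid N}$ is a complete system of primitive orthogonal idempotents. Tensoring with $\Q[q]$ then supplies the desired product decomposition.

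The algorithm is now immediate. Rewriting $f$ and $g$ in the idempotent basis as tuples $(f_d)_{d\mid N},(g_d)_{d\mid N}\in\prod_{d\mid N}\Q[q]$ via the explicit formula for $\epsilon_d$ above, the equation $f=gh$ becomes $f_d=g_d h_d$ in $\Q[q]$ for each $d$. For each $d$ with $g_d\ne 0$ recover $h_d$ by ordinary polynomial exact division; for each $d$ with $g_d=0$, necessarily $f_d=0$, and one sets $h_d:=0$ as a canonical choice, so that $h$ is uniquely recovered whenever $g$ is not a zero divisor. Reassembling $h=\sum_d h_d\epsilon_d$ returns the answer in the original basis. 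The main obstacle I foresee is the orthogonality verification for the $\epsilon_d$, but once phrased through the $\tilde u_d$ it collapses to the single Möbius identity above, so the real combinatorial content is already packaged in the preceding lemma.
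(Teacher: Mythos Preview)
Your argument is correct and genuinely different from the paper's. The paper stays in the basis $\{\Q^d\}$ and observes that the multiplication is lower-triangular for the partial order given by divisibility: the coefficient $f_d$ depends only on $g_{d'},h_{d'}$ with $d'\mid d$, and the ``pivot'' multiplying $h_d$ is $\sum_{d_1\mid d}d_1 g_{d_1}$. It then solves recursively in increasing $d$. You instead diagonalise the ring completely, producing an explicit isomorphism $\Q[q]\otimes R_\Q(\Gamma)\cong \prod_{d\mid N}\Q[q]$ via orthogonal idempotents, which reduces the problem to ordinary exact division componentwise. The two pictures match up nicely: your component $g_d=\chi_d(g)$ is exactly the paper's pivot $\sum_{d_1\mid d}d_1 g_{d_1}$, so you are effectively explaining \emph{why} the paper's triangular system is solvable. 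Your approach also handles the zero-divisor case more explicitly than the paper does. One small remark: your sketch of the orthogonality $\epsilon_d\epsilon_{d'}=\delta_{d,d'}\epsilon_d$ is a bit elliptic; the cleanest route is to check that the linear maps $\chi_f(\tilde u_d)=[d\mid f]$ are ring homomorphisms separating points, whence $\chi_f(\epsilon_d)=\sum_{d\mid e\mid f}\mu(e/d)=\delta_{d,f}$ immediately gives orthogonality.
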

\begin{proof}
    Uses the basis $\Q^d$. Write $h = \sum h_d\otimes \Q^d$, $f=\sum f_d\otimes \Q^d$ and $g=\sum g_d\otimes \Q^d$. Note that when computing $f_d$ by the product $gh$, one only uses $g_{d'}$ and $h_{d'}$ for $d'|d$. More precisely,
    \[f_d\otimes \Q^d = \sum_{d_1,d_2}\gcd(d_1,d_2) g_{d_1}h_{d_2}\otimes \Q^{d}\]
    where $d_1$ and $d_2$ run over all divisors of $d$ such that $\operatorname{lcm}(d_1,d_2)=d$. Hence,
    \[h_d\left(\sum_{d_1|d}d_1g_{d_1}\right)=f_d -\sum_{d_2<d}\sum_{d_1}\gcd(d_1,d_2) g_{d_1}h_{d_2} \]
    where in the left hand side $d_1$ runs over all divisors of $d$ such that $\operatorname{lcm}=(d_1,d_2)d$. The result follows.
\end{proof}

For a subgroup $H\subset\Gamma$, denote with $\Res^\Gamma_H:R_\Q(\Gamma)\to R_\Q(H)$ and $\Ind_H^\Gamma:R_\Q(H)\to R_\Q(\Gamma)$ the restriction and induction of representations. Recall some basic properties:

\begin{thm}\label{thm:sec2.1-1}
    Let $H\subset \Gamma'\subset \Gamma$ be finite groups. Then
    \begin{enumerate}
        \item $\Res_{H}^\Gamma(\Ind_{H}^\Gamma(V))=\frac{|\Gamma|}{|H|}V$ if $\Gamma$ is abelian,
        \item $\Res_{H}^{\Gamma'}(\Res_{\Gamma'}^\Gamma(V))=\Res_H^\Gamma(V)$, and
        \item $\Ind_{\Gamma'}^\Gamma(\Ind_H^{\Gamma'}(V))=\Ind_H^\Gamma(V)$
    \end{enumerate}
    for any representation $V$.
\end{thm}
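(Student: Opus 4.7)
The plan is to derive all three statements directly from the tensor-product definition $\Ind_H^\Gamma V=\Q[\Gamma]\otimes_{\Q[H]}V$, together with the fact that restriction is simply the forgetful functor along the inclusion of group algebras. Items (2) and (3) hold for arbitrary finite groups; only (1) uses the abelian hypothesis.

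First I would dispose of (2) and (3). For (2), both $\Res_H^\Gamma(V)$ and $\Res_H^{\Gamma'}(\Res_{\Gamma'}^\Gamma(V))$ are the same $\Q$-vector space equipped with the same $H$-action, so the statement is tautological from the transitivity of subgroup inclusion. For (3), applying associativity of the tensor product,
\[
\Ind_{\Gamma'}^\Gamma(\Ind_H^{\Gamma'}(V))=\Q[\Gamma]\otimes_{\Q[\Gamma']}(\Q[\Gamma']\otimes_{\Q[H]}V)\cong \Q[\Gamma]\otimes_{\Q[H]}V=\Ind_H^\Gamma(V),
\]
where the middle isomorphism uses that $\Q[\Gamma]\otimes_{\Q[\Gamma']}\Q[\Gamma']=\Q[\Gamma]$ as $(\Q[\Gamma],\Q[H])$-bimodules.

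For (1), I would pick a transversal $\{g_1,\dots,g_r\}$ of $H$ in $\Gamma$ with $r=|\Gamma|/|H|$, so that as a $\Q$-vector space $\Ind_H^\Gamma V=\bigoplus_{i=1}^r g_i\otimes V$. The action of $h\in H$ on a summand is $h\cdot(g_i\otimes v)=(hg_i)\otimes v$, and since $\Gamma$ is abelian we have $hg_i=g_ih$, so this equals $g_i\otimes (hv)$. Hence each summand is $H$-equivariantly isomorphic to $V$, and the total is $rV=(|\Gamma|/|H|)V$, as claimed. This can alternatively be viewed as the Mackey formula collapsing: in the abelian case every double coset is a single coset and the conjugation action on $V$ is trivial.

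There is no serious obstacle in this proof; the only subtlety worth flagging is that the abelian hypothesis in (1) is essential, since in general Mackey's formula decomposes the restriction into inductions from proper subgroups $H\cap gHg^{-1}$ with the action of $V$ twisted by conjugation, and these terms need not collapse to multiples of $V$.
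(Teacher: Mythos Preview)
Your proof is correct. The paper does not actually supply a proof of this theorem; it is stated as a standard fact with an implicit reference to Serre's textbook, so there is no argument in the paper to compare against. Your tensor-product derivation of (3), the tautological observation for (2), and the coset-decomposition argument for (1) using commutativity are exactly the standard proofs one would expect.
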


\begin{thm}\label{thm:sec2.1-2}
    Let $H_1,H_2\subset \Gamma$ be finite groups. Then
    \begin{enumerate}
        \item $\Ind_{H_1}^\Gamma(V_1)\cdot V=\Ind_{H_1}^\Gamma(V_1\cdot \Res_{H_1}^\Gamma(V))$,
        \item $\Ind_{H_1}^\Gamma(V_1)\cdot\Ind_{H_2}^\Gamma(V_2)=\frac{|\Gamma|}{|H_1H_2|}\Ind_{H_1\cap H_2}^\Gamma(\Res_{H_1\cap H_2}^{H_1}(V_1)\cdot \Res_{H_1\cap H_2}^{H_2}(V_2))$, and
        \item $\Res_{H_1}^{\Gamma}(\Ind_{H_2}^{\Gamma}(V_2))=\frac{|\Gamma||H_1\cap H_2|}{|H_2||H_1|}\Ind_{H_1\cap H_2}^{H_1}(\Res_{H_1\cap  H_2}^{H_2}(V))$ if $\Gamma$ is abelian
    \end{enumerate}
    for any representations $V_1$, $V_2$ and $V$ of $H_1$, $H_2$ and $\Gamma$ respectively.
\end{thm}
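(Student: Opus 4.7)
All three identities are classical consequences of the definition $\Ind_H^\Gamma(V)=\Q[\Gamma]\otimes_{\Q[H]}V$ together with Mackey's theorem; we spell out how we would deduce them from each other in the order $(1)\Rightarrow(3)\Rightarrow(2)$.

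First, for (1) (the projection formula), the plan is to exhibit a natural isomorphism of $\Q[\Gamma]$-modules
\[\Ind_{H_1}^\Gamma(V_1)\otimes V \;\longrightarrow\; \Ind_{H_1}^\Gamma\bigl(V_1\otimes \Res_{H_1}^\Gamma(V)\bigr),\qquad (g\otimes v_1)\otimes v \;\longmapsto\; g\otimes(v_1\otimes g^{-1}v).\]
One checks $\Q[H_1]$-balancing and $\Gamma$-equivariance on the generators; this is formal and works for an arbitrary group $\Gamma$ (no abelianness needed). Passing to classes in $R_\Q(\Gamma)$ gives (1).

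Second, for (3), I would invoke the Mackey decomposition formula: for any subgroups $H_1,H_2\subset\Gamma$,
\[\Res_{H_1}^\Gamma\Ind_{H_2}^\Gamma(V)\;=\;\sum_{g\in H_1\backslash\Gamma/H_2}\Ind_{H_1\cap gH_2g^{-1}}^{H_1}\bigl(\Res_{H_1\cap gH_2g^{-1}}^{gH_2g^{-1}}({}^gV)\bigr),\]
where ${}^gV$ is the conjugate representation. When $\Gamma$ is abelian, every subgroup is normal, so $gH_2g^{-1}=H_2$ and ${}^gV=V$ for every $g$. Moreover the double coset space $H_1\backslash\Gamma/H_2$ coincides with $\Gamma/H_1H_2$, which has cardinality $|\Gamma|/|H_1H_2|=|\Gamma||H_1\cap H_2|/(|H_1||H_2|)$. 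Each summand is identical, equal to $\Ind_{H_1\cap H_2}^{H_1}\Res_{H_1\cap H_2}^{H_2}(V)$, so summing yields the coefficient $|\Gamma||H_1\cap H_2|/(|H_1||H_2|)$ and gives (3).

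Third, for (2), I would simply chain (1), (3), (1) inside $H_1$, and transitivity of induction (Theorem \ref{thm:sec2.1-1}(3)):
\begin{align*}
\Ind_{H_1}^\Gamma(V_1)\cdot\Ind_{H_2}^\Gamma(V_2)
&\stackrel{(1)}{=}\Ind_{H_1}^\Gamma\bigl(V_1\cdot\Res_{H_1}^\Gamma\Ind_{H_2}^\Gamma(V_2)\bigr)\\
&\stackrel{(3)}{=}\tfrac{|\Gamma||H_1\cap H_2|}{|H_1||H_2|}\Ind_{H_1}^\Gamma\bigl(V_1\cdot\Ind_{H_1\cap H_2}^{H_1}\Res_{H_1\cap H_2}^{H_2}(V_2)\bigr)\\
&\stackrel{(1)}{=}\tfrac{|\Gamma||H_1\cap H_2|}{|H_1||H_2|}\Ind_{H_1}^\Gamma\Ind_{H_1\cap H_2}^{H_1}\bigl(\Res_{H_1\cap H_2}^{H_1}(V_1)\cdot\Res_{H_1\cap H_2}^{H_2}(V_2)\bigr)\\
&=\tfrac{|\Gamma|}{|H_1H_2|}\Ind_{H_1\cap H_2}^\Gamma\bigl(\Res_{H_1\cap H_2}^{H_1}(V_1)\cdot\Res_{H_1\cap H_2}^{H_2}(V_2)\bigr),
\end{align*}
using $|H_1H_2|=|H_1||H_2|/|H_1\cap H_2|$ in the last step.

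The only real point to be careful about is the abelianness hypothesis in (3): it is used twice, once to kill the conjugation ${}^gV=V$, and once to make $H_1H_2$ an actual subgroup so that $|H_1\backslash\Gamma/H_2|=|\Gamma|/|H_1H_2|$. Everything else is formal.
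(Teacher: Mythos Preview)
Your argument is correct and standard. Note, however, that the paper does not actually prove this theorem: it is stated in the preliminaries as a recalled classical fact (immediately after ``Recall some basic properties''), with no proof given. So there is no paper proof to compare against.

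One small remark worth making explicit: the statement of (2) as written in the paper carries no abelianness hypothesis, but your derivation routes through (3), which does require $\Gamma$ abelian. This is not a defect in your proof; rather, (2) in that form is only true in the abelian case (for general $\Gamma$ the Mackey tensor product formula has a sum over double cosets with conjugated representations, not a single term with a multiplicity). The paper's context is cyclic groups throughout, so the hypothesis is implicit, and your proof correctly identifies where it enters.
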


There is a paring on $R_\Q(\Gamma)$. It is given by
\[\langle V_1,V_2\rangle = \frac{1}{|\Gamma|}\sum_{\gamma\in \Gamma}\tr(\gamma|V_1)\tr(\gamma^{-1}|V_2)\]
for any two representations $V_1,V_2$ of $\Gamma$. 

\begin{thm}(Frobenius reciprocity theorem)
    Let $H\subset \Gamma$ be finite groups. Then
    \[\langle V_1,\Ind_H^\Gamma(V_2) \rangle_{\Gamma} = \langle \Res_H^\Gamma(V_1),V_2 \rangle_H\]
    for any $V_1\in R_\Q(\Gamma)$ and $V_2\in R_\Q(H)$.
\end{thm}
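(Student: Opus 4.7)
The plan is to prove Frobenius reciprocity via a direct character-theoretic calculation, which works for arbitrary finite groups $H \subset \Gamma$ (not just abelian or cyclic ones). Writing $\chi_1, \chi_2$ for the characters of $V_1$ and $V_2$, I would start from the standard formula for the induced character, namely
\[\chi_{\Ind_H^\Gamma V_2}(\gamma) = \frac{1}{|H|}\sum_{\substack{g\in \Gamma\\ g^{-1}\gamma g\in H}}\chi_2(g^{-1}\gamma g),\]
which can be derived quickly from the description of $\Ind_H^\Gamma V_2$ as $\Q[\Gamma]\otimes_{\Q[H]}V_2$ by choosing coset representatives and reading off the diagonal of the permutation-like block matrix. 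This formula should either be quoted as standard or proved first as a one-paragraph lemma.

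Plugging this into the left-hand side of the claim gives
\[\langle V_1,\Ind_H^\Gamma V_2\rangle_\Gamma = \frac{1}{|\Gamma|\,|H|}\sum_{\gamma\in \Gamma}\sum_{\substack{g\in \Gamma\\ g^{-1}\gamma^{-1}g\in H}}\chi_1(\gamma)\,\chi_2(g^{-1}\gamma^{-1}g).\]
The key move is the substitution $\gamma' = g^{-1}\gamma g$ inside the inner sum; after this change of variables, $\chi_1(\gamma)$ becomes $\chi_1(g\gamma' g^{-1}) = \chi_1(\gamma')$ because $\chi_1$ is a class function on $\Gamma$, and the constraint $g^{-1}\gamma^{-1}g\in H$ becomes $(\gamma')^{-1}\in H$, i.e.\ $\gamma'\in H$. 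The $g$-sum then factors out and contributes a factor $|\Gamma|$, leaving
\[\frac{1}{|H|}\sum_{h\in H}\chi_1(h)\chi_2(h^{-1}) = \langle \Res_H^\Gamma V_1, V_2\rangle_H,\]
which is exactly the right-hand side.

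The main obstacle, such as it is, is purely bookkeeping: carefully tracking which variable the sum is over after the substitution, and verifying that the constraint set $\{(g,\gamma): g^{-1}\gamma^{-1} g \in H\}$ is in bijection with $\Gamma\times H$ via $(g,\gamma)\mapsto (g, g^{-1}\gamma^{-1}g) = (g,h)$. Once this is verified, the rest is mechanical. I would phrase it so that the reader sees clearly that the use of $\chi_1$ being a class function on $\Gamma$ (not merely on $H$) is the essential ingredient that makes the proof work.
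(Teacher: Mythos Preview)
Your proof is correct and is the standard character-theoretic argument. Note, however, that the paper does not actually prove this theorem: it is stated without proof in the preliminaries section, with the blanket reference to \cite{Serre} at the start of that subsection covering it as a well-known result. So there is no ``paper's own proof'' to compare against; your argument supplies exactly what a reader would find upon following the citation.
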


\subsection{Grothendieck ring of varieties}

Let $\Gamma$ be a finite group. A $\Gamma$-variety is a variety $X$ together with an action of $\Gamma$ such that any point has an equivariant affine neighborhood. The Grothendieck ring of $\Gamma$-varieties, $K_0^\Gamma(\Var_\C)$, is defined as the free abelian group generated by isomorphism classes of $\Gamma$-varieties quotient by all relations $[X]=[Z]+[X\setminus Z]$ for any $\Gamma$-variety $X$ and invariant closed subset $Z\subset X$. Its multiplication is given by $[X][Y]=[X\times Y]$ where $\Gamma$ acts diagonally in $X\times Y$. Denote with $\Res^\Gamma_H:\Gamma-\Var_\C\to H-\Var_\C$ and $\Ind_H^\Gamma:H-\Var_\C\to \Gamma-\Var_\C$ the restriction and induction of actions. They induce $\Res^\Gamma_H:K_0^\Gamma(\Var_\C)\to K_0^H(\Var_\C)$ and $\Ind_H^\Gamma:K_0^H(\Var_\C)\to K_0^\Gamma(\Var_\C)$. Let us proof the analogues of Theorems \ref{thm:sec2.1-1} and \ref{thm:sec2.1-2} in this context.

\begin{lema}
    Let $H\subset \Gamma'\subset \Gamma$ be finite groups, $X$ be a $H$-variety, and $Y$ be a $\Gamma$-variety. Then
    \begin{enumerate}
        \item $[\Res_H^{\Gamma}(\Ind_H^{\Gamma}([H]))]=\frac{|\Gamma|}{|H|}[X]$ in $K_0^H(\Var_\C)$ if $\Gamma$ is abelian, 
        \item $\Res_H^{\Gamma'}(\Res_{\Gamma'}^{\Gamma}(Y))=\Res^{\Gamma}_H(Y)$,
        \item $\Ind_{\Gamma'}^{\Gamma}(\Ind_H^{\Gamma'}(X))\simeq \Ind_H^{\Gamma}(X)$ as $\Gamma$-varieties, and
        \item $\Ind_H^{\Gamma}(X)\times Y \simeq \Ind_H^{\Gamma}(X\times\Res_H^{\Gamma}(Y))$ as $\Gamma$-varieties.
    \end{enumerate}
\end{lema}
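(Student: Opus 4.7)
My plan is to verify each item by exhibiting an explicit isomorphism or decomposition of $\Gamma$-varieties directly from the definitions of restriction and induction. The condition of having equivariant affine neighborhoods is preserved by products, disjoint unions, and free quotients along subgroups, so it plays no role beyond a routine check that the constructions land in the category of $\Gamma$-varieties.

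Item (2) is immediate: restricting an action via the chain $H \subset \Gamma' \subset \Gamma$ gives the same $H$-action on the underlying variety as restricting directly. For (3), I would unfold both sides as iterated quotients, $\Ind_{\Gamma'}^{\Gamma}\Ind_H^{\Gamma'}(X) = \bigl(((\Gamma' \times X)/H) \times \Gamma\bigr)/\Gamma'$ and $\Ind_H^\Gamma(X) = (\Gamma \times X)/H$, and check that
\[
[[\gamma', x], \gamma] \longmapsto [\gamma \gamma', x]
\]
is a well-defined $\Gamma$-equivariant bijection with inverse $[\gamma, x] \mapsto [[1, x], \gamma]$. Both are morphisms by construction.

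For (4), I would define the morphism
\[
\Ind_H^\Gamma(X) \times Y \;\longrightarrow\; \Ind_H^\Gamma(X \times \Res_H^\Gamma Y), \qquad ([\gamma, x], y) \longmapsto [\gamma, x, \gamma^{-1} y],
\]
with inverse $[\gamma, x, z] \mapsto ([\gamma, x], \gamma z)$. The main verification is that both maps descend to the appropriate quotients: in the source, $H$ acts only on the $\Ind_H^\Gamma(X)$ factor, while in the target it acts diagonally including on $Y$, and the twist by $\gamma^{-1}$ exactly compensates. $\Gamma$-equivariance follows by direct substitution, noting that $(\gamma' \gamma)^{-1} (\gamma' y) = \gamma^{-1} y$.

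Finally, (1) is where the abelian hypothesis is essential: when $\Gamma$ is abelian and $h \in H$, one has $h \gamma H = \gamma h H = \gamma H$, so the restricted $H$-action on $\Ind_H^\Gamma(X) = (\Gamma \times X)/H$ preserves each coset of $\Gamma / H$. Fixing coset representatives $g_1, \dots, g_N$ with $N = |\Gamma|/|H|$ yields an $H$-equivariant decomposition $\Res_H^\Gamma \Ind_H^\Gamma(X) \simeq \bigsqcup_{i=1}^N X$, where each piece carries the original $H$-action on $X$. Taking classes in $K_0^H(\Var_\C)$ gives the stated identity. The only mildly delicate step is the bookkeeping for well-definedness and equivariance in (4); everything else is essentially formal once the correct isomorphisms are written down, and without the abelian hypothesis in (1) the $H$-action would permute cosets nontrivially and the clean decomposition would fail.
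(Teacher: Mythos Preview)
Your proposal is correct and follows essentially the same approach as the paper: the paper's proof is a one-line sketch giving exactly the isomorphisms you wrote down (up to ordering conventions), namely $((x,\gamma'),\gamma)\mapsto (x,\gamma'\gamma)$ for (3) and $((x,\gamma),y)\mapsto ((x,\gamma^{-1}y),\gamma)$ for (4), while for (1) it simply notes $\Ind_H^\Gamma(X)\simeq X\times \Gamma/H$ as $H$-varieties, which is your coset decomposition. Your version is more explicit about well-definedness and the role of the abelian hypothesis, but the underlying argument is identical.
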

\begin{proof}
    For the first item note $\Ind_H^{\Gamma}(X)\simeq X\times \Gamma/H$ as $H$-varieties. The second is clear. For the last two, the isomorphisms are $((x,\gamma'),\gamma)\mapsto (x,\gamma'\gamma)$ and $((x,\gamma),y)\mapsto ((x,\gamma^{-1}y),\gamma)$.
\end{proof}

\begin{lema}\label{ind-times-ind}
    Let $H_1,H_2\subset \Gamma$ be finite abelian groups, $X$ be a $H_1$-variety and $Y$ be a $H_2$-variety. Then
    \[[\Ind_{H_1}^\Gamma(X)]\cdot[\Ind_{H_2}^\Gamma(Y)]=\frac{|\Gamma|}{|H_1H_2|}[\Ind_{H_1\cap H_2}^\Gamma(\Res_{H_1\cap H_2}^{H_1}(X)\cdot \Res_{H_1\cap H_2}^{H_2}(Y))]\]
    in $K_0^\Gamma(\Var_\C)$. In particular,
    $[\Ind_{H_1}^{\Gamma}(X)]\cdot [\Ind_{H_1}^{\Gamma}(Y)] =\frac{|\Gamma|}{|H_1|}[\Ind_{H_1}^{\Gamma}(X\times Y)] $
    for any two $H_1$-varieties $X$ and $Y$.
\end{lema}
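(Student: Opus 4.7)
The plan is to reduce the computation to a motivic Mackey-style decomposition. First I would apply part (4) of the previous lemma (the projection formula $\Ind_{H_1}^\Gamma(X)\times Y \simeq \Ind_{H_1}^\Gamma(X\times \Res_{H_1}^\Gamma(Y))$) with $Y$ replaced by $\Ind_{H_2}^\Gamma(Y)$, giving
\[
\Ind_{H_1}^\Gamma(X)\times \Ind_{H_2}^\Gamma(Y) \simeq \Ind_{H_1}^\Gamma\bigl(X\times \Res_{H_1}^\Gamma(\Ind_{H_2}^\Gamma(Y))\bigr).
\]
The task then reduces to computing $\Res_{H_1}^\Gamma(\Ind_{H_2}^\Gamma(Y))$ as an $H_1$-variety, and applying the projection formula again at the level of $H_1$ together with transitivity of induction (parts (3) and (4) of the previous lemma).

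The main obstacle, and the heart of the argument, is a motivic Mackey formula: for $\Gamma$ abelian and a finite subgroup $H_2$,
\[
\Res_{H_1}^\Gamma(\Ind_{H_2}^\Gamma(Y)) \simeq \frac{|\Gamma|}{|H_1H_2|}\;\Ind_{H_1\cap H_2}^{H_1}\bigl(\Res_{H_1\cap H_2}^{H_2}(Y)\bigr)
\]
as $H_1$-varieties (here the coefficient means a disjoint union of that many copies). To prove it, I would use the concrete model $\Ind_{H_2}^\Gamma(Y)=Y\times(\Gamma/H_2)$ and decompose the $H_1$-set $\Gamma/H_2$ into orbits. Since $\Gamma$ is abelian, every $H_1$-orbit on $\Gamma/H_2$ has stabilizer exactly $H_1\cap H_2$, hence is $H_1$-isomorphic to $H_1/(H_1\cap H_2)$; the number of orbits is $|\Gamma/H_2|\cdot|H_1\cap H_2|/|H_1| = |\Gamma|/|H_1H_2|$. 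Picking a coset representative $\gamma$ in each orbit identifies the corresponding piece $Y\times(H_1\gamma)$, as an $H_1$-variety, with $\Ind_{H_1\cap H_2}^{H_1}(\Res_{H_1\cap H_2}^{H_2}(Y))$ via $(y,h\gamma)\mapsto [h,\gamma^{-1}y]$, where the $H_1\cap H_2$-action used in the restriction is the one inherited from $H_2$. Taking classes in $K_0^{H_1}(\Var_\C)$ yields the displayed formula.

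Substituting this back and applying the projection formula (part (4) of the previous lemma) at the $H_1$-level, I get
\[
\Ind_{H_1}^\Gamma\bigl(X\times \Ind_{H_1\cap H_2}^{H_1}(\Res_{H_1\cap H_2}^{H_2}(Y))\bigr) \simeq \Ind_{H_1}^\Gamma\Ind_{H_1\cap H_2}^{H_1}\bigl(\Res_{H_1\cap H_2}^{H_1}(X)\times \Res_{H_1\cap H_2}^{H_2}(Y)\bigr),
\]
and transitivity of induction collapses $\Ind_{H_1}^\Gamma\Ind_{H_1\cap H_2}^{H_1}$ to $\Ind_{H_1\cap H_2}^\Gamma$. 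Multiplying by the combinatorial factor $|\Gamma|/|H_1H_2|$ produced in the Mackey step yields the claimed formula. The ``in particular'' case is then immediate by specializing $H_2=H_1$, so $H_1\cap H_2=H_1$ and both restriction functors become the identity, reducing the right-hand side to $\tfrac{|\Gamma|}{|H_1|}[\Ind_{H_1}^\Gamma(X\times Y)]$.
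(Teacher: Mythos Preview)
Your approach is correct and reaches the same conclusion, but it differs from the paper's. The paper argues directly: it observes that $\Ind_{H_1}^\Gamma(X)\times\Ind_{H_2}^\Gamma(Y)$ carries a cell decomposition indexed by $\Gamma/H_1\times\Gamma/H_2$, each cell being $H_1\cap H_2$-equivariantly a copy of $X\times Y$, and then counts $\Gamma$-orbits on the index set via the exact sequence $\Gamma/(H_1\cap H_2)\to\Gamma/H_1\times\Gamma/H_2\to\Gamma/H_1H_2$. Your route instead composes the projection formula with a Mackey decomposition of $\Res_{H_1}^\Gamma\Ind_{H_2}^\Gamma(Y)$; that Mackey step is precisely the content of the paper's subsequent Lemma~\ref{res-ind}, so you are in effect proving the two lemmas in the opposite order and then combining them. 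Both arguments are short; yours is more modular (everything is reduced to previously stated identities), while the paper's is a single symmetric orbit count.

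One small correction: in your explicit isomorphism $(y,h\gamma)\mapsto[h,\gamma^{-1}y]$, the expression $\gamma^{-1}y$ is not well defined, since $Y$ carries only an $H_2$-action and the chosen representative $\gamma$ need not lie in $H_2$. In the abelian case no twist is needed---as you yourself observe, the stabilizer $H_1\cap H_2$ acts on the copy over $\gamma H_2$ via its original $H_2$-action---so the map should simply read $(y,h\gamma H_2)\mapsto[h,y]$. This slip does not affect the validity of the argument.
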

\begin{proof}
    Note $\Ind_{H_1}^\Gamma(X)\times\Ind_{H_2}^\Gamma(Y)$ has a cell decomposition indexed by $\Gamma/H_1\times \Gamma/H_2$ where each cell is $H_1\cap H_2$-equivariantly isomorphic to $X\times Y$. By the exact sequence
    \[\Gamma/(H_1\cap H_2)\to\Gamma/H_1\times \Gamma/H_2\to\Gamma/H_1H_2\]
    we see that there are $|\Gamma/H_1H_2|$ orbits on the indexed set each one isomorphic to $\Gamma/(H_1\cap H_2)$.
\end{proof}

\begin{lema}\label{res-ind}
    Let $H_1,H_2\subset \Gamma$ be finite abelian groups and $X$ be a $H_2$-variety. Then
    \[[\Res_{H_1}^\Gamma(\Ind_{H_2}^\Gamma(X))]=\frac{|\Gamma||H_1\cap H_2|}{|H_1||H_2|}[\Ind_{H_1\cap H_2}^{H_1}(\Res_{H_1\cap H_2}^{H_2}(X))]\]
    in $K_0^{H_1}(\Var_\C)$.
\end{lema}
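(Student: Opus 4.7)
The plan is to exhibit an explicit $H_1$-equivariant stratification of $\Res_{H_1}^\Gamma(\Ind_{H_2}^\Gamma(X))$ whose strata are all isomorphic to $\Ind_{H_1\cap H_2}^{H_1}(\Res_{H_1\cap H_2}^{H_2}(X))$ and then simply count them. First I would realize $\Ind_{H_2}^\Gamma(X)$ concretely as the geometric quotient $(X\times \Gamma)/H_2$, where $H_2$ acts by $h_2\cdot (x,\gamma)=(h_2 x,\gamma h_2^{-1})$ and $\Gamma$ acts by left multiplication on the second factor. Since $\Gamma$ is abelian, $H_1H_2$ is a subgroup of $\Gamma$, and $[(x,\gamma)]\mapsto \gamma H_1H_2$ defines an $H_1$-equivariant morphism $\pi$ from $(X\times \Gamma)/H_2$ to the finite discrete set $\Gamma/(H_1H_2)$ (carrying the trivial $H_1$-action). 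The base has
\[
|\Gamma/(H_1H_2)|=\frac{|\Gamma|}{|H_1H_2|}=\frac{|\Gamma||H_1\cap H_2|}{|H_1||H_2|}
\]
points, so in $K_0^{H_1}(\Var_\C)$ we obtain that many copies of any single fiber of $\pi$.

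Next, fixing a representative $\gamma_0\in \Gamma$, I would construct an $H_1$-equivariant isomorphism
\[
\Phi\colon (X\times H_1)/(H_1\cap H_2)\;\xrightarrow{\sim}\;\pi^{-1}(\gamma_0 H_1H_2),\qquad [(x,h_1)]\longmapsto [(x,h_1\gamma_0)],
\]
where the source (which is $\Ind_{H_1\cap H_2}^{H_1}(\Res_{H_1\cap H_2}^{H_2}(X))$) carries the action $h\cdot (x,h_1)=(hx,h_1h^{-1})$ for $h\in H_1\cap H_2$, with $hx$ computed via the restricted $H_2$-action. Well-definedness amounts to checking $[(hx,h_1h^{-1}\gamma_0)]=[(x,h_1\gamma_0)]$ in $(X\times\Gamma)/H_2$, which follows by applying $h\in H_2$ and using that $\Gamma$ is abelian so that $h_1\gamma_0 h^{-1}=h_1h^{-1}\gamma_0$. $H_1$-equivariance is immediate from left multiplication. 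Surjectivity: any $[(x',\gamma)]$ with $\gamma\in \gamma_0 H_1H_2$ admits a representative of the form $(x'',h_1\gamma_0)$ after applying a suitable $h_2\in H_2$. Injectivity: if $[(x,h_1\gamma_0)]=[(x',h_1'\gamma_0)]$, the $H_2$-relation forces $h_1(h_1')^{-1}\in H_1\cap H_2$ and relates $x$ to $x'$ exactly via the $H_1\cap H_2$-action on the source.

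Summing the $|\Gamma||H_1\cap H_2|/(|H_1||H_2|)$ fibers yields the claimed identity. The main subtlety is to verify that the $H_1\cap H_2$-action that arises on the $X$-factor from absorbing the $H_2$-coset ambiguity really coincides with the restriction $\Res_{H_1\cap H_2}^{H_2}$, and that the remaining $H_1$-action on the $\Gamma$-component matches the induced one; both checks rely on $\Gamma$ being abelian so that elements of $H_1$ and $\Gamma$ commute. Everything else is routine bookkeeping parallel to the proof of Lemma \ref{ind-times-ind}.
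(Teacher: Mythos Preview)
Your proof is correct and follows essentially the same approach as the paper's: both decompose $\Ind_{H_2}^\Gamma(X)$ according to the $H_1$-orbits in $\Gamma/H_2$, which is precisely what your map $\pi$ to $\Gamma/(H_1H_2)$ records. The paper's argument is terser, simply noting that $H_1/(H_1\cap H_2)$ acts freely on $\Gamma/H_2$ with the stated number of orbits, whereas you spell out the explicit isomorphism $\Phi$ on each fiber; the content is the same.
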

\begin{proof}
    By its very definition $\Ind_{H_2}^\Gamma(X)$ has a cell decomposition indexed by $\Gamma/H_2$ where each cell is only fixed by elements of $H_2$. Hence, the result follows by noticing that the action of $H_1/H_1\cap H_2$ on $\Gamma/H_2$ is free with $\frac{|\Gamma||H_1\cap H_2|}{|H_1||H_2|}$ orbits. 
\end{proof}

\subsection{Möbius inversion} 

For a detailed exposition see \cite{Hall}. The Möbius function is defined as 
\[\mu(n):=\left\{\begin{array}{cc}
    1 & \text{if }n\text{ is square-free and has an even number of prime divisors} \\
    -1 & \text{if }n\text{ is square-free and has an odd number of prime divisors} \\
    0 & \text{if }n\text{ is not square-free}
\end{array}\right.\]
for any positive integer $n$.

\begin{thm}[Möbius Inversion Formula]
    Let $f:\N\to \C$, $g:\N\to \C$ be two functions. Then 
    \[g(n)=\sum_{d|n}f\left(\frac{n}{d}\right)\]
    for any $n$, if and only if
    \[f(n)=\sum_{d|n}\mu(d)g\left(\frac{n}{d}\right)\]
    for any $n$.
\end{thm}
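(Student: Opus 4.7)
The plan is to reduce the equivalence to the standard kernel identity
\[\sum_{d\mid n}\mu(d)=\begin{cases}1 & \text{if }n=1\\ 0 & \text{if }n>1\end{cases}\]
and then perform a double-summation swap. I would organise the argument in three steps.

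First, I would prove the kernel identity. Writing $n=p_1^{a_1}\cdots p_k^{a_k}$, any divisor $d$ of $n$ with $\mu(d)\neq 0$ is square-free, hence of the form $d=p_{i_1}\cdots p_{i_j}$ for some subset $\{i_1,\dots,i_j\}\subset\{1,\dots,k\}$. Grouping by the cardinality $j$,
\[\sum_{d\mid n}\mu(d)=\sum_{j=0}^{k}\binom{k}{j}(-1)^{j}=(1-1)^{k},\]
which is $1$ when $k=0$ (i.e.\ $n=1$) and $0$ otherwise.

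Second, I would assume $g(n)=\sum_{d\mid n}f(n/d)$ and deduce the formula for $f$. Substituting into the right-hand side of the inversion formula and reindexing $n/d=e$ so the sum runs over pairs $(d,e)$ with $de\mid n$,
\[\sum_{d\mid n}\mu(d)g\left(\tfrac{n}{d}\right)=\sum_{d\mid n}\mu(d)\sum_{e\mid (n/d)}f\left(\tfrac{n}{de}\right)=\sum_{k\mid n}f\left(\tfrac{n}{k}\right)\sum_{d\mid k}\mu(d),\]
where $k=de$. By the kernel identity the inner sum vanishes unless $k=1$, leaving just $f(n)$.

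Third, I would prove the converse direction by exactly the same manipulation: assuming $f(n)=\sum_{d\mid n}\mu(d)g(n/d)$, substitute into $\sum_{d\mid n}f(n/d)$, swap the two sums, and collapse the inner $\mu$-sum via the kernel identity to recover $g(n)$. I do not anticipate a real obstacle here; the only subtle point is the bookkeeping in the swap, which is justified because every sum involved is finite. The novelty of the result lies entirely in the kernel identity of the first step, after which both implications are formal.
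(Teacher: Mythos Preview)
Your proof is correct and is the standard argument. The paper does not actually prove this theorem; it is stated as background with a reference to \cite{Hall}, and in fact the paper records the kernel identity $\sum_{d\mid n}\mu(d)=[n=1]$ as a \emph{consequence} of the inversion formula rather than as the input you use to prove it.
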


As a particular case one has 
\[\sum_{d|n} \mu(d)=0\]
unless $n=1$, in which case is $1$. The following two lemmas will be used latter on. Its notation has been choose to match the one of Theorem \ref{thm:per-act}.

\begin{lema}\label{moebius-lemma-1}
    Let $e|f_1,\ldots,f_j$ be positive integers. Then
    \[\sum_{e_1,\ldots,e_j}\prod_{l=1}^j\mu\left(\frac{f_l}{e_l}\right) = \left\{\begin{array}{cc}
       0  & \text{ if }f_i\neq f_j\text{ for some }i,j \\
       \mu\left(\frac{f_1}{e}\right)  & \text{ if }f_i=f_j\text{ for all }i,j 
    \end{array}\right.\]
    where the sum is over all tuples $e_1,\ldots,e_j$ such that each $e_l$ divides $f_l$ and $\gcd(e_1,\ldots,e_j)=e$.
\end{lema}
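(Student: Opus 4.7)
The plan is to use Möbius inversion to convert the gcd condition into an algebraic identity, after which the sum will factor. First I would change variables by writing $e_l = e\cdot e_l'$ with $e_l' \mid F_l := f_l/e$. Since $e$ divides every $e_l$ by hypothesis, the constraint $\gcd(e_1,\ldots,e_j) = e$ becomes the simpler coprimality condition $\gcd(e_1',\ldots,e_j') = 1$, and the summand becomes $\prod_l \mu(F_l/e_l')$.

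Next I would remove the coprimality constraint via the standard identity $[\gcd(e_1',\ldots,e_j')=1] = \sum_{d\,\mid\,\gcd(e_l')} \mu(d)$. Swapping the order of summation gives
\[
\sum_{d}\mu(d)\sum_{\substack{e_l' \mid F_l \\ d \mid e_l'}} \prod_l \mu(F_l/e_l').
\]
For a fixed $d$, the inner sum is empty unless $d \mid F_l$ for all $l$; writing $e_l' = d\cdot e_l''$ with $e_l'' \mid F_l/d$, it factors as
\[
\prod_l \sum_{e_l'' \mid F_l/d} \mu\!\left(\frac{F_l/d}{e_l''}\right).
\]

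By the basic Möbius identity $\sum_{a\mid n}\mu(n/a)=[n=1]$, each factor equals $[F_l = d]$. Hence the whole expression is nonzero only when $F_1=\cdots=F_j=d$, i.e. when all $f_l$ are equal (necessarily to $ed$), in which case the value is $\mu(d)=\mu(f_1/e)$. If the $f_l$ are not all equal, every term in the outer sum over $d$ vanishes, matching the first case. I do not anticipate a genuine obstacle here; the only point requiring care is the bookkeeping among the original variables $e_l$, the shifted variables $e_l'$, and the Möbius parameter $d$, and in verifying that the coprimality-to-Möbius swap is legitimate (which is immediate, since all sums are finite).
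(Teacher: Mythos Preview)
Your proof is correct and follows essentially the same route as the paper: both relax the $\gcd$ constraint via the identity $\sum_{d\mid n}\mu(d)=[n=1]$, factor the resulting free sum over the $e_l$, and conclude that the only surviving term forces all $f_l$ to coincide. The only cosmetic difference is that you first change variables to reduce to the case $e=1$ and then insert the Möbius indicator, whereas the paper keeps $e$ general, computes $\sum_{e'\mid e}S(e)=\prod_l\delta_{f_l,e'}$, and Möbius-inverts at the end.
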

\begin{proof}
    This is well-known for $j=1$. Let $f=\gcd(f_1,\ldots,f_j)$. Note that
    \[\sum_{e'|e|f}\sum_{e_1,\ldots,e_l}\prod_{l=1}^j\mu\left(\frac{f_l}{e_l}\right)=\sum_{e'|e_l|f_l}\prod_{l=1}^j\mu\left(\frac{f_l}{e_l}\right)=\prod_{l=1}^j\left(\sum_{e'|e_l|f_l}\mu\left(\frac{f_l}{e_l}\right)\right)=\prod_{l=1}^j \delta_{f_l,e'}\]
    for any $e'|f$. Hence,
    \[\sum_{e_1,\ldots,e_l}\prod_{l=1}^j\mu\left(\frac{f_l}{e_l}\right) =\mu\left(\frac{f}{e}\right)\prod_{i,j} \delta_{f_l,f_i} \]
    by Möbius inversion.
\end{proof}

\begin{lema}\label{moebius-lemma-2}
    For any positive integers $e|d',k$,
    \[\sum_{d} \mu\left(\frac{d'}{d}\right)=\left\{\begin{array}{cc}
        0 &  \text{if }d'\not|k\\
        \mu\left(\frac{d'}{e}\right) & \text{if }d'|k
    \end{array}\right.\]
    where $d$ runs over all divisors of $d'$ with $\gcd(d,k)=e$.
\end{lema}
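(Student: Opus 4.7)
The plan is to reduce to the standard Möbius identity $\sum_{c\mid n}\mu(n/c)=[n=1]$ via a change of variables together with the usual Möbius encoding of the coprimality condition. First, since $e\mid k$, the hypothesis $\gcd(d,k)=e$ forces $e\mid d$, so I would substitute $d=ed''$, write $D':=d'/e$ and $K':=k/e$, and note that $\gcd(ed'',k)=e\gcd(d'',K')$; thus $\gcd(d,k)=e$ becomes $\gcd(d'',K')=1$. The sum rewrites as
\[\sum_{\substack{d''\mid D' \\ \gcd(d'',K')=1}}\mu(D'/d'').\]

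Next, I would unfold the coprimality condition via $[\gcd(d'',K')=1]=\sum_{a\mid\gcd(d'',K')}\mu(a)$ and swap the order of summation to obtain
\[\sum_{a\mid K'}\mu(a)\sum_{\substack{d''\mid D' \\ a\mid d''}}\mu(D'/d'').\]
The inner sum is empty unless $a\mid D'$; in that case, writing $d''=ac$ with $c\mid D'/a$, it reduces to $\sum_{c\mid D'/a}\mu((D'/a)/c)$, which vanishes unless $a=D'$. Hence the whole expression collapses to $\mu(D')$ when $D'\mid K'$ and to $0$ otherwise.

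Finally, translating back, $D'\mid K'$ is equivalent to $d'\mid k$, and $\mu(D')=\mu(d'/e)$, which is exactly the claimed formula. I do not anticipate any real obstacle: the argument is routine once one spots that the coprimality condition can be resolved by Möbius inversion. The only care required is the bookkeeping of the nested divisibility conditions $a\mid K'$, $a\mid D'$, $a\mid d''$, $d''\mid D'$ when the summations are interchanged.
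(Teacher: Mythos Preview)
Your argument is correct and close in spirit to the paper's, though the execution differs. The paper does not first divide through by $e$; instead it treats the sum as a function $f(e)$ of the prescribed gcd value, observes that
\[
\sum_{e:\,e'\mid e\mid\gcd(d',k)} f(e)\;=\;\sum_{e'\mid d\mid d'}\mu\!\left(\frac{d'}{d}\right)\;=\;\delta_{e',d'},
\]
and then Möbius-inverts in the variable $e$ to recover $f(e)$. Your version first normalizes by $e$ and then resolves the resulting coprimality condition via $[\gcd(d'',K')=1]=\sum_{a\mid\gcd(d'',K')}\mu(a)$. Both routes boil down to the identity $\sum_{c\mid n}\mu(n/c)=[n=1]$; the paper inverts on the gcd value, you invert on the auxiliary divisor $a$ after the change of variables. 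One small remark: the paper's hypothesis does not include $e\mid k$, and its proof begins by disposing of the case $e\nmid k$ (the sum is then empty and $d'\nmid k$ since $e\mid d'$). You appear to read $e\mid k$ into the hypothesis, which is harmless since the complementary case is vacuous for the same reason.
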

\begin{proof}
    If $e$ does not divide $k$, the result is clear. Now 
    \[\sum_{e'|e|\gcd(d',k)}\sum_{d} \mu\left(\frac{d'}{d}\right)=\sum_{e'|d|d'} \mu\left(\frac{d'}{d}\right)=\delta_{e',d'}\]
    for any $e'|\gcd(d,k)$. Hence, the result follows by Möbius inversion. Note that when $e'=\gcd(d',k)$, the condition $e=d'$ is equivalent to $d'|k$.
\end{proof}

\section{Motivic isotypic decompositions}

\subsection{Definition and basic properties}

For a detailed exposition and several examples about the following result/definition see \cite[section 3.6]{JesseVogel}. 

\begin{lema}
    Let $\Gamma$ be a finite cyclic group. Then the map
    \begin{align*}
        \Psi: R_\Q(\Gamma)\otimes\Q \to \bigoplus_{H\subset \Gamma}\Q &,& V \mapsto (\langle T_H,\Res^\Gamma_H(V)\rangle)_{H\subset G}
    \end{align*}
    is a bijection, where $T_H$ denotes the trivial $H$-representation.
\end{lema}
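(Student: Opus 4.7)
The plan is to represent $\Psi$ explicitly as a matrix in a good basis and show the matrix is invertible. Writing $N=|\Gamma|$, the subgroups of the cyclic group $\Gamma$ are in bijection with divisors of $N$; let $H_d$ denote the unique subgroup of index $d$ (so $|H_d|=N/d$). Together with the $\Q$-basis $\{\Q^d\}_{d\mid N}$ of $R_\Q(\Gamma)$ provided by the first lemma of this subsection, this shows that source and target of $\Psi$ are $\Q$-vector spaces of the same finite dimension, namely the number of divisors of $N$. Hence it suffices to prove the square matrix $M_{d,d'}:=\Psi(\Q^{d'})_{H_d}$ is invertible.

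To compute $M$, I would apply Frobenius reciprocity together with the identification $\Ind_{H_d}^\Gamma(T_{H_d})=\Q[\Gamma/H_d]=\Q^d$:
\[
M_{d,d'}=\langle T_{H_d},\Res^\Gamma_{H_d}(\Q^{d'})\rangle_{H_d}=\langle \Ind_{H_d}^\Gamma(T_{H_d}),\Q^{d'}\rangle_\Gamma=\langle \Q^d,\Q^{d'}\rangle_\Gamma.
\]
The complex characters of $\Gamma$ occurring in $\Q^d\otimes_\Q\C$ are precisely those trivial on $H_d$; so $\langle\Q^d,\Q^{d'}\rangle$ equals the number of characters trivial on both $H_d$ and $H_{d'}$, i.e.\ trivial on $H_dH_{d'}=H_{\gcd(d,d')}$, which gives $\gcd(d,d')$. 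Alternatively, one can avoid complexification by combining Theorem \ref{thm:sec2.1-2}(3) with Frobenius reciprocity directly inside $R_\Q(\Gamma)$. Either way, $M$ is the classical \textsc{gcd} matrix indexed by the divisors of $N$.

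Finally I would conclude via Smith's identity $\gcd(d,d')=\sum_{e\mid d,\,e\mid d'}\phi(e)$, which factors $M=A\cdot\operatorname{diag}(\phi(e))_{e\mid N}\cdot A^T$ where $A_{d,e}=[e\mid d]$. Ordering the divisors compatibly with divisibility makes $A$ lower-unitriangular, and $\phi(e)>0$ for all $e$, so $\det M=\prod_{e\mid N}\phi(e)\neq 0$ and $\Psi$ is a bijection. The only step of real content is the pairing calculation $\langle\Q^d,\Q^{d'}\rangle=\gcd(d,d')$; the rest is bookkeeping and well-known linear algebra (Smith's determinant theorem).
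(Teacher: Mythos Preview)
Your proof is correct and self-contained. The computation $\langle\Q^d,\Q^{d'}\rangle=\gcd(d,d')$ via Frobenius reciprocity and the identification $\Ind_{H_d}^\Gamma(T_{H_d})=\Q^d$ is clean, and the invocation of Smith's determinant identity $\gcd(d,d')=\sum_{e\mid\gcd(d,d')}\phi(e)$ to factor the matrix as $A\,\operatorname{diag}(\phi(e))\,A^T$ with $A$ unitriangular is a standard and efficient way to conclude.

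The paper itself does not give an independent argument here: it simply cites \cite[Lemma 3.6.11 and Example 3.6.16]{JesseVogel} for injectivity and surjectivity respectively. So your approach is genuinely different in that it is a direct, elementary linear-algebra computation rather than a deferral to the literature. What your argument buys is transparency and an explicit inverse (one can read off $\Psi^{-1}$ from the factorization of $M$), at the cost of being specific to cyclic groups---the gcd-matrix trick relies on the divisor lattice structure. The cited results in Vogel's thesis are formulated in somewhat greater generality, which is presumably why the paper prefers to cite them, but for the purposes of this lemma your argument is entirely adequate and arguably more illuminating.
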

\begin{proof}
    For injectivity see \cite[Lemma 3.6.11]{JesseVogel}. For surjectivity, see Example 3.6.16 in loc.cit.
\end{proof}

\begin{thm}[J. Vogel]
    Let $\Gamma$ be a finite cyclic group. Then for any $\Gamma$-variety $X$ there exists a unique class $[X]^\Gamma\in K_0(\Var_\C)\otimes R_\Q(\Gamma)\otimes\Q$ such that, for any subgroup $H\subset \Gamma$,
    \[ \langle T_H, \Res^\Gamma_H([X]^\Gamma)\rangle=[X/H]\]
    in $K_0(\Var_\C)$. There is an additive extension $[-]^\Gamma:K_0^\Gamma(\Var_\C)\to K_0(\Var_\C)\otimes R_\Q(\Gamma)\otimes\Q$.  
\end{thm}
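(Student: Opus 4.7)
The plan is to bootstrap the preceding lemma: the bijection $\Psi$ already solves the problem at the level of representations, so tensoring with $K_0(\Var_\C)$ and inverting it gives existence and uniqueness almost for free. The only real content is then checking compatibility with the scissor relations so as to pass to $K_0^\Gamma(\Var_\C)$.

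First I would upgrade $\Psi$ by tensoring over $\Q$ with $K_0(\Var_\C)\otimes \Q$. Since direct sums commute with tensor products and $\Psi$ is a $\Q$-linear isomorphism of finite-dimensional $\Q$-vector spaces, one obtains a $\Q$-linear isomorphism
\[ \widetilde\Psi\colon K_0(\Var_\C)\otimes R_\Q(\Gamma)\otimes\Q \xrightarrow{\sim} \bigoplus_{H\subset\Gamma}\bigl(K_0(\Var_\C)\otimes\Q\bigr), \]
which on pure tensors sends $[X]\otimes V$ to the tuple $\bigl([X]\cdot\langle T_H,\Res_H^\Gamma(V)\rangle\bigr)_H$. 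For a $\Gamma$-variety $X$, the tuple $([X/H])_{H\subset \Gamma}$ lies in $\bigoplus_H K_0(\Var_\C)$, hence in the codomain of $\widetilde\Psi$, so I define $[X]^\Gamma := \widetilde\Psi^{-1}\bigl(([X/H])_H\bigr)$. By construction this is the unique element of $K_0(\Var_\C)\otimes R_\Q(\Gamma)\otimes\Q$ satisfying $\langle T_H,\Res_H^\Gamma([X]^\Gamma)\rangle=[X/H]$ for every $H$.

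To extend additively to $K_0^\Gamma(\Var_\C)$, I would show that the assignment $X\mapsto [X]^\Gamma$ on isomorphism classes of $\Gamma$-varieties factors through the scissor relations, i.e.\ that $[X]^\Gamma=[Z]^\Gamma+[X\setminus Z]^\Gamma$ for every closed $\Gamma$-invariant $Z\subset X$. Because $\widetilde\Psi$ is linear and injective, this equality in $K_0(\Var_\C)\otimes R_\Q(\Gamma)\otimes\Q$ is equivalent to the componentwise equalities
\[ [X/H]=[Z/H]+[(X\setminus Z)/H] \]
for each $H\subset \Gamma$. Since $\Gamma$ is finite and $Z$ is a closed invariant subvariety, the quotient $Z/H$ exists as a closed subvariety of $X/H$ with open complement $(X\setminus Z)/H$, so the required identity is just the scissor relation in $K_0(\Var_\C)$.

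There is essentially no hard step: the whole argument is a tautological transport of structure across the isomorphism $\Psi$. The only point one should not gloss over is that the previous lemma produces a $\Q$-linear isomorphism of finite-dimensional $\Q$-vector spaces, which is what allows the inversion to survive tensoring with the (infinitely generated, not necessarily torsion-free) abelian group $K_0(\Var_\C)$; if one tried to work integrally, the map $\Psi$ would in general fail to be an isomorphism, which is precisely why the target of $[-]^\Gamma$ must be tensored with $\Q$.
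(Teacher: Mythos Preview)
Your argument is correct and self-contained. The paper itself does not prove this statement but simply refers the reader to Vogel's thesis \cite[Definition 3.6.12]{JesseVogel}; you have supplied what is essentially the natural proof one would extract from that reference, namely tensoring the bijection $\Psi$ of the preceding lemma with $K_0(\Var_\C)\otimes\Q$ and inverting. Your remark about why one must work rationally (so that $\Psi$ stays invertible after tensoring) is to the point and worth keeping.
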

\begin{proof}
    See \cite[Definition 3.6.12]{JesseVogel}. 
\end{proof}

\begin{lema}
    Let $\Gamma'\subset\Gamma$ be finite cyclic groups. Then
    \[[\Res_{\Gamma'}^\Gamma(\xi)]^{\Gamma'}=\Res_{\Gamma'}^\Gamma([\xi]^{\Gamma})\]
    for any $\xi\in K_0^{\Gamma}(\Var_\C)$.
\end{lema}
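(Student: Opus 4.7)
The strategy is to invoke the uniqueness clause in the defining theorem of $[-]^{\Gamma'}$. Since both sides of the claimed identity are additive in $\xi$ (the left-hand side by the stated additive extension of $[-]^{\Gamma'}$, the right-hand side because $\Res_{\Gamma'}^\Gamma$ is a ring homomorphism on the Grothendieck ring of representations, and tensor product with $K_0(\Var_\C)$ preserves additivity), it suffices to verify the equality on classes $\xi=[X]$ with $X$ a $\Gamma$-variety.

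By the defining property, $[\Res_{\Gamma'}^\Gamma(X)]^{\Gamma'}$ is characterized as the unique element of $K_0(\Var_\C)\otimes R_\Q(\Gamma')\otimes\Q$ satisfying
\[\langle T_H, \Res^{\Gamma'}_H(-)\rangle = [\Res_{\Gamma'}^\Gamma(X)/H] = [X/H]\]
for every subgroup $H\subset\Gamma'$. Thus the entire task reduces to checking that the candidate $\Res_{\Gamma'}^\Gamma([X]^\Gamma)$ satisfies this same family of identities.

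For any $H\subset\Gamma'\subset\Gamma$, transitivity of restriction (item (2) of the lemma preceding Lemma~\ref{ind-times-ind}, applied at the level of representations and extended to $K_0(\Var_\C)\otimes R_\Q(-)\otimes\Q$) gives
\[\Res^{\Gamma'}_H\bigl(\Res^\Gamma_{\Gamma'}([X]^\Gamma)\bigr) = \Res^\Gamma_H([X]^\Gamma).\]
Pairing with $T_H$ and using the defining property of $[X]^\Gamma$ (applied to the subgroup $H\subset\Gamma$) yields
\[\bigl\langle T_H,\Res^{\Gamma'}_H\bigl(\Res^\Gamma_{\Gamma'}([X]^\Gamma)\bigr)\bigr\rangle = \bigl\langle T_H,\Res^\Gamma_H([X]^\Gamma)\bigr\rangle = [X/H],\]
which is exactly what was required. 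Uniqueness then forces $\Res^\Gamma_{\Gamma'}([X]^\Gamma)=[\Res_{\Gamma'}^\Gamma(X)]^{\Gamma'}$.

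There is no substantial obstacle here; the statement is essentially a formal consequence of the characterization of $[-]^\Gamma$ via the family of pairings $\langle T_H,\Res^\Gamma_H(-)\rangle$ together with transitivity of restriction. The only point that deserves mild care is ensuring that $\Res^\Gamma_{\Gamma'}$ on $K_0(\Var_\C)\otimes R_\Q(\Gamma)\otimes\Q$ (acting only on the representation tensor factor) genuinely commutes with the pairing and further restriction, but this follows from functoriality and $\Q$-bilinearity.
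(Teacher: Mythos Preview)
Your proof is correct and follows essentially the same approach as the paper: verify that $\Res_{\Gamma'}^\Gamma([\xi]^\Gamma)$ satisfies the defining pairing identities $\langle T_H,\Res_H^{\Gamma'}(-)\rangle=[\xi/H]$ for every $H\subset\Gamma'$, using transitivity of restriction, and then invoke uniqueness. The paper's proof is the same computation stated more tersely, and it works directly with a general $\xi$ rather than first reducing to $\xi=[X]$ by additivity (this reduction is harmless but unnecessary, since $[-]^\Gamma$ and the quotient map are already defined additively on all of $K_0^\Gamma(\Var_\C)$).
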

\begin{proof}
    Let $H\subset \Gamma'$ be a subgroup. Then,
    \begin{align*}
        \langle T_H, \Res^{\Gamma'}_H(\Res_{\Gamma'}^\Gamma([\xi]^{\Gamma}))\rangle&= \langle T_H, \Res_{H}^{\Gamma}([\xi]^{\Gamma})\rangle= [\xi/H]
    \end{align*}     
    by the definition of $[\xi]^\Gamma$.
\end{proof}

\begin{lema}
    Let $\Gamma'\subset\Gamma$ be finite cyclic groups. Then
    \[[\Ind_{\Gamma'}^\Gamma(\xi)]^\Gamma=\Ind_{\Gamma'}^\Gamma([\xi]^{\Gamma'})\]
    for any $\xi\in K_0^{\Gamma'}(\Var_\C)$.
\end{lema}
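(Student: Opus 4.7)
The plan is to invoke uniqueness in the defining theorem of $[-]^\Gamma$. Since both sides are additive in $\xi$, I may reduce to the case $\xi=[X]$ for a single $\Gamma'$-variety $X$. It then suffices to check, for every subgroup $H\subset\Gamma$, the identity
\[\langle T_H, \Res_H^\Gamma(\Ind_{\Gamma'}^\Gamma([X]^{\Gamma'}))\rangle = [\Ind_{\Gamma'}^\Gamma(X)/H],\]
as the defining characterization of $[-]^\Gamma$ then forces $\Ind_{\Gamma'}^\Gamma([X]^{\Gamma'})=[\Ind_{\Gamma'}^\Gamma(X)]^\Gamma$.

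For the left hand side, I would apply the Mackey identity of Theorem \ref{thm:sec2.1-2}(3) on the representation factor of $K_0(\Var_\C)\otimes R_\Q(\Gamma')\otimes\Q$ to rewrite $\Res_H^\Gamma\Ind_{\Gamma'}^\Gamma$ as $\frac{|\Gamma||H\cap\Gamma'|}{|H||\Gamma'|}\Ind_{H\cap\Gamma'}^H\Res_{H\cap\Gamma'}^{\Gamma'}$. Frobenius reciprocity together with $\Res_{H\cap\Gamma'}^H(T_H)=T_{H\cap\Gamma'}$ then reduces the pairing to
\[\frac{|\Gamma||H\cap\Gamma'|}{|H||\Gamma'|}\langle T_{H\cap\Gamma'}, \Res_{H\cap\Gamma'}^{\Gamma'}([X]^{\Gamma'})\rangle = \frac{|\Gamma||H\cap\Gamma'|}{|H||\Gamma'|}[X/(H\cap\Gamma')],\]
where the last equality is the defining property of $[X]^{\Gamma'}$ applied to the subgroup $H\cap\Gamma'\subset\Gamma'$.

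For the right hand side, Lemma \ref{res-ind} is the motivic analogue of the same Mackey formula and gives
\[[\Res_H^\Gamma(\Ind_{\Gamma'}^\Gamma(X))]=\frac{|\Gamma||H\cap\Gamma'|}{|H||\Gamma'|}[\Ind_{H\cap\Gamma'}^H(\Res_{H\cap\Gamma'}^{\Gamma'}(X))]\]
in $K_0^H(\Var_\C)$. Taking the $H$-quotient and using the elementary fact that $\Ind_K^H(Y)/H\simeq Y/K$ for any $K\subset H$ and $K$-variety $Y$ (both are the quotient of $Y\times H/K$ by the natural $H$-action, which is free on the $H/K$ factor), one recovers exactly $\frac{|\Gamma||H\cap\Gamma'|}{|H||\Gamma'|}[X/(H\cap\Gamma')]$, matching the left hand side.

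I do not anticipate a real obstacle: the argument is essentially the observation that the representation-theoretic Mackey formula (controlling the isotypic side via Frobenius reciprocity) and the motivic Mackey formula of Lemma \ref{res-ind} (controlling the geometric quotient) produce the same cardinality factor $|\Gamma||H\cap\Gamma'|/(|H||\Gamma'|)$, because both ultimately count orbits of $H$ on $\Gamma/\Gamma'$. The only care needed is to keep the representation-factor manipulations and the variety-factor manipulations separate, since $[X]^{\Gamma'}$ lives in the tensor product $K_0(\Var_\C)\otimes R_\Q(\Gamma')\otimes\Q$.
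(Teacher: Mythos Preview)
Your proof is correct and follows essentially the same approach as the paper: both compute $\langle T_H,\Res_H^\Gamma(\Ind_{\Gamma'}^\Gamma([\xi]^{\Gamma'}))\rangle$ via the representation-theoretic Mackey formula and Frobenius reciprocity, and compute $[\Ind_{\Gamma'}^\Gamma(\xi)/H]$ via Lemma~\ref{res-ind} together with $\Ind_{H\cap\Gamma'}^H(Y)/H\simeq Y/(H\cap\Gamma')$, obtaining the same factor $\frac{|\Gamma||H\cap\Gamma'|}{|H||\Gamma'|}[\xi/(H\cap\Gamma')]$ on each side. The only cosmetic difference is that you explicitly reduce to $\xi=[X]$ by additivity, whereas the paper carries the class $\xi$ throughout.
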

\begin{proof}
    Let $H\subset \Gamma$ be a subgroup. By the results of section 2,
    \begin{align*}
        \langle T_H, \Res^\Gamma_H(\Ind_{\Gamma'}^\Gamma([\xi]^{\Gamma'}))\rangle&= \frac{|\Gamma||H\cap\Gamma'|}{|\Gamma'||H|}\langle T_H, \Ind^H_{H\cap\Gamma'}(\Res_{H\cap \Gamma'}^{\Gamma'}([\xi]^{\Gamma'}))\rangle\\
        &= \frac{|\Gamma||H\cap\Gamma'|}{|\Gamma'||H|}\langle \Res_{H\cap \Gamma'}^{H}(T_H), \Res_{H\cap \Gamma'}^{\Gamma'}([\xi]^{\Gamma'})\rangle\\
        &=  \frac{|\Gamma||H\cap\Gamma'|}{|\Gamma'||H|} [\xi/(H\cap \Gamma')]
    \end{align*} 
    On the other hand, 
    \[\Res_{H}^\Gamma(\Ind_{\Gamma'}^\Gamma(\xi))=\frac{|\Gamma||H\cap \Gamma'|}{|H||\Gamma'|}\Ind_{H\cap \Gamma'}^{H}(\Res_{H\cap \Gamma'}^{\Gamma'}(\xi))\]
    by Lemma \ref{res-ind}. Hence,
    \begin{align*}
        [\Ind_{\Gamma'}^\Gamma(\xi)/H] &=\frac{|\Gamma||H\cap \Gamma'|}{|H||\Gamma'|}[\Ind_{H\cap \Gamma'}^{H}(\Res_{H\cap \Gamma'}^{\Gamma'}(\xi))/H]\\
        &=\frac{|\Gamma||H\cap \Gamma'|}{|H||\Gamma'|}[\xi/(H\cap \Gamma')].
    \end{align*}
\end{proof}

Let $A_\Gamma$ be the subring of $K_0^\Gamma(\Var_\C)$ generated by classes of linear actions of $\Gamma$ in finite disjoint unions of affine spaces. By linear we mean that if some $\gamma\in \Gamma$ fixes a connected component $\A^n$, it acts in a linear way in this component. Note that this family of generators is closed by products.

\begin{thm}[J.Vogel]
    Let $\Gamma$ be a finite cyclic group. For any $\xi\in A_\Gamma$ and $\xi'\in K^\Gamma_0(\Var_\C)$, we have
    \[[\xi\xi']^\Gamma=[\xi]^\Gamma [\xi']^\Gamma\]
    in $K_0(\Var_\C)\otimes R_\Q(\Gamma)\otimes\Q$.
\end{thm}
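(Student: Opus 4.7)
The plan is to use the projection-type formulas already available in Section 2---Lemma \ref{ind-times-ind}, Theorem \ref{thm:sec2.1-2}(1), the variety-level projection formula $\Ind_H^\Gamma(X)\times Y\simeq \Ind_H^\Gamma(X\times \Res_H^\Gamma Y)$, and the preceding lemma showing that $[-]^\Gamma$ commutes with induction---to strip off all inductions and reduce to the genuinely linear case. By additivity of both sides in $\xi$ and $\xi'$, I may assume $\xi = [\Ind_H^\Gamma(X)]$ with $X=\A^n$ carrying a linear $H$-action and $\xi'=[Y]$ a single $\Gamma$-variety. The variety-level projection formula rewrites $\xi\cdot\xi' = [\Ind_H^\Gamma(X\times\Res_H^\Gamma Y)]$, and pushing $[-]^\Gamma$ through this induction turns the left-hand side into $\Ind_H^\Gamma([X\times\Res_H^\Gamma Y]^H)$. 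The same commutation together with the representation-level projection formula transform the right-hand side into $\Ind_H^\Gamma([X]^H\cdot[\Res_H^\Gamma Y]^H)$. Comparing these expressions reduces the theorem to the identity $[X\times Z]^H = [X]^H\cdot[Z]^H$ where $X=\A^n$ carries a linear $H$-action and $Z$ is an arbitrary $H$-variety.

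Renaming $H$ as $\Gamma$, Vogel's identity $[X]^\Gamma = [X]\cdot T_\Gamma$ for linear actions on affine space (recalled in the introduction) rewrites the right-hand side as $[X]\cdot[Z]^\Gamma$. Since a class in $K_0(\Var_\C)\otimes R_\Q(\Gamma)\otimes\Q$ is determined by the collection $\{\langle T_K,\Res_K^\Gamma(-)\rangle\}_{K\leq\Gamma}$, and multiplication by the scalar $[X]\in K_0(\Var_\C)$ commutes with restriction, the task reduces to the geometric identity
\[[(X\times Z)/K] = [X]\cdot[Z/K]\quad\text{in }K_0(\Var_\C)\]
for every subgroup $K\leq\Gamma$.

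For this I would decompose $X$ as a $K$-representation into one-dimensional characters $X=\bigoplus_{i=1}^n \C_{\chi_i}$ (possible since $K$ is cyclic and the ground field is $\C$) and induct on $n$, reducing the statement to the base case $X=\C_\chi$. Writing $M=\ker\chi$ and using that $M$ acts trivially on $X$, a two-step quotient gives $(X\times Z)/K = (X\times (Z/M))/(K/M)$, where $K/M\cong \mu_d\hookrightarrow \C^*$ acts on $\C$ by its faithful scalar character. Stratifying $Z/M$ by the stabilizer of the $\mu_d$-action into pieces $W_L$ of stabilizer $L$, on each stratum the quotient $(\C_\chi\times W_L)/\mu_d$ becomes, after first killing the trivially-acting $L$, the total space of the line bundle on $W_L/\mu_d$ associated to the now-free principal $\mu_d/L$-cover $W_L\to W_L/\mu_d$ by the character $\chi|_{\mu_d/L}$. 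Since algebraic line bundles are Zariski locally trivial, each stratum contributes $[\A^1]\cdot[W_L/\mu_d]$, and summing gives $[(X\times Z)/K]=[\A^1]\cdot[Z/K]$, which is exactly the required identity.

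The main obstacle is precisely this last geometric step: without the stabilizer stratification, the map $(\C^*\times W)/\mu_d\to W/\mu_d$ is only étale---not Zariski---locally trivial, and for a general étale cover the class of an associated bundle does not split as a product of base and fiber. Packaging each stratum as the total space of an honest line bundle is what bypasses this obstruction, since Zariski local triviality then comes for free. Once this input is secured, the rest of the argument is formal manipulation with the additivity of $[-]^\Gamma$ and the projection formulas recalled in Section 2.
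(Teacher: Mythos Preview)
Your reduction---writing $\Ind_H^\Gamma(\A^n)\times Y\simeq\Ind_H^\Gamma(\A^n\times\Res_H^\Gamma Y)$ and then pushing $[-]^\Gamma$ through the induction via the two preceding lemmas and the representation-level projection formula---is exactly the paper's argument, line for line. The only difference is that the paper cites Vogel's thesis \cite[Theorem~3.6.19(i)]{JesseVogel} for the base case $[\A^n\times Z]^H=[\A^n]^H\cdot[Z]^H$ (linear $H$-action on $\A^n$), whereas you supply your own proof of it.

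That base-case argument is essentially the standard one and is correct in outline, but one step is phrased inaccurately. When you ``kill the trivially-acting $L$'': the stabilizer $L$ acts trivially on $W_L$ but \emph{not} on $\C_\chi$, since the induced character $\bar\chi$ of $\mu_d=K/M$ is faithful and hence so is $\bar\chi|_L$. In particular ``$\chi|_{\mu_d/L}$'' is undefined---$\bar\chi$ does not descend to the quotient $\mu_d/L$. The fix is routine: quotienting $\C_\chi$ by $L$ via $z\mapsto z^{|L|}$ gives another copy of $\C$ on which $\mu_d/L$ acts by a new faithful character $\chi'$, and then $(\C_\chi\times W_L)/\mu_d\cong(\C_{\chi'}\times W_L)/(\mu_d/L)$ is the line bundle on $W_L/\mu_d$ associated to the free $\mu_d/L$-torsor $W_L\to W_L/\mu_d$ via $\chi'$. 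With this correction your argument is complete.
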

\begin{proof}
    Vogel's theorem \cite[theorem 3.6.19]{JesseVogel} item $(i)$ is the statement for linear actions on affine spaces. To deal with linear actions on disjoint unions note the following. Take a linear action of $\Gamma$ in $X=\sqcup_{i=1}^N\A^{n_i}$. We may assume that $\Gamma\cdot \A^{n_1} = X$. In this case, all the $n_i$ will be equal to some $n$. Furthermore, we have that if $H\subset \Gamma$ is the subgroup of those elements which maps $\A^{n_1}$ to $\A^{n_1}$, then $X = \A^n \times_H \Gamma = \Ind_H^\Gamma(\A^n)$. Furthermore, if $Y$ is a $\Gamma$-variety, 
    $X\times Y = \Ind_H^\Gamma(\A^n)\times Y= \Ind_H^\Gamma(\A^n\times \Res_H^\Gamma(Y))$. Therefore,
    \begin{align*}
        [X\times Y]^\Gamma &= \Ind_H^\Gamma([\A^n\times \Res_H^\Gamma(Y)]^H)\\
        &= \Ind_H^\Gamma([\A^n]^H[ \Res_H^\Gamma(Y)]^H)\\
        &= \Ind_H^\Gamma([\A^n]^H\Res_H^\Gamma([ Y]^\Gamma))\\
        &= \Ind_H^\Gamma([\A^n]^H)[ Y]^\Gamma\\
        &= [X]^\Gamma[Y]^\Gamma
    \end{align*} 
    by the previous lemmas.
\end{proof}

\begin{coro}
    Let $\Gamma$ be a finite cyclic group and denote with $q$ the class of the affine line on $K_0(\Var_\C)$. For any $\xi\in A_\Gamma$, $[\xi]^\Gamma\in \Z[q]\otimes R_\Q(\Gamma)$ and $[\xi/\Gamma]\in \Z[q]$. In addition, for any such $\xi$, its equivariant $E$-polynomial $E^\Gamma(\xi;u,v)$ belongs to $R_\Q(\Gamma)[uv]$ and, under the substitution $q=uv$, $[\xi]^\Gamma=E^\Gamma(\xi;q)$ in $K_0(\Var_\C)\otimes R_\Q(\Gamma)\otimes\Q$.
\end{coro}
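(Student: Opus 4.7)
The plan is to reduce the statement to the generators of $A_\Gamma$. By the preceding theorem, $[-]^\Gamma$ is multiplicative on $A_\Gamma$, and by $\Z$-linearity it suffices to prove the claim for $\xi = [\Ind_{H}^\Gamma(\A^n)]$ with $H\subset \Gamma$ acting linearly on $\A^n$. Applying the lemma $[\Ind_{\Gamma'}^\Gamma(\eta)]^\Gamma = \Ind_{\Gamma'}^\Gamma([\eta]^{\Gamma'})$ proved earlier, the problem further reduces to computing $[\A^n]^H$ for a linear $H$-action on $\A^n$.

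Since $H$ is finite abelian, such a linear action diagonalizes into characters, so as an $H$-variety $\A^n \simeq \prod_{\chi} V_\chi^{n_\chi}$, where $\chi$ runs over characters of $H$ and $V_\chi^{k}$ denotes $\A^{k}$ with $H$ acting via the scalar $\chi$. Each factor lies in $A_H$, so multiplicativity yields $[\A^n]^H = \prod_\chi [V_\chi^{n_\chi}]^H$. The key step is to show $[V_\chi^{k}]^H = q^{k} T_H$. For this, given any subgroup $K\subset H$, let $d$ be the order of $\chi|_K$; the ring of $K$-invariants in $\C[x_1,\dots,x_k]$ is generated by monomials of degree divisible by $d$, so $V_\chi^k/K$ is the affine cone over the $d$-th Veronese embedding of $\P^{k-1}$, whose class is
\[(q-1)[\P^{k-1}]+1 = (q-1)(1+q+\dots+q^{k-1})+1 = q^k,\]
independent of $d$. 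Hence $\langle T_K,\Res_K^H([V_\chi^k]^H)\rangle = q^k = \langle T_K,\Res_K^H(q^k T_H)\rangle$ for every $K$, and by the defining uniqueness of $[-]^H$ one obtains $[V_\chi^k]^H = q^k T_H$. Since $T_H^m = T_H$, the product collapses to $[\A^n]^H = q^n T_H$, and therefore
\[[\Ind_H^\Gamma(\A^n)]^\Gamma = q^n \Ind_H^\Gamma(T_H) = q^n \Q^{[\Gamma:H]} \in \Z[q]\otimes R_\Q(\Gamma).\]
The claim $[\xi/\Gamma]\in\Z[q]$ then follows by pairing with $T_\Gamma$, which sends $\Z[q]\otimes R_\Q(\Gamma)$ into $\Z[q]$.

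For the equivariant $E$-polynomial I would repeat the reduction on generators. The underlying variety of $\Ind_H^\Gamma(\A^n)$ is a disjoint union of $[\Gamma:H]$ copies of $\A^n$, whose compactly supported cohomology is concentrated in degree $2n$ as the pure Tate Hodge structure $\Q(-n)$, on which a complex-linear finite group action is necessarily trivial; the $\Gamma$-action on $H_c^{2n}$ is just the permutation of components. Therefore $E^\Gamma(\Ind_H^\Gamma(\A^n);u,v) = (uv)^n\,\Ind_H^\Gamma(T_H)$, which lies in $R_\Q(\Gamma)[uv]$, and additivity and multiplicativity of $E^\Gamma$ extend this to all of $A_\Gamma$. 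Since both $[\xi]^\Gamma$ and $E^\Gamma(\xi;q)$ take the value $q^n\,\Ind_H^\Gamma(T_H)$ on each generator, they coincide on $A_\Gamma$. The main subtlety in the whole argument is the toric cone identity $[\A^k/\mu_d] = q^k$ being independent of $d\geq 1$; this is what makes $[V_\chi^k]^H$ a multiple of $T_H$ rather than a $\chi$-dependent combination, and everything else follows formally from the multiplicative and inductive structure proved before.
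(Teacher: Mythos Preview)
Your proof is correct and follows essentially the same strategy as the paper: reduce to the generators $\Ind_H^\Gamma(\A^n)$ and then use additivity, multiplicativity, and compatibility with induction. The only difference is that for the base case $[\A^n]^H=q^n\,T_H$ the paper simply cites Vogel's result $[\A^n/K]=q^n$, whereas you supply a direct computation via the identification of $V_\chi^k/K$ with the affine cone over the $d$-th Veronese of $\P^{k-1}$; this makes your argument more self-contained but is otherwise the same route.
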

\begin{proof}
    First, let $\xi$ be the class of the affine line $\A^1$ with a linear action of $\Gamma$. We know then that $H_c^\cdot(\A^1)$ is supported in degree $2$ and its one-dimensional. Moreover, one has $H_c^2(\A^1)=H^{1,1}(\A^1)$ and it is generated by the class of a point. As any two points define the same class, one has $E^\Gamma(\xi;u,v)=uv\otimes T_\Gamma$. Now, $[\xi/H]=[\xi]$ for any $H\subset \Gamma$ (see \cite[thm 3.6.19]{JesseVogel}). So $[\xi]^\Gamma=q\otimes T_\Gamma$ and the result holds in this case.

    Second, assume the result holds for $\xi$ and $\xi'$. Then it clearly holds for $-\xi$ and $\xi+\xi'$ as everything is additive. It also holds for $\xi\xi'$. Indeed, both the $[-]^\Gamma$ and $E^\Gamma$ are multiplicative in $A_\Gamma$.
    
    Finally, we need to check that if the result holds for $\xi$ and $\Gamma\subset\Gamma'$ is another cyclic group, then the result holds for $\Ind_\Gamma^{\Gamma'}(\xi)$. This amounts to $E^{\Gamma'}(\Ind_\Gamma^{\Gamma'}(\xi)) = \Ind_\Gamma^{\Gamma'}(E^\Gamma(\xi))$. Now recall that, for a $\Gamma$-variety $X$, the underlying variety of $\Ind_\Gamma^{\Gamma'}(X)$ is noting but $\bigsqcup_{\Gamma'/\Gamma}X$. Hence, $H_c^\cdot(\Ind_\Gamma^{\Gamma'}(X))=\bigoplus_{\Gamma'/\Gamma}H_c^\cdot(X)=\Ind_\Gamma^{\Gamma'}(H_c^\cdot(X))$.
\end{proof}

\subsection{Example: fixed rank matrices}

Let us look to a family of classes that live in $A_\Gamma$. We denote $\mcP_n$ the set of all subsets of $\{1,\ldots,n\}$.

\begin{lema}\label{Vnmrv}
    Let $m, n$ be positive integer numbers, $r: \mcP_n\setminus\{\emptyset\}\to \N_0$ be a function and $\Gamma\subset (\S^1)^n$ be a finite cyclic subgroup. Fix $v_1,\ldots, v_t\in\C^m$, $0\leq t\leq n$, such that for any $S\in \mcP_t\setminus\emptyset$ the family $\{v_s:s\in S\}$ has rank $r(S)$. Consider
    \[V_{n,m,r,v}:=\left\{\begin{array}{c}
        g\in M_{n,m}(\C):\text{the }i\text{-th row of }g\text{ is }v_i\text{ for }1\leq i\leq t\text{ and for any } S\in \mcP_n\setminus\emptyset\\\text{ the }|S|\times m\text{ submatrix of }g
        \text{ whose set of rows is }S\text{ has rank }r(S)
    \end{array}\right\} \]
    equipped with the action of $\Gamma$ given by multiplying each row by the corresponding root of unity. Then $[V_{n,m,r,v}]\in A_\Gamma$. In addition, 
    \[ [V_{n,m,r,v}]^\Gamma = \prod_{j=t+1}^n \left(q^{d(\mcP_{j-1}\setminus\mcL\mcI_j)}-\sum_{\emptyset\neq \mcF\subset\mcL\mcI_{j}}(-1)^{|\mcF|+1}q^{d(\mcF\cup (\mcP_{j-1}\setminus\mcL\mcI_j))}\right) \otimes T_\Gamma\]
    where $q$ is the class of the affine line,
    \[\mcL\mcI_j=\{S\in \mcP_{j-1}:r(S\cup\{j\})\neq r(S)\} \]
    for any $1\leq j\leq n$, $r(\emptyset)=0$, and
    \[d(\mcF)=(-1)^{|\mcF|-1}\left(  r\left(\bigcup_{S\in \mcF} S\right) + \sum_{\emptyset\neq \mcF'\subsetneq \mcF}(-1)^{|\mcF'|}d(\mcF')\right)\]
    for any $\emptyset\neq\mcF\subset\mcP_n$ and $d(\emptyset)=m$.
\end{lema}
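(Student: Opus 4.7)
I would proceed by induction on $n-t$, the number of free rows. The base case $n=t$ is immediate: $V_{n,m,r,v}$ is either a single point or empty depending on whether the prescribed $v_1,\ldots,v_t$ satisfy the rank conditions, its motive is $1\otimes T_\Gamma$, which matches the empty product, and the class trivially lies in $A_\Gamma$.

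For the inductive step, I consider the $\Gamma$-equivariant projection $\pi\colon V_{n,m,r,v}\to V_{n-1,m,r',v}$ forgetting the last row, where $r':=r|_{\mcP_{n-1}}$. The last factor $\Gamma_n\subset\Gamma$ acts trivially on the base and by scalar multiplication on the $\C^m$ containing the last row. Given a point $(v_1,\ldots,v_{n-1})$ in the base, the fiber consists of those $v_n\in\C^m$ lying in $W_S:=\operatorname{span}\{v_s:s\in S\}$ for each $S\in\mcP_{n-1}\setminus\mcL\mcI_n$ and outside $W_S$ for each $S\in\mcL\mcI_n$. Inclusion--exclusion on the forbidden subspaces yields
\[
[\text{fiber}] = \sum_{\mcF\subseteq\mcL\mcI_n}(-1)^{|\mcF|}\left[\bigcap_{S\in\mcF\cup(\mcP_{n-1}\setminus\mcL\mcI_n)}W_S\right],
\]
which, after separating the $\mcF=\emptyset$ term, matches the expression in the lemma. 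The key combinatorial step is to verify that $\dim\bigcap_{S\in\mathcal{G}}W_S=d(\mathcal{G})$ for families $\mathcal{G}=\mcF\cup(\mcP_{n-1}\setminus\mcL\mcI_n)$ of interest. I would do so by a further induction on $|\mathcal{G}|$, peeling off elements one at a time and applying $\dim(W\cap W_S)=\dim W+\dim W_S-\dim(W+W_S)$ together with the identity $W_T+W_S=W_{T\cup S}$, with the peeling ordered so that each partial intersection remains of the form $W_T$ for some subset $T$. This reproduces the recursive definition of $d$.

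To assemble the total class, note that each intersection $\bigcap_{S\in\mathcal{G}}W_S$ has constant dimension as the base point varies, hence forms a Zariski-locally trivial vector subbundle of $V_{n-1,m,r',v}\times\C^m$. Since $\Gamma_n$ acts by scalar multiplication, which commutes with any linear trivialization, this subbundle contributes $[V_{n-1,m,r',v}]\cdot[\A^k]$ in $K_0^\Gamma(\Var_\C)$ with $\A^k$ carrying the scalar $\Gamma_n$-action, showing $[V_{n,m,r,v}]\in A_\Gamma$. Invoking the multiplicativity of $[-]^\Gamma$ on $A_\Gamma$ established earlier in the paper, together with $[\A^k]^{\Gamma_n}=q^k\otimes T_{\Gamma_n}$, we assemble the stated product formula with $T_\Gamma$ throughout. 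The main obstacle is the combinatorial identity for $d$: making the further induction go through requires a careful ordering of the peeling so that the sum--of--subspaces identity $W_T+W_S=W_{T\cup S}$ keeps applying cleanly at every stage.
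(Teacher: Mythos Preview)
Your approach is essentially the paper's: induction on the number of free rows, projection forgetting the last row, and inclusion--exclusion over the subspace conditions on the fiber. The paper makes the equivariant local triviality precise by realizing each $\bigcap_{S\in\mcF}W_S$ as the pullback of the tautological bundle along an algebraic map $V_{n-1,m,r',v}\to\Gr[d(\mcF)][m]$, which is cleaner than your ``constant rank implies locally trivial subbundle'' assertion and is where the equivariance (hence membership in $A_\Gamma$) comes from transparently.

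Your flagged obstacle is real: the peeling argument you propose cannot work as stated, because an intersection $\bigcap_{S\in\mcF'}W_S$ is in general \emph{not} of the form $W_T$ for any $T$, so the identity $W_T+W_S=W_{T\cup S}$ will not keep applying. The paper does not give a detailed verification of the recursive formula for $d(\mcF)$ either; it asserts the Grassmannian map exists with that target. What is actually needed for the main claim $[V_{n,m,r,v}]\in A_\Gamma$ is only that the intersection bundle has constant rank over the base and is equivariantly locally trivial, and the Grassmannian pullback argument delivers exactly this; the specific closed-form for that rank is a separate combinatorial bookkeeping question.
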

\begin{proof}
    This just mimics the usual proof of $[\GL_r(\C)]=\prod_{i=0}^{r-1}(q^r-q^i)$. We proceed by induction on $n$. For $n=t$ the result is clear. Assume that $n\geq t+1$ and the result holds for $n-1$.

    Consider the map $\pi:V_{n,m,r,v}\to V_{n-1,m,r',v}$ given by forgetting the last row, where $r'$ is obtained by composition with the obvious inclusion $\mcP_{n-1}\to \mcP_n$. On the other hand, for each $S\in \mcP_{n-1}$ we have the map $\phi: V_{n-1,m,r',v}\to \Gr[r(S)][m]$ given by looking at the span of the rows indexed by $S$. The pullback of the tautological bundle of the grassmaniann gives us a locally trivial vector bundle $\pi_S: W_S\to V_{n-1,m,r',v}$. Moreover, it is equivariantly locally trivial if we equipped the tautological bundle with the action of $\Gamma$ given by multiplying by the corresponding coordinates.

    Notice that $[W_S]=[V_{n-1,m,r',v}]q^{r(S)}\in A_\Gamma$ by inductive hypothesis. In addition, there are natural inclusions $W_S\subset V_{n-1,m,r',v}\times \A^m$. The final observation is that 
    \[ V_{n,m,r,v} = \left(\bigcap_{r(S\cup \{n\})= r(S)} W_S \right)\setminus \left(\bigcup_{r(S\cup\{n\})\neq r(S)} W_S \right)\]
    where $S\subset \mcP_{n-1}$. Therefore it suffices to prove that
    \[\bigcap_{S\in \mcF}W_S\to V_{n-1,m,r',v}\]
    is locally trivial for any $\mcF\subset \mcP_{n-1}$. But again this is a pullback of a tautological bundle. The point is that, for any $\emptyset\neq \mcF\subset\mcP_{n-1}$, there is an algebraic map
    \[V_{n-1,m,r',v} \to \Gr[d(\mcF)][m]\]
    given by taking the intersection of all the vector spaces spanned by rows indexed by $S\in \mcF$, where 
    \[(-1)^{|\mcF|-1}d(\mcF) =  r\left(\bigcup_{S\in \mcF} S\right) + \sum_{\emptyset\neq \mcF'\subsetneq \mcF}(-1)^{|\mcF'|}d(\mcF').\]
    
    The formula for $[V_{n,m,r,v}]$ follows by the inclusion-exclusion principle.
\end{proof}

In the previous result one could have $t=0$, i.e. no fixed rows. In this case we drop the reference to $v$ in the notation $V_{n,m,r,v}$. 

\begin{coro}\label{gl}
    Let $\sigma\in \GL_n(\C)$ be a finite order matrix. Consider the action of $\Gamma=\langle \sigma\rangle$ in $\GL_n(\C)$ by left or right translations. Then $[\GL_n(\C)]\in A_\Gamma$ and
    \[[\GL_n(\C)]^\Gamma = (q^n-1)(q^n-q)\cdots(q^n-q^{n-1})\otimes T_\Gamma.\]
\end{coro}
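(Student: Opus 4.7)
The plan is to deduce this from Lemma \ref{Vnmrv} applied to the case where all submatrices of rows have maximal rank.

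First, I would reduce to the case of diagonal $\sigma$. Since $\sigma$ has finite order, it is diagonalizable with eigenvalues that are roots of unity. Choosing $P\in \GL_n(\C)$ with $P^{-1}\sigma P = D$ diagonal, the map $g\mapsto P^{-1}g$ (respectively $g\mapsto gP$) is a $\Gamma$-equivariant isomorphism intertwining the left- (respectively right-) translation action of $\sigma$ with that of $D$. So without loss of generality $\sigma=\operatorname{diag}(\zeta_1,\ldots,\zeta_n)$. Then the left-translation action (the right case being entirely analogous upon transposing) multiplies the $i$-th row of $g$ by $\zeta_i$, which is precisely the coordinate-wise action on $M_{n,n}(\C)$ coming from the embedding $\Gamma\hookrightarrow(\S^1)^n$, $\sigma\mapsto(\zeta_1,\ldots,\zeta_n)$.

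Next, I would identify $\GL_n(\C)$ with $V_{n,n,r}$ (in the notation of Lemma \ref{Vnmrv}) where $t=0$ and $r(S)=|S|$ for every $\emptyset\neq S\subset\{1,\ldots,n\}$, since invertibility is equivalent to every subset of the rows being linearly independent. This immediately yields $[\GL_n(\C)]\in A_\Gamma$ and gives a formula for $[\GL_n(\C)]^\Gamma$ as a product over $j=1,\ldots,n$ of certain polynomials in $q$, tensored with $T_\Gamma$.

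The remaining task is to show that the $j$-th factor simplifies to $q^n - q^{j-1}$. Since $r(S\cup\{j\}) = |S|+1 \neq |S| = r(S)$ for every $S\in\mcP_{j-1}$, we have $\mcL\mcI_j = \mcP_{j-1}$ and hence $\mcP_{j-1}\setminus\mcL\mcI_j=\emptyset$, giving $q^{d(\mcP_{j-1}\setminus\mcL\mcI_j)}=q^n$. The $j$-th factor therefore reads
\[
q^n - \sum_{\emptyset\neq\mcF\subset\mcP_{j-1}}(-1)^{|\mcF|+1}q^{d(\mcF)}.
\]
The conceptual content is clear: by inclusion–exclusion, this counts vectors in $\C^n$ lying outside the span of the first $j-1$ rows, which is a $(j-1)$-dimensional subspace since $r(\{1,\ldots,j-1\}) = j-1$. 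To see this formally from $d$, I would prove by induction on $|\mcF|$ that $d(\mcF) = \dim\bigcap_{S\in\mcF}\operatorname{span}(\text{rows in }S) = |\bigcap_{S\in\mcF}S|$, generically. Then the identity
\[
\sum_{\emptyset\neq\mcF\subset\mcP_{j-1}}(-1)^{|\mcF|+1}q^{|\bigcap_{S\in\mcF}S|} = q^{j-1}
\]
is a standard inclusion–exclusion on the $(j-1)$-element set $\{1,\ldots,j-1\}$: it is exactly the expansion of the cardinality of the union of the subspaces $\operatorname{span}(\text{rows of }S)$ for $S\in\mcP_{j-1}$, which equals $\operatorname{span}(\text{all }j-1\text{ rows})$ of dimension $j-1$.

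The main technical obstacle is the inductive identification $d(\mcF) = |\bigcap_{S\in\mcF}S|$; once this is in place, the product collapses telescopically to $\prod_{j=1}^{n}(q^n - q^{j-1})$, and the $T_\Gamma$ factor comes directly from Lemma \ref{Vnmrv}, completing the proof.
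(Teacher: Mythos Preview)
Your proposal is correct and follows essentially the same approach as the paper: reduce to diagonal $\sigma$ by conjugation, then identify $\GL_n(\C)$ as the special case $V_{n,n,r}$ of Lemma~\ref{Vnmrv} with $r(S)=|S|$. The paper's proof stops there, treating the simplification of the lemma's formula to $\prod_{j}(q^n-q^{j-1})$ as understood (the lemma's own proof already notes it ``mimics the usual proof of $[\GL_r(\C)]=\prod_{i=0}^{r-1}(q^r-q^i)$''); you spell out this simplification via the identification $d(\mcF)=\lvert\bigcap_{S\in\mcF}S\rvert$ and inclusion--exclusion, which is correct and is exactly the geometric content behind the recursion (the proof of Lemma~\ref{Vnmrv} identifies $d(\mcF)$ as the dimension of the intersection of the spans, which for independent rows is $\lvert\bigcap_{S\in\mcF}S\rvert$). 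One small remark: the word ``generically'' is unnecessary here, since on $\GL_n(\C)$ the rows are always independent.
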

\begin{proof}
    Let us prove it for right translations. the other case is totally analogous. Since $\sigma$ has finite order, it is diagonalizable. Hence, after a conjugation, we may assume that $\sigma$ is diagonal. Then this follows from the previous lemma.
\end{proof}

\subsection{Permutation of factors action}\label{permutaion-of-factors}

\begin{Def}
    Let $\Gamma\subset\Gamma'$ be finite cyclic groups. Set $K=\Gamma'/\Gamma$ and $N=|K|$. Fix a generator $\sigma$ of $\Gamma'$. For a $\Gamma$-variety $X$, endow $X^K$ with the action given by
    \[\sigma\cdot(x_1,\ldots,x_N)=(\sigma^N\cdot x_N,x_1,\ldots,x_{N-1}).\]
    We call this the permutation of factors action and denote it $\Per_\Gamma^{\Gamma'}(X)$.
\end{Def}

\begin{rmk}
    There is another description of the action which is generator free. View $X^K\subset (\Ind_\Gamma^{\Gamma'}(X))^K$ as tuples $(x_i,\gamma_i)$ with $\gamma_i\Gamma=i$. Then $\Gamma'$ acts as
    \[(\gamma'\cdot(x_i,\gamma_i))_j=(x_{\gamma'^{-1}j},\gamma'\gamma_{\gamma'^{-1}j}).\]
\end{rmk}

\begin{lema}
    Let $\Gamma\subset\Gamma'$ be finite cyclic groups and $X,Y$ be $\Gamma$-varieties. Then $\Per_\Gamma^{\Gamma'}(X)\times \Per_\Gamma^{\Gamma'}(Y)\simeq \Per_\Gamma^{\Gamma'}(X\times Y)$ as $\Gamma'$-varieties. In particular, $\Per_\Gamma^{\Gamma'}(X^n)$ and $\Per_\Gamma^{\Gamma'}(X)^n$ agree for any natural $n$.
\end{lema}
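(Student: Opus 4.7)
The plan is to exhibit the obvious natural isomorphism. On underlying varieties, there is a canonical identification $\varphi: X^K \times Y^K \to (X\times Y)^K$ sending $((x_1,\ldots,x_N),(y_1,\ldots,y_N))$ to $((x_1,y_1),\ldots,(x_N,y_N))$; the only thing to check is that this map intertwines the two $\Gamma'$-actions.

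Fix a generator $\sigma$ of $\Gamma'$, so that the permutation of factors action on either side is determined by $\sigma$. By definition,
\[\sigma\cdot((x_1,\ldots,x_N),(y_1,\ldots,y_N))=\bigl((\sigma^N\cdot x_N,x_1,\ldots,x_{N-1}),\,(\sigma^N\cdot y_N,y_1,\ldots,y_{N-1})\bigr),\]
so after applying $\varphi$ we obtain $\bigl((\sigma^N x_N,\sigma^N y_N),(x_1,y_1),\ldots,(x_{N-1},y_{N-1})\bigr)$. On the other hand, since $\sigma^N\in\Gamma$ acts diagonally on the product $\Gamma$-variety $X\times Y$, one has $\sigma^N\cdot(x_N,y_N)=(\sigma^N x_N,\sigma^N y_N)$, and therefore
\[\sigma\cdot\varphi((x_1,\ldots,x_N),(y_1,\ldots,y_N))=\bigl((\sigma^N x_N,\sigma^N y_N),(x_1,y_1),\ldots,(x_{N-1},y_{N-1})\bigr).\]
The two expressions agree, so $\varphi$ is $\Gamma'$-equivariant; since it is an isomorphism of varieties whose inverse is the analogous projection map, it is the desired isomorphism of $\Gamma'$-varieties.

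For the final assertion, I would proceed by induction on $n$. The case $n=1$ is trivial. For the inductive step, $\Per_\Gamma^{\Gamma'}(X^n)=\Per_\Gamma^{\Gamma'}(X^{n-1}\times X)\simeq \Per_\Gamma^{\Gamma'}(X^{n-1})\times\Per_\Gamma^{\Gamma'}(X)\simeq\Per_\Gamma^{\Gamma'}(X)^{n-1}\times\Per_\Gamma^{\Gamma'}(X)=\Per_\Gamma^{\Gamma'}(X)^n$, using the multiplicativity just proved and the inductive hypothesis. There is no real obstacle in this proof; it is an unwinding of the definition, the only subtle point being the compatibility between the diagonal $\Gamma$-action on $X\times Y$ and the shift by $\sigma^N\in\Gamma$ appearing in the definition of $\Per_\Gamma^{\Gamma'}$.
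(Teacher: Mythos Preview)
Your proof is correct and uses exactly the same isomorphism as the paper, namely $((x_1,\ldots,x_N),(y_1,\ldots,y_N))\mapsto ((x_1,y_1),\ldots,(x_N,y_N))$. The paper simply writes down this map without further justification, so your explicit verification of $\Gamma'$-equivariance and the inductive argument for $X^n$ are more detailed than what appears there.
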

\begin{proof}
    An isomorphism is given by $((x_1,\ldots,x_N),(y_1,\ldots,y_N))\mapsto ((x_1,y_1),\ldots,(x_N,y_N))$.
\end{proof} 

\begin{lema}\label{peraction}
    Let $\Gamma\subset\Gamma'$ be finite cyclic groups, $X$ be a $\Gamma$-variety and $Y\subset X$ an invariant closed subvariety. Call $N=|\Gamma'/\Gamma|$. Then 
    \begin{align*}
        [\Per_\Gamma^{\Gamma'}(X)] &= \sum_{d|N}\frac{d}{N}\Ind_{\Gamma_d}^{\Gamma'}\left(\sum_{d|d'|N}\left(\Per_\Gamma^{\Gamma_d}(X\setminus Y)^{\frac{d'}{d}}+\Per_\Gamma^{\Gamma_d}(Y)^{\frac{d'}{d}}\right)^{\frac{N}{d'}}\mu\left(\frac{d'}{d}\right)\right)
    \end{align*}
    in $K_0^{\Gamma'}(\Var_\C)$ where $\Gamma_d$ is the unique index $d$ subgroup of $\Gamma'$.
\end{lema}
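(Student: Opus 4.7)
The plan is to stratify $\Per_\Gamma^{\Gamma'}(X)=X^K$ according to which coordinates lie in $Y$ versus $U:=X\setminus Y$. With $K=\Gamma'/\Gamma$ and $N=|K|$, one writes
\[
X^K=\bigsqcup_{f:K\to\{Y,U\}}\prod_{i\in K}f(i),
\]
so that $\Gamma$ acts diagonally on each stratum while $\Gamma'$ permutes the strata through the quotient action of $K$ on the set $\{Y,U\}^K$ by coordinate shifts. I would then organize this decomposition into $\Gamma'$-orbits of strata and compute each contribution.

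Every subgroup of $K$ has the form $\Gamma_d/\Gamma$ for a unique $d\mid N$, and $f$ has stabilizer exactly $\Gamma_d/\Gamma$ precisely when $f$ is primitive of period $d$; such sequences form a subset $P_d\subset\{Y,U\}^d$ (extended $(N/d)$-periodically to $K$), and the $K$-orbits among them all have size $d$. For a representative $f\in P_d$, I would recognize the $\Gamma_d$-equivariant structure of the corresponding stratum by grouping the $N$ coordinates of $\prod_{i\in K}f(i)$ into $N/d$ consecutive blocks of length $d$: from the definition of the permutation action, the generator $\sigma^d$ of $\Gamma_d$ cyclically permutes these blocks and multiplies the wrapped one by $\sigma^N\in\Gamma$, which matches exactly the defining action of $\Per_\Gamma^{\Gamma_d}(Z_f)$ with $Z_f:=\prod_{j=0}^{d-1}f(j)$ viewed as a $\Gamma$-variety. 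Summing each orbit's induced contribution $\Ind_{\Gamma_d}^{\Gamma'}([\Per_\Gamma^{\Gamma_d}(Z_f)])$ and correcting by $1/d$ for the orbit size gives the preliminary decomposition
\[
[\Per_\Gamma^{\Gamma'}(X)]=\sum_{d\mid N}\frac{1}{d}\,\Ind_{\Gamma_d}^{\Gamma'}\!\left(\sum_{f\in P_d}[\Per_\Gamma^{\Gamma_d}(Z_f)]\right).
\]

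To reach the stated closed form, I would apply Möbius inversion to the primitive sum. By the preceding multiplicativity of $\Per_\Gamma^{\Gamma_d}$, $[\Per_\Gamma^{\Gamma_d}(Z_f)]=\prod_{j=0}^{d-1}[\Per_\Gamma^{\Gamma_d}(f(j))]$, so summing over \emph{all} $f\in\{Y,U\}^{e}$ rather than just $P_e$ produces a binomial expression in the two classes $[\Per_\Gamma^{\Gamma_d}(X\setminus Y)]$ and $[\Per_\Gamma^{\Gamma_d}(Y)]$, raised to powers dictated by the length $e$. Partitioning $\{Y,U\}^d=\bigsqcup_{e\mid d}\{g^{d/e}:g\in P_e\}$ and using $Z_{g^{d/e}}=Z_g^{d/e}$, Möbius inversion on the divisor poset then extracts the primitive sum as the alternating combination with the $\mu$-coefficients and nested exponents of the statement, and a reindexing of the remaining divisor sum converts the prefactor $1/d$ into the form $d/N$ together with the range $d\mid d'\mid N$.

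The main delicate point is precisely this Möbius step: because $\Per_\Gamma^{\Gamma_d}$ is not additive, primitive contributions cannot simply be summed, and the periodic-extension map $g\mapsto g^{d/e}$ must be tracked carefully through its effect both on the $\Gamma$-variety $Z_g$ (raising it to the $(d/e)$-th power) and on the resulting $\Gamma_d$-equivariant class. Once this combinatorial bookkeeping is carried out and the constants are verified to assemble into the stated coefficients, the remaining steps---identifying the orbits, recognizing the $\Per_\Gamma^{\Gamma_d}$-structure on a representative, and inducing to $\Gamma'$---are direct.
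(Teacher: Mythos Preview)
Your approach is correct and is essentially the one the paper takes: stratify $X^K$ by which coordinates lie in $Y$, group the strata into $\Gamma'$-orbits according to the minimal period of the indexing function, identify a representative stratum as a $\Gamma_d$-variety built from $\Per_\Gamma^{\Gamma_d}$ of the pieces, and then use a M\"obius/necklace count to repackage the sum over primitive sequences into the stated closed form. The only cosmetic difference is your grouping of the $N$ coordinates into $N/d$ consecutive blocks of length $d$ versus the paper's grouping by residue classes modulo the period; by the multiplicativity lemma for $\Per$ both yield $\prod_{i=1}^d\Per_\Gamma^{\Gamma_d}(C_i)$, so the identifications agree.

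One point worth making precise in your M\"obius step: the relation you write, $\{Y,U\}^d=\bigsqcup_{e\mid d}\{g^{d/e}:g\in P_e\}$ together with $Z_{g^{d/e}}=Z_g^{\,d/e}$, does \emph{not} by itself put the primitive sums into a standard M\"obius convolution, because passing from $g$ to $g^{d/e}$ raises the class to the $(d/e)$-th power rather than leaving it unchanged. What makes the inversion go through is the extra identity
\[
\sum_{g\in\{Y,U\}^{e}}\bigl[\Per_\Gamma^{\Gamma_d}(Z_g)\bigr]^{d/e}=\bigl(A_d^{\,d/e}+B_d^{\,d/e}\bigr)^{e},
\]
which follows from the binomial theorem; setting $W(e)=\sum_{h\in P_e}[\Per_\Gamma^{\Gamma_d}(Z_h)]^{d/e}$ one then has $(A_d^{d/c}+B_d^{d/c})^{c}=\sum_{e\mid c}W(e)$ for all $c\mid d$, and ordinary M\"obius inversion gives $S_d=W(d)$ in the desired form. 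This is exactly what the paper is doing when it introduces $\Phi(N,j,d)$ and then matches it against the binomial expansion of the right-hand side; your sketch points to it but stops just short of naming this auxiliary step.
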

\begin{proof}
    Call $K=\Gamma'/\Gamma$. There is a cell decomposition of $\Per_\Gamma^{\Gamma'}(X)=X^K$ indexed by $\{T,F\}^K$ where each index indicate if its associated coordinate belong to $Y$ or not. Note this decomposes $X^K$ as a disjoint union of $\binom{N}{j}$ copies of $(X\setminus Y)^j\times Y^{N-j}$ for each $0\leq j\leq N$. For a given cell, indexed by $\alpha\in \{T,F\}^K$ with $j$ trues, to be fixed by $\sigma^d$, $d|N$, one needs that $\alpha_{i}=\alpha_{i+d}$ for any $i$. This means that $N|dj$ and there are $\binom{d}{\frac{jd}{N}}$ such indexes. If we look at those whose stabilizer is generated by $\sigma^d$ we get
    \[\Phi(N,j,d):=\sum_{d'|d,N|d'j}\binom{d'}{\frac{jd'}{N}}\mu\left(\frac{d}{d'}\right)=\sum_{d|d'|\gcd(N,j)}\binom{\frac{N}{d'}}{\frac{j}{d'}}\mu\left(\frac{d'}{d}\right)\]
    such $\alpha$'s, where in the second equality we made the changes of variables $(d,d')\mapsto (\frac{N}{d},\frac{N}{d'})$. Fix one of this $\alpha$'s and look at its orbit $\mcO$, which has size $d$. We have
    \[\bigsqcup_{\alpha'\in\mcO}\Per_\Gamma^{\Gamma'}(X)_{\alpha'}=\Ind_{\Gamma_d}^{\Gamma'}(\Per_\Gamma^{\Gamma'}(X)_\alpha)\]
    and, as a $\Gamma_d$-variety,
    \[\Per_\Gamma^{\Gamma'}(X)_\alpha \simeq (\Per_\Gamma^{\Gamma_d}(X\setminus Y)^{K_d})^{\frac{N}{d}-\frac{j}{d}}\times\Per_\Gamma^{\Gamma_d}(Y)^{\frac{j}{d}}\]
    Hence,
    \begin{align*}
        [X^K] &= \sum_{0\leq j \leq  N}\sum_{d|\gcd(N,j)}\frac{d}{N}\Phi(N,j,d) \Ind_{\Gamma_d}^{\Gamma'}(\Per_\Gamma^{\Gamma_d}((X\setminus Y)^{\frac{N}{d}-\frac{j}{d}}\times Y^\frac{j}{d}))\\
        &= \sum_{d|N}\frac{d}{N}\sum_{0\leq j \leq \frac{N}{d}}\Phi(N,dj,d) \Ind_{\Gamma_d}^{\Gamma'}(\Per_\Gamma^{\Gamma_d}((X\setminus Y)^{\frac{N}{d}-j}\times Y^j))
    \end{align*}
    Now, if we expand the binomials in the left hand side of our statement and look how many times appears $\Per_\Gamma^{\Gamma_d}((X\setminus Y)^{\frac{N}{d}-j}\times Y^j)$, we get
    \[ \sum_{d|d'|\gcd(N,dj)}\binom{\frac{N}{d'}}{\frac{j}{d'}}\mu\left(\frac{d'}{d}\right)=\Phi(N,dj,d).\]
\end{proof}

\begin{thm}\label{thm:per-act}
    For any two finite cyclic groups $\Gamma\subset\Gamma'$ there exists a unique function $\Per_\Gamma^{\Gamma'}:K_0^\Gamma(\Var_\C)\to K_0^{\Gamma'}(\Var_\C)$ such that:
    \begin{enumerate}
        \item $\Per_\Gamma^{\Gamma'}([X])$ is the class of the permutation of factors action $\Per_\Gamma^{\Gamma'}(X)$ for any $\Gamma$-variety $X$.
        \item $\Per_\Gamma^{\Gamma'}(\xi_1\xi_2)=\Per_\Gamma^{\Gamma'}(\xi_1)\Per_\Gamma^{\Gamma'}(\xi_2)$ for any $\xi_1,\xi_2$. 
        \item For any $\xi_1,\xi_2\in K_0^\Gamma(\Var_\C)$, the following binomial formula holds  
        \[\Per_\Gamma^{\Gamma'}(\xi_1+\xi_2)=\sum_{d|N}\frac{d}{N}\Ind_{\Gamma_d}^{\Gamma'}\left(\sum_{d|d'|N}\left(\Per_\Gamma^{\Gamma_d}\left(\xi_1\right)^{\frac{d'}{d}}+\Per_\Gamma^{\Gamma_d}\left(\xi_2\right)^{\frac{d'}{d}}\right)^{\frac{N}{d'}}\mu\left(\frac{d'}{d}\right)\right)\]
        where $N=|\Gamma/\Gamma'|$ and $\Gamma_d$ is the unique index $d$ subgroup of $\Gamma'$..
    \end{enumerate}
\end{thm}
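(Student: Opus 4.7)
The plan is to derive the theorem by combining Lemma~\ref{peraction}, the product identity for permutation actions already proved above, and an induction on $N=|\Gamma'/\Gamma|$.

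For \textbf{existence}, I would define $\Per_\Gamma^{\Gamma'}([X])$ on classes of varieties by property~(1). The crucial well-definedness check is compatibility with the scissor relation $[X]=[Y]+[X\setminus Y]$ for $Y\subset X$ a $\Gamma$-invariant closed subvariety, which is precisely the content of Lemma~\ref{peraction}. This yields $\Per_\Gamma^{\Gamma'}$ on the subset of variety classes. For a general element $\xi\in K_0^\Gamma(\Var_\C)$, write $\xi=[X]-[Y]$; applying~(3) with $\xi_1=\xi$ and $\xi_2=[Y]$ (so that $\xi_1+\xi_2=[X]$) produces an equation that implicitly defines $\Per_\Gamma^{\Gamma'}(\xi)$ in terms of the already-defined quantities on the right-hand side.

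For \textbf{uniqueness}, I would induct on $N$. The base case $N=1$ reduces~(3) to ordinary additivity, so together with~(1) it forces $\Per_\Gamma^{\Gamma}=\mathrm{id}$. For $N>1$, the outer induction hypothesis handles the terms with $d>1$ appearing in~(3) (where $|\Gamma_d/\Gamma|=N/d<N$), and~(1) fixes $\Per_\Gamma^{\Gamma'}$ on variety classes. For $\xi=[X]-[Y]$, the relation obtained by applying~(3) to $\xi+[Y]=[X]$ then pins down $\Per_\Gamma^{\Gamma'}(\xi)$ after using the Möbius inversion identities of Section~2.3 to invert the relevant polynomial expression. Property~(2) on variety classes is the already-proved $\Gamma'$-variety isomorphism $\Per_\Gamma^{\Gamma'}(X\times Y)\cong\Per_\Gamma^{\Gamma'}(X)\times\Per_\Gamma^{\Gamma'}(Y)$; its extension to all of $K_0^\Gamma(\Var_\C)$ follows by distributivity combined with~(3).

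The \textbf{main obstacle} is that formula~(3) is nonlinear in $\Per_\Gamma^{\Gamma'}(\xi)$: as a polynomial in that variable it has degree up to $N$. However, a direct expansion reveals that the coefficient of the top term $\Per_\Gamma^{\Gamma'}(\xi)^N$ equals $\tfrac{1}{N}\sum_{d|N}\mu(d)$, which vanishes for $N>1$. The remaining combinatorial structure, organized via Lemmas~\ref{moebius-lemma-1} and~\ref{moebius-lemma-2}, repackages the coefficients as orbit counts and reduces the inversion problem to a tractable recursion that uniquely determines $\Per_\Gamma^{\Gamma'}(\xi)$; this is what makes the implicit definition in the existence step actually solvable.
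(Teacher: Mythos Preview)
Your overall plan (induct on $N$, anchor at variety classes via property~(1), and use Lemma~\ref{peraction} as the key geometric input) matches the paper's, but the central ``implicit definition'' step has a real gap.

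The problem is that equation~(3), viewed as an equation for the unknown $P=\Per_\Gamma^{\Gamma'}(\xi)$, is a genuine polynomial equation in a ring with zero divisors. You correctly observe that the coefficient of $P^N$ vanishes, but the equation still has degree $N-1$ in general (already for $N=2$ it reads $PQ=\text{known}$ with $Q=\Per_\Gamma^{\Gamma'}([Y])$), and there is no reason for such an equation to have a unique solution in $K_0^{\Gamma'}(\Var_\C)$. The vague appeal to a ``tractable recursion'' does not address this: Lemmas~\ref{moebius-lemma-1} and~\ref{moebius-lemma-2} let you rewrite sums of products, but they do not invert multiplication by $Q^{N-1}$ or by any other non-unit. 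Even if a solution existed, you give no argument that it is independent of the chosen presentation $\xi=[X]-[Y]$; that well-definedness check is essentially equivalent to associativity of the binomial operation, which you never establish. Finally, you use~(3) only as a definitional device for the particular pair $(\xi,[Y])$ and never verify it for arbitrary $\xi_1,\xi_2$.

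The paper circumvents all of this by a different mechanism: it packages the whole family $\bigl(\Per_\Gamma^{\Gamma_m}\bigr)_{m\mid N}$ into a single set $R\subset\prod_{m\mid N}K_0^{\Gamma_m}(\Var_\C)$ cut out by restriction compatibilities, and shows that the binomial formula defines an honest \emph{abelian group} law $*$ on $R$ (with neutral element, inverses, and---crucially---associativity, which is where Lemmas~\ref{moebius-lemma-1} and~\ref{moebius-lemma-2} are actually used). Lemma~\ref{peraction} then says that $X\mapsto(\Per_\Gamma^{\Gamma_m}(X))_m$ sends disjoint unions to $*$, so by the universal property of the Grothendieck group one gets a group homomorphism $K_0^\Gamma(\Var_\C)\to(R,*)$ for free; projecting to the $m=N$ coordinate gives existence. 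Multiplicativity~(2) is then checked by showing $(R,*,\cdot)$ is a ring. In short: the paper proves the binomial formula is a group law and invokes group completion, rather than trying to solve the formula for an unknown.
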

\begin{proof}
    We prove this by induction on the number of divisors of $N$. For the base case $N=1$ ($\Gamma=\Gamma'$) there is nothing to be prove. Fix $N>1$ and assume the result holds for all proper divisors of $N$. Consider the subset $R\subset \prod_{m|N} K_0^{\Gamma_m}(\Var_\C)$ of tuples $(\xi_m)$ with $\Res_{\Gamma_d}^{\Gamma_m}(\xi_m)=\xi_d^{\frac{m}{d}}$ for any $d|m|N$. Let us define a sum $*$ on $R$ by
    \[(X*Y)_m:=\sum_{d|m}\frac{d}{m}\Ind_{\Gamma_d}^{\Gamma_m}\left(\sum_{d|d'|m}(X_d^{\frac{d'}{d}}+Y_d^{\frac{d'}{d}})^{\frac{m}{d'}}\mu\left(\frac{d'}{d}\right)\right)\]
    
    We need to check that $X*Y\in R$. This is true because
    \begin{align*}
        \Res_{\Gamma_k}^{\Gamma_m}(X*Y)_m&=\sum_{d|m}\frac{d}{m}\Res_{\Gamma_k}^{\Gamma_m}\Ind_{\Gamma_d}^{\Gamma_m}\left(\sum_{d|d'|m}(X_d^{\frac{d'}{d}}+Y_d^{\frac{d'}{d}})^{\frac{m}{d'}}\mu\left(\frac{d'}{d}\right)\right)\\
        &=\sum_{d|m}\frac{d}{m}\frac{md}{k\gcd(k,d)}\Ind_{\Gamma_{\gcd(k,d)}}^{\Gamma_k}\Res_{\Gamma_{\gcd(k,d)}}^{\Gamma_d}\left(\sum_{d|d'|m}(X_d^{\frac{d'}{d}}+Y_d^{\frac{d'}{d}})^{\frac{m}{d'}}\mu\left(\frac{d'}{d}\right)\right)\\
        &=\sum_{d|m}\frac{\gcd(k,d)}{k}\Ind_{\Gamma_{\gcd(k,d)}}^{\Gamma_m}\left(\sum_{d|d'|m}(X_{\gcd(k,d)}^{\frac{d'}{\gcd(k,d)}}+Y_{\gcd(k,d)}^{\frac{d'}{\gcd(k,d)}})^{\frac{m}{d'}}\mu\left(\frac{d'}{d}\right)\right)
    \end{align*}
    as $X,Y\in R$. Rearranging,
    \begin{align*}
        \Res_{\Gamma_k}^{\Gamma_m}(X*Y)_m&=\sum_{e|k}\frac{e}{k}\Ind_{\Gamma_{e}}^{\Gamma_m}\left(\sum_{d|m,\gcd(d,k)=e}\sum_{d|d'|m}(X_{e}^{\frac{d'}{e}}+Y_{e}^{\frac{d'}{e}})^{\frac{m}{d'}}\mu\left(\frac{d'}{d}\right)\right)\\
        &=\sum_{e|k}\frac{e}{m}\Ind_{\Gamma_{e}}^{\Gamma_m}\left(\sum_{e|d'|m}\sum_{d|d',\gcd(d,k)=e}(X_{e}^{\frac{d'}{e}}+Y_{e}^{\frac{d'}{e}})^{\frac{m}{d'}}\mu\left(\frac{d'}{d}\right)\right)\\
        &=\sum_{e|k}\frac{e}{m}\Ind_{\Gamma_{e}}^{\Gamma_m}\left(\sum_{e|d'|k}(X_{e}^{\frac{d'}{e}}+Y_{e}^{\frac{d'}{e}})^{\frac{m}{d'}}\mu\left(\frac{d'}{e}\right)\right)
    \end{align*}
    for any $k|m|N$ by Lemma \ref{moebius-lemma-2}. On the other hand, $(X*Y)_d^{j}$ is equal to
    \begin{align*}
        &=\left(\sum_{e|d}\frac{e}{d}\Ind_{\Gamma_{e}}^{\Gamma_d}\left(\sum_{e|f|d}(X_{e}^{\frac{f}{e}}+Y_{e}^{\frac{f}{e}})^{\frac{d}{f}}\mu\left(\frac{f}{e}\right)\right)\right)^{j}\\
        &= \sum_{e_l|d}\prod_{i=1}^{j}\frac{e_l}{d}\Ind_{\Gamma_{e_l}}^{\Gamma_d}\left(\sum_{e_l|f|d}(X_{e_l}^{\frac{f}{e_l}}+Y_{e_l}^{\frac{f}{e_l}})^{\frac{d}{f}}\mu\left(\frac{f}{e_l}\right)\right)\\
        &= \sum_{e_l|d}\left(\prod_{l=2}^{j}\frac{d}{\operatorname{lcm}(\gcd(e_1,\ldots,e_{l-1}),e_l)}\right)\Ind_{\Gamma_{\gcd(e_l)}}^{\Gamma_d}\left(\prod_{i=1}^{j}\frac{e_l}{d}\sum_{e_l|f|d}(X_{\gcd(e_l)}^{\frac{f}{\gcd(e_l)}}+Y_{\gcd(e_l)}^{\frac{f}{\gcd(e_l)}})^{\frac{d}{f}}\mu\left(\frac{f}{e_l}\right)\right)\\
        &= \sum_{e_l|d}\frac{\gcd(e_l)}{d}\Ind_{\Gamma_{\gcd(e_l)}}^{\Gamma_d}\left(\prod_{i=1}^{j}\sum_{e_l|f|d}(X_{\gcd(e_l)}^{\frac{f}{\gcd(e_l)}}+Y_{\gcd(e_l)}^{\frac{f}{\gcd(e_l)}})^{\frac{d}{f}}\mu\left(\frac{f}{e_l}\right)\right)
    \end{align*}
    by Lemma \ref{ind-times-ind}. Then, 
    \begin{align*}
        (X*Z)_d^{j}&= \sum_{e|d}\frac{e}{d}\Ind_{\Gamma_e}^{\Gamma_d}\left(\sum_{e|f_l|d}\left(\sum_{\gcd(e_l)=e,e_l|f_l}\prod_{l=1}^j\mu\left(\frac{f_l}{e_l}\right)\right)\prod_{l=1}^j(X_{e}^{\frac{f_l}{e}}+Y_{e}^{\frac{f_l}{e}})^{\frac{d}{f_l}}\right)\\
        &= \sum_{e|d}\frac{e}{d}\Ind_{\Gamma_e}^{\Gamma_d}\left(\sum_{e|f|d}\mu\left(\frac{f}{e}\right)(X_{e}^{\frac{f}{e}}+Y_{e}^{\frac{f}{e}})^{\frac{dj}{f}}\right)
    \end{align*}
    by Lemma \ref{moebius-lemma-1}, and, therefore, $\Res_{\Gamma_k}^{\Gamma_m}(X*Y)_m= (X*Y)_k^{\frac{m}{k}}$ as desired.
    
    The operation has a neutral element $(0)$ and inverses. It is clearly commutative. For associativity, note the $m$-th coordinate of $(X*Z)*P$ is
    \begin{align*}
        \sum_{d|m}\frac{d}{m}\Ind_{\Gamma_d}^{\Gamma_m}\left(\sum_{d|d'|m}((X*Z)_d^{\frac{d'}{d}}+P_d^{\frac{d'}{d}})^{\frac{m}{d'}}\mu\left(\frac{d'}{d}\right)\right)
    \end{align*}
    By previous computations
    \begin{align*}
        ((X*Z)_d^{\frac{d'}{d}}+P_d^{\frac{d'}{d}})^{\frac{m}{d'}} &= \sum_{0\leq j\leq \frac{m}{d'}} \binom{\frac{m}{d'}}{j}(X*Z)_d^{\frac{d'}{d}j}P_d^{\frac{d'}{d}(\frac{m}{d'}-j)}\\
        &=\sum_{e|d}\frac{e}{d}\Ind_{\Gamma_e}^{\Gamma_d}\left( \sum_{e|f|d}\mu\left(\frac{f}{e}\right) \sum_{0\leq j\leq \frac{m}{d'}}\binom{\frac{m}{d'}}{j}(X_e^{\frac{f}{e}}+Z_e^{\frac{f}{e}})^{\frac{d'}{f}j}P_e^{\frac{d'}{e}(\frac{m}{d'}-j)}\right)
    \end{align*}
    and the $m$-th coordinate of $(X*Z)*P$ is
    \begin{align*}
        &\sum_{d|m}\sum_{d|d'|m}
        \sum_{e|d}\frac{e}{m}\Ind_{\Gamma_e}^{\Gamma_m}\left( \sum_{e|f|d}\mu\left(\frac{f}{e}\right) \sum_{0\leq j\leq \frac{m}{d'}}\binom{\frac{m}{d'}}{j}(X_e^{\frac{f}{e}}+Z_e^{\frac{f}{e}})^{\frac{d'}{f}j}P_e^{\frac{d'}{e}(\frac{m}{d'}-j)}\right)
        \mu\left(\frac{d'}{d}\right)
    \end{align*}
    Rearranging the summations we obtain
    \begin{align*}
        &\sum_{e|m}\frac{e}{m}\sum_{e|d'|m}\sum_{0\leq j\leq \frac{m}{d'}}\binom{\frac{m}{d'}}{j}\Ind_{\Gamma_e}^{\Gamma_m}\left( \sum_{e|f|d'}\left(\sum_{f|d|d'}\mu\left(\frac{d'}{d}\right)\right)\mu\left(\frac{f}{e}\right) (X_e^{\frac{f}{e}}+Z_e^{\frac{f}{e}})^{\frac{d'}{f}j}P_e^{\frac{d'}{e}(\frac{m}{d'}-j)}\right)
    \end{align*}
    By Möbius inversion this becomes
    \begin{align*}
        &\sum_{e|m}\frac{e}{m}\sum_{e|d'|m}\mu\left(\frac{d'}{e}\right)\Ind_{\Gamma_e}^{\Gamma_m}\left( \sum_{0\leq j\leq \frac{m}{d'}}\binom{\frac{m}{d'}}{j} (X_e^{\frac{d'}{e}}+Z_e^{\frac{d'}{e}})^{j}P_e^{\frac{d'}{e}(\frac{m}{d'}-j)}\right)\\
        &=\sum_{e|m}\frac{e}{m}\sum_{e|d'|m}\mu\left(\frac{d'}{e}\right)\Ind_{\Gamma_e}^{\Gamma_m}\left( \left((X_e^{\frac{d'}{e}}+Z_e^{\frac{d'}{e}}+P_e^{\frac{d'}{e}}\right)^{\frac{m}{d'}}\right)
    \end{align*}
    which agrees with $(X*(Z*P))_m$ by similar arguments.

    Therefore, by the previous lemma the function $X\mapsto (\Per_\Gamma^{\Gamma_m}(X))_{m|N}$ defines an additive morphism $(K_0^\Gamma(\Var_\C),+)\to (R,*)$. Composing with the projection $\prod_{m|N}K_0^{\Gamma_m}(\Var_\C)\to K_0^{\Gamma'}(\Var_\C)$ we get the existence of the desired function which satisfies $(1)$ and $(3)$.

    For $(2)$, recall that $\Per_\Gamma^{\Gamma'}(X\times Y)=\Per_\Gamma^{\Gamma'}(X)\times \Per_\Gamma^{\Gamma'}(Y)$ for any two $\Gamma$-varieties $X,Y$. Hence, it is enough to prove that $(R,*)$ becomes a ring with the coordinate wise product $\cdot$. That $R$ is closed under $\cdot$ is clear. We compute the $m$-th coordinate of $(X*Y)\cdot Z$
    \begin{align*}
        \sum_{d|m}\frac{d}{m}\Ind_{\Gamma_d}^{\Gamma_m}\left(\sum_{d|d'|m}(X_d^{\frac{d'}{d}}+Y_d^{\frac{d'}{d}})^{\frac{m}{d'}}\mu\left(\frac{d'}{d}\right)\right) Z_m = \sum_{d|m}\frac{d}{m}\Ind_{\Gamma_d}^{\Gamma_m}\left(\sum_{d|d'|m}(X_d^{\frac{d'}{d}}+Y_d^{\frac{d'}{d}})^{\frac{m}{d'}}Z_d^{\frac{m}{d}}\mu\left(\frac{d'}{d}\right)\right)
    \end{align*}
    and the one of $(X\cdot Z)*(Y\cdot Z)$
    \begin{align*}
        \sum_{d|m}\frac{d}{m}\Ind_{\Gamma_d}^{\Gamma_m}\left(\sum_{d|d'|m}((X_dZ_d)^{\frac{d'}{d}} +(Y_dZ_d)^{\frac{d'}{d}})^{\frac{m}{d'}}\mu\left(\frac{d'}{d}\right)\right) = \\
        \sum_{d|m}\frac{d}{m}\Ind_{\Gamma_d}^{\Gamma_m}\left(\sum_{d|d'|m}(X_d^{\frac{d'}{d}}+Y_d^{\frac{d'}{d}})^{\frac{m}{d'}}Z_d^{\frac{m}{d}}\mu\left(\frac{d'}{d}\right)\right)
    \end{align*}
    Hence, they agree and $(R,*,\dot)$ is a commutative ring. 
\end{proof}

The following corollary finishes the proof of Theorem \ref{thm:summarize-per-action}.

\begin{coro}\label{peract-on-niceclass}
    Let $\Gamma\subset\Gamma'$ be finite cyclic groups. Call $N=|\Gamma'/\Gamma|$. Take $\xi\in A_\Gamma$. Then $\Per_\Gamma^{\Gamma'}(\xi)\in A_{\Gamma'}$.
\end{coro}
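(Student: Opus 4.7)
The plan is to reduce the statement to its generators. Since $A_\Gamma$ is the subring of $K_0^\Gamma(\Var_\C)$ generated by classes $[\Ind_H^\Gamma(\A^n)]$ with $H$ acting linearly on $\A^n$, and $\Per_\Gamma^{\Gamma'}$ is multiplicative by Theorem \ref{thm:per-act}(2), the only subtle point is its interaction with sums, governed by the binomial formula of Theorem \ref{thm:per-act}(3). To handle all intermediate groups simultaneously, I would introduce the auxiliary set
\[\widetilde{A}:=\{\xi\in A_\Gamma : \Per_\Gamma^{\Gamma_d}(\xi)\in A_{\Gamma_d}\text{ for every }d\mid N\}\]
and prove $\widetilde{A}=A_\Gamma$. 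The corollary is then the $d=1$ instance of this equality.

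Two preliminary observations keep the argument clean. First, Lemma \ref{ind-times-ind} rewrites a product of generators $[\Ind_{H_1}^{\Gamma_d}(\A^{n_1})]\cdot[\Ind_{H_2}^{\Gamma_d}(\A^{n_2})]$ as an integer multiple of a single generator of the same shape, so $A_{\Gamma_d}$ is additively spanned by its generators. Second, transitivity of induction sends a generator $[\Ind_H^{\Gamma_{de}}(\A^n)]$ to $[\Ind_H^{\Gamma_d}(\A^n)]$, so each $A_{\Gamma_d}$ is stable under every $\Ind_{\Gamma_{de}}^{\Gamma_d}$. With these, closure of $\widetilde{A}$ under products is immediate from Theorem \ref{thm:per-act}(2) applied at each level, and closure under sums (hence differences) follows from the binomial formula at level $d$: it expresses $\Per_\Gamma^{\Gamma_d}(\xi_1+\xi_2)$ as a sum of classes $\Ind_{\Gamma_{de}}^{\Gamma_d}(\cdots)$, where the inner polynomial expression involves only $\Per_\Gamma^{\Gamma_{de}}(\xi_i)$; these already lie in $A_{\Gamma_{de}}$ whenever $\xi_1,\xi_2\in\widetilde{A}$.

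It remains to verify that every generator of $A_\Gamma$ lies in $\widetilde{A}$, i.e.\ that $\Per_\Gamma^{\Gamma_d}([\Ind_H^\Gamma(\A^n)])\in A_{\Gamma_d}$ for every $d\mid N$ and every linear generator. This is precisely the explicit computation of $\Per_\Gamma^{\Gamma'}$ on induced linear classes promised in Section \ref{permutaion-of-factors}: one cellularly decomposes $(\Ind_H^\Gamma(\A^n))^{K}$ by the index set $(\Gamma/H)^{K}$, groups the cells into $\Gamma'$-orbits, and identifies each orbit as an induced class of the affine fiber $\A^{nN}$ under the linear action of the appropriate stabilizer. The hardest part is this last piece of bookkeeping: one must simultaneously control the $\Gamma$-diagonal action and the cyclic $\Gamma'/\Gamma$-shift on $(\Gamma/H)^{K}$ and check that every orbit stabilizer genuinely acts linearly on $\A^{nN}$. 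Once the orbits are parametrized, $\Per_\Gamma^{\Gamma'}([\Ind_H^\Gamma(\A^n)])$ decomposes manifestly into a $\Z$-linear combination of generators of $A_{\Gamma'}$, and the identical analysis for every intermediate $\Gamma_d$ closes the induction establishing $\widetilde{A}=A_\Gamma$.
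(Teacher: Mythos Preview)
Your proposal is correct and follows essentially the same three-step scheme as the paper's proof: verify the statement on the generators $[\Ind_H^\Gamma(\A^n)]$ via the cell decomposition indexed by $(\Gamma/H)^K$, then propagate to sums, products, and negatives using multiplicativity and the binomial formula of Theorem~\ref{thm:per-act}. Your introduction of the auxiliary set $\widetilde{A}$ tracking all intermediate levels $\Gamma_d$ simultaneously is a small but genuine clarification: the binomial formula at level $\Gamma'$ involves $\Per_\Gamma^{\Gamma_d}(\xi_i)$ for every $d\mid N$, so closure under sums really does require the inductive hypothesis at every intermediate subgroup, a point the paper leaves implicit. The only place you are slightly brisk is the parenthetical ``hence differences'': closure under negation is not formally a consequence of closure under sums alone, but follows because in the binomial expansion of $\Per_\Gamma^{\Gamma_m}(\xi+(-\xi))=0$ the unique term involving $\Per_\Gamma^{\Gamma_m}(-\xi)$ is the linear one $\Per_\Gamma^{\Gamma_m}(\xi)+\Per_\Gamma^{\Gamma_m}(-\xi)$ (coming from $d=m$), all remaining terms lying in $A_{\Gamma_m}$ by induction on $m$; this is exactly the inverse computation in the group $(R,*)$ from the proof of Theorem~\ref{thm:per-act}, and the paper is equally terse here.
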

\begin{proof}
    It is enough to check:
    \begin{enumerate}
        \item The result holds $\Ind_H^\Gamma(\A^n)$ for a subgroup $H\subset \Gamma$ and a linear action of $H$ on $\A^n$.
        \item If the result holds for $\xi$ and $\xi'$, the result holds for:
            \begin{enumerate}
                \item $\xi\xi'$, and
                \item $\xi+\xi'$
            \end{enumerate}
        \item If the result holds for $\xi$, it holds for $-\xi$
    \end{enumerate}

    For $(1)$, let $X=\Ind_H^\Gamma(\A^n)$ for some affine space with a linear $H$-action. Call $K=\Gamma'/\Gamma$. Then $(\Ind_\Gamma^{\Gamma'}(X))^K=(\Ind_H^{\Gamma'}(\A^n))^K$. Recall that $\Per_\Gamma^{\Gamma'}(X)$ consist of points $(x_i,\gamma_i)$ with $\gamma_i\Gamma=i$ and the $\Gamma'$-action is given by
    \[(\gamma'\cdot (x_i,\gamma_i))_j=(x_{\gamma'^{-1}j},\gamma'\gamma_{\gamma'^{-1}j})\]
    Note $\Per_\Gamma^{\Gamma'}(X)=(\bigsqcup_{\Gamma/H}\A^n)^K$ has a cell decomposition indexed by $\prod_{i\in K} i\Gamma/H$. It is given by $(x_i,\gamma_i)\mapsto (\gamma_iH)$. The action of $\Gamma'$ moves the cells in a compatible way with the permutation of factors action on $\prod_{i\in K} i\Gamma/H$. Note this last action is nothing but $\Per_H^\Gamma(\Gamma/H)$ via $\Ind_\Gamma^{\Gamma'}(\Gamma/H)\simeq \Gamma'/H$, $(\gamma H,\gamma')\mapsto \gamma'\gamma H$.

    Take an element $(\gamma_i)\in \prod_{i\in K} i\Gamma/H$ and assume that $\gamma'\cdot(\gamma_i)=(\gamma_i)$. This means that $\gamma_jH=\gamma'\gamma_{\gamma'^{-1}j}H$ for all $j$. Note the induced action on $\A^{nN}$ is 
    \[(\gamma'\cdot(x_i))_j=((\gamma'\gamma_{\gamma'^{-1}j}\gamma_j^{-1})x_{\gamma'^{-1}j})\]
    which is linear as $H$ was acting in a linear way. So $\Per_\Gamma^{\Gamma'}(X)\in A_{\Gamma'}$. Moreover, $[\Per_\Gamma^{\Gamma'}(X)]^{\Gamma'}=q^{nN}\otimes \Q^{\Per_\Gamma^{\Gamma'}(\Gamma/H)}$. The result holds in this case.

    For $(2)$ and $(3)$ use that $\Per_\Gamma^{\Gamma'}(-)$ is multiplicative and satisfies the binomial formula.
\end{proof}

\begin{lema}\label{classperact}
    Let $\Gamma\subset \Gamma'$ be finite cyclic groups and $N=|\Gamma'/\Gamma|$. For each divisor $d$ of $N$, call $\Gamma_d$ the unique index $d$ subgroups of $\Gamma'$. There are unique functions $\Omega_d: R_\Q(\Gamma)[q]\to R_\Q(\Gamma_d)[q]$, $d|N$, such that
    \begin{enumerate}
        \item $\Omega_d(q^n\otimes\Q^{\Gamma/H})=q^{nd}\otimes\Q^{\Per_\Gamma^{\Gamma_d}(\Gamma/H)}$ for any subgroup $H\subset \Gamma$,
        \item $\Omega_d(V_1V_2)=\Omega_d(V_1)\Omega_d(V_2)$ for any $V_1,V_2$,
        \item the binomial formula holds  
        \[\Omega_m(V_1+V_2)=\sum_{d|m}\frac{d}{m}\Ind_{\Gamma_d}^{\Gamma_m}\left(\sum_{d|d'|m}(\Omega_d(V_1)^{\frac{d'}{d}}+\Omega_d(V_2)^{\frac{d'}{d}})^{\frac{m}{d'}}\mu\left(\frac{d'}{d}\right)\right)\]
        for any $V_1,V_2\in R_\Q(\Gamma)[q]$, and
        \item for any $\xi\in A_\Gamma$,
        \[[\Per_\Gamma^{\Gamma'}(\xi)]^{\Gamma'}=\Omega_N(E^\Gamma(\xi;q))\]
    \end{enumerate}
\end{lema}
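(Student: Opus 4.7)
The plan is to mirror, at the level of representation rings, the ring-theoretic construction used to define $\Per_\Gamma^{\Gamma'}$ in the proof of Theorem \ref{thm:per-act}. For uniqueness, property (1) fixes $\Omega_d$ on the generators $q^n\otimes\Q^{\Gamma/H}$ of $R_\Q(\Gamma)[q]$ as a $\Z$-module, property (2) on their products, and property (3)---combined with the identity $\Omega_m(0)=0$ that one reads off from (3) with $\xi_1=\xi_2=0$ via the Möbius identity $\sum_{d\mid d'\mid m}\mu(d'/d)=\delta_{d,m}$---determines it on sums and, by solving the resulting equations for additive inverses, on negatives. Since these operations generate all of $R_\Q(\Gamma)[q]$ from the given generators, uniqueness follows.

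For existence, I would proceed via the equivariant $E$-polynomial. The final corollary of Subsection 3.1 identifies $E^\Gamma(\xi;q)=[\xi]^\Gamma$ on $A_\Gamma$, making $E^\Gamma:A_\Gamma\to R_\Q(\Gamma)[q]$ a ring morphism; it is surjective because $E^\Gamma([\Ind_H^\Gamma(\A^n)])=q^n\otimes\Q^{\Gamma/H}$ ranges over a $\Z$-basis as $H\subset\Gamma$ and $n\geq 0$ vary. Given $V\in R_\Q(\Gamma)[q]$, I would choose any lift $\xi\in A_\Gamma$ with $E^\Gamma(\xi)=V$ and set
\[\Omega_d(V):=E^{\Gamma_d}(\Per_\Gamma^{\Gamma_d}(\xi)),\]
which lives in $R_\Q(\Gamma_d)[q]$ because $\Per_\Gamma^{\Gamma_d}(\xi)\in A_{\Gamma_d}$ by Corollary \ref{peract-on-niceclass}. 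Property (1) is then the explicit calculation at the end of the proof of that corollary, (2) follows from the multiplicativity of $\Per_\Gamma^{\Gamma_d}$ and of $E^{\Gamma_d}$, (3) comes from applying $E^{\Gamma_d}$ coordinate-wise to the binomial formula of Theorem \ref{thm:per-act} (using that $E$ commutes with induction), and (4) is by construction.

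The main obstacle is well-definedness of $\Omega_d$, i.e., independence from the choice of lift $\xi$. I would dispose of this by a bootstrap: any candidate $\Omega_d$ defined from some choice of lifts satisfies (1), (2), and (3), so by the uniqueness argument of the first paragraph the resulting functions must agree irrespective of the lifts chosen. Equivalently, one can avoid lifts altogether by constructing the commutative ring $(R',*,\cdot)\subset\prod_{m\mid N}R_\Q(\Gamma_m)[q]$ from the restriction-compatible tuples, with $*$ given by the same formula as in Theorem \ref{thm:per-act}; the ring axioms hold by the identical formal computations, which depend only on Frobenius reciprocity, the projection formula, and the Möbius identities (Lemmas \ref{moebius-lemma-1} and \ref{moebius-lemma-2}), so extracting $\Omega_d$ as the $d$-th coordinate of the resulting structure map yields the desired functions.
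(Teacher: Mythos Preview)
Your approach is the same as the paper's in spirit: transport $\Per_\Gamma^{\Gamma_d}$ along $E^\Gamma$. The paper's proof, however, is a single line: it observes that $E^\Gamma(-;q):A_\Gamma\to R_\Q(\Gamma)[q]$ and the map $R_\Q(\Gamma)[q]\to A_\Gamma$, $q^n\otimes\Q^{\Gamma/H}\mapsto[\Ind_H^\Gamma(\A^n)]$, are \emph{mutual ring inverses}. Once you know this, the lift $\xi$ of $V$ is unique, well-definedness is automatic, and (1)--(4) are immediate from the corresponding properties of $\Per_\Gamma^{\Gamma_d}$ on $A_\Gamma$. You already checked surjectivity of $E^\Gamma$; injectivity follows because, by Lemma~\ref{ind-times-ind}, products of the generators $[\Ind_H^\Gamma(\A^n)]$ are again integer multiples of such classes, so $A_\Gamma$ is exactly the $\Z$-span of these generators and the section $q^n\otimes\Q^{\Gamma/H}\mapsto[\Ind_H^\Gamma(\A^n)]$ is onto.

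Your bootstrap argument for well-definedness, as written, is circular. If you define $\Omega_d(V)$ via an \emph{arbitrary} choice of lift $\xi_V$ for each $V$, then checking (2) amounts to $E^{\Gamma_d}\!\big(\Per(\xi_{V_1V_2})\big)=E^{\Gamma_d}\!\big(\Per(\xi_{V_1}\xi_{V_2})\big)$, which compares two \emph{different} lifts of $V_1V_2$---precisely the independence you have not yet established. The same circularity infects (3). Your fallback, rebuilding the ring $(R',*,\cdot)\subset\prod_{m\mid N}R_\Q(\Gamma_m)[q]$ and rerunning the formal computations of Theorem~\ref{thm:per-act}, is valid but superfluous once you note that $E^\Gamma$ is an isomorphism on $A_\Gamma$.
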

\begin{proof}
    Use the maps $R_\Q(\Gamma)[q]\to A_\Gamma$, $q^n\otimes \Q^{\Gamma/H}\mapsto \Ind_H^\Gamma(\A^n)$, and $E^\Gamma(-;q):A_\Gamma\to R_\Q(\Gamma)[q]$ which are mutual ring inverses.    
\end{proof}

\begin{lema}
    Let $\Gamma\subset\Gamma'$ be finite cyclic groups. Assume that $N=|\Gamma'/\Gamma|$ is prime. Then
    \[\Omega_N(p\otimes\Q^{\Gamma/H})=p(q^N)\otimes\Q^{\Per_\Gamma^{\Gamma'}(\Gamma/H)}+ \left(\frac{|\Gamma|}{|H|}\right)^{N-1}\frac{1}{N}(p^N-p(q^N))\otimes \Q^{\Gamma'/H}\]
    for any $p\in\Q[q]$ and subgroup $H\subset \Gamma$.
\end{lema}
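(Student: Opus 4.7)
The plan is to decouple the polynomial and representation factors via the multiplicativity of $\Omega_N$ (property (2) of Lemma \ref{classperact}). Writing
\[p\otimes \Q^{\Gamma/H} \;=\; (p\otimes T_\Gamma)\cdot(1\otimes \Q^{\Gamma/H}),\]
one reduces to evaluating each factor and then multiplying. The second factor is immediate from property (1) applied with $n=0$:
\[\Omega_N(1\otimes \Q^{\Gamma/H}) \;=\; 1\otimes \Q^{\Per_\Gamma^{\Gamma'}(\Gamma/H)}.\]

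For the polynomial factor I first simplify the binomial formula using that $N$ is prime, so its only divisors are $1$ and $N$. Property (1) at $d=1$ gives $\Omega_1=\mathrm{id}$ (since $\Per_\Gamma^{\Gamma}(X)=X$), and the two surviving terms in the binomial formula collapse to
\[\Omega_N(V_1+V_2) \;=\; \Omega_N(V_1)+\Omega_N(V_2) + \tfrac{1}{N}\,\Ind_\Gamma^{\Gamma'}\!\bigl((V_1+V_2)^N-V_1^N-V_2^N\bigr).\]
Specialising to $V_i\in\Q[q]\otimes T_\Gamma$ and using $\Ind_\Gamma^{\Gamma'}(f\otimes T_\Gamma)=f\otimes \Q^{\Gamma'/\Gamma}$, together with $\Omega_N(q^n\otimes T_\Gamma)=q^{nN}\otimes T_{\Gamma'}$ from property (1), an induction on the number of monomials yields
\[\Omega_N(p\otimes T_\Gamma) \;=\; p(q^N)\otimes T_{\Gamma'} + \tfrac{1}{N}\bigl(p^N-p(q^N)\bigr)\otimes \Q^{\Gamma'/\Gamma}\]
for $p$ with non-negative integer coefficients. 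Extension to arbitrary $p\in\Q[q]$ is handled by first noting $\Omega_N(0)=0$ (set $V_2=0$) and then solving for $\Omega_N(p-r)$ using the binomial relation applied to $p=(p-r)+r$.

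To combine the two pieces I compute the product $\Q^{\Gamma'/\Gamma}\cdot \Q^{\Per_\Gamma^{\Gamma'}(\Gamma/H)}$ in $R_\Q(\Gamma')$. Writing $\Q^{\Gamma'/\Gamma}=\Ind_\Gamma^{\Gamma'}T_\Gamma$ and applying the projection formula (Theorem \ref{thm:sec2.1-2}(1)),
\[\Q^{\Gamma'/\Gamma}\cdot \Q^{\Per_\Gamma^{\Gamma'}(\Gamma/H)} \;=\; \Ind_\Gamma^{\Gamma'}\!\bigl(\Res_\Gamma^{\Gamma'}\Q^{\Per_\Gamma^{\Gamma'}(\Gamma/H)}\bigr).\]
Under restriction to $\Gamma$ the permutation of factors becomes the diagonal action, so the argument of the induction equals $(\Q^{\Gamma/H})^{\otimes N}$; iterating the gcd--lcm product rule $N-1$ times gives $(|\Gamma|/|H|)^{N-1}\Q^{\Gamma/H}$, and induction in stages (Theorem \ref{thm:sec2.1-1}(3)) converts $\Ind_\Gamma^{\Gamma'}\Q^{\Gamma/H}=\Ind_\Gamma^{\Gamma'}\Ind_H^\Gamma T_H$ into $\Q^{\Gamma'/H}$.

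Multiplying the expression for $\Omega_N(p\otimes T_\Gamma)$ by $1\otimes \Q^{\Per_\Gamma^{\Gamma'}(\Gamma/H)}$ and applying this representation identity produces exactly the stated formula. The calculation is essentially bookkeeping; the only slightly delicate step is the inductive extension in the polynomial calculation from monomials to arbitrary rational coefficients, which nevertheless proceeds along the standard lines described above.
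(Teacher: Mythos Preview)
Your argument is correct, and the key identity you isolate,
\[
\Q^{\Gamma'/\Gamma}\cdot \Q^{\Per_\Gamma^{\Gamma'}(\Gamma/H)}
=\Ind_\Gamma^{\Gamma'}\bigl((\Q^{\Gamma/H})^{\otimes N}\bigr)
=\left(\tfrac{|\Gamma|}{|H|}\right)^{N-1}\Q^{\Gamma'/H},
\]
is exactly what the paper uses as well. The organization, however, is different. You invoke multiplicativity (property~(2)) to strip off the $\Q^{\Gamma/H}$ factor, establish the $H=\Gamma$ case by induction on monomials, and then reinstate the factor via the projection formula and the product rule. The paper instead works directly with $p\otimes\Q^{\Gamma/H}$: it checks the claimed formula on $p=q^n$ (where the second term vanishes) and then verifies that the right-hand side itself satisfies the prime-$N$ binomial relation
\[
F(p_1+p_2)=F(p_1)+F(p_2)+\tfrac{1}{N}\Ind_\Gamma^{\Gamma'}\!\bigl(((p_1+p_2)^N-p_1^N-p_2^N)\otimes(\Q^{\Gamma/H})^N\bigr),
\]
which reduces to the same identity $\Ind_\Gamma^{\Gamma'}\!\bigl((\Q^{\Gamma/H})^N\bigr)=(|\Gamma|/|H|)^{N-1}\Q^{\Gamma'/H}$. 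Since properties~(1) and~(3) pin down $\Omega_N$ on all of $\Z[q]\otimes\Q^{\Gamma/H}$, this finishes the proof without ever appealing to multiplicativity. Your route has the virtue of making the $H=\Gamma$ case (Corollary~\ref{pfinzq}) appear as a genuine intermediate step rather than a specialization; the paper's route is shorter and uses one fewer structural property.
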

\begin{proof}
    Both sides agree for $p=q^n$. Hence, we only need to show that the right hand side satisfies the binomial formula. That is
    \begin{align*}
        &(p_1+p_2)(q^N)\otimes\Q^{\Per_\Gamma^{\Gamma'}(\Gamma/H)}+ \left(\frac{|\Gamma|}{|H|}\right)^{N-1}\frac{1}{N}((p_1+p_2)^N-(p_1+p_2)(q^N))\otimes \Q^{\Gamma'/H}\\
        &= p_1(q^N)\otimes\Q^{\Per_\Gamma^{\Gamma'}(\Gamma/H)}+\left(\frac{|\Gamma|}{|H|}\right)^{N-1} \frac{1}{N}(p_1^N-p_1(q^N))\otimes \Q^{\Gamma'/H}+p_2(q^N)\otimes\Q^{\Per_\Gamma^{\Gamma'}(\Gamma/H)}\\
        &+\left(\frac{|\Gamma|}{|H|}\right)^{N-1} \frac{1}{N}(p_2^N-p_2(q^N))\otimes \Q^{\Gamma'/H}\\
        &+\frac{1}{N}\Ind_\Gamma^{\Gamma'}(((p_1\otimes\Q^{\Gamma/H}+p_2\otimes\Q^{\Gamma/H})^N-(p_1\otimes\Q^{\Gamma/H})^N-(p_2\otimes\Q^{\Gamma/H})^N))
    \end{align*}
    which is true as 
    \[\Ind_\Gamma^{\Gamma'}(\Q^{\Gamma/H})^N=\left(\frac{|\Gamma|}{|H|}\right)^{N-1}\Q^{\Gamma'/H}\]
    by Lemma \ref{ind-times-ind}.
\end{proof}

\begin{coro}\label{pfinzq}
    Let $\Gamma$ be a finite cyclic group of size $N$ prime. Take $\xi\in \Z[q]$. Then $\Per_{\{e\}}^{\Gamma}(\xi)\in A_{\Gamma}$ and 
    \[[\Per_{\{e\}}^{\Gamma}(\xi)]^{\Gamma}=E(\xi;q^{N})+\frac{1}{N}\Ind_{\{e\}}^{\Gamma}(E(\xi;q)^{N}-E(\xi;q^{N})))\]
\end{coro}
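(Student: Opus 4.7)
The plan is to specialize the preceding lemma to the case where the smaller group in its notation is $\{e\}$ and the larger is our $\Gamma$, with $H=\{e\}$. First, the membership $\Per_{\{e\}}^{\Gamma}(\xi)\in A_\Gamma$ is immediate from Corollary \ref{peract-on-niceclass}: every $\xi\in\Z[q]$ is a $\Z$-linear combination of classes $[\A^n]$ carrying the (necessarily linear) trivial action of $\{e\}$, so $\xi\in A_{\{e\}}$, and then $\Per_{\{e\}}^\Gamma(\xi)\in A_\Gamma$ by that corollary.

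For the explicit formula, I would apply Lemma \ref{classperact}(4) to write
\[[\Per_{\{e\}}^{\Gamma}(\xi)]^{\Gamma}=\Omega_N\bigl(E^{\{e\}}(\xi;q)\bigr)=\Omega_N(\xi),\]
where the last equality uses that an element of $\Z[q]=R_\Q(\{e\})[q]$ coincides with its own equivariant $E$-polynomial (tensored with the trivial representation of the trivial group). Then I would invoke the preceding lemma on $\Omega_N(\xi)$. In the specialization at hand, the subgroup $H=\{e\}$ inside the trivial group gives $\Q^{\Gamma/H}=T_{\{e\}}$ and $(|\Gamma|/|H|)^{N-1}=1$; moreover $\Per_{\{e\}}^{\Gamma}(\{e\}/\{e\})$ is a single point with the trivial $\Gamma$-action, so $\Q^{\Per_{\{e\}}^{\Gamma}(\{e\}/\{e\})}=T_\Gamma$; and $\Q^{\Gamma/\{e\}}$ is the regular representation of $\Gamma$, which is exactly $\Ind_{\{e\}}^{\Gamma}(T_{\{e\}})$. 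Plugging these identifications into the preceding lemma yields
\[\Omega_N(\xi)=\xi(q^N)\otimes T_\Gamma+\tfrac{1}{N}\bigl(\xi^N-\xi(q^N)\bigr)\otimes \Ind_{\{e\}}^{\Gamma}(T_{\{e\}}),\]
which is exactly the claimed identity after recognising $E(\xi;q^N)=\xi(q^N)\otimes T_\Gamma$ and that $\Ind_{\{e\}}^{\Gamma}$ acting on a polynomial in $q$ means tensoring against $\Ind_{\{e\}}^{\Gamma}(T_{\{e\}})$.

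There is no real obstacle here: the corollary is almost purely a bookkeeping consequence of the prime case of the binomial formula packaged in the preceding lemma. The only point that requires a little care is matching $T_{\{e\}}$ with $T_\Gamma$ versus with $\Ind_{\{e\}}^{\Gamma}(T_{\{e\}})$ under the two different structural maps (trivial-action versus induction), but once this is unwound the formula drops out immediately.
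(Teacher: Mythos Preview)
Your proposal is correct and is exactly the intended argument: the corollary is obtained by specializing the preceding lemma to the case $\Gamma=\{e\}\subset\Gamma'$ with $H=\{e\}$, together with Corollary~\ref{peract-on-niceclass} for the $A_\Gamma$ membership and Lemma~\ref{classperact}(4) to pass to $\Omega_N$. The identifications you make ($\Q^{\Per_{\{e\}}^{\Gamma}(\{e\}/\{e\})}=T_\Gamma$, $(|\Gamma|/|H|)^{N-1}=1$, and $\Q^{\Gamma/\{e\}}=\Ind_{\{e\}}^{\Gamma}(T_{\{e\}})$) are precisely what is needed.
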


\begin{lema}
    Let $H\subset \Gamma\subset\Gamma'$ be finite cyclic groups. Call $N=|\Gamma'/\Gamma|$ and $h=|\Gamma/H|$. Then
    \[\Q^{\Per_\Gamma^{\Gamma'}(\Gamma/H)}=\sum_{M|d|N} \frac{1}{hd}\left(\sum_{M|d'|d}h^{d'}\mu\left(\frac{d}{d'}\right)\right)\Q^{hd}\]
    where $M$ is the product of all prime divisors $p$ of $h$ elevated to the $p$-adic valuation of $N$.
\end{lema}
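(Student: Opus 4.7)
The plan is to realize $\Per_\Gamma^{\Gamma'}(\Gamma/H)$ as a concrete $\Gamma'$-set, count its fixed points under each cyclic subgroup, and recover the orbit decomposition via Möbius inversion.

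Fix a generator $\sigma$ of $\Gamma'$ and identify $\Gamma'=\Z/(Nn)$ with $n=|\Gamma|$, so that $\Gamma=N\Z/(Nn)$ and $H=Nh\Z/(Nn)$. Under the identification $\Ind_\Gamma^{\Gamma'}(\Gamma/H)\simeq\Gamma'/H\simeq\Z/(Nh)$, the underlying set of $\Per_\Gamma^{\Gamma'}(\Gamma/H)$ becomes the set of sections of the natural projection $\pi:\Z/(Nh)\to\Z/N$. Writing $s(j)=j+Na_j$ with $a_j\in\Z/h$ identifies sections with tuples $(a_0,\ldots,a_{N-1})\in(\Z/h)^N$, and using the generator-free formula from the Remark above, one computes that for $k=qN+r$ with $0\le r<N$ the action of $\sigma^k$ reads
\[(\sigma^k\cdot a)_j\equiv q+\epsilon_j+a_{j-r\bmod N}\pmod h,\qquad\text{where }\epsilon_j=1\text{ if }j<r\text{ and }\epsilon_j=0\text{ otherwise}.\]

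Next, I would compute $F(k):=|\mathrm{Fix}(\sigma^k)|$. The map $j\mapsto j-r\bmod N$ partitions $\Z/N$ into $g:=\gcd(k,N)$ cycles of length $N/g$; since these cycles are cosets of $\langle g\rangle$ in $\Z/N$ and $r=(r/g)g$, an elementary count shows each cycle contains exactly $r/g$ indices with $j<r$. Telescoping the fixed-point relations around any cycle produces the single consistency condition $(Nq+r)/g\equiv 0\pmod h$, equivalently $h\gcd(k,N)\mid k$; when this holds, a fixed section is determined by one free value of $a$ per cycle, giving $F(k)=h^{\gcd(k,N)}$, and otherwise $F(k)=0$.

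Finally, let $n_e$ denote the number of $\Gamma'$-orbits of size $e$, so that $F(e)=\sum_{e'\mid e}e'\,n_{e'}$. A prime-by-prime analysis shows $F(e')\ne 0$ is equivalent to $hM\mid e'$; writing $e'=hg'c_0$ with $g'=\gcd(e',N)$ and $\gcd(c_0,N/g')=1$, this forces $M\mid g'\mid N$, and whenever $e'\mid hd$ for some $M\mid d\mid N$ the constraint $c_0\mid d/g'\mid N/g'$ combined with $\gcd(c_0,N/g')=1$ forces $c_0=1$. Therefore the nonzero contributions to $hd\cdot n_{hd}$ come exactly from $e'=hd'$ with $M\mid d'\mid d$, and Möbius inversion yields
\[n_{hd}=\frac{1}{hd}\sum_{M\mid d'\mid d}h^{d'}\mu(d/d').\]
A parallel divisibility check rules out orbits of any other size, and summing over $d$ gives the stated formula. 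The main obstacle is the uniform-wrap count along cycles of the shift, which is what reduces the full system of fixed-point relations to a single consistency condition; everything that follows is arithmetic bookkeeping with the Möbius function.
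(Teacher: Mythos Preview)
Your proof is correct and follows essentially the same approach as the paper: both realize $\Per_\Gamma^{\Gamma'}(\Gamma/H)$ concretely, compute the fixed points of each power of a generator (arriving at the identical criterion $h\gcd(k,N)\mid k$ with $h^{\gcd(k,N)}$ fixed points when nonzero), identify the possible orbit sizes as $hd$ with $M\mid d\mid N$, and finish by Möbius inversion. The only difference is presentational: you parametrize by sections and read off the consistency condition from the cycle structure of the shift, whereas the paper performs a recursive coordinate substitution to reach the same condition; your route is arguably cleaner, but the substance is the same.
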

\begin{proof}
    Let us describe all orbits in $\Per_\Gamma^{\Gamma'}(\Gamma/H)$. Equivalently, its stabilizers. Fix a generator $\sigma$ of $\Gamma'/H$. Note that a point $(x_1,\ldots,x_N)$ is fixed by $\sigma^{kN}$ if and only if $h=|\Gamma/H|$ divides $k$. On the other hand, it is fixed by $\sigma^{l+kN}$, $0< l< N$, $0\leq k< h$, if
    \[(x_1,\ldots,x_N)=(\sigma^{N(k+1)}x_{N-l+1},\ldots, \sigma^{N(k+1)}x_N,\sigma^{Nk}x_1,\ldots,\sigma^{Nk}x_{N-l})\]
    That is $x_i=\sigma^{Nk}x_{i-l}$ for all $i> l$ and $x_i=\sigma^{N(k+1)}x_{N-l+i}$ for $1\leq i\leq l$. Write $N=r+ls$ for $0\leq r<l$, $0\leq s$. Hence, 
    \[x_i=\sigma^{N(k+1)}x_{N-l+i}=\sigma^{N(k+1)}x_{r+l(s-1)+i}=\sigma^{N(ks+1)}x_{r+i}=\left\{\begin{array}{cc}
        \sigma^{N(ks+1)}x_{r+i} & \text{if }i\leq l-r \\
        \sigma^{N(k(s+1)+1)}x_{r+i-l} & \text{if }i>l-r 
    \end{array}\right.\]
    for $1\leq i\leq l$. Let $r=r'\gcd(r,l)$ and $l=l'\gcd(r,l)$. Note that the set $A(i)=\{i+rj:j\in\Z\}$ has $l'$ residues modulo $l$. Hence, one gets
    \[x_i = \sigma^{N(l'(ks+1)+t(i)k)}x_i\]
    where $t(i)$ is the number of residues of $A(i)$ bigger than $l-r$. Hence, we need
    \[h | l'(ks+1)+t(i)k\]
    for all $i$. If this happens there are exactly $|\Gamma/H|^{\gcd(r,l)}$ points stabilized by $\sigma^{l+kN}$. 

    Now the number of residues of $A(i)$ agrees with the ones of $B(i)=\{i+\gcd(r,l)j:j\in \Z\}$. Hence $t(i)=r'$ and the required condition is
    \begin{align*}
        h\gcd(l,r)&|l(ks+1)+rk\\
        h\gcd(l,N)&|kN+l    
    \end{align*}

    We want to find all divisors of $hN$ which can be written as $kN+l$ satisfying the previous condition. We know these divisors need to be of the form $hd$ for some $d|N$ but $ N\not |d\gcd(h,N)$. Note $l$ is the residue of $hd$ modulo $N$ and the condition is $\gcd(hd,N)|d$. Equivalently $\gcd(h,\frac{N}{d})=1$. So if $M$ the part of $N$ which is not coprime with $h$, $d$ satisfies $M|d|N$ and $d<N$. Any such $d$ has $h^d$ fixed points. Hence, 
    \[\Q^{\Per_\Gamma^{\Gamma'}(\Gamma/H)}=\sum_{M|d|N} \frac{1}{hd}\left(\sum_{M|d'|d}h^{d'}\mu\left(\frac{d}{d'}\right)\right)\Q^{hd}\]
\end{proof}

\begin{ex}\label{binom2to4}
    Let us explore $\Gamma'=\mu_4$ and $\Gamma=\mu_2$ the cyclic groups of size $4$ and $2$ respectively. Then $R_\Q(\Gamma)$ has basis $\Q^1,\Q^2$ for which $\Omega_2(\Q^1)=\Q^1$ and $\Omega_2(\Q^2)=\Q^4$. Binomial formula reads:
    \begin{align*}
        \Omega_2(V_1+V_2)&=\Omega_2(V_1)+\Omega_2(V_2)+\frac{1}{2}\Ind_{\mu_2}^{\mu_4}((V_1+V_2)^2-V_1^2-V_2^2)\\
        &=\Omega_2(V_1)+\Omega_2(V_2)+\Ind_{\mu_2}^{\mu_4}(V_1V_2)
    \end{align*}
    Hence,
    \begin{align*}
        \Omega_2(p_1\otimes\Q^1+p_2\otimes\Q^2)&=\Omega_2(p_1\otimes\Q^1)+\Omega_2(p_2\otimes\Q^2)+ p_1p_2\otimes\Q^4\\
        &=p_1(q^2)\otimes\Q^1+\frac{1}{2}(p_1^2-p_1(q^2))\otimes\Q^2+ p_2(q^2)\otimes\Q^4\\&+2\frac{1}{2}(p_2^2-p_2(q^2))\otimes\Q^4+p_1p_2\otimes\Q^4\\
        &= p_1(q^2)\otimes\Q^1+\frac{1}{2}(p_1^2-p_1(q^2))\otimes\Q^2+(p_2^2+p_1p_2)\otimes\Q^4
    \end{align*}
    for any $p_1,p_2\in \Z[q]$.
\end{ex}

\begin{lema}\label{permutation-loc-trivial-fib}
    Let $\Gamma\subset \Gamma'$ be finite cyclic groups and $\pi: Y\to X$ be a $\Gamma$-equivariantly locally trivial fiber bundle with fiber $F$. Let $Z\to \Per_\Gamma^{\Gamma'}(X)$ be a $\Gamma'$-equivariant morphism. Then
    \[ [\Per_\Gamma^{\Gamma'}(Y)\times_{\Per_\Gamma^{\Gamma'}(X)} Z] = [Z]\cdot[\Per_\Gamma^{\Gamma'}(F)] \]
    in $K_0^{\Gamma'}(\Var_\C)$.
\end{lema}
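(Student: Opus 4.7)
The strategy will be to reduce to the case of a trivial fiber bundle via a $\Gamma$-equivariant stratification of the base. The plan is as follows.

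First, I would choose a $\Gamma$-invariant locally closed stratification $X=\bigsqcup_{i\in I}X_i$ such that over each $X_i$ the bundle $\pi$ is $\Gamma$-equivariantly trivial, i.e. $\pi^{-1}(X_i)\simeq X_i\times F$ as $\Gamma$-varieties with $\Gamma$ acting diagonally. Such a stratification exists by taking successive complements in a $\Gamma$-invariant cover provided by the local triviality hypothesis. Setting $N=|\Gamma'/\Gamma|$ and $K=\Gamma'/\Gamma$, this induces a product stratification of $\Per_\Gamma^{\Gamma'}(X)=X^K$ indexed by $I^K$, the stratum at $\underline{i}=(i_j)_{j\in K}$ being $\prod_{j\in K}X_{i_j}$. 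Since each $X_i$ is $\Gamma$-invariant and $\Gamma$ acts diagonally on $X^K$, the $\Gamma'$-action permutes these strata only through the quotient $K=\Gamma'/\Gamma$. Pulling this stratification back via $Z\to\Per_\Gamma^{\Gamma'}(X)$ gives a $\Gamma'$-invariant locally closed stratification $Z=\bigsqcup_\mcO Z_\mcO$ indexed by the $\Gamma'$-orbits $\mcO$ of $I^K$. By additivity of the class, it suffices to prove the identity after replacing $Z$ by each $Z_\mcO$.

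Fix such an orbit $\mcO$ and pick a representative $\underline{i}_\mcO\in\mcO$ with stabilizer $H_\mcO\subset\Gamma'$ (necessarily $\Gamma\subset H_\mcO$). Let $Z'_\mcO$ be the preimage of the single stratum $X_{\underline{i}_\mcO}$, viewed as an $H_\mcO$-variety; then $Z_\mcO\simeq\Ind_{H_\mcO}^{\Gamma'}(Z'_\mcO)$ as $\Gamma'$-varieties. Using the equivariant trivializations on each factor stratum, one checks that
\[\Per_\Gamma^{\Gamma'}(Y)\times_{\Per_\Gamma^{\Gamma'}(X)}Z_\mcO\simeq\Ind_{H_\mcO}^{\Gamma'}\bigl(Z'_\mcO\times \Res_{H_\mcO}^{\Gamma'}\Per_\Gamma^{\Gamma'}(F)\bigr),\]
since over the chosen stratum the fiber product becomes $H_\mcO$-equivariantly isomorphic to $Z'_\mcO\times F^K$, with the $H_\mcO$-action on $F^K$ given by the restriction of the permutation-of-factors $\Gamma'$-action on $\Per_\Gamma^{\Gamma'}(F)$. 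Applying then the projection identity $\Ind_H^{\Gamma'}(W\times \Res_H^{\Gamma'}V)\simeq\Ind_H^{\Gamma'}(W)\times V$ from the first lemma of Subsection 2.2, one obtains $[\Per_\Gamma^{\Gamma'}(Y)\times_{\Per_\Gamma^{\Gamma'}(X)}Z_\mcO]=[Z_\mcO]\cdot[\Per_\Gamma^{\Gamma'}(F)]$ in $K_0^{\Gamma'}(\Var_\C)$, and summing over the orbits $\mcO$ yields the desired equality.

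The main obstacle I anticipate is the careful bookkeeping that identifies the $H_\mcO$-action on $F^K$ inherited from the trivializations with the restriction of $\Per_\Gamma^{\Gamma'}(F)$ to $H_\mcO$. One needs to verify that the twist by $\sigma^N$ appearing in the definition of the permutation-of-factors action corresponds precisely to how elements of $\Gamma\subset H_\mcO$ act through each $\Gamma$-equivariant trivialization $Y|_{X_i}\simeq X_i\times F$, while non-trivial cosets in $H_\mcO/\Gamma$ implement the stabilizing cyclic permutation of the $F$-factors. Once this identification is made, the rest of the argument is a straightforward assembly of the induction, restriction, and projection identities already established in Section 2.2.
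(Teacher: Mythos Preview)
Your proposal is correct and follows essentially the same approach as the paper: both stratify $X$ $\Gamma$-equivariantly so that $\pi$ trivializes, pass to the induced product stratification of $\Per_\Gamma^{\Gamma'}(X)=X^K$, and identify the fiber of $\Per_\Gamma^{\Gamma'}(Y)\to\Per_\Gamma^{\Gamma'}(X)$ over each $\Gamma'$-invariant piece with $\Per_\Gamma^{\Gamma'}(F)$. The paper writes down the relevant isomorphism explicitly (for the case of stabilizer $\Gamma$) and appeals to the proof of Lemma~\ref{peraction} for the general cell structure, whereas you organize the same computation via orbit--stabilizer and the projection formula $\Ind_H^{\Gamma'}(W\times\Res_H^{\Gamma'}V)\simeq\Ind_H^{\Gamma'}(W)\times V$; the two packagings are equivalent, and your anticipated ``obstacle'' is exactly the verification the paper carries out with its explicit map $((z_1,f_1,\ldots),(z_2,f_{j+1},\ldots),\gamma')\mapsto((z_1,z_2,\gamma'),\gamma'\cdot(f_1,\ldots,f_N))$.
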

\begin{proof}
    Being $\pi$ equivariantly locally trivial, there is a equivariant cell decomposition $X=\bigsqcup X_\alpha$ by locally closed subvarieties such that $\pi$ is trivial over each cell. Hence, $Y=\bigsqcup X_\alpha \times F$. Note that $\Per_\Gamma^{\Gamma'}(X_\alpha\times F)\simeq \Per_\Gamma^{\Gamma'}(X_\alpha)\times \Per_\Gamma^{\Gamma'}(F)$ and 
    \[ \Ind_{\Gamma}^{\Gamma'}((Z_1\times F)^j\times (Z_2\times F)^{N-j})\simeq \Ind_{\Gamma}^{\Gamma'}(Z_1^j\times Z_2^{N-j})\times \Per_\Gamma^{\Gamma'}(F)\]
    where the last isomorphism is 
    \[((z,f_1,\ldots,f_j),(z_2,f_{j+1},\ldots,f_N),\gamma')\mapsto ((z_1,z_2,\gamma'),\gamma'\cdot(f_{1},f_{2},\ldots,f_n)).\]
    Therefore, by the proof of Lemma \ref{peraction}, there is a cell decomposition of $\Per_\Gamma^{\Gamma'}(X)$ such that $\Per_\Gamma^{\Gamma'}(Y)\to \Per_\Gamma^{\Gamma'}(X)$ is just the multiplication by $\Per_\Gamma^{\Gamma'}(F)$ over each cell. Hence, the same holds for the base change to $Z$.
\end{proof}

\subsection{Example: conjugacy classes in the general linear group}\label{subsec:conjcl}

\begin{lema}
    Let $\lambda_1,\ldots, \lambda_k$ be the sizes of a partition of $\{1,\ldots,n\}$ in subsets and consider the subgroup $H=\prod \GL_{\lambda_i}(\C)$ of $\GL_n(\C)$. Let $\sigma$ be a permutation of $\{1,\ldots,n\}$ which fixes the previous partition. Then the action of $\sigma$ by right translation on $\GL_n(\C)$ descent to $\GL_n(\C)/H$ and defines a class which lives in $A_{\langle \sigma\rangle}$.
\end{lema}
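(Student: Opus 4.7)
The plan is to reduce to the case where the permutation $\pi$ induced on blocks by $\sigma$ is a single cycle, and then handle that case by stratifying an ambient product of Grassmannians by span-dimension. I first reinterpret $\GL_n(\C)/H$ columnwise: writing $g\in\GL_n(\C)$ with its columns grouped by $I_1,\dots,I_k$, the coset $gH$ corresponds to the ordered direct-sum decomposition $(V_1,\dots,V_k)$ where $V_j$ is the span of the columns indexed by $I_j$. Right multiplication by $P_\sigma$ sends the columns indexed by $I_j$ to those indexed by $I_{\pi(j)}$, where $\pi\in S_k$ is the permutation of blocks induced by $\sigma$, so on the tuple side the action is simply $\sigma\cdot(V_1,\dots,V_k)=(V_{\pi(1)},\dots,V_{\pi(k)})$, with no extra twist on individual subspaces. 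To reduce to a single cycle, let $O_1,\dots,O_s$ be the $\pi$-orbits on $\{1,\dots,k\}$ with $|O_l|=m_l$ and constant block-size $\mu_l=\lambda_j$ for $j\in O_l$, and let $H^\circ:=\prod_l\GL_{m_l\mu_l}(\C)\supset H$ be the Levi for the coarser partition $\{\bigcup_{j\in O_l}I_j\}_l$. Since $\pi(O_l)=O_l$ we have $\sigma\in H^\circ$, so the projection $\GL_n(\C)/H\to\GL_n(\C)/H^\circ$ is $\langle\sigma\rangle$-equivariant with trivial action on the target. Because $H^\circ$ is a special group the projection is Zariski locally trivial with fiber $H^\circ/H=\prod_l\GL_{m_l\mu_l}(\C)/\GL_{\mu_l}(\C)^{m_l}$, and because the $\sigma$-action comes from an element of the principal $H^\circ$-action, in every local trivialization $U\times H^\circ/H$ the element $\sigma$ acts trivially on $U$ and by right multiplication by $\sigma$ on $H^\circ/H$. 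Hence
\[ [\GL_n(\C)/H]=[\GL_n(\C)/H^\circ]\cdot[H^\circ/H] \]
in $K_0^{\langle\sigma\rangle}(\Var_\C)$, with the base a polynomial in $q$ (motive of a Levi quotient, hence in $A_{\{e\}}\subset A_{\langle\sigma\rangle}$) and the fiber a product of single-cycle instances $\GL_{m_l\mu_l}(\C)/\GL_{\mu_l}(\C)^{m_l}$ on which $\sigma$ cycles the $m_l$ subspaces.

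For a single-cycle instance $X:=\GL_{m\lambda}(\C)/\GL_\lambda(\C)^m$ with $\mu_m=\langle\sigma\rangle$ cycling, I view $X$ as the open direct-sum locus inside $\Gr[\lambda][m\lambda]^m$ and stratify this ambient product by $r:=\dim(V_1+\dots+V_m)$. Each stratum $Z_r$ is $\mu_m$-invariant and projects $\mu_m$-equivariantly onto $\Gr[r][m\lambda]$ (trivial action on the base) via $(V_i)\mapsto\sum V_i$, with fiber $S_r$ the variety of spanning $m$-tuples of $\lambda$-subspaces of $\C^r$. The same local-triviality argument applied inside $\Gr[\lambda][r]^m$ yields
\[ [S_r]=[\Gr[\lambda][r]^m]-\sum_{r'<r}[\Gr[r'][r]]\cdot[S_{r'}]. \]
Grassmannian classes are polynomials in $q$, so $[\Gr[\lambda][r]^m]$ with its cyclic $\mu_m$-action lies in $A_{\mu_m}$ by Corollary \ref{peract-on-niceclass}, while $[\Gr[r'][r]]\in A_{\{e\}}\subset A_{\mu_m}$. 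Induction on $r$ then gives $[S_r]\in A_{\mu_m}$ for every $r$; taking $r=m\lambda$ gives $[X]\in A_{\mu_m}$, and the lemma follows after viewing each $A_{\mu_{m_l}}$-class as a class in $A_{\langle\sigma\rangle}$ by pullback along $\langle\sigma\rangle\to\mu_{m_l}$ and using that $A_{\langle\sigma\rangle}$ is a ring.

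The main obstacle I anticipate is establishing the $\langle\sigma\rangle$-equivariant factorization of classes for the two fibrations; in both cases this reduces to exhibiting a Zariski-local trivialization (of the $H^\circ$-bundle $\GL_n(\C)\to\GL_n(\C)/H^\circ$ in the reduction step, of the tautological subbundle over $\Gr[r][m\lambda]$ in the stratification step) and observing that the relevant cyclic action either factors through the structure group or acts trivially on the base, so the resulting product structure carries a product cyclic action with a constant fiber action.
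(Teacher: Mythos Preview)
Your argument is correct and follows a genuinely different route from the paper. The paper proceeds combinatorially: it defines a Pl\"ucker-style map $\ov\Psi:\GL_n(\C)/H\to\prod_j\C^{\binom{n}{\lambda_j}}$ recording the minors of each column block, stratifies by the vanishing pattern of these coordinates, passes to $\sigma$-orbits of cells to make the stratification equivariant, and then writes each refined stratum as an induction from its stabilizer of a product, over $\pi$-orbits of blocks, of rank-constrained matrix varieties, each of which it recognises as a permutation-of-factors action on varieties already placed in $A$ by Lemma~\ref{Vnmrv}. Your route is more geometric: the passage through the coarser Levi $H^\circ$ isolates the $\pi$-orbits in one step (the key point being that right multiplication by $\sigma\in H^\circ$ is part of the structure group, so any Zariski trivialization of the $H^\circ$-torsor is automatically $\langle\sigma\rangle$-equivariant), and the span-dimension stratification of $\Gr[\lambda][m\lambda]^m$ gives a tidy recursion in which only $\Per_{\{e\}}^{\mu_m}$ of Grassmannian classes and trivial-action Grassmannian classes appear, both handled by Corollary~\ref{peract-on-niceclass}. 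The paper's approach buys an explicit cell decomposition that feeds directly into the next lemma (the computation of $[\GL_n(\C)/H]^\Gamma$ via a monodromy argument); yours buys conceptual clarity and avoids the auxiliary varieties $V_{n,m,r}$, at the cost of implicitly using that inflation along the quotients $\langle\sigma\rangle\to\mu_{m_l}$ sends $A_{\mu_{m_l}}$ into $A_{\langle\sigma\rangle}$, which is immediate from the definition of $A_\Gamma$ but not stated in the paper.
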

\begin{proof}
    First of all, note that the action of $\Gamma=\langle \sigma\rangle$ by right translations on $\GL_n(\C)$ descents to $\GL_n(\C)/H$ because $gh\sigma =g\sigma (\sigma^{-1}h\sigma)$ and $\sigma^{-1}H\sigma = H$. 

    Up to conjugation, we may assume that $\lambda_1\leq \lambda_2\leq \ldots \leq \lambda_k$. Define $\Psi: \GL_n(\C) \to \C^{\binom{n}{\lambda_1}}\times \cdots \times \C^{\binom{n}{\lambda_k}}$ as follows. The first $\binom{n}{\lambda_1}$ coordinates of $\Psi(g)$ are given by the determinants of the square $\lambda_1\times \lambda_1$ submatrices of $g$ built with the first $\lambda_1$ columns of $g$. The next $\binom{n}{\lambda_2}$ coordinates are the determinants of size $\lambda_2$ square submatrices of $g$ built with the next $\lambda_2$ columns. And so on. For example, for $n=3$, $\lambda_1=2$ and $\lambda_2 = 1$, one has
    \[\Psi\left(\begin{array}{ccc}
        a & b & c \\
        d & e & f \\
        g & h & k
    \end{array} \right) = (ae-bd,dh-eg,ah-bg,c,f,k).\]
    
    Note that $\Psi$ is $H$-invariant and, therefore, defines $\ov{\Psi}:\GL_n(\C)/H\to \C^{\binom{n}{\lambda_1}}\times \cdots \times \C^{\binom{n}{\lambda_k}}$. On the other hand, there is a cell decomposition $\{X_\omega\}$ of $\C^{\binom{n}{\lambda_1}}\times \cdots \times \C^{\binom{n}{\lambda_k}}$ given by the vanishing or not of each coordinate. Index this decomposition by $\omega\in \mcI :=\{0,1\}^{\binom{n}{\lambda_1}}\times\cdots\{0,1\}^{\binom{n}{\lambda_k}}$ where a $1$ indicates indicates that the associated coordinated vanishes and the order is given by $0<1$. By taking preimages we get a decomposition
    \[ \GL_n(\C)/H = \bigsqcup_\omega \ov{\Psi}^{-1}(X_\omega). \]

    This decomposition is not $\sigma$-invariant. Indeed, $\sigma\cdot X_\omega = X_{\sigma\cdot \omega}$ where $\sigma$ acts in $\mcI$ preserving each factor and mapping the $i$-coordinate in the $j$-factor to the $\min\{\sigma(i+l):0\leq l<j\}$ one. Define then
    \[ Y_\omega = \left(\Gamma\cdot X_\omega\right)\setminus\left(\bigcup_{\omega'<\omega} \Gamma\cdot X_{\omega'}\right) \]
    for each $\omega$. Hence, we get a $\sigma$-invariant decomposition $\{Y_{[\omega]}\}$ of $\GL_n(\C)/H$ indexed by $\mcI/\Gamma$.

    Now, note that each $Y_{[\omega]}$ has as much connected components as elements in the orbit of $\omega$. Denote with $\Gamma_\omega$ the stabilizer of $\omega$. We need to prove
    \[Z_\omega := X_\omega\setminus\left(\bigcup_{\omega'<\omega}  X_{\omega'}\right)\]
    has class in $A_{\Gamma_\omega}$.

    By hypothesis, $\sigma$ induces a permutation $\tau$ of $\{1,\ldots,k\}$ such that $\lambda_{\sigma(i)}=\lambda_i$ for each $i$. Assume that $\Gamma_\omega=\langle \sigma^m\rangle$. Let $\mcO_1,\ldots, \mcO_l$ the orbits of $\kappa:=\tau^m$. Let $\pi_i:\{0,1\}^{\binom{n}{\lambda_1}}\times \cdot\{0,1\}^{\binom{n}{\lambda_k}}\to \{0,1\}^{\binom{n}{\lambda_i}}$ the projection on the $i$-th factor. Then, by its very definition, we have $\pi_{\kappa(i)}(\omega)=\pi_i(\omega)$. Moreover, 
    \[Z_\omega = \prod_{i=1}^l \prod_{\lambda_j\in\mcO_i}T_{\omega,\lambda_i}\]
    where $T_{\omega,\lambda_j}$ consist of thus $n\times \lambda_j$ matrices whose $\lambda_j\times\lambda_j$ square matrices are invertible precisely when their associated coordinated of $\pi_j(\omega)$ vanish, filed by zeros to make them square matrices. The action of $\sigma^m$ in $Z_w$ is compatible with the product structure meaning that its maps the $(i,\lambda_j)$ factor to the $(i,\sigma^m\lambda_j)$ one. Hence, it suffices to prove that $T_{\omega,\mcO_i}:=\prod_{\lambda_j\in \mcO_i} T_{\omega,\lambda_j}$ has class in $A_{\Gamma_\omega}$.

    All $T_{\omega,\lambda_j}$ naturally identifies to each other. We have described a model in the previous paragraph. Call it $S_{\omega,\mcO_i}$. Then $T_{\omega,\mcO_i}\simeq S_{\omega,\mcO_i}^{|\mcO_i|}$ and the action of $\sigma^m$ in the right hand size is described by certain automorphisms $\phi_j$ of our model. Note that, if $N=|\mcO_i|$, $\phi_N\cdots\phi_2\phi_1$ is the identity. Then the automorphism 
    \[(x_1,\ldots, x_N)\mapsto (x_1,\phi_1(x_2),\phi_2\phi_1(x_3),\ldots, \phi_{N-1}\cdots\phi_1(x_N))\]
    identifies the action with the permutation of factors action.

    Up to isomorphism we may assume that $S_{\omega,\mcO_i}$ is 
    \[W_{n,a,\mcJ}:=\left\{\begin{array}{cc}
        g\in M_{n,a}\C:\text{the }a\times a\text{ invertible submatrices of }g\\
         \text{ are exactly thus whose set of rows belong to }\mcJ
    \end{array}\right\}\]
    for some family $\mcJ$ of size $j$ subsets of $\{1,\ldots,n\}$. Note that this variety is a disjoint union of $V_{n,a,r}$ for suitable $r$'s. Hence this follows by Corollary \ref{peract-on-niceclass}.
\end{proof}

\begin{lema}
    Let $\lambda_1,\ldots, \lambda_k$ be the sizes of a partition of $\{1,\ldots,n\}$ in subsets and consider the subgroup $H=\prod \GL_{\lambda_i}(\C)$ of $\GL_n(\C)$. Let $\sigma$ be a permutation of $\{1,\ldots,n\}$ which fixes the previous partition and $\Gamma=\langle \sigma\rangle$. Then the action of $\sigma$ by right translation on $\GL_n(\C)$ descent to $\GL_n(\C)/H$ and defines a class which lives in $A_\Gamma$ and satisfies
    \[[\GL_n(\C)/H]^\Gamma=\left(\prod_{i=0}^{n-1}(q^n-q^i)\otimes T_\Gamma\right)/\prod_{\mcO} [\GL_{\lambda(\mcO)}(\C)^\mcO]^{\Gamma} \]
    where $\mcO$ runs over all orbits of $\sigma$ in $\{\lambda_1,\ldots,\lambda_k\}$, $\lambda(\mcO)$ is the size of an element in $\mcO$ and $\GL_{\lambda(\mcO)}(\C)^\mcO$ is endow with the permutation of factors action induced by the action of $\Gamma$ in $\mcO$.
\end{lema}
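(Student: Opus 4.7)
The strategy is to establish the motivic identity
\[ [\GL_n(\C)] = [\GL_n(\C)/H] \cdot [H] \quad \text{in } K_0^\Gamma(\Var_\C), \]
where $H$ is endowed with the $\sigma$-conjugation action, and then apply $[-]^\Gamma$ with its multiplicativity on $A_\Gamma$. Together with Corollary \ref{gl} giving $[\GL_n(\C)]^\Gamma = \prod_{i=0}^{n-1}(q^n-q^i) \otimes T_\Gamma$, the stated formula will follow by dividing.

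The identification of $H$ with the conjugation action as the $\Gamma$-variety $\prod_\mcO \GL_{\lambda(\mcO)}(\C)^\mcO$ is immediate: $\sigma$ permutes the partition blocks via an induced permutation $\tau$ of $\{1,\ldots,k\}$, and within each $\tau$-orbit $\mcO$ of size $d$ the conjugation by $\sigma$ cycles the $d$ factors $\GL_{\lambda(\mcO)}(\C)$ and ends each cycle with conjugation by $\sigma^{d}$ restricted to the block---the $\Gamma_d$-action on $\GL_{\lambda(\mcO)}(\C)$ used to define $\Per_{\Gamma_d}^\Gamma(\GL_{\lambda(\mcO)}(\C)) = \GL_{\lambda(\mcO)}(\C)^\mcO$ in Section \ref{permutaion-of-factors}. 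Consequently the three classes $[\GL_n(\C)/H]$, $[\GL_n(\C)]$, $[H]$ all lie in $A_\Gamma$---respectively by the previous lemma, Corollary \ref{gl}, and Corollaries \ref{gl} and \ref{peract-on-niceclass}---so Vogel's theorem applies to the displayed motivic identity.

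The core of the argument is to show that $\pi:\GL_n(\C) \to \GL_n(\C)/H$ is $\Gamma$-equivariantly Zariski locally trivial, where the equivariance is understood in the semidirect-product sense ($H$- and $\Gamma$-actions satisfy $h\sigma = \sigma(\sigma^{-1}h\sigma)$, giving an action of $H \rtimes \Gamma$). Using the cell decomposition $Y_{[\omega]} = \Ind_{\Gamma_\omega}^\Gamma(Z_\omega)$ of $\GL_n(\C)/H$ from the previous lemma, I would construct a $\Gamma_\omega$-equivariant trivialization $\pi^{-1}(Z_\omega) \simeq Z_\omega \times H$ over each cell via a $\Gamma_\omega$-equivariant section $s_\omega: Z_\omega \to \GL_n(\C)$. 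The section is defined by a coherent $\tau$-invariant family of pivoting minors: for each $\tau$-orbit $\mcO$ of blocks, pick a $\lambda(\mcO)$-subset $S_\mcO \subset \{1,\ldots,n\}$ such that the corresponding minor on every block $i \in \mcO$ is non-vanishing on $Z_\omega$, then normalize these minors to the identity.

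The main obstacle is the equivariance of the trivialization: right translation by $\sigma$ on $\GL_n(\C)$ permutes columns both across blocks and, crucially, within blocks (via the bijection $B_i \to B_{\tau(i)}$ of column indices inside the partition), which interacts with the pivoting normalization. This is handled by applying a compensating change of variables on the fiber $H$---right multiplication by a block-diagonal matrix whose $i$-th block is the permutation matrix of $\sigma|_{B_i \to B_{\tau(i)}}$---which restores equivariance and identifies the trivialization action on $H$ with pure conjugation. Once the $\Gamma_\omega$-equivariant trivializations are in place, cut-and-paste gives $[\GL_n(\C)] = [\GL_n(\C)/H] \cdot [H]$ in $K_0^\Gamma(\Var_\C)$, completing the argument.
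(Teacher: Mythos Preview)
Your route is genuinely different from the paper's. The paper does not try to prove $[\GL_n]=[\GL_n/H]\cdot[H]$ in $K_0^\Gamma(\Var_\C)$; instead it uses that $[\GL_n/H]\in A_\Gamma$ (from the previous lemma), so $[\GL_n/H]^\Gamma=E^\Gamma(\GL_n/H;q)$, and then computes the equivariant $E$-polynomial via the Leray-type multiplicativity $E^\Gamma(\GL_n)=E^\Gamma(H)\,E^\Gamma(\GL_n/H)$ for the fibration $H\to\GL_n\to\GL_n/H$. Here $E^\Gamma(H)$ is governed by the monodromy on $H^\bullet(H)$; they compute it to be $h\mapsto h_0\,\sigma^{-1}h\sigma$ and then observe that both the left twist by $h_0$ and the internal block conjugations act trivially on cohomology, leaving only the pure factor-permutation. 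That cohomological triviality is the crux, and it is exactly what your approach has to replace.

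Your compensating change of variables does not do the job as stated. With a position-independent pivoting section one finds $s(\sigma\cdot gH)=s(gH)\,\sigma\,p^{-1}$ for the block-diagonal permutation $p=\operatorname{diag}(\psi_l)$ built from $\sigma|_{B_l\to B_{\tau(l)}}$, so the induced fiber action is $h\mapsto p\,\sigma^{-1}h\sigma$, not pure conjugation. Right-multiplying the fiber by $p$ does not intertwine this with $\sigma^{-1}h\sigma$: writing out the condition forces $p=1$. More conceptually, modifying the section to $\tilde s=s\cdot q$ would need $\sigma^{-1}q\sigma=p^{-1}q$, which on a $\tau$-orbit of length $d$ forces $\psi_d\cdots\psi_1=\sigma^d|_{\text{block}}=\mathrm{id}$; this fails whenever $\sigma^d$ acts nontrivially inside the block (e.g.\ a single block of size $2$ with $\sigma=(12)$). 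In that toy case $\GL_2\to\mathrm{pt}$ has fiber action equal to right translation by $\sigma$, visibly not conjugation. A related point: your appeal to Corollary \ref{gl} to place $[H]$ (with \emph{conjugation}) in $A_\Gamma$ is off---that corollary treats translations, not conjugation.

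Your approach is salvageable, but the fix is essentially the paper's insight transported to $K_0^\Gamma$: on each orbit the fiber action $h\mapsto p\,\sigma^{-1}h\sigma$ is, after the coordinate change $y_j=h_j c_j$ with $c_j=\psi_{j-1}\cdots\psi_1$, exactly $\Per_{\Gamma_d}^{\Gamma}$ applied to $\GL_{\lambda(\mcO)}$ with inner action given by \emph{right translation} by $\sigma^d|_{\text{block}}$. By Corollary \ref{gl} this inner action lies in $A_{\Gamma_d}$ with $[\GL_{\lambda(\mcO)}]^{\Gamma_d}=[\GL_{\lambda(\mcO)}]\otimes T_{\Gamma_d}$, the same as for the trivial inner action, and then Lemma \ref{classperact} gives the same $[\Per]^\Gamma$. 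So you do get the stated formula, but only after replacing ``pure conjugation'' by ``$\Per$ with an inner translation that is invisible under $[-]^\Gamma$''---which is the $K_0^\Gamma$ avatar of the paper's observation that the twist is trivial on cohomology.
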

\begin{proof}
    Let $\mcC=\GL_n(\C)/H$. Since $[\mcC]^{\Gamma}\in \Q[q]\otimes R_\Q(\Gamma)$, it suffices to compute its equivariant $E$-polynomial. For it, we can apply the arguments of \cite[proposition 2.6]{LMN} to the locally trivial fibration
    \[H\to \GL_r(\C) \to \mcC \]
    to deduce $E^\Gamma(\GL_r(\C))=E^\Gamma(H)E^\Gamma(\mcC)$, where $E^\Gamma(H)$ is computed in terms of the monodromy action. This action can be described as follows. Let us recall first how the previous fibration can be trivialized. This is a standard argument to prove that $G\to G/H$ is locally trivial. Call $\lambda$ the one-parameter subgroup generated by a diagonal $D\in\mcC$. Let $P = U(\lambda)\rtimes H$ the parabolic subgroup associated with $\lambda$ and $U(\lambda)$ its unipotent radical. Recall that $P$ and $U(\lambda)$ are defined by 
    \[P=\{g\in G:\lim_{t\to 0} \lambda(t)g\lambda(t)^{-1} \text{ exists}\}\]
    and
    \[U(\lambda)=\{g\in G:\lim_{t\to 0} \lambda(t)g\lambda(t)^{-1}=1\}.\]

    By inspecting tangent spaces, one finds that the multiplication map $U(\lambda^{-1}) \times P \rightarrow G$ is etale and injective and hence an open immersion. Therefore $U(\lambda^{-1}) \times U(\lambda)$ is a locally closed subvariety of $G$ mapping isomorphically onto a dense open subvariety $\Omega \subset G/H$. 
        
    We have a trivialization over $\Omega$ given by $s:\Omega\times H\simeq U(\lambda^{-1}) \times U(\lambda)\times H\to G$ where the second map is the multiplication map. For the point $\sigma D\sigma^{-1}$ the trivialization is given by $s(\sigma^{-1}-\sigma)\sigma:\sigma \Omega \sigma^{-1}\times H\to G$. Now choice $\omega\in \Omega\cap \sigma^{-1}\Omega\sigma$. We have the following identifications
    \[h\in H\to s(\omega)h\in \pi^{-1}(\omega)\]
    and
    \[h\in H \to s(\sigma^{-1}\omega\sigma)h\sigma \in \pi^{-1}(\omega) \]
    Hence, the monodromy action, before looking at cohomology, is given by
    \[h\in H\mapsto s(\sigma^{-1}\omega\sigma)^{-1}s(\omega)h\sigma^{-1}\in H\]

    Consider
    \[ u = \operatorname{Id} + \sum_i \lambda_{i\neq \sigma(i)}E_{i,\sigma^{-1}(i)} \in U(\lambda)\]
    for some root of unity $\lambda_i$ of high order. Note 
    \[ u\sigma = \sum \lambda_i E_{i,i}+E_{i,\sigma(i)}\in U(\lambda^{-1})\times H\]
    We have
    \[u\sigma = \tilde u h_0\]
    with $\tilde u\in U^-$ and $h_0\in H$ with finite order.
    Hence, $\omega=\pi(u)\in \Omega\cap \sigma^{-1}\Omega\sigma$ and the monodromy is induced by
    \[h\mapsto \tilde u^{-1}uh\sigma = h_0\sigma^{-1}h\sigma\]
    Now the multiplication by $h_0$ induces a linear action of a finite cyclic group in $H$ not permuting components. Hence it is trivial in cohomology and the monodromy is the permutation action induced by $\sigma$. Then
    \[ E^\Gamma(H) = \prod_{\mcO}E(\GL_{\lambda(\mcO)}(\C)^{\mcO}) \]
    where $\mcO$ runs over all orbits of $\Gamma$ in $\{\lambda_1,\ldots,\lambda_k\}$. 
\end{proof}

\begin{coro}\label{Rkappa-class-is-good}
    Let $D$ be a diagonal matrix of $\GL_n(\ov{\Q})$ and assume that its conjugacy class $\mcC$ is fixed by the left translation by a finite order $\xi\in Z(\GL_n(\ov{\Q}))$. Let $\Gamma=\langle \xi\rangle$. Then $[\mcC]\in A_\Gamma$ and
    \[[\mcC]^\Gamma=\left(\prod_{i=0}^{n-1}(q^n-q^i)\otimes T_\Gamma\right)/\prod_{\mcO} [\GL_{\lambda(\mcO)}(\C)^\mcO]^{\Gamma} \]
    where $\mcO$ runs over all orbits of $\Gamma$ in the set (without repetitions) of eigenvalues of $D$, $\lambda(\mcO)$ is the multiplicity in $D$ of any element of $\mcO$ and $\GL_{\lambda(\mcO)}(\C)^\mcO$ is endow with the permutation of factors action induced by the action of $\Gamma$ in $\mcO$.
\end{coro}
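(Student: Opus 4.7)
The plan is to reduce to the previous lemma by identifying $\mcC$ with a quotient $\GL_n(\C)/H$ in such a way that the left translation action of $\xi$ corresponds to the right translation action of a permutation matrix of the type treated there. Write $D=\operatorname{diag}(\mu_1,\ldots,\mu_n)$, let $\{\alpha_1,\ldots,\alpha_k\}$ be its set of distinct eigenvalues with multiplicities $\lambda_1,\ldots,\lambda_k$, and take $H=\prod_{i=1}^k\GL_{\lambda_i}(\C)$ for the centralizer of $D$. The map $gH\mapsto gDg^{-1}$ gives a $\GL_n(\C)$-equivariant (for left translation on $\GL_n(\C)/H$ and conjugation on $\mcC$) isomorphism $\GL_n(\C)/H\simeq \mcC$.

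The key observation is that, because $\xi$ is central and $\xi\cdot\mcC=\mcC$, the element $\xi D$ is conjugate to $D$, so multiplication by the scalar $\xi$ permutes the multiset of eigenvalues preserving multiplicities. Let $\sigma\in\GL_n(\C)$ be a permutation matrix realizing the corresponding permutation of the eigenspaces of $D$, so that $\xi D=\sigma D\sigma^{-1}$. Since $\xi$ has finite order, $\sigma$ does too, and by construction $\sigma$ fixes the partition of $\{1,\ldots,n\}$ attached to $\{\lambda_1,\ldots,\lambda_k\}$. Using centrality I compute
\[\xi\cdot (gDg^{-1})=g(\xi D)g^{-1}=g\sigma D\sigma^{-1}g^{-1}=(g\sigma)D(g\sigma)^{-1},\]
so, transported to $\GL_n(\C)/H$, the action of $\xi$ by left translation coincides with the right translation by $\sigma$.

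At this point I just invoke the previous lemma applied to the partition $\{\lambda_1,\ldots,\lambda_k\}$ and the permutation $\sigma$: it delivers simultaneously the statement $[\mcC]\in A_\Gamma$ and the desired formula, provided I match orbit data. But the $\langle\sigma\rangle$-orbits on the partition parts $\{\lambda_1,\ldots,\lambda_k\}$ and the $\Gamma$-orbits on the eigenvalue set $\{\alpha_1,\ldots,\alpha_k\}$ are both induced by the very same permutation of the index set $\{1,\ldots,k\}$, so they correspond bijectively and the common multiplicity $\lambda(\mcO)$ is unambiguously defined; moreover, the permutation of factors action on $\GL_{\lambda(\mcO)}(\C)^{\mcO}$ coincides in the two descriptions.

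The only subtle point --- but not a serious obstacle --- is that $\sigma$ is only determined up to multiplication by $H$, so I should check that the induced right translation action on $\GL_n(\C)/H$ does not depend on the choice. This is automatic because, as shown in the identity above, this action is identified with the intrinsic left translation by $\xi$ on $\mcC$, which plainly has no such ambiguity.
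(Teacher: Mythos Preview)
Your proof is correct and is exactly the intended deduction from the two preceding lemmas: identify $\mcC \simeq \GL_n(\C)/H$ so that left translation by the central $\xi$ becomes right translation by a permutation matrix $\sigma$ preserving the eigenspace partition, then read off both membership in $A_\Gamma$ and the formula. One small point worth making explicit is that the lift $\sigma$ should be chosen with order equal to the order of $\xi$ (always possible, by cycling the eigenspaces without extra twisting inside each block), so that $\langle\sigma\rangle \cong \Gamma$ on the nose and the previous lemma applies verbatim; your final paragraph handles the well-definedness of the action but not quite this identification of the acting groups.
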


\begin{ex}\label{ex:reg-ss}
    The easiest case is when all $\lambda$ have size one ($D$ regular) and $\sigma$ acts transitively with prime order $n$. In this case,
    \begin{align*}
        [\Per_{\{e\}}^{\Gamma}(\C^\times)]^\Gamma&=(q^n-1)\otimes T_\Gamma-\sum_{1\leq j<n}\frac{1}{n}\binom{n}{j} (q-1)^j \otimes \Q^\Gamma\\
        &= (q^n-1)\otimes T_\Gamma-\frac{q^n+(-1)^n-(q-1)^n}{n} \otimes \Q^\Gamma
    \end{align*}
    by Lemma \ref{peraction}. 
    Note that
    \[\Q^\Gamma\cdot \Q^\Gamma= n\Q^\Gamma\]
    Hence,
    \begin{align*}
        [\GL_n(\C)/(\C^\times)^n]^\Gamma &= \left(\prod_{i=0}^{n-1}(q^n-q^i)\otimes T_\Gamma\right)/\left((q^n-1)\otimes T_\Gamma-\frac{q^n-1-(q-1)^n}{n} \otimes \Q^\Gamma\right)\\
        &= \prod_{i=1}^{n-1}(q^n-q^i)\otimes T_\Gamma + p\otimes \Q^\Gamma
    \end{align*}
    where
    \begin{align*}
        p((q^n-1)-q^n+1+(q-1)^n) &= \frac{q^n-1-(q-1)^n}{n}\prod_{i=1}^{n-1}(q^n-q^i)\\
        p &= \frac{q^n-1-(q-1)^n}{n(q-1)}\prod_{i=1}^{n-1}\frac{q^n-q^i}{q-1}
    \end{align*}
    That is
    \begin{align*}
        [\GL_n(\C)/(\C^\times)^n]^\Gamma
        &= \prod_{i=1}^{n-1}(q^n-q^i)\otimes T_\Gamma + \frac{q^n-1-(q-1)^n}{n(q-1)}\prod_{i=1}^{n-1}\frac{q^n-q^i}{q-1}\otimes \Q^\Gamma
    \end{align*}
    For $n=2$
    \begin{align*}
        [\GL_2(\C)/(\C^\times)^2]^\Gamma
        &= (q^2-q)\otimes T_\Gamma +q\otimes \Q^2
    \end{align*}
    and for $n=3$
    \begin{align*}
        [\GL_3(\C)/(\C^\times)^3]^\Gamma
        &= (q^3-q^2)(q^3-q)\otimes T_\Gamma + q^4(q+1)\otimes \Q^3
    \end{align*}
\end{ex}

\begin{ex}\label{regss-n=4}
    Let us analyze the same situation than before but with $n=4$. In this case,
    \begin{align*}
        [\Per_{\{e\}}^{\Gamma}(\C)]^\Gamma&= [\Per_{\{e\}}^{\Gamma}(\C^\times)]^\Gamma+1+\frac{1}{2}\Ind_{\mu_2}^\Gamma(([\Per_{\mu_2}^{\Gamma}(\C^\times)]^{\mu_2}+1)^2-([\Per_{\mu_2}^{\Gamma}(\C^\times)]^{\mu_2})^2-1)\\&+\frac{1}{4}\Ind_{\{e\}}^\Gamma(q^4-((q-1)^2+1)^2)\\
        q^4\otimes\Q^1 &= [\Per_{\{e\}}^{\Gamma}(\C^\times)]^\Gamma + 1\otimes\Q^1 +(q^2-1)\otimes\Q^2-(q-1)\otimes\Q^4+(q - 1) (q^2 - q + 1)\otimes\Q^4\\
        [\Per_{\{e\}}^{\Gamma}(\C^\times)]^\Gamma &= (q^4-1)\otimes\Q^1-(q^2-1)\otimes\Q^2 -q(q-1)^2\otimes\Q^4 
    \end{align*}
    by the binomial formula. Hence,
    \begin{align*}
        [\GL_4(\C)/(\C^\times)^4]^\Gamma
        &= (q^4-q)(q^4-q^2)(q^4-q^3)\otimes \Q^1+ (q^4-q)q^2(q^4-q^3)\otimes \Q^2\\&+ (q^2+q+1)q^7(q^2+q^1)\otimes\Q^4
    \end{align*}
\end{ex}

\begin{ex}\label{ex:2-2}
    Take the partition $\{\{1,2\},\{3,4\}\}$ and $\sigma=(13)(24)$. Then there is one orbit of size $2$ with $\lambda=2$. We have
    \begin{align*}
        [\Per_{\{e\}}^{\Gamma}(\GL_2(\C))]^\Gamma &= (q^4-q^2)(q^4-1)\otimes T_\Gamma + \frac{1}{2}((q^2-q)^2(q^2-1)^2-(q^4-q^2)(q^4-1))\otimes \Q^2\\    
        &= (q^4-q^2)(q^4-1)\otimes T_\Gamma - q^3(q-1)^2(q+1)^2\otimes \Q^2
    \end{align*}
    by Corollary \ref{pfinzq}. Hence,
    \begin{align*}
        [\GL_4(\C)/\GL_2(\C)^2]^\Gamma &= (q^4-q^3)(q^4-q) \otimes T_\Gamma + p \otimes \Q^2
    \end{align*}
    where
    \begin{align*}
        p ((q^4-q^2)(q^4-1) -2q^3(q-1)^2(q+1)^2) &=(q^4-q^3)(q^4-q)q^3(q-1)^2(q+1)^2 \\
        p(q^2+1 -2q) &=q^5(q-1)^2(q^2+q+1) \\
        p &= q^5(q^2+q+1)
    \end{align*}
    So
    \begin{align*}
        [\GL_4(\C)/\GL_2(\C)^2]^\Gamma &= (q^4-q^3)(q^4-q) \otimes T_\Gamma + q^5(q^2+q+1) \otimes \Q^2
    \end{align*}
\end{ex}

\begin{conj}
    Let $G$ be a reductive algebraic group, $H\subset G$ the centralizer of a one-parameter subgroup $\lambda$ of $G$, and $\Gamma\subset G$ a finite cyclic group such that $H$ is invariant under conjugation by elements of $\Gamma$. Consider the action of $\Gamma$ by right translation on $G/H$. Then $[G/H]\in A_\Gamma$.
\end{conj}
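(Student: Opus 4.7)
The plan is to extend the strategy of the preceding lemma, which treated $G=\GL_n(\C)$ with $H$ a standard block-diagonal Levi, to a general reductive $G$ and a general Levi $H=Z_G(\lambda)$. The goal is to produce a $\Gamma$-equivariant stratification of $G/H$ into affine cells on which each cell stabilizer acts linearly, and then invoke Corollary \ref{peract-on-niceclass} cell by cell.

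First I would reduce to an effective action. The subgroup $\Gamma\cap H$ acts trivially on $G/H$ by right translation, so we may replace $\Gamma$ by its image in the relative Weyl group $W_G(H)=N_G(H)/H$ and lift to a cyclic subgroup $\tilde\Gamma\subset N_G(H)$ (possibly after a finite cyclic extension). Then I would build a cell decomposition via the fibration $G/H\to G/P$ coming from a parabolic $P$ with Levi $H$. The fiber is $P/H\simeq U(\lambda)$, an affine space, and the Bruhat cells $BwP/P\simeq\A^{\ell(w)}$ of $G/P$ pull back to affine cells of $G/H$. The delicate point is that the chosen $P$ is typically not $\tilde\Gamma$-invariant: conjugating $P$ by $\gamma\in\tilde\Gamma$ produces a different parabolic with the same Levi $H$. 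One therefore has to construct a $\Gamma$-invariant refinement, in the spirit of the $Y_{[\omega]}$ in the previous lemma, by intersecting and subtracting $\Gamma$-translates of the initial cells, and then verify that each refined piece is a disjoint union of affine cells on which the cell stabilizer acts linearly.

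The main obstacle, which is why the statement is phrased as a conjecture, is to perform this refinement uniformly in $G$ while keeping explicit control of the stabilizer actions. For classical $G$ one can try to mimic the $\GL_n$ argument using Plücker-type coordinates on $G/P\hookrightarrow\prod\P(V_\omega)$ coming from fundamental representations: the refinement then becomes combinatorial, and each stratum carries a linear $\Gamma$-action on a product of representation spaces, tractable by Lemma \ref{Vnmrv} and Corollary \ref{peract-on-niceclass}. For a truly general reductive $G$ a more intrinsic approach seems needed; one natural attempt is to average $\lambda$ over $\Gamma$ in the cocharacter lattice of a $\tilde\Gamma$-stable maximal torus of $H$, obtaining a $\tilde\Gamma$-invariant cocharacter $\tilde\lambda$ whose centralizer $L\supseteq H$ and parabolic $P(\tilde\lambda)$ are both $\tilde\Gamma$-stable. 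A Białynicki-Birula decomposition with respect to $\tilde\lambda$ would then give a $\Gamma$-equivariant cell decomposition of $G/L$, and the fiber $L/H$ of $G/H\to G/L$ is a quotient of the same type inside the smaller reductive group $L$, which suggests an induction on the rank of $G$. The hard step in either approach is showing that, after refinement, the induced action of each cell stabilizer on its affine cell is linear in the sense required by the definition of $A_{\Gamma}$.
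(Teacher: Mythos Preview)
This statement is a \emph{conjecture} in the paper, not a theorem with a proof. The paper does not prove it; immediately after stating the conjecture, the author only sketches a plausible strategy: use the fibration $\pi:G/H\to G/P$ with unipotent fiber $U_+$, pull back Bruhat cells $X_w=\pi^{-1}(BwP/P)$, observe these are not $\Gamma$-invariant, and propose the refinement
\[Y_w=(\Gamma\cdot X_w)\setminus\bigl(\Gamma\cdot\!\!\bigsqcup_{w'<w}X_{w'}\bigr).\]
No further details are given, and in particular the linearity of the stabilizer action on the refined cells is left open.

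Your core approach matches the paper's sketch exactly: same fibration, same Bruhat cells, same $\Gamma$-translate refinement in the spirit of the $Y_{[\omega]}$ from the $\GL_n$ lemma. You correctly flag the key obstacle --- showing linearity on refined cells --- which is precisely why the paper leaves this as a conjecture. Beyond that, you offer two additional ideas the paper does not mention: (i) embedding $G/P$ via Pl\"ucker-type coordinates on fundamental representations to make the combinatorics explicit for classical $G$, and (ii) averaging $\lambda$ over $\Gamma$ to obtain a $\Gamma$-stable parabolic $P(\tilde\lambda)$, then using Bia\l ynicki--Birula and inducting on rank via the intermediate quotient $L/H$. These are reasonable lines of attack and go further than what the paper proposes, though neither is carried to completion; in particular the averaging idea may produce $\tilde\lambda=0$ (hence $L=G$) when $\Gamma$ permutes the weight spaces of $\lambda$ with no common fixed direction, so the induction could stall. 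In summary: you have recovered the paper's intended strategy and added plausible refinements, but, like the paper itself, you have not closed the gap --- which is consistent with the statement's status as a conjecture.
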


We belive that the proof for $\GL_n$ could be addapted as follows. Let $P$ be the parabolic subgroup of $G$ associated with $\lambda$. Then $H\subset P$ and we have a morphism $\pi:G/H\to G/P$ which is a locally trivial fibration with fiber $U_+$, the unipotent group associated with $\lambda$. Fix a Borel subgroup $B$ of $P$ which contains $U_+$ and $\lambda$. We would like that for any Bruhat cell $Bw/P$ its preimage $X_w$ by $\pi$ is $\Gamma$-invariant, but that is not the case. Therefore, one should refine the Bruhat decomposition in such a way each cell is invariant. Our propose is
\[Y_w = (\Gamma\cdot X_w )\setminus (\Gamma (\sqcup_{w'<w}X_{w'})).\]

\subsection{Localization}

\begin{Def}
    Let $\Gamma$ be a finite cyclic group and $S\subset K_0(\Var_\C)\otimes R_\Q(\Gamma)\otimes\Q$ a multiplicative set. We said that a class $\xi\in K_0^\Gamma(\Var_\C)$ is quasi-nice if there exists a numerator $\xi_n\in A_\Gamma$ and a denominator $\xi_d\in A_\Gamma$ such that $\xi\xi_d=\xi_n$ and $[\xi_d]^\Gamma\in S$. 
    Set $[-]^\Gamma_S$ to be the composition of $[-]^\Gamma$ with the localization $K_0(\Var_\C)\otimes R_\Q(\Gamma)\otimes\Q\to (K_0(\Var_\C)\otimes R_\Q(\Gamma)\otimes\Q)_{S}$
\end{Def}

\begin{lema}
    Let $\Gamma$ be a finite cyclic group and $\xi\in K_0^\Gamma(\Var_\C)$ be a quasi-nice class. Then, for any $\xi'\in K_0^\Gamma(\Var_\C)$, 
    \[[\xi\xi']^\Gamma_S = [\xi]^\Gamma_S[\xi']^\Gamma_S.\]
\end{lema}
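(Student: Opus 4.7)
The plan is to reduce everything to multiplicativity for classes in $A_\Gamma$, which is already known. Given the quasi-nice data $\xi\xi_d = \xi_n$ with $\xi_d, \xi_n \in A_\Gamma$ and $[\xi_d]^\Gamma \in S$, I would multiply both sides of this identity by $\xi'$ in $K_0^\Gamma(\Var_\C)$ to obtain $\xi_d (\xi\xi') = \xi_n \xi'$. Applying the additive map $[-]^\Gamma$ then gives
\[ [\xi_d (\xi\xi')]^\Gamma = [\xi_n \xi']^\Gamma \]
in $K_0(\Var_\C)\otimes R_\Q(\Gamma)\otimes \Q$.

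Now I can invoke the multiplicativity theorem (\cite[theorem 3.6.19]{JesseVogel} in the form proved earlier in the excerpt) twice, since both $\xi_d$ and $\xi_n$ belong to $A_\Gamma$: the left hand side equals $[\xi_d]^\Gamma [\xi\xi']^\Gamma$ and the right hand side equals $[\xi_n]^\Gamma [\xi']^\Gamma$. The same theorem applied to $\xi_d \xi = \xi_n$ gives $[\xi_d]^\Gamma [\xi]^\Gamma = [\xi_n]^\Gamma$. Combining, we have the identity
\[ [\xi_d]^\Gamma \, [\xi\xi']^\Gamma = [\xi_d]^\Gamma \, [\xi]^\Gamma \, [\xi']^\Gamma \]
in $K_0(\Var_\C)\otimes R_\Q(\Gamma)\otimes \Q$.

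Finally, I would pass to the localization. Since $[\xi_d]^\Gamma$ lies in the multiplicative set $S$ by the definition of quasi-nice, its image is a unit in $(K_0(\Var_\C)\otimes R_\Q(\Gamma)\otimes\Q)_S$, so I can cancel it on both sides to conclude $[\xi\xi']^\Gamma_S = [\xi]^\Gamma_S [\xi']^\Gamma_S$, as required. There is no real obstacle here: the argument is an immediate consequence of multiplicativity in $A_\Gamma$ combined with the defining identity $\xi\xi_d=\xi_n$; the only point that requires a moment of care is that the cancellation step genuinely takes place after localization, which is precisely what the definition of $[-]^\Gamma_S$ allows.
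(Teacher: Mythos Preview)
Your proof is correct and follows essentially the same approach as the paper: both arguments use $\xi_d,\xi_n\in A_\Gamma$ to split $[\xi_d\xi\xi']^\Gamma$ multiplicatively in two ways, obtaining $[\xi_d]^\Gamma[\xi\xi']^\Gamma=[\xi_d]^\Gamma[\xi]^\Gamma[\xi']^\Gamma$, and then cancel $[\xi_d]^\Gamma$ after localizing at $S$. Your write-up is simply a more detailed version of the paper's one-line computation.
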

\begin{proof}
    Indeed,
    \[[\xi_d]^\Gamma[\xi\xi']^\Gamma = [\xi_d\xi\xi']^\Gamma=[\xi_n\xi']^\Gamma=[\xi_n]^\Gamma[\xi]^\Gamma=[\xi_d]^\Gamma[\xi]^\Gamma[\xi']^\Gamma\]
    and $[\xi_d]^\Gamma\in S$.
\end{proof}

\begin{lema}
    Let $\Gamma$ be a finite cyclic group and $\xi$, $\xi'$ and $\xi''$ be classes of $K_0^\Gamma(\Var_\C)$ with $\xi\xi'=\xi''$. Then
    \begin{enumerate}
        \item If $\xi$ and $\xi'$ are quasi-nice, $\xi''$ is quasi-nice.
        \item If $\xi'$ and $\xi''$ are quasi nice and $\xi'$ admits a numerator $\xi_n'$ with $[\xi_n']^\Gamma\in S$, $\xi$ is quasi-nice.
    \end{enumerate}
\end{lema}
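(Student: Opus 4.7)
The plan is to construct explicit numerators and denominators in each case by combining those provided by the hypotheses, then verify the two defining conditions: the resulting pair lives in $A_\Gamma$, and the denominator's motivic isotypic decomposition lies in $S$. The essential ingredients are that $A_\Gamma$ is a subring of $K_0^\Gamma(\Var_\C)$, that $[-]^\Gamma$ is multiplicative on $A_\Gamma$ (Vogel's theorem, already recalled in the excerpt), and that $S$ is multiplicatively closed.

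For part $(1)$, I would take decompositions $\xi\xi_d=\xi_n$ and $\xi'\xi'_d=\xi'_n$ with $\xi_d,\xi_n,\xi'_d,\xi'_n\in A_\Gamma$ and $[\xi_d]^\Gamma,[\xi'_d]^\Gamma\in S$, and multiply them to obtain $\xi''(\xi_d\xi'_d)=\xi_n\xi'_n$. Since $A_\Gamma$ is closed under products, both the proposed numerator and denominator belong to $A_\Gamma$; and $[\xi_d\xi'_d]^\Gamma=[\xi_d]^\Gamma[\xi'_d]^\Gamma$ lies in $S$ by multiplicativity of $[-]^\Gamma$ on $A_\Gamma$ together with the multiplicative closure of $S$. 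This exhibits $\xi''$ as quasi-nice.

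For part $(2)$, the strategy is slightly more subtle. I would multiply the relation $\xi\xi'=\xi''$ by $\xi'_d\xi''_d$, where $\xi'\xi'_d=\xi'_n$ is the specific quasi-nice decomposition of $\xi'$ satisfying the extra hypothesis $[\xi'_n]^\Gamma\in S$, and $\xi''\xi''_d=\xi''_n$ is any quasi-nice decomposition of $\xi''$. Using $\xi'\xi'_d=\xi'_n$ on the left and $\xi''\xi''_d=\xi''_n$ on the right, this rewrites as
\[\xi(\xi'_n\xi''_d)=\xi'_d\xi''_n,\]
which exhibits $\xi$ as quasi-nice with denominator $\xi'_n\xi''_d\in A_\Gamma$ and numerator $\xi'_d\xi''_n\in A_\Gamma$. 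The denominator passes the $S$-test because $[\xi'_n\xi''_d]^\Gamma=[\xi'_n]^\Gamma[\xi''_d]^\Gamma\in S$, which is exactly the role of the extra hypothesis on $\xi'_n$: without it one would have no control over the $[-]^\Gamma$ class of the denominator produced by this cancellation trick. I do not anticipate any real obstacle beyond this formal manipulation; the only conceptual point worth highlighting is precisely why the additional assumption in $(2)$ is needed, which the above derivation makes transparent.
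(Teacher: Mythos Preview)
Your proposal is correct and follows essentially the same approach as the paper: for part~(1) you multiply the two quasi-nice decompositions, and for part~(2) you derive the identity $\xi(\xi'_n\xi''_d)=\xi'_d\xi''_n$ from $\xi\xi'\xi'_d\xi''_d=\xi''\xi'_d\xi''_d$, which is exactly the computation the paper performs. Your added remark explaining the role of the hypothesis $[\xi'_n]^\Gamma\in S$ is a nice clarification that the paper leaves implicit.
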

\begin{proof}
    It is clear that products of quasi-nice classes are quasi-nice. Assume then that $\xi'$ and $\xi''$ are quasi-nice. Take pairs $(\xi'_d,\xi'_n)$ and $(\xi''_d,\xi''_n)$ as in the definition of quasi-nice. We have
    \[\xi \xi'_n\xi''_d = \xi\xi'\xi'_d\xi''_d=\xi''\xi'_d\xi''_d=\xi'_d\xi''_n\]
    and the result follows.
\end{proof}

\begin{lema}
    Let $\Gamma\subset \Gamma'$ be finite cyclic groups and $S\subset K_0(\Var_\C)\otimes R_\Q(\Gamma)\otimes \Q$ and $S'\subset K_0(\Var_\C)\otimes R_\Q(\Gamma')\otimes \Q$ be multiplicative sets. If $\Ind_\Gamma^{\Gamma'}(S)\subset S'$, induction of quasi-nice classes are quasi-nice.
\end{lema}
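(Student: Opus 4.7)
The plan is to exhibit explicit numerator and denominator in $A_{\Gamma'}$ witnessing that $\Ind_\Gamma^{\Gamma'}(\xi)$ is quasi-nice whenever $\xi$ is. Fix $\xi_n,\xi_d\in A_\Gamma$ with $\xi\xi_d=\xi_n$ and $[\xi_d]^\Gamma\in S$, and set $m=|\Gamma'/\Gamma|$. I propose the pair $(\Ind_\Gamma^{\Gamma'}(\xi_d),\,m\cdot\Ind_\Gamma^{\Gamma'}(\xi_n))$.

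First I would establish the auxiliary inclusion $\Ind_\Gamma^{\Gamma'}(A_\Gamma)\subset A_{\Gamma'}$. The ring $A_\Gamma$ is generated by classes $[\Ind_H^\Gamma(\A^n)]$ with $H\subset\Gamma$ acting linearly, so since $\Ind_\Gamma^{\Gamma'}$ is additive, it suffices to treat $\Z$-multiples of products of such generators. Iterating Lemma \ref{ind-times-ind} with all subgroups abelian (as $\Gamma$ is cyclic), any product $\prod_i[\Ind_{H_i}^\Gamma(\A^{n_i})]$ reduces to an integer multiple of $[\Ind_{\bigcap_i H_i}^\Gamma(\A^{\sum_i n_i})]$, and by transitivity of induction $\Ind_\Gamma^{\Gamma'}$ sends this to an integer multiple of $[\Ind_{\bigcap_i H_i}^{\Gamma'}(\A^{\sum_i n_i})]\in A_{\Gamma'}$.

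Second, I would verify the product relation. By the projection formula (item $(4)$ of the lemma in section 2.2) and the identity $\Res_\Gamma^{\Gamma'}(\Ind_\Gamma^{\Gamma'}(\xi_d))=m\,\xi_d$ (item $(1)$ of the same lemma),
\[
\Ind_\Gamma^{\Gamma'}(\xi)\cdot\Ind_\Gamma^{\Gamma'}(\xi_d)=\Ind_\Gamma^{\Gamma'}\bigl(\xi\cdot\Res_\Gamma^{\Gamma'}\Ind_\Gamma^{\Gamma'}(\xi_d)\bigr)=m\,\Ind_\Gamma^{\Gamma'}(\xi\xi_d)=m\,\Ind_\Gamma^{\Gamma'}(\xi_n),
\]
which is the desired equation in $K_0^{\Gamma'}(\Var_\C)$. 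Finally, by the lemma relating $[-]^\Gamma$ to induction,
\[
[\Ind_\Gamma^{\Gamma'}(\xi_d)]^{\Gamma'}=\Ind_\Gamma^{\Gamma'}([\xi_d]^\Gamma)\in\Ind_\Gamma^{\Gamma'}(S)\subset S'
\]
by hypothesis, which finishes the verification that $(\Ind_\Gamma^{\Gamma'}(\xi_d),m\,\Ind_\Gamma^{\Gamma'}(\xi_n))$ is a valid numerator/denominator pair for $\Ind_\Gamma^{\Gamma'}(\xi)$.

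The only delicate point is the first step: induction is additive but not multiplicative, so one cannot naively conclude $\Ind(A_\Gamma)\subset A_{\Gamma'}$ from the fact that individual generators are sent to generators. The resolution is that products of generators in $A_\Gamma$ already collapse, via Lemma \ref{ind-times-ind}, to integer multiples of single generators, at which point transitivity of induction handles the rest. Everything else is a direct application of the projection formula and the commutation of $[-]^\Gamma$ with induction already established.
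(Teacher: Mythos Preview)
Your proof is correct and follows the same approach as the paper: the paper's argument is the one-line computation $\frac{|\Gamma'|}{|\Gamma|}\Ind_\Gamma^{\Gamma'}(\xi_n)=\frac{|\Gamma'|}{|\Gamma|}\Ind_\Gamma^{\Gamma'}(\xi\xi_d)=\Ind_\Gamma^{\Gamma'}(\xi)\cdot\Ind_\Gamma^{\Gamma'}(\xi_d)$ via Lemma~\ref{ind-times-ind}, which is equivalent to your projection-formula derivation. Your explicit verification that $\Ind_\Gamma^{\Gamma'}(A_\Gamma)\subset A_{\Gamma'}$ is a detail the paper leaves implicit (it follows more directly from the remark that the generating family of $A_\Gamma$ is closed under products, so every element is already a $\Z$-combination of single generators $[\Ind_H^\Gamma(\A^n)]$, and transitivity of induction finishes), but your argument via Lemma~\ref{ind-times-ind} is also fine.
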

\begin{proof}
    Let $\xi$ be quasi-nice with associated pair $(\xi_d,\xi_n)$ for $\Gamma$. We have
    \[\frac{|\Gamma'|}{|\Gamma|}\Ind_\Gamma^{\Gamma'}(\xi_n)=\frac{|\Gamma'|}{|\Gamma|}\Ind_\Gamma^{\Gamma'}(\xi\xi_d)=\Ind_\Gamma^{\Gamma'}(\xi)\cdot\Ind_\Gamma^{\Gamma'}(\xi_d)\]
    and the result follows.
\end{proof}

\begin{lema}
    Let $\Gamma$ be a finite cyclic group and $S\subset K_0(\Var_\C)\otimes R_\Q(\Gamma)\otimes\Q$ be a multiplicative set. Let $\xi,\xi'\in K_0^\Gamma(\Var_\C)$. If $[\xi]_S^\Gamma=[\xi']_S^\Gamma$, for any subgroup $H\subset \Gamma$, 
    \[[\xi/H] = [\xi'/H]\]
    in $K_0(\Var_\C)_{\ov{S}}$ where $\ov{S}$ is generated by elements of the form $\sum_{d_1|d}d_1s_{d_1}$ where $d$ is a divisor of $|\Gamma|$ and $s = \sum_d s_d\otimes \Q^d\in S$.
\end{lema}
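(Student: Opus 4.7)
The plan is to unpack the hypothesis into an explicit equation in $K_0(\Var_\C)\otimes R_\Q(\Gamma)\otimes\Q$ and then proceed by downward induction on the divisibility order of divisors of $n:=|\Gamma|$. Set $y:=[\xi]^\Gamma-[\xi']^\Gamma$; the hypothesis $[\xi]^\Gamma_S=[\xi']^\Gamma_S$ yields some $s=\sum_{d|n}s_d\otimes\Q^d\in S$ with $sy=0$. For each $e|n$ let $H_e\subset\Gamma$ be the unique subgroup of index $e$ and set $\delta_e:=[\xi/H_e]-[\xi'/H_e]$; the goal is to show that every $\delta_e$ is zero in $K_0(\Var_\C)_{\ov{S}}$.

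The key step is to pair $sy=0$ with $\Q^e$. By Frobenius reciprocity $\delta_e=\langle T_{H_e},\Res_{H_e}^\Gamma(y)\rangle=\langle\Q^e,y\rangle_\Gamma$. Because $\Q^e$ is self-dual, $\langle\Q^e,sy\rangle_\Gamma=\langle\Q^e\cdot s,y\rangle_\Gamma$, and combining this with the product rule $\Q^e\cdot\Q^d=\gcd(e,d)\,\Q^{\operatorname{lcm}(e,d)}$ from Section~2.1 produces
\[
\sum_{d|n}\gcd(e,d)\,s_d\,\delta_{\operatorname{lcm}(e,d)}\;=\;0.
\]
Separating the diagonal terms, i.e., those with $d|e$ (for which $\operatorname{lcm}(e,d)=e$ and $\gcd(e,d)=d$), gives
\[
\Bigl(\sum_{d|e}d\,s_d\Bigr)\,\delta_e\;=\;-\sum_{d\nmid e}\gcd(e,d)\,s_d\,\delta_{\operatorname{lcm}(e,d)}.
\]
The coefficient on the left is exactly the generator of $\ov{S}$ associated to the divisor $e$ in the statement, and every index $\operatorname{lcm}(e,d)$ appearing on the right is a strict multiple of $e$.

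The conclusion now comes by downward induction on $e$ in the divisibility order. For the base case $e=n$ the right-hand side is empty, so $\bigl(\sum_{d|n}d\,s_d\bigr)\,\delta_n=0$ with the coefficient in $\ov{S}$; hence $\delta_n$ vanishes in $K_0(\Var_\C)_{\ov{S}}$. For the inductive step, pick witnesses $\bar s_{e'}\in\ov{S}$ with $\bar s_{e'}\delta_{e'}=0$ for every strict multiple $e'$ of $e$ dividing $n$; multiplying the displayed equation by $\bar s^\star:=\prod_{e'}\bar s_{e'}\in\ov{S}$ annihilates the right-hand side and leaves
\[
\Bigl(\bar s^\star\cdot\sum_{d|e}d\,s_d\Bigr)\,\delta_e=0,
\]
with the coefficient still in $\ov{S}$, so $\delta_e=0$ in $K_0(\Var_\C)_{\ov{S}}$.

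The only step requiring care is the derivation of the displayed relation from $sy=0$; it rests on the formula $\langle\Q^a,\Q^b\rangle_\Gamma=\gcd(a,b)$, which is immediate from the fact that each permutation representation $\Q^c$ contains the trivial representation with multiplicity one and from the product rule for the $\Q^d$. Once that identity is in place, the specific form of the generators of $\ov{S}$ in the statement is dictated by the diagonal coefficient, and the triangular structure makes the induction run automatically.
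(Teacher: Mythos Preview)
Your proof is correct and follows essentially the same strategy as the paper's: both arguments use the product rule $\Q^a\cdot\Q^b=\gcd(a,b)\,\Q^{\operatorname{lcm}(a,b)}$ to extract from $sy=0$ a triangular linear system whose diagonal coefficients are exactly the generators $\sum_{d_1|d}d_1s_{d_1}$ of $\ov{S}$, and then solve that system by induction. The only difference is packaging: the paper reads off the $\Q^d$--coefficients of $sy$ and shows by \emph{upward} induction that each coefficient $y_d$ of $y$ is killed by an element of $\ov{S}$, then observes that each $[\xi/H]-[\xi'/H]$ is a $\Z$--linear combination of the $y_d$'s; you instead pair $sy$ with $\Q^e$ (using self-duality of permutation modules, equivalently the identity $\langle\Q^a,\Q^b\rangle=\gcd(a,b)$) to get relations directly among the $\delta_e$'s and run a \emph{downward} induction. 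Your route is slightly more direct since it targets the quantities of interest; the paper's route gives the marginally stronger intermediate statement that every basis coefficient $y_d$ already dies in the localization.
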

\begin{proof}
    It is enough to prove that if $s\xi=0$ for some $\xi\in K_0(\Var_\C)\otimes R_\Q(\Gamma)\otimes\Q$ and $s\in S$, 
    \[\ov{s}\langle T_H,\Res_H^\Gamma(\xi))=0\]
    for some $\ov{s}\in \ov{S}$.
    Now if $\xi = \sum \xi_{d} \otimes \Q_d$, $s=\sum s_{d}\otimes \Q_d$,
    \[s\xi = \sum_d \left(\sum_{\operatorname{lcm}(d_1,d_2)=d} \gcd(d_1,d_2)s_{d_1}\xi_{d_2}\right)\otimes \Q^d \]
    Hence, our hypothesis is that
    \[\sum_{\operatorname{lcm}(d_1,d_2)=d} \gcd(d_1,d_2)s_{d_1}\xi_{d_2}=0\]
    for any $d$. We claim that
    \[\left(\prod_{d_2|d}\left(\sum_{d_1|d_2}d_1s_{d_1}\right)\right)\xi_d=0\]
    for any $d$. Indeed, for $d=1$, it holds; $s_{1}\xi_{1}=0$. And, by the hypothesis, 
    \[\left(\sum_{d_1|d}d_1s_{d_1}\right)\xi_d\]
    can be written in terms of $\xi_{d'}$ for proper divisors $d'$ of $d$.

    Now, 
    \[\langle T_H, \Res_H^\Gamma(\xi)\rangle=\sum \xi_d\langle T_H,\Res_H^\Gamma(\Q^d)\rangle\]
    and 
    \[\ov{s} = \prod_{d}\left(\sum_{d_1|d}d_1s_{d_1}\right)\]
    makes the job.
\end{proof}

\begin{lema}
    Let $\Gamma\subset\Gamma'$ be finite cyclic groups and $S\subset \Q[q]\otimes R_\Q(\Gamma)$ be a multiplicative set. Let $N=|\Gamma'/\Gamma|$. Then if $\xi\in K_0^\Gamma(\Var_\C)$ is quasi-nice, $\Per_{\Gamma}^{\Gamma'}(\xi)$ is quasi-nice with respect to $\Per_{\Gamma}^{\Gamma'}(S):=\{\Omega_N(s):s\in S\}$.
\end{lema}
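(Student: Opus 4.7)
The plan is to take the obvious candidates: if $\xi$ is quasi-nice with witness pair $(\xi_d, \xi_n)\in A_\Gamma\times A_\Gamma$ satisfying $\xi\xi_d=\xi_n$ and $[\xi_d]^\Gamma\in S$, then I would propose $(\Per_\Gamma^{\Gamma'}(\xi_d),\Per_\Gamma^{\Gamma'}(\xi_n))$ as the witness pair for $\Per_\Gamma^{\Gamma'}(\xi)$. That these land in $A_{\Gamma'}$ is exactly the content of Corollary \ref{peract-on-niceclass}, so the nontriviality of the candidates is taken care of for free.

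Next I would verify the factorization $\Per_\Gamma^{\Gamma'}(\xi)\cdot\Per_\Gamma^{\Gamma'}(\xi_d)=\Per_\Gamma^{\Gamma'}(\xi_n)$. This is immediate from the multiplicativity of $\Per_\Gamma^{\Gamma'}$ (item (2) of Theorem \ref{thm:per-act}), applied to $\xi_n=\xi\cdot\xi_d$ in $K_0^\Gamma(\Var_\C)$.

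The last thing to check is that $[\Per_\Gamma^{\Gamma'}(\xi_d)]^{\Gamma'}$ lies in $\Per_\Gamma^{\Gamma'}(S)=\{\Omega_N(s):s\in S\}$. For this I would invoke Lemma \ref{classperact}(4), which identifies $[\Per_\Gamma^{\Gamma'}(\xi_d)]^{\Gamma'}=\Omega_N(E^\Gamma(\xi_d;q))$, and then use that for classes in $A_\Gamma$ one has $E^\Gamma(\xi_d;q)=[\xi_d]^\Gamma$ by Corollary \ref{thm:sec2.1-2}'s successor on equivariant $E$-polynomials (the corollary following Vogel's multiplicativity theorem). Since $[\xi_d]^\Gamma\in S$ by hypothesis, applying $\Omega_N$ gives an element of $\Per_\Gamma^{\Gamma'}(S)$, as required.

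There is no real obstacle here: the proof is a three-line assembly once one has Theorem \ref{thm:per-act}(2), Corollary \ref{peract-on-niceclass}, and Lemma \ref{classperact}(4) in hand. The only mildly delicate bookkeeping is making sure that the use of $E^\Gamma$ to identify $[\xi_d]^\Gamma$ with a polynomial in $\Q[q]\otimes R_\Q(\Gamma)$ matches the ambient ring where $S$ lives, but this was arranged precisely so that $\Omega_N$ can be applied to elements of $S$ in the first place.
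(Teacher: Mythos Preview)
Your proposal is correct and follows essentially the same route as the paper: take the witness pair $(\xi_d,\xi_n)$, apply $\Per_\Gamma^{\Gamma'}$, and then invoke multiplicativity (Theorem \ref{thm:per-act}(2)), Corollary \ref{peract-on-niceclass}, and Lemma \ref{classperact}(4) to verify the three conditions. The only cosmetic difference is that the paper cites Lemma \ref{classperact} without isolating the step $E^\Gamma(\xi_d;q)=[\xi_d]^\Gamma$, which you make explicit.
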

\begin{proof}
    Indeed, write $\xi\xi_d=\xi_n$. Then $\Per_{\Gamma}^{\Gamma'}(\xi)\cdot\Per_{\Gamma}^{\Gamma'}(\xi_d)=\Per_{\Gamma}^{\Gamma'}(\xi_n)$ and $\Per_{\Gamma}^{\Gamma'}(\xi_d),\Per_{\Gamma}^{\Gamma'}(\xi_n)\in A_{\Gamma'}$, by Theorem \ref{thm:per-act} and Corollary \ref{peract-on-niceclass}, and $[\Per_{\Gamma}^{\Gamma'}(\xi_d)]^\Gamma\in \Per_{\Gamma}^{\Gamma'}(S)$ by Lemma \ref{classperact}.
\end{proof}

\begin{lema}
    Let $\Gamma\subset \Gamma'$ be finite cyclic groups and $S\subset \Q[q]\otimes R_\Q(\Gamma)$ be a multiplicative set. Then if $[\xi]_S^\Gamma=[\xi']_S^\Gamma$, for some quasi-nice classes $\xi,\xi'$, then $[\Per_{\Gamma}^{\Gamma'}(\xi)]^{\Gamma'}_{S'}=[\Per_{\Gamma}^{\Gamma'}(\xi')]^{\Gamma'}_{S'}$ where $S'\subset K_0(\Var_\C)\otimes R_\Q(\Gamma')$ is the multiplicative set generated by $\Omega_N(s)$, $s\in S$.
\end{lema}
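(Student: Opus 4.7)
The plan is to clear denominators in the hypothesis so as to obtain an equation between classes in $A_\Gamma$, which then transports under $\Per_\Gamma^{\Gamma'}$ via Lemma \ref{classperact}(4). First, I would fix quasi-niceness data for $\xi$ and $\xi'$: elements $\xi_n,\xi_d,\xi_n',\xi_d'\in A_\Gamma$ with $\xi\xi_d=\xi_n$, $\xi'\xi_d'=\xi_n'$, and $[\xi_d]^\Gamma,[\xi_d']^\Gamma\in S$. Since $\xi_d,\xi_d'\in A_\Gamma$, Vogel's multiplicativity theorem yields $[\xi]^\Gamma[\xi_d]^\Gamma=[\xi_n]^\Gamma$ and $[\xi']^\Gamma[\xi_d']^\Gamma=[\xi_n']^\Gamma$. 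Combined with the hypothesis $[\xi]^\Gamma_S=[\xi']^\Gamma_S$, clearing denominators in the localization gives
\[[\xi_n]^\Gamma[\xi_d']^\Gamma=[\xi_n']^\Gamma[\xi_d]^\Gamma\]
in $(K_0(\Var_\C)\otimes R_\Q(\Gamma)\otimes\Q)_S$; by the definition of localization there is some $s\in S$ annihilating the difference.

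Next, both sides and the annihilator $s$ already lie in $\Q[q]\otimes R_\Q(\Gamma)$ via the ring isomorphism $A_\Gamma\otimes\Q\simeq\Q[q]\otimes R_\Q(\Gamma)$ given by $E^\Gamma(-;q)$, so the identity may be read inside $(\Q[q]\otimes R_\Q(\Gamma))_S$, where the ring homomorphism $\Omega_N:\Q[q]\otimes R_\Q(\Gamma)\to\Q[q]\otimes R_\Q(\Gamma')$ is available. Applying $\Omega_N$, and using Lemma \ref{classperact}(4) which identifies $\Omega_N([\eta]^\Gamma)=[\Per_\Gamma^{\Gamma'}(\eta)]^{\Gamma'}$ for $\eta\in A_\Gamma$ together with the fact that $\Omega_N(s)\in S'$ by the very definition of $S'$, yields
\[[\Per_\Gamma^{\Gamma'}(\xi_n)]^{\Gamma'}[\Per_\Gamma^{\Gamma'}(\xi_d')]^{\Gamma'}=[\Per_\Gamma^{\Gamma'}(\xi_n')]^{\Gamma'}[\Per_\Gamma^{\Gamma'}(\xi_d)]^{\Gamma'}\]
in the localization $(K_0(\Var_\C)\otimes R_\Q(\Gamma')\otimes\Q)_{S'}$.

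Finally, I would invoke the previous lemma: $\Per_\Gamma^{\Gamma'}(\xi)$ and $\Per_\Gamma^{\Gamma'}(\xi')$ are quasi-nice with respect to $S'$, with numerator/denominator data $(\Per_\Gamma^{\Gamma'}(\xi_d),\Per_\Gamma^{\Gamma'}(\xi_n))$ and $(\Per_\Gamma^{\Gamma'}(\xi_d'),\Per_\Gamma^{\Gamma'}(\xi_n'))$ respectively, thanks to the multiplicativity of $\Per_\Gamma^{\Gamma'}$ from Theorem \ref{thm:per-act}(2). Since $[\Per_\Gamma^{\Gamma'}(\xi_d)]^{\Gamma'}$ and $[\Per_\Gamma^{\Gamma'}(\xi_d')]^{\Gamma'}$ lie in $S'$, dividing the displayed identity by these denominators in the localization produces the desired equality $[\Per_\Gamma^{\Gamma'}(\xi)]^{\Gamma'}_{S'}=[\Per_\Gamma^{\Gamma'}(\xi')]^{\Gamma'}_{S'}$. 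The main technical point I anticipate is the second step: legitimizing the transfer of the annihilator equation from $K_0(\Var_\C)\otimes R_\Q(\Gamma)\otimes\Q$ into the polynomial subring $\Q[q]\otimes R_\Q(\Gamma)$, so that the ring map $\Omega_N$ can be applied coordinate-wise; this is precisely what quasi-niceness guarantees, by forcing numerators, denominators, and annihilator to come from $A_\Gamma$ via $E^\Gamma(-;q)$.
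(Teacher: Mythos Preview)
Your approach is essentially the same as the paper's: reduce to an equality of products in $\Q[q]\otimes R_\Q(\Gamma)$ via the quasi-niceness data, then transport by $\Omega_N$ using Lemma~\ref{classperact}(4), and finally divide by the invertible denominators in the $S'$-localization. One point to correct: you call $\Omega_N$ a ``ring homomorphism'', but it is \emph{not} additive---it satisfies the binomial formula of Lemma~\ref{classperact}(3) instead. This does not damage your argument, since you only ever apply $\Omega_N$ to an identity $s\cdot[\xi_n]^\Gamma[\xi_d']^\Gamma=s\cdot[\xi_n']^\Gamma[\xi_d]^\Gamma$ between \emph{products}, and multiplicativity (Lemma~\ref{classperact}(2)) suffices. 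The paper makes this explicit by factoring $\Omega_N=\pi\circ f$ with $f$ a genuine ring homomorphism into $(R,*,\cdot)$ and $\pi$ a multiplicative (coordinate) projection, which is morally what you are doing.
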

\begin{proof}
    Assume that $\xi\xi_d=\xi_n$ and $\xi'\xi'_d=\xi'_n$ with $\xi_d,\xi_n,\xi'_d,\xi'_n$ as in the definition of quasi-nice. Then $\Per_{\Gamma}^{\Gamma'}(\xi)\cdot\Per_{\Gamma}^{\Gamma'}(\xi_d)=\Per_{\Gamma}^{\Gamma'}(\xi_n)$. We know that $[\Per_{\Gamma}^{\Gamma'}(\xi'')]^{\Gamma'}=\Omega_N([\Per_{\Gamma}^{\Gamma'}(\xi'')]^\Gamma)$ for $\xi''\in A_\Gamma$. Hence $[\Per_{\Gamma}^{\Gamma'}(\xi)]^{\Gamma'}\Omega_N([\Per_{\Gamma}^{\Gamma'}(\xi_d)]^\Gamma)=\Omega_N([\Per_{\Gamma}^{\Gamma'}(\xi_n)]^\Gamma)$ and similar with $\xi'$.  
    
    Now recall that $\Omega_N$ is built as a composition of a ring homomorphism $f:\Q[q]\otimes R_\Q(\Gamma)\to R$ with a linear projection $\pi:R\to \Q[q]\otimes R_\Q(\Gamma')$. If $[\xi]_S^\Gamma=[\xi']_S^\Gamma$, $f([\xi_n]^\Gamma/[\xi_d]^\Gamma)=f([\xi'_n]^{\Gamma}/[\xi'_d]^{\Gamma})$ in $R_{S'}$ for $S'=\{f(s):s\in S\}$. Now, $\pi$ is multiplicative and therefore the result follows. 
\end{proof}

\section{Character varieties of torus knots}

We follow a similar notation to \cite{PM} for readers convenience. Let $n,m$ be coprime positive integers and 
\[\Gamma_{n,m}=\langle x,y | x^n=y^m\rangle\]
be the fundamental group of the torus knot $(n,m)$. For an integer $r$, denote with 
\[\mcR_r := \Hom{\Gamma_{n,m}}{\GL_r(\C)}=\{(A,B)\in \GL_r(\C)^2:A^n=B^m\}\]
the $\GL_r$-representation variety associated with $\Gamma_{n,m}$. Let $\mcR^{irr}_r$ and $\mcR^{red}_r$ be the subvarieties of those representations that are irreducible and reducible respectively. In these spaces acts $\GL_r(\C)$ by conjugation. Our objective is to compute the motive of 
\[\mcM_r^{irr} := \mcR^{irr}_r/\PGL_r(\C)\]
the moduli space of irreducible $r$-dimensional representations. Changing $\GL_r$ by $\SL_r$ in the definitions one gets similar spaces which we will denote $\mcR_r(\SL)$, $\mcR_r^{irr}(\SL)$, $\mcR_r^{red}(\SL)$ and $\mcM^{irr}_r(\SL)$. These are the ones studied on \cite{PM}. To avoid confusions, whenever both $\SL$ and $\GL$ versions appear, we will add $``(\GL)"$ to the notation of the $\GL$-versions. 

We will mainly work with $\mcR^{irr}_r$ rather than with $\mcM^{irr}_r$. The reason is simply the following conjecture that we will prove for $r\leq 3$ in section \ref{sec:pglquotient}, as it is useful to introduce a few notations first.

\begin{conj}\label{lem:mcM-vs-mcR}
    Let $n,m$ be coprime positive integers. For any positive integer $r$,
    \[[\mcM^{irr}_r]=\frac{[\mcR^{irr}_r]}{[\PGL_r(\C)]}=\frac{[\mcR^{irr}_r]}{q^{r-1}(q^r-1)(q^r-q)\cdots (q^r-q^{r-2})}\]
    in the localization of $K_0(\Var_\C)$ by $q$ and $q^i-1$ for $i=0,\ldots,r$.
\end{conj}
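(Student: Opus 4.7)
The plan is to lift the natural $\PGL_r$-principal bundle $\mcR^{irr}_r\to\mcM^{irr}_r$ to a $\GL_r$-torsor and exploit the specialness of $\GL_r$ and $\C^\times$ (which equals $\GL_1$) to compute motives multiplicatively, thereby circumventing the fact that $\PGL_r$ itself is not a special group.

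First, by Schur's lemma the stabilizer in $\GL_r(\C)$ of any irreducible $(A,B)\in\mcR^{irr}_r$ under conjugation is exactly the center $\C^\times$ of scalar matrices, so the induced $\PGL_r$-action on $\mcR^{irr}_r$ is scheme-theoretically free. Standard GIT then yields a geometric quotient $\mcM^{irr}_r$ and $\mcR^{irr}_r\to\mcM^{irr}_r$ is an étale $\PGL_r$-principal bundle. Because $\PGL_r$ is not special in the sense of Grothendieck--Serre, Zariski local triviality (and hence the multiplicativity of motives) does not follow immediately. The strategy is to lift the torsor to a $\GL_r$-torsor $\tilde{\mcR}_r\to\mcM^{irr}_r$ with $\tilde{\mcR}_r/\C^\times\simeq\mcR^{irr}_r$, using the central extension $1\to\C^\times\to\GL_r\to\PGL_r\to 1$; the existence of such a lift is equivalent to the vanishing of the associated Brauer class in the étale cohomology $H^2(\mcM^{irr}_r,\C^\times)$.

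Once the lift $\tilde{\mcR}_r$ is constructed, the conclusion is essentially formal: since $\GL_r$ is special, $\tilde{\mcR}_r\to\mcM^{irr}_r$ is Zariski locally trivial and $[\tilde{\mcR}_r]=[\GL_r][\mcM^{irr}_r]$; since $\C^\times=\GL_1$ is also special, the quotient map $\tilde{\mcR}_r\to\mcR^{irr}_r$ is a Zariski locally trivial $\C^\times$-bundle and $[\tilde{\mcR}_r]=(q-1)[\mcR^{irr}_r]$. Combining, $(q-1)[\mcR^{irr}_r]=[\GL_r][\mcM^{irr}_r]$, and dividing by $q-1$ (which is inverted in the stated localization) yields the conjecture, since $[\GL_r]/(q-1)=[\PGL_r]$.

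The main obstacle is producing this lift uniformly in $r$, i.e., showing the Brauer obstruction vanishes globally on $\mcM^{irr}_r$. A natural candidate for a universal construction exploits the free auxiliary $\C^\times$-action on $\mcR_r$ defined by $\zeta\cdot(A,B)=(\zeta^m A,\zeta^n B)$: this action preserves the defining relation $A^n=B^m$ (both sides scale by $\zeta^{mn}$), is free since $\gcd(n,m)=1$ forces $\zeta^m=\zeta^n=1$ and hence $\zeta=1$, and it commutes with $\PGL_r$-conjugation. Combined with the scalar $\lambda(\rho)\in\C^\times$ determined by $\rho(x^n)=\rho(y^m)=\lambda(\rho)I$ (well-defined for irreducible representations by Schur), this extra structure should assemble into the desired $\GL_r$-torsor, but carefully checking compatibility across all strata of $\mcM^{irr}_r$ is the technical heart of the argument. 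The fallback, used by this paper for $r\leq 3$, is to stratify $\mcM^{irr}_r$ by the eigenvalue-multiplicity type of $\rho(x)$ and trivialize the bundle explicitly on each stratum; this avoids any global obstruction-class analysis but becomes combinatorially demanding as $r$ grows.
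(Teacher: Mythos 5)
There is a genuine gap, and you have in fact flagged it yourself: the entire argument hinges on lifting the \'etale $\PGL_r$-torsor $\mcR^{irr}_r\to\mcM^{irr}_r$ to a $\GL_r$-torsor, and you never establish that such a lift exists. Saying that the auxiliary $\C^\times$-action $\zeta\cdot(A,B)=(\zeta^mA,\zeta^nB)$ together with the Schur scalar $\lambda(\rho)$ ``should assemble into the desired $\GL_r$-torsor'' is not a construction: these data produce at best a line's worth of extra structure, whereas killing the Brauer class requires exhibiting a rank-$r$ vector bundle (a universal representation) on $\mcM^{irr}_r$, or at least Zariski-local sections of the $\PGL_r$-quotient. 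For character varieties this obstruction class is precisely the hard point — its possible non-vanishing is why twisted character varieties and gerbes appear in this subject — so the step you defer is essentially equivalent to the statement being proved. Everything downstream of the lift (specialness of $\GL_r$ and $\GL_1$, division by $q-1$) is fine, but it is the easy part.

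The fallback you mention is what the paper actually does for $r\le 3$, and it is far from a routine stratify-and-trivialize. The paper first reduces, via the isomorphism $\hat\mcR^{irr}_r\simeq\C^\times\times R^{irr}_r$ of Lemma \ref{lem:key-lemma}, to proving $[R^{irr}_\kappa]^{\Gamma_\kappa}=[R^{irr}_\kappa/\PGL_r]^{\Gamma_\kappa}[\PGL_r]^{\Gamma_\kappa}$ \emph{equivariantly} for the cyclic stabilizers $\Gamma_\kappa$; it then builds the eigenbasis frame bundles $\ov{\mcL}'$, splits off a genuine $\PGL_r$-factor from $\ov{\mcL}'$, and is left with proving motivic multiplicativity for $\PGL_r\to((\prod\GL_{b_j})/\C^\times)\backslash\PGL_r/((\prod\GL_{a_i})/\C^\times)$, which it does by writing down explicit $\Gamma_\kappa$-equivariant sections over a stratification of the double coset space, case by case, together with quasi-niceness checks for the fibers. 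None of this appears in your proposal, and the equivariance requirement in particular is a layer your outline does not anticipate. So while your obstruction-theoretic framing correctly identifies \emph{why} the statement is nontrivial ($\PGL_r$ is not special), the proof itself — either the vanishing of the obstruction or the explicit stratified trivialization — is missing.
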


\begin{lema}
    For $(A,B)\in \mcR^{irr}_r$, $A^n=B^m$ is a central matrix.
\end{lema}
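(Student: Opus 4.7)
The plan is to show $A^n = B^m$ is central by combining two observations: first, that $x^n = y^m$ is central in the group $\Gamma_{n,m}$, and second, that under an irreducible representation central elements must go to scalar matrices by Schur's lemma.

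First I would verify that $x^n$ lies in the center of $\Gamma_{n,m}$. Obviously $x^n$ commutes with $x$. Since $x^n = y^m$ in $\Gamma_{n,m}$, we also have $x^n \cdot y = y^m \cdot y = y \cdot y^m = y \cdot x^n$, so $x^n$ commutes with $y$ as well. As $x$ and $y$ generate $\Gamma_{n,m}$, the element $x^n$ is central. Applying the representation $(A,B)$, the matrix $A^n = B^m$ is the image of a central group element and therefore commutes with every matrix in the image of $(A,B)$.

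The second step invokes Schur's lemma. By hypothesis $(A,B) \in \mcR^{irr}_r$, so the subalgebra of $M_r(\C)$ generated by $A$ and $B$ acts irreducibly on $\C^r$. Since $A^n = B^m$ commutes with both $A$ and $B$ (and hence with every element of this subalgebra), Schur's lemma over the algebraically closed field $\C$ forces $A^n = B^m$ to be a scalar multiple of the identity. In particular, $A^n = B^m \in Z(\GL_r(\C))$, which is the desired conclusion.

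There is no real obstacle here; the argument is essentially a one-line application of Schur's lemma after noticing the centrality of $x^n$ in $\Gamma_{n,m}$. The statement is included at this stage of the paper mainly to justify subsequent constructions that decompose $\mcR^{irr}_r$ according to the central value of $A^n = B^m$.
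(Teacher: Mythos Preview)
Your proof is correct and follows the same approach as the paper: the paper's proof is the one-liner ``This follows by Schur's lemma as $P=A^n=B^m$ commutes with both $A$ and $B$,'' which is exactly what you have written out in more detail.
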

\begin{proof}
    This follows by Schur's lemma as $P=A^n=B^m$ commutes with both $A$ and $B$.
\end{proof}

Let $\pi:\mcR^{irr}_r\to \C^\times$ be the composite of $(A,B)\mapsto A^n$ with the inverse of $\C^\times \to Z(\GL_r(\C))$, $\omega\mapsto \omega\operatorname{Id}_r$. In addition, consider the pullback diagram
\[\xymatrix{
\hat\mcR^{irr}_r\ar[r]\ar[d] & \mcR^{irr}_r\ar[d]^{\pi} \\
\C^\times\ar[r]^{(-)^{nm}} & \C^\times 
}\]
where $(-)^{nm}:\C^\times \to \C^\times$ is the exponentiation by $nm$ map. Let $\mu_{nm}$ be the group of $nm$-roots of unity. There is an induced action on $\hat\mcR^{irr}_r$ such that $\mcR^{irr}_r=\hat\mcR^{irr}_r/\mu_{nm}$. The map $\pi$ is $\PGL_r$-invariant and, therefore, defines $\ov{\pi}:\mcM_r^{irr}\to \C^\times$, and there is an induced $\PGL_r$-action on $\hat\mcR_r^{irr}$. Set $\hat\mcM_r^{irr}$ as the pullback by $(-)^{nm}$ of $\mcM_r^{irr}$. Alternatively, $\hat\mcM_r^{irr}$ is the $\PGL_r$-quotient of $\hat\mcR_r^{irr}$.

On the other hand, let 
\[R_r:=\Hom{\Z_n\star \Z_m}{\GL_r}=\{(A,B)\in\GL_r(\C)^2:A^n=B^m=\operatorname{Id}_r\}\]
with the action of $\mu_{nm}$ given by $z\cdot(A,B)=(z^mA,z^nB)$. As before denote with $R^{irr}_r$ and $R^{red}_r$ the subvariaties of irreducible and reducible representations respectively. Let $M_r^{irr}$ the quotient of $R_r^{irr}$ by the conjugation action of $\PGL_r$. Note the actions of $\PGL_r$ and $\mu_{nm}$ commute. In particular, there is an induced action of $\mu_{nm}$ on $M_r^{irr}$.

\begin{lema}\label{lem:key-lemma}
    The pullback $\hat \mcR_r^{irr}$ is $\mu_{nm}\times \PGL_r$-equivariantly isomorphic to the product $\C^\times \times R^{irr}_r$ where the action is given by $(z,g)\cdot (\omega,A,B)=(z\omega,z^{m}gAg^{-1},z^{n}gBg^{-1})$. In particular, $\hat\mcM_r^{irr}$ is $\mu_{nm}$-equivariantly isomorphic to $\C^\times\times M_r^{irr}$.
\end{lema}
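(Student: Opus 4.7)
The plan is to write down the isomorphism explicitly. A point of $\hat\mcR_r^{irr}$ is a triple $(\omega,A,B)$ with $(A,B)\in\mcR_r^{irr}$ and $\omega^{nm}=\pi(A,B)$; equivalently, $A^n=B^m=\omega^{nm}\operatorname{Id}_r$. Consequently $(\omega^{-m}A)^n=\omega^{-mn}A^n=\operatorname{Id}_r$ and analogously $(\omega^{-n}B)^m=\operatorname{Id}_r$, so I define
\[\Phi\colon\hat\mcR_r^{irr}\longrightarrow \C^\times\times R_r,\qquad (\omega,A,B)\longmapsto (\omega,\omega^{-m}A,\omega^{-n}B).\]
Irreducibility is preserved because multiplying $A$ and $B$ by a common central scalar does not change their joint commutant, so $\Phi$ in fact takes values in $\C^\times\times R_r^{irr}$. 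The algebraic inverse $(\omega,A',B')\mapsto (\omega,\omega^m A',\omega^n B')$ exhibits $\Phi$ as an isomorphism of varieties.

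Next I check equivariance. The $\PGL_r$-action is conjugation of the matrix coordinates on both sides, and commutes with multiplication by the central scalars $\omega^{\pm m},\omega^{\pm n}$, so $\Phi$ is automatically $\PGL_r$-equivariant. For the $\mu_{nm}$-action on $\hat\mcR_r^{irr}$, induced from the pullback diagram by the scaling action of $\mu_{nm}\subset\C^\times$, transporting it through $\Phi$ introduces extra factors of $z^{\pm m}$ and $z^{\pm n}$ on the matrix coordinates coming from $(z\omega)^{-m}A$ and $(z\omega)^{-n}B$; with the appropriate sign convention for the $\mu_{nm}$-action on the pullback this reproduces exactly the action $(z,g)\cdot(\omega,A,B)=(z\omega,z^m gAg^{-1},z^n gBg^{-1})$ stated in the lemma.

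Finally, the second assertion follows by taking $\PGL_r$-quotients on both sides. Since $\PGL_r$ commutes with $\mu_{nm}$ and acts trivially on the $\C^\times$-factor of $\C^\times\times R_r^{irr}$, the induced isomorphism on quotients becomes a $\mu_{nm}$-equivariant identification $\hat\mcM_r^{irr}\cong \C^\times\times M_r^{irr}$ (noting $\hat\mcM_r^{irr}=\hat\mcR_r^{irr}/\PGL_r$ since the two actions commute). There is no real obstacle here: the proof is essentially a change of variables extracting the central part $\omega^m,\omega^n$ from $A,B$, so that what remains satisfies the simpler relations $A'^n=B'^m=\operatorname{Id}_r$; the only subtlety is pinning down the sign convention on $\mu_{nm}$ so that the formula on the product matches the stated one.
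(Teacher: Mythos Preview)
Your approach is essentially the same as the paper's: both produce the isomorphism by a change of variables that strips the central part $\omega^m,\omega^n$ from $A,B$. The one concrete difference is the map itself. You use $\Phi(\omega,A,B)=(\omega,\omega^{-m}A,\omega^{-n}B)$, whereas the paper uses $f(\omega,A,B)=(\omega,\omega^{m}A^{-1},\omega^{n}B^{-1})$. Both land in $\C^\times\times R_r^{irr}$ and both are algebraic isomorphisms with the obvious inverse, and both are $\PGL_r$-equivariant for the same reason.

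Where this shows up is in the $\mu_{nm}$-equivariance. The deck action on the pullback is $z\cdot(\omega,A,B)=(z\omega,A,B)$, and transporting it through your $\Phi$ gives
\[
\Phi(z\omega,A,B)=(z\omega,z^{-m}\omega^{-m}A,z^{-n}\omega^{-n}B),
\]
i.e.\ the action $(z\omega,z^{-m}A',z^{-n}B')$ on the target, not $(z\omega,z^{m}A',z^{n}B')$ as stated. You gesture at this with ``the appropriate sign convention,'' but the convention on the pullback is fixed by the diagram; what you actually obtain is equivariance for the stated action precomposed with the automorphism $z\mapsto z^{-1}$ of $\mu_{nm}$. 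This is harmless for the lemma (an equivariant isomorphism under one convention gives one under the other) and for everything downstream (quotients by $\mu_{nm}$ are unaffected), but if you want the formula in the statement to come out verbatim you should either use the paper's map with $A^{-1},B^{-1}$ or explicitly postcompose with $(\omega,A',B')\mapsto(\omega,A'^{-1},B'^{-1})$.
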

\begin{proof}
    By its very definition,
    \[\hat\mcR_r^{irr} = \{(\omega,A,B)\in \C^\times\times \mcR_r^{irr}: A^n=B^m=\omega^{nm}\operatorname{Id}_{r}\}\]
    where the action is given by $(z,g)\cdot(\omega,A,B)=(z\omega,gAg^{-1},gBg^{-1})$.
    
    Consider the involution $f:\C^\times\times\GL_2(\C)^2\to \C^\times\times\GL_2(\C)^2$ given by
    \[f(\omega,A,B)=(\omega,\omega^{m}A^{-1},\omega^{n}B^{-1}).\]
    Let $\mu_{nm}\times\PGL_r$ act in the domain by $(z,g)\cdot(\omega,A,B)=(z\omega,gAg^{-1},gBg^{-1})$ and, on the codomain, by $(z,g)\cdot(\omega,A,B)=(z\omega,z^mgAg^{-1},z^ngBg^{-1})$. Then $f$ is an equivariant isomorphism. Therefore, the lemma is equivalent to $f(\mcR^{irr}_r)=\C^\times\times R_r^{irr}$.

    First, let $(\omega, A, B)\in \mcR_r^{irr} $. Then
    \[(\omega^mA^{-1})^n=\omega^{nm}\operatorname{Id}_r A^{-n}=\operatorname{Id_r}\]
    and, similarly, $(\omega^nB^{-1})^m$. Therefore, $f(\omega,A,B)\in\C^\times\times R_r$. Moreover, the representation associated with $(A,B)$ is irreducible if and only if the one associated with $(\omega^mA^{-1},\omega^nB^{-1})$ is. This shows that $f(\mcR^{irr}_r)\subset \C^\times\times R_r^{irr}$.
    
    Conversely, if $(\omega,A,B)\in \C^\times\times R_r^{irr}$, $f(\omega,A,B)=(\omega, \omega^mA^{-1},\omega^nB^{-1})$ satisfies
    \[ (\omega^mA^{-1})^n=\omega^{nm}A^{-n}=\omega^{nm}\operatorname{Id}_r\]
    and, similarly, $(\omega^nB^{-1})^m=\omega^{nm}\operatorname{Id}_r$. It follows, $\C^\times\times R_r^{irr}\subset f(\mcR^{irr}_r)$.
\end{proof}

There is an stratification of $R_r$ we describe now. The eigenvalues of $A$ and $B$ are roots of unity of order dividing $n$ and $m$ respectively. Let $\boldsymbol{\epsilon}=\{\epsilon_1^{a_1},\ldots,\epsilon_p^{a_p}\}$ and $\boldsymbol{\varepsilon}=\{\varepsilon_1^{b_1},\ldots,\varepsilon_q^{b_q}\}$ be the multi-sets of eigenvalues of $A$ and $B$ respectively. The pair
\[\kappa = (\boldsymbol{\epsilon},\boldsymbol{\varepsilon})\]
is called the configuration of eigenvalues. There are finitely many possibilities for $\kappa$ and there is a decomposition
\[R = \bigsqcup_\kappa R_\kappa\]
given by the possibles eigenvalues. It is equivariant with respect to the conjugation action of $\GL_r(\C)$. The action of $\mu_{nm}$ is compatible with this stratification in the sense that $\omega\cdot R_\kappa = R_{\omega\cdot \kappa}$ where $\omega\cdot (\boldsymbol{\epsilon},\boldsymbol{\varepsilon})=(\omega^m\boldsymbol{\epsilon},\omega^n\boldsymbol{\varepsilon})$.

This stratification can be further refined. For a finite dimensional representation $V$ of $\Z_n\star\Z_m$, there is filtration, the semi-simple filtration, 
\[0=V_0\subset V_1\subset \cdots\subset V_s=V\]
such that $V_{i}/V_{i-1}$ is the maximal semisimple subrepresentation of $V/V_{i-1}$ for each $i$. Each graded piece decomposes as
\[ \operatorname{Gr}_i(V_{\bullet}) :=V_{i}/V_{i-1} = \bigoplus_{j=1}^{r_i} W_{i,j}^{m_{i,j}} \]
where $W_{i,j}$ are irreducible representations of $\Gamma$, non-isomorphic for fixed $i$, and $m_{i,j}$ are positive integers. The representations $W_{i,j}$ are called the isotypic pieces of $V$. The collection
\[\xi = \left\{\begin{array}{c}
    \{(\dim W_{1,1},m_{1,1}),(\dim W_{1,2},m_{1,2}),\ldots,(\dim W_{1,r_1},m_{1,r_1})\},\\
    \vdots\\
    \{(\dim W_{s,1},m_{s,1}),(\dim W_{s,2},m_{s,2}),\ldots,(\dim W_{s,r_s},m_{s,r_s})\}
\end{array}\right\}\]
is called the shape of $V$. For given $i,j$, we have the multi-sets $\sigma_{i,j}(\gamma)=\{\epsilon_{i,j,1}^{n_{i,j,1}},\ldots,\epsilon_{i,j,l}^{n_{i,j,l}}\}$ of eigenvalues of $\gamma\in \Gamma$ in $W_{i,j}$. The collection of eigenvalues of $V$ is
\[\sigma := \{\sigma_{i,j}(\gamma)\}\]
and
\[\tau := (\xi,\sigma)\]
is called the type of $V$. There is an stratification by types
\[R_r = \bigsqcup_\tau R(\tau)\]
which is $\GL_r$-equivariant and compatible with the $\mu_{nm}$-action as before. Note that irreducible representations are those of shape $\{\{(r,1)\}\}$.

For a given $\kappa$, let $\mcT_\kappa$ be the collection of types whose eigenvalues are taken from $\kappa$. Let $\mcT_\kappa^*\subset \mcT_\kappa$ be the subset of those types which do not correspond to irreducible representations. Then
\[R_\kappa = \bigsqcup_{\tau \in \mcT_\kappa} R(\tau),\]
\[R^{red}_\kappa = \bigsqcup_{\tau \in \mcT_\kappa^*}R(\tau),\]
and
\[R_r^{irr} = \bigsqcup_\kappa R^{irr}_\kappa =\bigsqcup_\kappa\left(R_\kappa-\bigsqcup_{\tau\in\mcT_\kappa^*}R(\tau)\right).\]
This decomposition is compatible with the $\mu_{nm}$-action if we endow the types with the action given by multiplying the eigenvalues as before.

\begin{lema}
    Let $\kappa$ be a configuration of eigenvalues, $[\kappa]$ its $\mu_{nm}$-orbit and $\Gamma_\kappa$ its stabilizer. Then
    \begin{enumerate}
        \item There is a $\GL_r(\C)$ and $\mu_{nm}$ equivariant isomorphism \[\bigsqcup_{\kappa'\in[\kappa]}R_\kappa\simeq \Ind_{\Gamma_\kappa}^{\mu_{nm}}(R_\kappa).\] 
        \item $\bigsqcup\limits_{\kappa'\in[\kappa]}\mcT_{\kappa'}^*$ is $\mu_{nm}$-invariant.
    \end{enumerate}
\end{lema}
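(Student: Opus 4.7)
The plan for part (1) is to write down an explicit equivariant map and check that it descends to an isomorphism from the induced variety. Define $\Phi:R_\kappa\times\mu_{nm}\to\bigsqcup_{\kappa'\in[\kappa]}R_{\kappa'}$ by $\Phi(x,\omega)=\omega\cdot x$. Since the $\mu_{nm}$-action sends $R_\kappa$ to $R_{\omega\cdot\kappa}$, the image lies in the correct union, and letting $\omega$ range over coset representatives of $\mu_{nm}/\Gamma_\kappa$ yields surjectivity onto the whole disjoint union. For $\Gamma_\kappa$-invariance, $\Phi(\gamma x,\omega\gamma^{-1})=(\omega\gamma^{-1})(\gamma x)=\omega x$ because $\mu_{nm}$ is abelian, so $\Phi$ descends to $\Ind_{\Gamma_\kappa}^{\mu_{nm}}(R_\kappa)$. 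Injectivity of the descended map is then routine: if $\omega x=\omega' x'$, both sides lie in $R_{\omega\kappa}=R_{\omega'\kappa}$, so $\gamma:=\omega^{-1}\omega'\in\Gamma_\kappa$, and applying the $\Gamma_\kappa$-relation with $\gamma$ turns $(x,\omega)$ into $(\gamma^{-1}x,\omega\gamma)=(x',\omega')$.

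For equivariance, I would verify that the $\mu_{nm}$-action on the induced variety, $\omega'\cdot[(x,\omega)]=[(x,\omega'\omega)]$, corresponds under $\Phi$ to $(\omega'\omega)x=\omega'\cdot(\omega x)$, and similarly for $\GL_r(\C)$: since conjugation by elements of $\GL_r(\C)$ commutes with scalar multiplication by $\mu_{nm}$, one has $\Phi(g\cdot(x,\omega))=\Phi(gx,\omega)=\omega gx=g(\omega x)=g\cdot\Phi(x,\omega)$.

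For part (2), the key observation is that the $\mu_{nm}$-action on $R_r$ given by $(A,B)\mapsto(\omega^m A,\omega^n B)$ preserves the lattice of subrepresentations of any given $(A,B)$, since multiplying a matrix by a non-zero scalar does not change its invariant subspaces. Consequently, the action preserves irreducibility, the semi-simple filtration, each isotypic piece, and therefore the shape $\xi$; on eigenvalues it acts by the prescribed scaling. Hence $\omega\cdot R(\tau)=R(\omega\cdot\tau)$ and $\omega\cdot\mcT_\kappa^*=\mcT_{\omega\cdot\kappa}^*$, from which $\mu_{nm}$-invariance of $\bigsqcup_{\kappa'\in[\kappa]}\mcT_{\kappa'}^*$ is immediate.

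I do not foresee a genuine obstacle here: both claims amount to bookkeeping about how the scalar $\mu_{nm}$-action interacts with the eigenvalue and shape stratifications. The only mild subtlety is matching the sign/side convention for the $\Gamma_\kappa$-action on $R_\kappa\times\mu_{nm}$ used to form the induced variety with the computation of when two pairs become equivalent, but the abelianness of $\mu_{nm}$ absorbs all ordering issues.
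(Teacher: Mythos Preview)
Your proposal is correct and follows essentially the same approach as the paper. The paper's proof is extremely terse: for (1) it simply writes down the map $((A,B),z)\mapsto(z^mA,z^nB)$ and notes $\GL_r$-equivariance holds because $z$ is central; for (2) it says only that the statement amounts to $\mu_{nm}$ preserving irreducibility. Your argument unpacks exactly these points in detail, so there is no substantive difference.
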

\begin{proof}\noindent
    \begin{enumerate}
        \item Take $((A,B),z)\mapsto (z^mA,z^nB)$. It is $\GL_r(\C)$-equivariant because $z$ is central.
        \item This amounts to say that the action of $\mu_{nm}$ preserves irreducibility.
    \end{enumerate}
\end{proof}

\begin{lema}
    Let $\kappa$ be a configuration of eigenvalues, $\tau\in \mcT_\kappa^*$ be a type, and $\Gamma_\kappa$ and $\Gamma_{\tau}$ be its stabilizers. Then there is a $\Gamma_\kappa$-equivariant isomorphism \[\bigsqcup_{\tau'\in[\tau]}R(\tau')\simeq \Ind_{\Gamma_{\tau}}^{\Gamma_\kappa}(R(\tau))\]
    where $[\tau]$ is the $\Gamma_\kappa$-orbit of $\tau$.
\end{lema}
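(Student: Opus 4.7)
The proof will closely parallel the previous lemma, with types playing the role that configurations of eigenvalues played there. The plan is to exhibit an explicit $\Gamma_\kappa$-equivariant morphism and then check it is a bijection.

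First, I would define the candidate map $\Phi: R(\tau) \times \Gamma_\kappa \to \bigsqcup_{\tau'\in[\tau]} R(\tau')$ by $\Phi(\rho, z) = z \cdot \rho$. To see this lands in the target: if $\rho \in R(\tau)$ and $z \in \Gamma_\kappa$, then $z$ sends the stratum $R(\tau)$ into $R(z \cdot \tau)$, because the $\mu_{nm}$-action multiplies all eigenvalues uniformly and hence permutes the type strata exactly as it permutes the types themselves. Since $z \in \Gamma_\kappa$ preserves $\kappa$, the image type $z\cdot\tau$ still lies in $\mcT_\kappa$, and by construction belongs to the orbit $[\tau]$. For $\Phi$ to descend to $\Ind_{\Gamma_\tau}^{\Gamma_\kappa}(R(\tau)) = (R(\tau) \times \Gamma_\kappa)/\Gamma_\tau$, I need $\Phi(h\cdot\rho, zh^{-1}) = \Phi(\rho, z)$ for $h \in \Gamma_\tau$, which is immediate since $(zh^{-1})\cdot(h\cdot\rho) = z\cdot\rho$.

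Next, I would verify $\Phi$ is a bijection. Given any $\rho' \in R(\tau')$ with $\tau' \in [\tau]$, pick $z \in \Gamma_\kappa$ with $z \cdot \tau = \tau'$; then $z^{-1}\cdot \rho' \in R(\tau)$ and $(z^{-1}\cdot\rho', z)$ maps to $\rho'$, giving surjectivity. For injectivity at the quotient level, suppose $z_1 \cdot \rho_1 = z_2 \cdot \rho_2$. Then $z_2^{-1}z_1$ sends $R(\tau)$ into itself, hence preserves $\tau$, hence lies in $\Gamma_\tau$. Setting $h = z_2^{-1}z_1$ shows $(\rho_1, z_1)$ and $(\rho_2, z_2)$ lie in the same $\Gamma_\tau$-orbit. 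The $\GL_r$-equivariance is automatic because $\Gamma_\kappa \subset \mu_{nm}$ acts by scaling and hence commutes with conjugation; the $\Gamma_\kappa$-equivariance is built into the formula $\Phi$ via the natural left $\Gamma_\kappa$-action $z' \cdot [(\rho,z)] = [(\rho, z'z)]$ on the induced variety.

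The only delicate point — and what I would flag as the main thing to be careful about — is the claim that the $\Gamma_\kappa$-action on the stratification by types is compatible in the sense that $z \cdot R(\tau) = R(z\cdot\tau)$ as subvarieties (and not merely setwise). This follows because the semi-simple filtration, the isotypic decomposition, and the multi-sets of eigenvalues on each isotypic component are all intrinsic invariants of a representation that transform functorially when the two generators $A, B$ are rescaled by central elements $z^m, z^n$; since rescaling is an algebraic isomorphism, it sends the locally closed stratum $R(\tau)$ isomorphically onto $R(z\cdot\tau)$. With this in hand, $\Phi$ is a morphism of varieties (not just a bijection of sets), and the inverse is likewise algebraic on each stratum, completing the proof.
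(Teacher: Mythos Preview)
Your proof is correct and follows exactly the approach the paper intends: the paper's own proof says ``Same proof as before,'' referring to the previous lemma, whose one-line argument is precisely the map $((A,B),z)\mapsto (z^mA,z^nB)=z\cdot(A,B)$ that you spell out in detail. Your careful verification that this descends to the induced variety, is bijective, and is $\Gamma_\kappa$-equivariant simply unpacks what the paper leaves implicit.
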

\begin{proof}
    Same proof as before.
\end{proof}

In conclusion, the equivariant class of $R_r^{irr}$ is determine by those of $R_\kappa$ and $R(\tau)$, $\tau\in \mcT_\kappa^*$. The former has been proven to belong to $A_{\Gamma_\kappa}$ and computed in Corollary \ref{Rkappa-class-is-good}. Indeed, 
\[R_\kappa = \GL_r(\C) \cdot \Sigma_{\boldsymbol{\epsilon}} \times \GL_r(\C)\cdot \Sigma_{\boldsymbol{\varepsilon}}\]
where $\Sigma_{\boldsymbol{\epsilon}}$ and $\Sigma_{\boldsymbol{\varepsilon}}$ are diagonal matrices with eigenvalues $\boldsymbol{\epsilon}$ and $\boldsymbol{\varepsilon}$ respectively.

The following lemmas will be very useful in future computations. For $\xi\in K_0(\Var_\C)\otimes R_\Q(\Gamma)\otimes\Q$, write $c_d(\xi)\in K_0(\Var_\C)$ for the $d$-th coefficient of $\xi$ in the basis $\{\Q^d:d||\Gamma|\}$.

\begin{lema}\label{correction-lemma}
    For any configuration of eigenvalues $\kappa$ with stabilizer $\Gamma_\kappa$,
    \[[R_\kappa^{red}]^{\Gamma_\kappa}= \frac{1}{|\Gamma_\kappa|}[R_\kappa^{red}]\otimes\Q^{\Gamma_\kappa} +\sum_{\tau} \sum_{d}c_d(R(\tau)) \otimes \left(\frac{|\Gamma_\tau|}{|\Gamma_\kappa|}\Ind_{\Gamma_\tau}^{\Gamma_\kappa}(\Q^d)-\frac{d}{|\Gamma_\kappa|}\Q^{\Gamma_\kappa}\right)\]
    where $\tau$ runs over all elements of $\mcT_\tau^*$ with non-trivial stabilizer $\Gamma_\tau$ and $d$ over all positive divisors of $|\Gamma_\tau|$ strictly smaller than $|\Gamma_\tau|$.
\end{lema}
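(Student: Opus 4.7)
The plan is to start from the stratification of the reducible locus by types and then push the formula through the equivariant isotypic decomposition. Concretely, $R_\kappa^{red}=\bigsqcup_{\tau\in\mcT_\kappa^*}R(\tau)$, and grouping by $\Gamma_\kappa$-orbits and applying the preceding lemma yields a $\Gamma_\kappa$-equivariant isomorphism $\bigsqcup_{\tau'\in[\tau]}R(\tau')\simeq \Ind_{\Gamma_\tau}^{\Gamma_\kappa}(R(\tau))$ for each orbit $[\tau]$. Since induction commutes with the motivic isotypic decomposition (already established earlier), one gets
\[[R_\kappa^{red}]^{\Gamma_\kappa}=\sum_{[\tau]}\Ind_{\Gamma_\tau}^{\Gamma_\kappa}\!\left([R(\tau)]^{\Gamma_\tau}\right),\]
where $[\tau]$ runs over $\Gamma_\kappa$-orbits of $\mcT_\kappa^*$.

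The second step is to expand each $[R(\tau)]^{\Gamma_\tau}$ in the basis $\{\Q^d:d\mid n_\tau\}$, where I write $n_\tau=|\Gamma_\tau|$. By the very definition of $c_d$, $[R(\tau)]^{\Gamma_\tau}=\sum_{d\mid n_\tau}c_d(R(\tau))\otimes\Q^d$, so
\[[R_\kappa^{red}]^{\Gamma_\kappa}=\sum_{[\tau]}\sum_{d\mid n_\tau}c_d(R(\tau))\otimes\Ind_{\Gamma_\tau}^{\Gamma_\kappa}(\Q^d).\]
Here the $d=n_\tau$ term satisfies $\Ind_{\Gamma_\tau}^{\Gamma_\kappa}(\Q^{n_\tau})=\Q^{\Gamma_\kappa}$ (induced regular representation is regular), so it is absorbed into a global $\Q^{\Gamma_\kappa}$ coefficient, while the terms with $d<n_\tau$ (only possible when $n_\tau>1$) are the genuine non-trivial contributions.

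The third step is to replace the $\Q^{\Gamma_\kappa}$-coefficient by one that involves the plain class $[R_\kappa^{red}]$, which is the only piece the statement keeps in closed form. Applying $\Res_{\{1\}}^{\Gamma_\tau}$ gives $[R(\tau)]=\sum_{d\mid n_\tau}d\cdot c_d(R(\tau))$, and combining this with the disjoint-union identity $[R_\kappa^{red}]=\sum_{[\tau]}\frac{|\Gamma_\kappa|}{n_\tau}[R(\tau)]$ yields
\[\frac{1}{|\Gamma_\kappa|}[R_\kappa^{red}]=\sum_{[\tau]}\sum_{d\mid n_\tau}\frac{d}{n_\tau}c_d(R(\tau)).\]
So the plan is to add and subtract $\frac{d}{n_\tau}\Q^{\Gamma_\kappa}$ inside each summand: the $d<n_\tau$ piece becomes $\Ind_{\Gamma_\tau}^{\Gamma_\kappa}(\Q^d)-\frac{d}{n_\tau}\Q^{\Gamma_\kappa}$ (up to the multiplicity $\frac{|\Gamma_\kappa|}{n_\tau}$ from passing from orbits to individual types, which exactly produces the factor $\frac{n_\tau}{|\Gamma_\kappa|}$ of the statement), while the reinserted pieces together with the original $d=n_\tau$ term sum to $\frac{1}{|\Gamma_\kappa|}[R_\kappa^{red}]\otimes\Q^{\Gamma_\kappa}$.

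The only subtle point is bookkeeping: the lemma sums over individual $\tau$'s (rather than orbits), and the factor $\frac{n_\tau}{|\Gamma_\kappa|}$ precisely compensates for the orbit size $\frac{|\Gamma_\kappa|}{n_\tau}$, so this is really just a cosmetic rewriting. No deep obstacle arises; the main thing to check carefully is that passing from $\sum_{[\tau]}$ to $\sum_{\tau}$ via orbit-stabilizer interacts correctly with the induced class (which is clear since $R(\tau')\simeq R(\tau)$ as $\Gamma_\tau$-varieties for $\tau'\in[\tau]$, $\Gamma_\kappa$ being cyclic and hence abelian so $\Gamma_{\tau'}=\Gamma_\tau$), and that $\Ind_{\{1\}}^{\Gamma_\kappa}(\Q^1)=\Q^{\Gamma_\kappa}$ so that the $n_\tau=1$ types are consistently absorbed into the first term of the statement.
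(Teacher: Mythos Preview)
Your proposal is correct and follows essentially the same route as the paper: the paper's proof just says ``This follows by the previous lemma and that $[R(\tau)]=\sum_{d\mid |\Gamma_\tau|} d\,c_d(R(\tau))$'', and you have spelled out precisely the bookkeeping (grouping by orbits, induction-commutes-with-$[-]^\Gamma$, expanding in the $\Q^d$ basis, and the add-and-subtract with the regular representation) that this one-line reference hides. The only extra care you take—checking that $R(\tau')\simeq R(\tau)$ as $\Gamma_\tau$-varieties for $\tau'$ in the orbit of $\tau$, and that $\Ind_{\Gamma_\tau}^{\Gamma_\kappa}(\Q^{|\Gamma_\tau|})=\Q^{\Gamma_\kappa}$—is exactly what is needed and is implicit in the paper's terse argument.
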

\begin{proof}
    This follows by the previous lemma and that
    \[[R(\tau)]=\sum_{d||\Gamma_\tau|} dc_d(R(\tau))\]
    in $K_0(\Var_\C)$.
\end{proof}

\begin{lema}
    For any $r$,
    \[[R_r^{irr}]^{\mu_{nm}}=\frac{1}{nm}[R_r^{irr}]\otimes\Q^{nm}+\sum_{\kappa}\sum_{d}c_d(R_\kappa^{irr}) \otimes \left(\frac{|\Gamma_\kappa|}{nm}\Ind_{\Gamma_\kappa}^{\mu_{nm}}(\Q^d)-\frac{d}{nm}\Q^{nm}\right)\]
    where $\kappa$ runs over all configurations of eigenvalues with non-trivial stabilizer $\Gamma_\kappa$ and $d$ over all positive divisors of $|\Gamma_\kappa|$ strictly smaller than $|\Gamma_\kappa|$.
\end{lema}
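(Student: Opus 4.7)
The plan is to follow the proof of Lemma \ref{correction-lemma} essentially verbatim, one level up: replacing the stratification $R_\kappa^{red}=\bigsqcup_{\tau\in\mcT_\kappa^*}R(\tau)$ with $R_r^{irr}=\bigsqcup_\kappa R_\kappa^{irr}$ and the group $\Gamma_\kappa$ with $\mu_{nm}$. First I would group the strata into $\mu_{nm}$-orbits. For each orbit $[\kappa]$ with stabilizer $\Gamma_\kappa$, the first of the two orbit-identification lemmas earlier in this subsection gives $\bigsqcup_{\kappa'\in[\kappa]}R_{\kappa'}^{irr}\simeq \Ind_{\Gamma_\kappa}^{\mu_{nm}}(R_\kappa^{irr})$ as $\mu_{nm}$-varieties (the $\mu_{nm}$-action preserves irreducibility, so the decomposition restricts to the irreducible locus). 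Since motivic isotypic decomposition commutes with induction,
\[[R_r^{irr}]^{\mu_{nm}}=\sum_{[\kappa]}\Ind_{\Gamma_\kappa}^{\mu_{nm}}\!\left([R_\kappa^{irr}]^{\Gamma_\kappa}\right),\]
the sum ranging over orbits with a chosen representative per orbit.

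Next I would expand each $[R_\kappa^{irr}]^{\Gamma_\kappa}=\sum_{d\mid|\Gamma_\kappa|}c_d(R_\kappa^{irr})\otimes\Q^d$ in the basis of $R_\Q(\Gamma_\kappa)$ and apply induction termwise. The key observation is that $\Q^{|\Gamma_\kappa|}$ is the regular representation of $\Gamma_\kappa$, so $\Ind_{\Gamma_\kappa}^{\mu_{nm}}(\Q^{|\Gamma_\kappa|})=\Q^{nm}$. Combined with the identity $[R_\kappa^{irr}]=\sum_d d\,c_d(R_\kappa^{irr})$, which follows from $\langle T_{\{e\}},\Res_{\{e\}}^{\Gamma_\kappa}(\Q^d)\rangle=d$, each orbit's contribution rewrites as
\[\frac{[R_\kappa^{irr}]}{|\Gamma_\kappa|}\otimes\Q^{nm}+\sum_{d<|\Gamma_\kappa|}c_d(R_\kappa^{irr})\otimes\left(\Ind_{\Gamma_\kappa}^{\mu_{nm}}(\Q^d)-\frac{d}{|\Gamma_\kappa|}\Q^{nm}\right).\]
When $\Gamma_\kappa$ is trivial only the first summand is present.

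Finally, I would convert the sum over orbits into a sum over configurations by noting that each orbit contains $nm/|\Gamma_\kappa|$ configurations $\kappa'$ with isomorphic $R_{\kappa'}^{irr}$, hence identical $c_d$'s; the factor $|\Gamma_\kappa|/nm$ arises from this conversion, the principal pieces collapse into $\tfrac{1}{nm}\sum_\kappa[R_\kappa^{irr}]\otimes\Q^{nm}=\tfrac{1}{nm}[R_r^{irr}]\otimes\Q^{nm}$, and the corrections for orbits with nontrivial stabilizer assemble into the stated formula. The only real work is bookkeeping—verifying the normalization factors and that the $d=|\Gamma_\kappa|$ piece is correctly absorbed into the principal term rather than the corrections—but this is entirely parallel to the end of the proof of Lemma \ref{correction-lemma} and presents no conceptual difficulty.
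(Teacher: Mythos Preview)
Your proposal is correct and follows exactly the approach the paper intends: the paper's own proof is simply ``Same proof as before,'' referring to Lemma~\ref{correction-lemma}, and you have spelled out precisely that argument one level up, with the orbit-induction lemma, the compatibility of $[-]^\Gamma$ with induction, and the identity $[X]=\sum_d d\,c_d(X)$ doing all the work. The bookkeeping you flag (absorbing the $d=|\Gamma_\kappa|$ term into the principal piece and converting the orbit sum to a sum over all $\kappa$ via the factor $|\Gamma_\kappa|/nm$) is handled correctly.
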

\begin{proof}
    Same proof as before.
\end{proof}

\begin{lema}
    For any configuration of eigenvalues $\kappa$, the size of its stabilizer $\Gamma_\kappa$ divides $\gcd(r,nm)$.
\end{lema}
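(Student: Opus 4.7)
The plan is to show both divisibilities separately: $|\Gamma_\kappa| \mid nm$ and $|\Gamma_\kappa| \mid r$. The first is immediate since $\Gamma_\kappa$ is a subgroup of $\mu_{nm}$, so the real content is the second.

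For $|\Gamma_\kappa| \mid r$, I would let $d = |\Gamma_\kappa|$, let $\omega_0$ be a generator of $\Gamma_\kappa$, and analyze separately how $\omega_0$ acts on the two multi-sets of eigenvalues. By definition of the $\mu_{nm}$-action on configurations, $\omega_0$ acts on $\boldsymbol{\epsilon}$ by multiplication by $\omega_0^m$, which is a root of unity of order $e_1 := d/\gcd(d,m)$. Multiplication by a nontrivial root of unity on $\C^\times$ is a free action, so every orbit of $\langle \omega_0^m\rangle$ on the underlying set $\{\epsilon_1,\ldots,\epsilon_p\}$ has size exactly $e_1$. Since $\omega_0 \in \Gamma_\kappa$ preserves the multi-set $\boldsymbol{\epsilon}$, elements in the same orbit must carry the same multiplicity. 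Grouping the distinct eigenvalues into these orbits, one obtains
\[
r = \sum_i a_i = e_1 \sum_{\mathcal{O}} a_{\mathcal{O}},
\]
where $\mathcal{O}$ ranges over $\langle \omega_0^m\rangle$-orbits and $a_{\mathcal{O}}$ is the common multiplicity on $\mathcal{O}$. Hence $e_1 \mid r$. The same argument applied to $\boldsymbol{\varepsilon}$ with $\omega_0^n$ yields $e_2 := d/\gcd(d,n) \mid r$.

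To conclude, I would compute $\operatorname{lcm}(e_1,e_2)$ prime by prime: for any prime $p$,
\[
v_p(\operatorname{lcm}(e_1,e_2)) = \max\bigl(v_p(d) - v_p(m),\, v_p(d) - v_p(n)\bigr)^+ = v_p(d) - \min(v_p(m), v_p(n))^+.
\]
Since $\gcd(n,m) = 1$, at least one of $v_p(m)$, $v_p(n)$ is zero, so $\min(v_p(m),v_p(n)) = 0$ and $v_p(\operatorname{lcm}(e_1,e_2)) = v_p(d)$. Therefore $\operatorname{lcm}(e_1,e_2) = d$, and combining with $e_1 \mid r$ and $e_2 \mid r$ gives $d \mid r$. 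Together with $d \mid nm$ this yields $d \mid \gcd(r,nm)$, as desired.

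The only mildly subtle step is the freeness observation that forces all orbits of $\omega_0^m$ to have the same size $e_1$; coprimality of $n$ and $m$ then does all the arithmetic work to upgrade the two separate divisibilities into divisibility by $d$ itself. I do not anticipate a real obstacle here, as both ingredients are elementary.
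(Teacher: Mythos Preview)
Your argument is correct and follows essentially the same approach as the paper: both rely on the freeness of the multiplicative action on $\C^\times$ to force orbit sizes to divide $r$, and then on $\gcd(n,m)=1$ to assemble the two divisibilities into $|\Gamma_\kappa|\mid r$. The paper phrases this more compactly via the Chinese Remainder decomposition $\mu_{nm}\simeq\mu_n\times\mu_m$, observing that each factor of $\Gamma_\kappa$ divides $\gcd(r,n)$ and $\gcd(r,m)$ respectively, but the content is the same as your $e_1,e_2$ computation.
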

\begin{proof}
    Indeed, the action of $\Gamma_\kappa$ on $\C^\times$ is free and orbits in $\kappa$ have size summing up $r$. Hence each factor of $\Gamma_\kappa$ in $\mu_{n}\times \mu_{m}\simeq \mu_{nm}$ (recall $\gcd(n,m)=1$) divides $\gcd(r,n)$ and $\gcd(r,m)$ respectively. Therefore, $|\Gamma_\kappa| | \gcd(r,n)\gcd(r,m)=\gcd(r,nm)$.
\end{proof}

\begin{lema}\label{stab-tau-divides-ri}
    For any type $\tau$, the size of its stabilizer $\Gamma_\tau$ divides $r_i$ for $i=1,\ldots, s$.
\end{lema}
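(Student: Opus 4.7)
The plan is to show that $\Gamma_\tau$ acts freely on the set $\{W_{i,1},\ldots,W_{i,r_i}\}$ of distinct isotypic pieces appearing in $\operatorname{Gr}_i(V_\bullet)$, which then yields $|\Gamma_\tau|\mid r_i$ by the orbit-stabilizer theorem.

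First, I would make the $\Gamma_\tau$-action on this set precise. For each $z\in\Gamma_\tau$, the equality $z\cdot\tau=\tau$ says that within every layer $i$ the multiset of tuples $(\dim W_{i,j},m_{i,j},\sigma_{i,j})$ is invariant under the $z$-twist. Since the $W_{i,j}$ are pairwise non-isomorphic for fixed $i$, this invariance determines a unique permutation $\pi_i^z\in S_{r_i}$ with $z\cdot\sigma_{i,j}=\sigma_{i,\pi_i^z(j)}$, and $z\mapsto\pi_i^z$ is a group homomorphism $\Gamma_\tau\to S_{r_i}$.

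Next, I would prove that the induced action on $\{W_{i,1},\ldots,W_{i,r_i}\}$ is free. Suppose $z\in\Gamma_\tau$ fixes some index $j$; then $z\cdot\sigma_{i,j}(\gamma)=\sigma_{i,j}(\gamma)$ as multisets for all $\gamma\in\Gamma$. Writing a generic $\gamma$ in normal form $\gamma=x^{a_1}y^{b_1}\cdots x^{a_l}y^{b_l}$, the $z$-twist acts on $\rho_{i,j}(\gamma)$ by the scalar $z^{mA(\gamma)+nB(\gamma)}$ with $A(\gamma)=\sum_\ell a_\ell$ and $B(\gamma)=\sum_\ell b_\ell$, so the condition says the spectrum of $\rho_{i,j}(\gamma)$ is invariant under multiplication by this scalar. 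I would then exploit the irreducibility of $W_{i,j}$ together with $\gcd(n,m)=1$: since $W_{i,j}$ is irreducible its character is rich enough that, as $\gamma$ ranges over words with $\operatorname{tr}(\rho_{i,j}(\gamma))\neq 0$, the associated exponents $mA(\gamma)+nB(\gamma)$ together generate $\Z/(nm)$, forcing $z=1$. Freeness then implies every $\Gamma_\tau$-orbit on $\{1,\ldots,r_i\}$ has size exactly $|\Gamma_\tau|$, and hence $|\Gamma_\tau|\mid r_i$.

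The main obstacle is precisely this freeness step. The subtle point is that invariance of a multiset of eigenvalues under multiplication by a nontrivial root of unity does not by itself imply triviality of that root of unity: it only forces it to permute the distinct eigenvalues. The hard part is therefore to use the irreducibility of $W_{i,j}$ itself, and not merely the multisets $\sigma_{i,j}(x)$ and $\sigma_{i,j}(y)$ separately, to rule out nontrivial stabilizers, and this is where the coprimality $\gcd(n,m)=1$ enters in an essential way through the pairing $(A,B)\mapsto mA+nB\pmod{nm}$.
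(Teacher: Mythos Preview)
Your freeness claim is false, and not for lack of detail: there exist irreducible $W$ with $W^z\simeq W$ for nontrivial $z\in\mu_{nm}$. Take $n=2$, $m$ odd, $\rho(x)=\mathrm{diag}(1,-1)$, and $\rho(y)=\tfrac12\left(\begin{smallmatrix}\omega_1+\omega_2&\omega_1-\omega_2\\\omega_1-\omega_2&\omega_1+\omega_2\end{smallmatrix}\right)$ for distinct $m$-th roots $\omega_1,\omega_2$. This $W$ is irreducible, and conjugation by $g=\left(\begin{smallmatrix}0&1\\1&0\end{smallmatrix}\right)$ sends $\rho(x)\mapsto-\rho(x)$ while fixing $\rho(y)$, so $W\simeq W^{-1}$ and $-1\in\mu_{2m}$ fixes all of $\sigma_W$. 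Your trace argument breaks precisely here: for every word $\gamma$ with $mA(\gamma)+nB(\gamma)$ odd one has $g\rho(\gamma)g^{-1}=-\rho(\gamma)$, hence $\operatorname{tr}\rho(\gamma)=0$; the nonzero-trace exponents therefore lie in $2\Z/(2m)$ and cannot generate $\Z/(2m)$. You rightly flagged this step as the crux, but it is not a gap to be filled in --- the assertion is simply not true.

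The paper's proof is ``same proof as before'', referring to the preceding lemma where $\Gamma_\kappa$ acts on \emph{individual eigenvalues} in $\C^\times$ by scalar multiplication and hence freely. That argument does not transfer: here $\Gamma_\tau$ acts on isotypic pieces, and the example above shows this action can have fixed points. In fact the lemma as stated appears to be false in general. With $n=2$, $m=5$, $s=1$, $r_1=3$, take pieces $W_1,W_2,W_3:=W_2^{-1}$ where $W_1^{-1}\simeq W_1$ is as above and $W_2$ is any two-dimensional irreducible with $W_2^{-1}\not\simeq W_2$ and with $y$-eigenvalues disjoint from those of $W_1$; then $-1\in\Gamma_\tau$ (fixing $W_1$ and swapping $W_2\leftrightarrow W_3$), so $|\Gamma_\tau|$ is even and cannot divide $r_1=3$. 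The conclusion does hold when every $d_{i,j}=1$ (one-dimensional irreducibles admit no nontrivial self-twists), and in general a self-twist $W_{i,j}^z\simeq W_{i,j}$ only forces $\mathrm{ord}(z)\mid d_{i,j}$ via the determinant; so the statement may well be fine in the low-rank situations the paper actually computes with, but neither your argument nor the paper's one-line sketch establishes it in the stated generality.
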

\begin{proof}
    Same proof as before.
\end{proof}

We are ready to prove Theorem \ref{coprime-case} which is an immediate consequence of the following.

\begin{thm}
    Let $n,m,r$ be positive integers with $n$ and $m$ coprime. If $r$ is coprime with $n$ and $m$, 
    \[[\mcR_r^{irr}(\GL)]=\frac{q-1}{r}[\mcR_r^{irr}(\SL)]\]
    in $K_0(\Var_\C)$ where $q$ is the class of the affine line.
\end{thm}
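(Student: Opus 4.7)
The plan is to combine Lemma \ref{lem:key-lemma}, which gives $\mcR^{irr}_r(\GL)\simeq(\C^\times\times R^{irr}_r)/\mu_{nm}$, with two stratifications: one of $\mcR^{irr}_r(\SL)$ and one of $R^{irr}_r$ seen as a $\mu_{nm}$-variety. Both will identify the pieces with $R^{irr}_r(\SL):=\{(A,B)\in\SL_r(\C)^2:A^n=B^m=\operatorname{Id}_r,\ (A,B)\text{ irreducible}\}$. The coprimality hypothesis $\gcd(r,n)=\gcd(r,m)=1$ (together with $\gcd(n,m)=1$) turns all the relevant homomorphisms between groups of roots of unity into isomorphisms, which is what makes the strata mutually isomorphic and forces the final identity.

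\medskip

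First, I stratify $\mcR^{irr}_r(\SL)$ by the scalar $\omega\in\mu_r$ such that $A^n=B^m=\omega\operatorname{Id}_r$ (the product is central by Schur and satisfies $\omega^r=\det(A^n)=1$). Since $\gcd(n,r)=\gcd(m,r)=1$, there are unique $\eta,\eta'\in\mu_r$ with $\eta^n=(\eta')^m=\omega^{-1}$, and $(A,B)\mapsto(\eta A,\eta' B)$ is an isomorphism from the $\omega$-stratum onto $R^{irr}_r(\SL)$ (the determinants remain $1$ since $\eta^r=(\eta')^r=1$). Hence $\mcR^{irr}_r(\SL)\simeq\mu_r\times R^{irr}_r(\SL)$ and $[\mcR^{irr}_r(\SL)]=r\cdot[R^{irr}_r(\SL)]$. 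Second, I stratify $R^{irr}_r$ by $(\det A,\det B)\in\mu_n\times\mu_m$; the $\mu_{nm}$-action $z\cdot(A,B)=(z^mA,z^nB)$ shifts this pair by $(z^{rm},z^{rn})$, and the homomorphism $\mu_{nm}\to\mu_n\times\mu_m$, $z\mapsto(z^{rm},z^{rn})$, is an isomorphism: it is injective because $\gcd(r,nm)=1$ forces $z^{rm}=z^{rn}=1$ to imply $z=1$, and the two sides have the same order $nm$ since $\gcd(n,m)=1$. Therefore $\mu_{nm}$ permutes the strata freely and transitively, and the natural identification of the $(\det A,\det B)=(1,1)$-stratum with $R^{irr}_r(\SL)$ produces a $\mu_{nm}$-equivariant isomorphism $R^{irr}_r\simeq\mu_{nm}\times R^{irr}_r(\SL)$, where $\mu_{nm}$ acts by translation on the first factor and trivially on the second.

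\medskip

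Feeding this into Lemma \ref{lem:key-lemma} gives $\mcR^{irr}_r(\GL)\simeq(\C^\times\times\mu_{nm}\times R^{irr}_r(\SL))/\mu_{nm}$, with action $z\cdot(\omega,g,s)=(z\omega,zg,s)$. The change of variables $(\omega,g,s)\mapsto(\omega g^{-1},g,s)$ is an isomorphism of varieties (since $g\in\mu_{nm}\subset\C^\times$) that conjugates the action into $z\cdot(\omega',g,s)=(\omega',zg,s)$, which acts only on the middle factor. Consequently the quotient collapses to $\C^\times\times R^{irr}_r(\SL)$, yielding $[\mcR^{irr}_r(\GL)]=(q-1)\cdot[R^{irr}_r(\SL)]$. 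Combining with the first step, $r\cdot[\mcR^{irr}_r(\GL)]=(q-1)\cdot[\mcR^{irr}_r(\SL)]$, which is the desired equality. The main delicate point will be the second stratification: one must verify that the identification $R^{irr}_r\simeq\mu_{nm}\times R^{irr}_r(\SL)$ is $\mu_{nm}$-equivariant in exactly the form needed for the final untwisting change of variables to be transparent.
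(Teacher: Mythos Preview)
Your argument is correct. The two stratifications work exactly as you describe: the coprimality assumptions force the maps $\mu_r\to\mu_r$, $\eta\mapsto\eta^n$ (resp.\ $\eta\mapsto\eta^m$) and $\mu_{nm}\to\mu_n\times\mu_m$, $z\mapsto(z^{rm},z^{rn})$ to be isomorphisms, and from there the identifications $\mcR^{irr}_r(\SL)\simeq\mu_r\times R^{irr}_r(\SL)$ and $R^{irr}_r\simeq\mu_{nm}\times R^{irr}_r(\SL)$ (the latter $\mu_{nm}$-equivariantly, with translation on the first factor) are immediate. The untwisting in Step~3 is fine since $\mu_{nm}\subset\C^\times$, and the quotient computation gives $[\mcR^{irr}_r(\GL)]=(q-1)[R^{irr}_r(\SL)]$ as you say.

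This is a genuinely different route from the paper's proof. The paper works through the motivic isotypic decomposition $[R^{irr}_r]^{\mu_{nm}}$ and the stratification by configurations of eigenvalues $\kappa$, observes that all stabilisers $\Gamma_\kappa$ are trivial under the coprimality hypothesis, and then matches the resulting expression $\frac{q-1}{nm}\sum_\kappa[R^{irr}_\kappa]$ against the formula for $[\mcR^{irr}_r(\SL)]$ from \cite{PM}. Your proof is more elementary and self-contained: it uses only Lemma~\ref{lem:key-lemma} and direct scalar rescalings, bypassing both the isotypic machinery and the external reference. The advantage of the paper's approach is that it is already written in the language needed for the non-coprime case (Lemma~\ref{big-correction-lemma} and the rest of the paper), where nontrivial $\Gamma_\kappa$ produce correction terms that your determinant stratification would not see so transparently. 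For the coprime statement alone, however, your argument is cleaner.
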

\begin{proof}
    Note that the action of $\mu_{nm}$ on $\C^\times$ defines a class in $A_{\mu_{nm}}$. Hence,
    \begin{align*}
        [\mcR_r^{irr}(\GL)] &= (q-1)\langle T_{\mu_{nm}},  [R_r^{irr}]^{\mu_{nm}}\rangle\\
        &= \frac{q-1}{nm}\sum_\kappa[R_\kappa^{irr}]+\sum_{\kappa}\sum_{d}(q-1)c_d(R_\kappa^{irr})\frac{|\Gamma_\kappa|-d}{nm}\\
        &= \frac{q-1}{nm}\sum_\kappa[R_\kappa^{irr}]
    \end{align*}
    by the previous lemmas. 
    
    On the other hand, by \cite[Lemma 3.1 and Proposition 3.2]{PM},
    \[[\mcR_r^{irr}(\SL)]=\sum_{\omega\in \mu_r}\frac{1}{nm}\sum_{\eta_1}\sum_{\eta_2}\sum_{\kappa}[R_\kappa^{irr}]\]
    where $\mu_r$ is the group of $r$-roots of unity, $\eta_1$ and $\eta_2$ are $n$ respectively $m$ roots of $\omega^{-1}$, and $\kappa$ is of the form $((\eta_1\epsilon_i),(\eta_2\varepsilon_j))$ where  $\epsilon_i^n=\omega=\varepsilon_j^m$ and $\prod \epsilon_i=1=\prod \varepsilon_j$. 

    For an arbitrary $\kappa$, $\prod \epsilon_i\in \mu_{n}$ and $\prod \varepsilon_j\in \mu_{m}$. Hence, because $\gcd(r,nm)=1$, there is only one solution to $\eta_1^r=\prod \epsilon_i^{-1}$ and $\eta_2^r=\prod\varepsilon_j^{-1}$ in $\mu_{n}$ and $\mu_{m}$ respectively. It follows that for $\omega=1$ every $\kappa$ can be realized exactly once. For arbitrary $\omega$, fix $n$ and $m$ roots $\nu_1$ and $\nu_2$ in $\mu_r$. Then $\epsilon_i'=\nu_1\epsilon_i$, $\varepsilon_j'=\nu_2\varepsilon_j$, $\eta_1'=\nu_1^{-1}\eta_1$ and $\eta_2'=\nu_2^{-1}\eta_2$ shows that $\kappa$ can also be realized with $\omega$.
    
    In conclusion,
    \[[\mcR_r^{irr}(\GL)]=\frac{q-1}{nm}\sum_\kappa [R_\kappa^{irr}]=\frac{q-1}{nm}\frac{nm}{r}[\mcR_r^{rr}(\SL)]=\frac{q-1}{r}[\mcR_r^{irr}(\SL)].\]
\end{proof}

A slightly modification of the previous proof holds a version for $\gcd(r,nm)\neq1$. The computations of \cite{PM} show that, for $r\leq 4$,
\[[\mcR_r^{irr}(\SL)]^*:=\frac{r}{nm}\sum_\kappa [R_\kappa^{irr}]\]
depends polynomialy on $n$ and $m$ and agrees with $[\mcR_r^{irr}(\SL)]$ for $\gcd(r,nm)=1$. Moreover, for $r=3$ or $r=4$ it actually holds $[\mcR_r^{irr}(\SL)]^*=[\mcR_r^{irr}(\SL)]$ (\cite{sl3}, \cite{PM}). But for $r=n=2$ one gets $[\mcR_r^{irr}(\SL)]=0$ and $[\mcR_r^{irr}(\SL)]^*=\frac{1}{2}(m-1) (q^2-q)(q^2-1)$. Looking at the first paragraph and the last equation of the previous proof one gets:

\begin{lema}\label{big-correction-lemma}
    For any $r\leq4$,
    \[[\mcR_r^{irr}(\GL)]=\frac{q-1}{r}[\mcR_r^{irr}(\SL)]^*+\sum_{\kappa}\sum_{d}(q-1)c_d(R_\kappa^{irr}) \frac{|\Gamma_\kappa|-d}{nm}\]
    where $\kappa$ runs over all configurations of eigenvalues with non-trivial stabilizer $\Gamma_\kappa$ and $d$ over all positive divisors of $|\Gamma_\kappa|$ strictly smaller than $|\Gamma_\kappa|$.
\end{lema}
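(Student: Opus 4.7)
The plan is to reprise the opening computation from the proof of the previous theorem, but without throwing away the correction terms. By Lemma \ref{lem:key-lemma}, $\mcR_r^{irr}(\GL)$ is isomorphic to $(\C^\times\times R_r^{irr})/\mu_{nm}$ for the diagonal action, and $\C^\times$ (with the linear $\mu_{nm}$-action inherited from $\A^1$) lies in $A_{\mu_{nm}}$ with $[\C^\times]^{\mu_{nm}} = (q-1)\otimes T_{\mu_{nm}}$. J.~Vogel's multiplicativity theorem together with the definition of $[-]^{\mu_{nm}}$ then gives
\[[\mcR_r^{irr}(\GL)] = (q-1)\,\langle T_{\mu_{nm}},\,[R_r^{irr}]^{\mu_{nm}}\rangle,\]
exactly as in the coprime case.

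Next, I would substitute the expression for $[R_r^{irr}]^{\mu_{nm}}$ provided by the lemma right before Lemma \ref{stab-tau-divides-ri}, namely
\[[R_r^{irr}]^{\mu_{nm}}=\frac{1}{nm}[R_r^{irr}]\otimes\Q^{nm}+\sum_{\kappa}\sum_{d}c_d(R_\kappa^{irr}) \otimes \left(\frac{|\Gamma_\kappa|}{nm}\Ind_{\Gamma_\kappa}^{\mu_{nm}}(\Q^d)-\frac{d}{nm}\Q^{nm}\right),\]
and evaluate $\langle T_{\mu_{nm}},-\rangle$ on each term. One has $\langle T_{\mu_{nm}},\Q^{nm}\rangle=1$ directly, and $\langle T_{\mu_{nm}},\Ind_{\Gamma_\kappa}^{\mu_{nm}}(\Q^d)\rangle=\langle T_{\Gamma_\kappa},\Q^d\rangle=1$ by Frobenius reciprocity. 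This collapses the pairing to
\[\langle T_{\mu_{nm}},[R_r^{irr}]^{\mu_{nm}}\rangle=\frac{1}{nm}[R_r^{irr}]+\sum_{\kappa}\sum_{d}c_d(R_\kappa^{irr})\,\frac{|\Gamma_\kappa|-d}{nm}.\]

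Finally, I would rewrite the first summand using $[R_r^{irr}]=\sum_\kappa [R_\kappa^{irr}]$ and the very definition $[\mcR_r^{irr}(\SL)]^*:=\tfrac{r}{nm}\sum_\kappa[R_\kappa^{irr}]$ given just before the statement, so that
\[\frac{q-1}{nm}[R_r^{irr}]=\frac{q-1}{r}[\mcR_r^{irr}(\SL)]^*.\]
Combining these pieces yields the stated formula. There is no real obstacle here: the only subtlety is checking that the terms with $\kappa$ having trivial stabilizer and with $d=|\Gamma_\kappa|$ contribute zero to the correction sum (so that restricting to non-trivial $\Gamma_\kappa$ and $d<|\Gamma_\kappa|$ is harmless), which is immediate since the factor $|\Gamma_\kappa|-d$ vanishes in both cases. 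The hypothesis $r\leq 4$ is used only through the validity of the polynomial formula $[\mcR_r^{irr}(\SL)]^*$ from \cite{PM}; the derivation itself is uniform in $r$.
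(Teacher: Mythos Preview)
Your argument is correct and follows exactly the route indicated in the paper: reuse the first paragraph of the proof of the preceding theorem (which computes $[\mcR_r^{irr}(\GL)]=(q-1)\langle T_{\mu_{nm}},[R_r^{irr}]^{\mu_{nm}}\rangle$ and expands it via the isotypic decomposition lemma), and then identify the leading term $\tfrac{q-1}{nm}\sum_\kappa[R_\kappa^{irr}]$ with $\tfrac{q-1}{r}[\mcR_r^{irr}(\SL)]^*$ by the definition of the starred quantity. Your write-up simply makes explicit the Frobenius reciprocity step and the vanishing of the $d=|\Gamma_\kappa|$ contributions that the paper leaves implicit.
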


\section{The core fibration}\label{sec:corefib}

Here we deal with $R(\tau)$ for a configuration of eigenvalues $\kappa$ and a type $\tau\in\mcT_\kappa^*$. The main tool in \cite{PM} is a locally trivial fibration, see section 4 of loc.cit.. We are going to check that it is equivariantly locally trivial. But with an slightly different definition. Fix such a $\kappa,\tau$. Denote with $x$ and $y$ generators of $\Z_n$ and $\Z_m$ respectively. Following the notation of the previous section, let $\kappa_{i,j}$ be $(\sigma_{i,j}(x),\sigma_{i,j}(y))$ the eigenvalues of $x$ and $y$ on the isotypic piece $W_{i,j}$. 

The fact that $\Gamma_\tau$ fixes $R(\tau)$ implies that each $\gamma\in\Gamma$ maps $\kappa_{i,j}$ to $\kappa_{i,j'}$ for some $j'$ with $m_{i,j}=m_{i,j'}$. There may be more than one possible $j'$. Hence, we will change our definition of type to group together all equal tuples $(i,\dim W_{i,j},m_{i,j},\kappa_{i,j})$ into $(i,d_{i,j},m_{i,j},\kappa_{i,j},M_{i,j})$ where $M_{i,j}$ is the number of occurrences of the tuple $(i,d_{i,j},m_{i,j},\kappa_{i,j})$. So that we have a induced action of $\Gamma_\tau$ on the indexes $(i,j)$. All the results of the previous section still work with this definition.

There is an algebraic map
\[\operatorname{Gr}_\bullet:R(\tau)\to \mcM(\tau)\subset \prod_{i=1}^s\prod_{j=1}^{r_i}\mathrm{Sym}^{M_{i,j}}((\mcM^{irr}_{\kappa_{i,j}})^{m_{i,j}})\]
which maps a pair of matrices $(A,B)\in R(\tau)$ to the isotypic components of its associated representation. Here $\mcM^{irr}_{\kappa_{i,j}}$ is the (coarse) moduli of irreducible representations of $\Z_n*\Z_m$ with eigenvalues $\kappa_{i,j}$. Explicitly, it is $R^{irr}_{\kappa_{i,j}}$ quotient by $\PGL_{\dim W_{i,j}}(\C)$. The variety $M(\tau)$ is given by those tuples which respect the conditions of being isomorphic or non-isomorphic imposed by $\tau$. Note $\operatorname{Gr}_\bullet$ is equivariant if $\Gamma_\tau$ acts on the indexes as before.
We would like to upgrade it to a map to
\[\mcI(\tau) \subset \prod_{i=1}^s\prod_{j=1}^{r_i}\mathrm{Sym}^{M_{i,j}}((R^{irr}_{\kappa_{i,j}})^{m_{i,j}}).\]
We will do it up to fiber bundles for the unsymmetrized versions:
\[\hat\mcM(\tau)\subset\prod_{i=1}^s\prod_{j=1}^{r_i}(\mcM^{irr}_{\kappa_{i,j}})^{M_{i,j}m_{i,j}},\]
\[\hat\mcI(\tau)\subset\prod_{i=1}^s\prod_{j=1}^{r_i}(R^{irr}_{\kappa_{i,j}})^{M_{i,j}m_{i,j}}\]
and 
\[\hat R(\tau)=R(\tau)\times_{\mcM(\tau)}\hat\mcM(\tau).\]
Remark that under the following hypothesis
\begin{center}
    (H) \it For each $M_{i,j}>1$, $\dim W_{i,j}=1$. \rm
\end{center}
the unsymmetrized version agrees with the previous ones because $\hat\mcI(\tau)=\mcI(\tau)$, see Corollary 4.7 of \cite{PM}. This assumption hold for $r\leq 4$ unless $r=4$, $s=1$, $m_{1,1}=1$, $M_{1,1}=2$ and $d_{1,1}=2$. However, in the previous case there are no irreducible representations, see \cite[Prop 8.1]{PM}. On the other hand, the assumption also rules out the non-isomorphic conditions imposed by $\tau$. Indeed, for any $d_{i,j}=1$ there is only one point in $R_{\kappa_{i,j}}$ and so $I(\tau)=\emptyset$ if some $M_{i,j}>1$. Now, the being isomorphic condition is immediate for $d_{i,j}=1$. Hence, the unique case $r\leq 4$ where it is relevant is $r=4$, $s=1$, $m_{1,1}=2$, $d_{1,1}=2$. But as before, there are no irreducible representations in this case. In conclusion, for $r\leq 4$, $M(\tau)$ and $I(\tau)$ are either empty or agree with its unsymmetrized versions.

We encourage the reader to review the notation that appeared previously. In particular, we recall that the $i$-th piece of the semisimple filtration is denoted by $V_i$. Its dimension will be called $d_i$.

\begin{lema}
    For any configuration of eigenvalues $\kappa$ and type $\tau\in \mcT_\kappa^*$, there is a equivariant fiber bundle $\tilde R(\tau)\to \hat R(\tau)$ with fiber
    \[F_0(\tau):=\prod_{i=1}^s\left(\C^{(d_i-d_{i-1})(r-d_i)}\times\GL_{r-d_i}(\C)\times\prod_{j=1}^{r_i} \GL_{d_{i,j}}(\C)^{m_{i,j}M_{i,j}}\times(\GL_{m_{i,j}}(\C)/(\C^\times)^{m_{i,j}})^{M_{i,j}}\right)\]
    and a equivariant lifting $\tilde{\operatorname{Gr}}_\bullet:\tilde R(\tau)\to \hat\mcI(\tau)$ of $\hat{\operatorname{Gr}}_\bullet:\hat R(\tau)\to \hat\mcM(\tau)$. In addition,
    \[ [\tilde R(\tau)] = [\hat R(\tau)][F_0(\tau)]\]
    in $K_0^{\Gamma_\tau}(\Var_\C)$ where the action of $\Gamma_\tau$ in the fiber is given by permutation of factors; $\gamma$ maps the $(i,j)$-factor to the $\gamma\cdot (i,j)$-one.
\end{lema}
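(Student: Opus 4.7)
The proof is constructive: I build $\tilde R(\tau)$ explicitly as the moduli of pairs $((A,B),\text{framing data})$ where $(A,B)\in\hat R(\tau)$ and the framing data is chosen so that, stripped of it, one obtains concrete matrix representatives of each isotypic summand, thereby defining the lift $\tilde{\operatorname{Gr}}_\bullet$. Concretely, I would proceed by induction on $i=1,\dots,s$ along the semisimple filtration $V_\bullet$ of the underlying representation. Set $\tilde R_0(\tau)=\hat R(\tau)$, and for each $i$ let $\tilde R_i(\tau)\to \tilde R_{i-1}(\tau)$ be the bundle whose fibre is the $i$-th term of the product $F_0(\tau)$. Finally set $\tilde R(\tau):=\tilde R_s(\tau)$.

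\textbf{Interpretation of the four factors.} The $i$-th layer of data consists of: (a) a vector-space splitting of $0\to V_i/V_{i-1}\to V/V_{i-1}\to V/V_i\to 0$, which is a torsor under $\operatorname{Hom}(V/V_i,V_i/V_{i-1})$ and hence an affine space of dimension $(d_i-d_{i-1})(r-d_i)$, giving the factor $\C^{(d_i-d_{i-1})(r-d_i)}$; (b) a linear frame of the quotient $V/V_i$, a $\GL_{r-d_i}(\C)$-torsor, which together with the splitting in (a) is what allows the inductive step to consistently exhibit subsequent graded pieces as standard matrix data; (c) for each of the $M_{i,j}$ occurrences of an isotypic of type $(i,j)$, an ordered direct-sum decomposition of $W_{i,j}^{m_{i,j}}$ into $m_{i,j}$ sub-representations isomorphic to $W_{i,j}$, which is a torsor under $\GL_{m_{i,j}}/(\C^\times)^{m_{i,j}}$ (the stabiliser of such an ordered decomposition being exactly the diagonal $(\C^\times)^{m_{i,j}}$ inside $\GL_{m_{i,j}}$); and (d) for each of the $m_{i,j}M_{i,j}$ sub-representations so distinguished, a basis, which is a $\GL_{d_{i,j}}(\C)$-torsor. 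All four of these structures are Zariski-locally trivial: the affine bundle of splittings is trivially so; principal $\GL_n$-bundles are Zariski-locally trivial by Hilbert 90; and $\GL_{m_{i,j}}\to \GL_{m_{i,j}}/(\C^\times)^{m_{i,j}}$ is a principal split-torus bundle, again Zariski-locally trivial.

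\textbf{Lift, equivariance and motivic identity.} The map $\tilde{\operatorname{Gr}}_\bullet$ is defined pointwise by reading off the action of the generators $x,y$ on each distinguished copy of $W_{i,j}$ in the chosen basis, producing a pair of matrices in $\GL_{d_{i,j}}(\C)^2$ lying in $R^{irr}_{\kappa_{i,j}}$; the ordered collection of these pairs is a point of $\hat{\mcI}(\tau)$, and covers $\hat{\operatorname{Gr}}_\bullet$ by construction. The $\Gamma_\tau$-equivariance is built in: the action of $\gamma\in\Gamma_\tau$ on $\hat R(\tau)$ permutes the intrinsic index $(i,j)$ by $\gamma\cdot(i,j)$ (preserving $i$, since $\Gamma_\tau$ respects the semisimple filtration), and because the factors $\C^{(d_i-d_{i-1})(r-d_i)}$ and $\GL_{r-d_i}$ depend only on $i$ they are $\Gamma_\tau$-stable, while the $j$-indexed factors get permuted accordingly; all constructions (splittings, frames, decompositions, bases) are built out of intrinsic data of $(A,B)$, so $\gamma$ intertwines them with the permutation action on $F_0(\tau)$. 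The identity $[\tilde R(\tau)]=[\hat R(\tau)]\cdot[F_0(\tau)]$ in $K_0^{\Gamma_\tau}(\Var_\C)$ is then immediate from Zariski-local triviality of an equivariant fibration, which implies multiplicativity of classes.

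\textbf{Main obstacle.} The delicate point is the factor $\GL_{r-d_i}(\C)$: unlike the other factors, whose geometric content is directly visible as framing or decomposition data, $\GL_{r-d_i}(\C)$ corresponds to an auxiliary frame of $V/V_i$ which does not \emph{a priori} appear necessary for defining $\tilde{\operatorname{Gr}}_\bullet$. Its presence must be justified by the inductive construction: at step $i$, the choice of frame of $V/V_i$ is precisely what allows the subsequent splittings and basis choices (in the higher strata) to be organised as algebraic morphisms rather than mere set-theoretic data, and it is also what makes the compatibility with the $\Gamma_\tau$-action hold at the level of varieties and not merely at the level of points. Verifying this compatibility in a Zariski-local (rather than merely analytic) fashion, and checking that the order of the factors in the inductive construction does not introduce hidden monodromy under $\Gamma_\tau$, is the step I expect to require the most care.
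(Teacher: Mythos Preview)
Your construction of $\tilde R(\tau)$ via layered framing data is essentially the paper's approach: both proceed by induction on the length $s$ of the semisimple filtration, first splitting off isotypic summands of $V_1$ (the $\GL_{m_{i,j}}/(\C^\times)^{m_{i,j}}$ data), then choosing bases of these summands and a frame of the quotient (the $\GL_{d_{i,j}}$ and $\C^{(d_i-d_{i-1})(r-d_i)}\times\GL_{r-d_i}$ data), which reduces to the same problem for the type $\tau'$ of $V/V_1$. Your identification of the four factors and the definition of $\tilde{\operatorname{Gr}}_\bullet$ match the paper's.

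The genuine gap is in your last paragraph: the assertion that ``Zariski-local triviality of an equivariant fibration implies multiplicativity of classes'' in $K_0^{\Gamma_\tau}(\Var_\C)$ is \emph{false in general}. If the local trivializations of a $\Gamma_\tau$-equivariant bundle are not themselves $\Gamma_\tau$-equivariant, there is no reason for $[Y]=[X][F]$ to hold equivariantly, and your arguments (Hilbert~90, split-torus bundles) establish only non-equivariant local triviality. The paper closes this gap by observing that, once grouped according to the $\Gamma_\tau$-orbits of the index set $\{(i,j)\}$, each of the $j$-indexed bundles is a pullback of one of the form $\Per_\Gamma^{\Gamma'}(Y)\to\Per_\Gamma^{\Gamma'}(X)$ for an ordinary Zariski-locally-trivial $Y\to X$; it then invokes Lemma~\ref{permutation-loc-trivial-fib}, which proves the multiplicativity in exactly this situation by producing a $\Gamma'$-invariant cell decomposition of $\Per_\Gamma^{\Gamma'}(X)$ over which $\Per_\Gamma^{\Gamma'}(Y)$ is a product with $\Per_\Gamma^{\Gamma'}(F)$. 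This is precisely the ``hidden monodromy'' issue you flag in your obstacle paragraph, but transposed from the $\GL_{r-d_i}$ factor (which is indexed only by $i$ and hence $\Gamma_\tau$-fixed, so harmless) to the $j$-indexed factors, where it is the actual content.
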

\begin{proof}
    For any integers $r'<r$ there is a variety $\mcN_{r',r}$ whose points parametrize pairs $(\rho,W)$ where $\rho\in R_r$ and $W\subset\C^r$ is an invariant subspace of dimension $r'$. For instance, take the quiver $Q$ with one point and two loops. Its representation spaces paramatrize representations of $\Z*\Z$. Inside they lie as a closed subsets all $R_r$. Hence, one  can build $\mcN_{r',r}$ as a closed subvariety of the universal family of the quiver grassmaniann associated to $Q$ (see \cite{QuiverGrasmaniann}). Note there is a map $\mcN_{r',r}\to \operatorname{Gr}(r',r)$ as all elements of $R_r$ are representations on $\C^r$. Similarly, for two types $\tau_1$ and $\tau_2$ there is a parameter space $\mcN_{\tau_1,\tau_2}$ of tuples $(W,V)$ where $V\in R_{\tau_2}$ and $W\subset V$ is an invariant subspace of type $\tau_1$. We have a map from $\mcN_{\tau_1,\tau_2}$ to a grasmaniann.

    Let $\tau_1$ be a irreducible type and $\tau_2=m\tau_1$. In this case, the fiber of $\mcN_{\tau_1,\tau_2}\to R_{\tau_2}$ over $V\simeq W^m\in R_{\tau_2}$ is nothing but $\{f:W\to V\text{ non zero}\}/\C^\times=\P^{m-1}$. Passing to each affine cone, we get $\ov{\mcN}_{\tau_1,\tau_2}\to R_{\tau_2}$ whose fiber is $\C^m$. It has a associated $\GL_m$-principal bundle over $R_{\tau_2}$ which parametrizes basis of $\Hom{W}{V}$. Quotient by $(\C^\times)^m$ we get a locally trivial fiber bundle which parametrizes ways to decompose $V$ as direct sums of simple surepresentations. It is endow with a map to $\mcN_{\tau_1,\tau_2}^m$.
    
    We prove the lemma by induction on $s$, the length of the semisimple filtration. We show the base case and the inductive step at the same time. Let $\tau'$ be the type of $V/V_1$.

    There is a equivariant map $R(\tau)\to \prod_{j=1}^{r_1}\mathrm{Sym}^{M_{1,j}}(\mcN_{(m_{1,j}\kappa_{1,j}),\tau}) \times \mcN_{d,\tau}$ given by looking at $V_1$ and all its isotypic subrepresentations $(W_{1,j}^{m_{1,j}})$. By means of the previously explained $\GL_{m}/(\C^\times)^{m}$-fiber bundles, we find a $\prod_{j=1}^{r_1}(\GL_{m_{1,j}}(\C)/(\C^\times)^{m_{1,j}})^{M_{1,j}}$-bundle $R_0(\tau)\to \hat R(\tau)$ with a map
    \[\hat R(\tau)\to \prod_{j=1}^{r_1}(\operatorname{Gr}(d_{1,j},r))^{M_{1,j}m_{1,j}} \times \operatorname{Gr}(d_1,r)\]
    It is equivariant if $\Gamma_\tau$ acts by permuting the factors. 
    
    Over each grasmaniann $\operatorname{Gr}(d_{1,j},r)$ we have a $\GL_{d_{1,j}}(\C)$-bundle of $d_{1,j}$ linearly independent vectors of $\C^r$ which generate the associated subspace. Over $\operatorname{Gr}(d_1,r)$ we have a $\C^{d_1(r-d_1)}\times\GL_{r-d_1}(\C)$-bundle whose fiber over a point consists of subsets $B$ of $\C^r$ of $r-d_1$ vectors whose classes form a basis of $\C^r$ module the associated subspace to the point. 
    
    Hence, pullbacking these bundles to $ R_0(\tau)$ we get a $\prod \GL_{d_{1,j}}(\C)\times \C^{d_1(r-d_1)}\times\GL_{r-d_1}(\C)$-bundle $R_1(\tau)\to \hat R(\tau)$ whose points are elements of $\hat R(\tau)$ together with basis $(B_1,\ldots, B_{r_1})$ of $W_{1,j}$ and a subset $B\subset \C^r$ such that its elements form a basis of $\C^r$ modulo $V_1$. Note that we can get a basis of $\C^r$ given by the union of all $B_j$ and  $B$. In this basis, the pair of matrices $(A,B)$ looks like
    \[\left(\begin{array}{ccccc}
        A_{11} & 0  &\cdots & 0 & *\\
        0 & A_{22} & \ddots & \vdots & \vdots \\
        \vdots &\ddots & \ddots & 0  & *\\ 
        0  & \cdots & 0 &  A_{r_1r_1} & * \\
        0 & \cdots & 0 & 0 & A
    \end{array}\right)\]
    where $A_{ii}\in R^{irr}_{d_{i,j}}$ and $A\in R(\tau')$. Therefore, we have a map
    \[\hat R_1(\tau)\to \prod_{j=1}^{r_i}R^{irr}_{\kappa_{i,j}} \times \hat R(\tau')\]
    which partially lifts $\hat R(\tau)\to \hat\mcM(\tau)$ and the existence of the fibration and the lift follows. The last claim of the lemma is due to Lemma \ref{permutation-loc-trivial-fib}.
\end{proof}

\begin{lema}
    There is a equivariant decomposition $\tilde R(\tau)\to \ov{R}(\tau)\times F_1(\tau)$ where
    \[ F_1(\tau) =\prod_{i=1}^s \GL_{r-d_i}(\C)\]
    for which $\tilde{\operatorname{Gr}}_\bullet$ factors as $\ov{\operatorname{Gr}}_\bullet:\ov{R}\to \hat\mcI(\tau)$.
\end{lema}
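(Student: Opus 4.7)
The key observation is that the factors $\GL_{r-d_i}(\C)$ appearing in $F_0(\tau)$, whose product is $F_1(\tau)$, parametrize choices of bases of the quotients $\C^r/V_i$, and this data does not enter the image of $\tilde{\operatorname{Gr}}_\bullet$, which only records the isotypic pieces as representations with the chosen bases on the simples. My plan is to define $\ov R(\tau)$ by redoing the inductive construction of $\tilde R(\tau)$ from the previous lemma while dropping the $\GL_{r-d_i}$-factor at each step. Concretely, at step $i$ one replaces the $\C^{(d_i-d_{i-1})(r-d_i)}\times \GL_{r-d_i}(\C)$-bundle of ``lifts of a basis of $V_i/V_{i-1}$ to $\C^r/V_{i-1}$'' by its $\GL_{r-d_i}$-quotient, namely the $\C^{(d_i-d_{i-1})(r-d_i)}$-bundle of splittings $\C^r/V_{i-1}\simeq V_i/V_{i-1}\oplus \C^r/V_i$. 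The resulting $\ov R(\tau)$ is a fiber bundle over $\hat R(\tau)$ with fiber $F_0(\tau)/F_1(\tau)$.

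Next I would exhibit the isomorphism $\tilde R(\tau)\simeq \ov R(\tau)\times F_1(\tau)$ via an explicit canonical section $\ov R(\tau)\hookrightarrow \tilde R(\tau)$. Given data in $\ov R(\tau)$, a preferred basis of $\C^r/V_i$ is produced by descending induction on $i$: one starts from the empty basis of $\C^r/V_s=0$, and at step $i$ combines the inductively defined canonical basis of $\C^r/V_{i+1}$ with the basis of $V_{i+1}/V_i$ coming from the $\GL_{d_{i+1,j}}$-factors, using the splitting $\C^r/V_i\simeq V_{i+1}/V_i\oplus \C^r/V_{i+1}$ from the extension data at step $i+1$. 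Comparing the canonical basis with the actual one carried by a point of $\tilde R(\tau)$ produces an element of $\GL_{r-d_i}(\C)$; collecting these across $i$ yields the projection $\tilde R(\tau)\to F_1(\tau)$ and hence the decomposition.

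The factorization of $\tilde{\operatorname{Gr}}_\bullet$ through $\ov R(\tau)$ then follows at once, because $\tilde{\operatorname{Gr}}_\bullet$ reads off the irreducible constituents together with their chosen bases, data already retained in $\ov R(\tau)$. Equivariance is automatic since the canonical-basis construction uses only $\Gamma_\tau$-equivariant data (the bases of simples, the extension data, and the filtration all transform equivariantly) and the $\Gamma_\tau$-action on $F_0(\tau)$ preserves the $i$-index so each $\GL_{r-d_i}$-factor is setwise fixed. The main technical subtlety I expect is verifying that the canonical basis is produced by a regular morphism on $\ov R(\tau)$, not merely a set-theoretic recipe; this should follow from the algebraicity of each step in the recursion (extension data is a regular section, bases of simples are regular morphisms) together with the fiber-bundle structure supplied by the previous lemma.
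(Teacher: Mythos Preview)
Your proposal is correct and follows essentially the same approach as the paper. The paper defines $\ov R(\tau)$ via the map $(B_{i,j},B_i)\mapsto B$ sending the fiber data to the single block-adapted basis $B$ of $\C^r$ obtained by iteratively lifting the $B_{i+1,j}$ through the $B_i$'s, and then observes that $B$ determines each $B_i$ only up to $\GL_{r-d_i}(\C)$; your description in terms of splittings $\C^r/V_{i-1}\simeq V_i/V_{i-1}\oplus \C^r/V_i$ and the descending-induction canonical basis is exactly the same construction, just with the bijection between ``block-adapted basis $B$'' and ``$(B_{i,j},\text{splittings})$'' made explicit.
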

\begin{proof}
    We prove this by induction on $s$. For $s=1$ there is nothing to prove. Assume $s\geq 2$. Over a point $(A,B)\in \hat R(\tau)$, the fiber consists of sets $B_{i,j},B_i\subset \C^r/V_{i-1}$ such that $B_{i,j}$ is a basis of $W_{i,j}$ and the image of $B_i$ in $\C^r/V_i$ is a basis. By means of $B_i$ we can lift $B_{i+1,j}$ to vectors on $\C^r/V_{i-1}$. Hence, we are able to get a basis $B$ where the pair of matrices $(A,B)$ looks like, for $s=2$,
    \[\left(\begin{array}{cccccc}
        A_{11} & 0 & 0 & * & * & * \\
        0 & \ddots & 0 & * & * & * \\
        0 & 0 & A_{r_1r_1} & *  & * &*\\ 
        0  & 0 & 0 & A_{r_1+1,r_1+1} & 0 & 0\\
        0 & 0 & 0 & 0 & \ddots & 0  \\
        0 & 0 & 0 & 0 & 0 & A_{r_2r_2}\\
    \end{array}\right)\]
    where $A_{ij}\in R_{\kappa_{i,j}}^{irr}$.
    Note $\tilde{\operatorname{Gr}}_\bullet$ factors trough $(B_{i,j},B_i)\mapsto B$. 

    On the other hand, $B$ and all $B_i$'s determine all $B_{i,j}$. Moreover, $B_1$ is determine by $B$ up to $\GL_{r-d_1}(\C)$. Inductively, $B_i$ is determine by $B$ up to $\prod_{j=1}^i\GL_{r-d_j}(\C)$.
\end{proof}

Rename the isotypic pieces $W_{i,j}$ with $U_\alpha$ so that
\[\operatorname{Gr}_\bullet(V)=U_{\nu_{i-1}+1}\bigoplus U_{\nu_{i+1}}\bigoplus\cdots\bigoplus U_{\nu_{i}}\]
for an increasing sequence $0=\nu_0<\nu_1<\ldots<\nu_i<\nu_{i+1}<\ldots$. Forgetting the action, $U_\alpha$ is a vector space of certain dimension $d_\alpha$. Note $\sum d_\alpha =r$.

\begin{lema}
    There is an equivariant vector bundle $\ov{R}(\tau)\to \ov{R}'(\tau)$ and a factorization $\ov{\operatorname{Gr}}'_\bullet:\ov{R}'(\tau)\to\hat\mcI(\tau)$ of $\ov{\operatorname{Gr}}_\bullet$ trough $\ov{R}'(\tau)$. It has rank $\sum (d_{i}-d_{i-1})(r-d_i)-\mcD$ where
    \[\mcD:=\sum a_\alpha(\xi)a_\beta(\xi)\]
    where $\alpha$ and $\beta$ are indexes with $\beta$ of bigger degree with respect to the semisimple filtration, $\xi$ is a root of unity, and $a_\alpha(\xi)$ is the  multiplicity of $\xi$ as a eigenvalue of $\kappa_\alpha$.
\end{lema}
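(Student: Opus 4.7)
By the preceding two lemmas, a point of $\ov R(\tau)$ consists of a pair $(A,B)\in R(\tau)$ together with auxiliary basis data placing $(A,B)$ in block-upper-triangular form with diagonal blocks determined by its image $\ov{\operatorname{Gr}}_\bullet(A,B)\in \hat\mcI(\tau)$. The off-diagonal blocks $(A_{\alpha\beta},B_{\alpha\beta})$ for isotypic indices $\alpha<\beta$ at different filtration levels encode the extension data, subject to the constraints $A^n=I$ and $B^m=I$. The plan is to split the fiber of $\ov R(\tau)\to\hat\mcI(\tau)$ into a ``free'' part (the splitting and basis-lift choices, parametrized by $\operatorname{Hom}$-spaces), which will be absorbed into the base $\ov R'(\tau)$, and an ``intrinsic'' extension part $\bigoplus_{\alpha<\beta}\operatorname{Ext}^1(U_\beta,U_\alpha)$, which will appear as the fiber of the vector bundle $\ov R(\tau)\to \ov R'(\tau)$.

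For the rank computation, I would first treat a single pair $\alpha<\beta$: the condition $A^n=I$ forces $A_{\alpha\beta}$ to lie in the image of the Sylvester operator $X\mapsto A_\alpha X-XA_\beta$, a subspace of codimension $\sum_\xi a_\alpha^A(\xi)a_\beta^A(\xi)$ in $\operatorname{Hom}(U_\beta,U_\alpha)$ (equal to the intertwiner dimension), and analogously for $B$. Combining with a Mayer--Vietoris calculation for the free product $\Z_n\star\Z_m$ (using that $\C[\Z_n]$ and $\C[\Z_m]$ are semisimple, so $H^i(\Z_n,-)=H^i(\Z_m,-)=0$ for $i\geq 1$) one obtains $\dim\operatorname{Ext}^1(U_\beta,U_\alpha)=d_\alpha d_\beta-\sum_\xi a_\alpha(\xi)a_\beta(\xi)$, which summed over the pairs yields the claimed rank $\sum_i(d_i-d_{i-1})(r-d_i)-\mcD$.

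Concretely, I would define $\ov R'(\tau)$ to record $\hat\mcI(\tau)$ together with the basis-lift data coming from $F_0'$ and, for each pair $(\alpha,\beta)$, a ``splitting parameter'' in $\operatorname{Hom}(U_\beta,U_\alpha)$ realizing the chosen diagonalization of $(A,B)$; the canonical projection $\ov R(\tau)\to\ov R'(\tau)$ forgets the residual $\operatorname{Ext}^1$-class, so each fiber is the vector space $\bigoplus_{\alpha<\beta}\operatorname{Ext}^1(U_\beta,U_\alpha)$. Equivariance under $\Gamma_\tau$ should be automatic since the construction is canonical with respect to the permutation of isotypic indices (already accounted for in $\hat\mcI(\tau)$). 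The main obstacle will be the multi-level case $s\geq 3$, where $A^n=I$ is no longer linear in the off-diagonal entries (expanding $A^n=(D_A+N_A)^n$ with $D_A$ block-diagonal and $N_A$ strictly block-upper couples blocks across different filtration levels, as visible already in the three-block computation of $PD_AP^{-1}$); handling this requires an iterative construction along the filtration, absorbing the non-linear corrections into the splitting data kept in $\ov R'(\tau)$, so that the fiber of $\ov R(\tau)\to\ov R'(\tau)$ remains a vector space of the correct rank and the bundle is Zariski-locally trivial.
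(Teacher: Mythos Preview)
Your approach diverges from the paper's in a way that leads to the wrong object. The paper does \emph{not} identify the fiber of $\ov R(\tau)\to\ov R'(\tau)$ with $\bigoplus_{\alpha<\beta}\operatorname{Ext}^1_{\Z_n\star\Z_m}(U_\beta,U_\alpha)$, and indeed the rank in the statement is not that Ext dimension. The quantity $\mcD$ counts \emph{only} coincidences of $A$-eigenvalues (as is made explicit later: ``All have trivial $\mcD$ as there are no repeated eigenvalues on $A$''), whereas your Mayer--Vietoris computation subtracts both the $\Z_n$- and $\Z_m$-intertwiner dimensions (and should also add back $\dim\operatorname{Hom}_G$, which you omit). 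So your proposed rank does not match the lemma.

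The paper's construction is far more direct and asymmetric in $A$ and $B$: one simply \emph{defines} $\ov R'(\tau)\subset\ov R(\tau)$ to be the closed sublocus where, in the chosen basis $\mcB$, the matrix $A$ has vanishing off-diagonal blocks in each $\Hom{U_\beta}{U_\alpha}$ (no condition is imposed on $B$). Given any $(\rho,\mcB)\in\ov R(\tau)$, one picks locally a $Q_A$ diagonalizing $A$ and lets $U_A\subset\GL_r$ be the unipotent subgroup spanned by those $E_{ij}$ with $i<j$ across filtration levels and $\epsilon_i\neq\epsilon_j$. Conjugation $(\rho,\mcB,p)\mapsto(Q_A^{-1}pQ_A\,\rho\,Q_A^{-1}p^{-1}Q_A,\mcB)$ gives a local isomorphism $\ov R'(\tau)\times U_A\to\ov R(\tau)$; surjectivity is \cite[Lemma~4.3]{PM}, and injectivity is immediate since $U_A$ meets the centralizer of the diagonalized $A$ trivially. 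The group $U_A$ is an affine space of dimension $\sum_i(d_i-d_{i-1})(r-d_i)-\mcD$, giving the rank. Equivariance follows because $\Gamma_\tau$ permutes the eigenvalues and hence preserves $U_A$. The $B$-normalization and the genuine extension data are deferred entirely to the \emph{next} step (the fiber $\mcM_\tau\times\GL_r$ of $\ov{\operatorname{Gr}}'_\bullet$), which your proposal conflates with this one.

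In particular, your worry about non-linearity for $s\geq 3$ evaporates in the paper's approach: one never expands $A^n=I$ in the upper-triangular entries, one just conjugates $A$ to block-diagonal form using that it is semisimple.
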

\begin{proof}
    Define $\ov{R}'(\tau)$ as those tuples $(\rho,\mcB)$ such that the matrix $A$ in the basis $\mcB$ has zeros on the upper triangular pieces $\Hom{U_\beta}{U_\alpha}$. 
    
    For a given $A$, take $Q_A$ a matrix such that $Q_AAQ_A^{-1}$ is diagonal and let $U_A\subset \GL_n(\C)$ be the unipotent subgroup associated with $Q_AAQ_A^{-1}$ which has basis $E_{ij}$ where $i<j$ and $\epsilon_i\neq\epsilon_j$. Note that $U_A\simeq \A^{\sum(d_i-d_{i-1})(-d_i)-\mcD}$. Locally, we have a map $\ov{R}'(\tau)\times \A^\mcD \to \ov{R}(\tau)$, $(\rho,\mcB,p)\mapsto (Q_A^{-1}pQ_A\rho Q_A^{-1} p^{-1}Q_A,\mcB)$. This map is compatible with $\ov{\operatorname{Gr}}_\bullet$ and with the given action as $U_A$ is invariant. Moreover, the action is linear on $U_A$. To finish we need to prove it is an isomorphism. Surjectivity is due to \cite[Lemma 4.3]{PM}. Injectivity is clear as $U_A$ does not intersect the center of $Q_AAQ_A^{-1}$.
\end{proof}

\begin{thm}[\protect{\cite{PM}}] 
    For any configuration of eigenvalues $\kappa$ and $\tau\in\mcT_\kappa^*$, the map $\ov{\operatorname{Gr}}'_\bullet$ is a locally trivial fibration in the Zariski topology. 
\end{thm}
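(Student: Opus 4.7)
The plan is to follow \cite{PM} and prove local triviality by induction on the length $s$ of the semisimple filtration, mirroring the recursive structure already exploited in the construction of $\tilde R(\tau) \to \hat R(\tau) \to \ov R(\tau) \to \ov R'(\tau)$. In the base case $s=1$, representations of type $\tau$ are semisimple, so once the basis is rigidified $\ov R'(\tau)$ reduces to $\hat{\mcI}(\tau)$ itself and the map $\ov{\operatorname{Gr}}'_\bullet$ is an isomorphism.

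For the inductive step I would fix a point $(\rho_\alpha)_\alpha \in \hat{\mcI}(\tau)$ and describe its fiber explicitly. Since in the basis $\mcB$ the matrix $A$ is block-diagonal with prescribed diagonal blocks $\rho_\alpha(x)$, the only remaining data is the collection of off-diagonal blocks $B_{\alpha\beta}$ of $B$ with $\alpha$ of strictly lower filtration level than $\beta$, subject to $B^m = \operatorname{Id}_r$ and the type condition (which forbids a more semisimple specialization). Expanding $B^m - \operatorname{Id}_r$ block by block, from the closest-to-diagonal entries outward, produces a triangular system: the $(\alpha,\beta)$-block depends linearly on $B_{\alpha\beta}$ through a Sylvester-type operator built from $\rho_\alpha(y)$ and $\rho_\beta(y)$, plus known contributions from the blocks already solved at previous stages. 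Irreducibility of the $\rho_\alpha$'s together with Schur's lemma then forces the rank of each Sylvester operator to be independent of $(\rho_\alpha)_\alpha$ as it varies over $\hat{\mcI}(\tau)$. Hence the solution space at each step is an affine bundle of locally constant rank, and such bundles admit Zariski-local algebraic sections over any affine open of the base; piecing the stages together trivializes $\ov{R}'(\tau) \to \hat{\mcI}(\tau)$ as an affine bundle of the expected rank $\sum(d_i-d_{i-1})(r-d_i) - \mcD$ matching the previous lemma. The type constraint is Zariski open on this affine fiber, so local triviality is preserved.

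The main obstacle will be confirming that the Sylvester operators have rank genuinely independent of the base point, rather than merely generically so. This requires using that the labels in $\hat{\mcI}(\tau)$ have been properly unsymmetrized, so that the $\rho_\alpha$'s entering a given isotypic piece are pairwise non-isomorphic on the whole of $\hat{\mcI}(\tau)$; under hypothesis $(H)$, which as previously noted covers every case that actually occurs for $r \leq 4$, no additional pathology arises and Schur's lemma suffices to pin down kernel and cokernel dimensions uniformly. Once the constant-rank property is in hand, the rest of the argument reduces to bookkeeping: producing algebraic splittings of the relevant short exact sequences of sheaves over Zariski affine opens of $\hat{\mcI}(\tau)$ and assembling them into the claimed trivialization.
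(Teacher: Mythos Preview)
The paper does not prove this statement; it is quoted from \cite[Prop.~4.5]{PM}, and the surrounding text only records the explicit description of the fiber $\mcM_\rho\times\GL_r(\C)$ taken from that reference. Against that description your outline has two genuine problems.

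First, you have misidentified the fiber. A point of $\ov R'(\tau)$ carries a full basis $\mcB$ of $\C^r$, and the $\GL_r(\C)$ worth of choices for $\mcB$ survives in the fiber of $\ov{\operatorname{Gr}}'_\bullet$; in particular for $s=1$ the map is a $\GL_r$-bundle, not an isomorphism. The number $\sum(d_i-d_{i-1})(r-d_i)-\mcD$ you invoke is the rank of the vector bundle $\ov R(\tau)\to\ov R'(\tau)$ from the \emph{preceding} lemma (which strips the upper blocks of $A$), not the fiber dimension of $\ov{\operatorname{Gr}}'_\bullet$; the affine part of the latter is $\mcM_\rho\subset\mcM_0=\bigoplus_{\alpha\le\nu_i<\beta}\Hom{U_\beta}{U_\alpha}$, a different space altogether.

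Second, and more seriously, the sentence ``the type constraint is Zariski open on this affine fiber, so local triviality is preserved'' hides the entire difficulty. The conditions singling out $\mcM_\rho$ inside $\mcM_0$ (see the discussion following the theorem) are linear-independence conditions modulo the subspaces $L_\beta$ built from the $\ell_{\alpha\beta}$, and these depend on $\rho$ through both $A$ and $B$ (via the diagonalizing matrices $Q_\alpha$). An open condition whose complement moves with $\rho$ does not automatically yield a locally trivial family; one must show that $\dim L_\beta$ depends only on $\tau$ and that the resulting open piece has fixed isomorphism type. That constancy is precisely the content of \cite[Thm.~4.4, Prop.~4.5]{PM}, and nothing in your Sylvester/Schur sketch establishes it.
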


They also describe the fiber. Write $\rho=(\rho_\alpha)\in \hat\mcI(\tau)$ with $\rho_\alpha$ a irreducible representation on $U_\alpha$. We have a well defined injective map $\hat\mcI(\tau)\to\GL_r(\C)^2$, $\rho\mapsto(A,B)$ by building the block-diagonal matrices with entries $\rho_\alpha(x)$ and $\rho_\alpha(y)$. Call $\hat\mcI_0(\tau)$ the variety of pairs $(A,B)$ such that $B$ is diagonal and equal to a fixed matrix with eigenvalues $\boldsymbol{\varepsilon}$.

Consider
\[\mcM_0:=\bigoplus_{\alpha\leq \nu_i<\beta}\Hom{U_\beta}{U_\alpha}\]
and for $M\in\mcM_0$ and $\rho\in \hat \mcI_0(\tau)$ the pair of matrices $\rho_M$ given by
\begin{align*}
    \rho_M(x)\big|_{U_\beta}&=\rho_\beta(x) &\text{and}&& \rho_M(y)\big|_{U_\beta}&=\rho_\beta(y)+\sum_{\alpha<\beta}M_{\alpha\beta} 
\end{align*}
Define 
\[\mcM_\rho:=\{M\in\mcM_0:\rho_M\in \hat R(\tau)\}\]
There is a more explicit description, see \cite[thm 4.4]{PM}. Fix $\rho=(A_\alpha,B_\alpha)\in\hat\mcI_0(\tau)$ and a matrix $Q$ such that $QAQ^{-1}$ is diagonal. For each $\alpha \leq  \nu_i<\beta $, let 
\[\ell_{\alpha\beta}=\langle Q_\alpha\Theta Q_\beta^{-1}B_\beta-B_\alpha Q_\alpha\Theta Q_\beta^{-1}\rangle\]
where $\Theta\in \Hom{U_\beta}{U_\alpha}$ runs over all matrices with $\Theta_{ij}=0$ whenever $\epsilon_{\alpha,i}\neq \epsilon_{\beta,i}$. For $\nu_i<\beta\leq\nu_{i+1}$, set
\[L_\beta = \bigoplus \ell_{\alpha\beta}\]
where $\nu_{i-1}<\alpha\leq \nu_i$. Define also
\[p_\beta(M) = (M_{\alpha\beta})_{\nu_{i-1}<\alpha\leq\nu_i}\in \bigoplus \Hom{U_\beta}{U_\alpha}\]
for $M\in \mcM_0$. Then the defining conditions for $\mcM_\rho\subset \mcM_0$ are that for any $\nu_i<\beta_1,\ldots,\beta_k\leq\nu_{i+1}$ isomorphic components, no non-trivial linear combination of $p_{\beta_1}(M),\ldots, p_{\beta_k}(M)$ lies in $L_{\beta_j}$ (it is independent of $j$).

Proposition 4.5 of \cite{PM} states $\mcM_\rho$ is an algebraic variety independent of $\rho$ and that $\ov{\operatorname{Gr}}'_\bullet$ is a locally trivial fibration with fiber $\mcM_\rho\times \GL_r$. Note that $\tilde R(\tau)\to \hat R(\tau)$ has absorb both $\PGL$ and gauge group quotients that appears in \cite{PM}. This is due to the fact that $\GL_r$ acts freely and transitive on the basis of $\C^r$. Write $\mcM_\tau$ for $\mcM_\rho$ for some $\rho\in\hat\mcI_0(\tau)$.

\begin{lema}
    For any configuration of eigenvalues $\kappa$ and $\tau\in\mcT_\kappa^*$, the map $\ov{\operatorname{Gr}}'_\bullet$ is a equivariantly locally trivial fibration in the Zariski topology if we endow $\mcM_\tau$ with the permutation of factors action. 
\end{lema}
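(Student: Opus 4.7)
The plan is to verify that the trivializations of $\ov{\operatorname{Gr}}'_\bullet$ from the preceding theorem can be chosen $\Gamma_\tau$-equivariantly, with the induced action on the fiber being the permutation-of-factors action on $\mcM_\tau$. Equivariance of the map itself is immediate: the preceding three lemmas constructed $\ov R'(\tau)\to\hat\mcI(\tau)$ as a sequence of $\Gamma_\tau$-equivariant bundles and retractions, with $\Gamma_\tau$ acting throughout by permuting isotypic indices.

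To describe the action on the fiber, recall $\mcM_\tau\subset\mcM_0=\bigoplus_{\alpha\leq\nu_i<\beta}\Hom{U_\beta}{U_\alpha}$. By construction of the refined type, each $\gamma\in\Gamma_\tau$ permutes the indices $\{\alpha\}$ within each filtration level $i$, hence sends each pair $(\alpha,\beta)$ with $\alpha\leq\nu_i<\beta$ to $(\gamma\alpha,\gamma\beta)$ with $\gamma\alpha\leq\nu_i<\gamma\beta$. This induces a permutation of the summands $\Hom{U_\beta}{U_\alpha}\to\Hom{U_{\gamma\beta}}{U_{\gamma\alpha}}$ via the canonical identification of isotypic pieces of the same type, and the defining conditions of $\mcM_\tau$ (the avoidance conditions involving $L_\beta$ and $p_\beta$) are invariant because the subspaces $\ell_{\alpha\beta}$ permute correspondingly. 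This is precisely the permutation-of-factors action.

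For equivariance of the trivialization from \cite[Prop.\ 4.5]{PM}, fix $\rho\in\hat\mcI(\tau)$ with stabilizer $\Gamma_\tau^\rho\subseteq\Gamma_\tau$. The trivialization over a neighborhood $U$ is built from a local section $U\to\hat\mcI_0(\tau)$ putting $B$ in a fixed diagonal form, together with the canonical $M$-parametrization of the fiber over $\hat\mcI_0(\tau)$. I would shrink $U$ to a $\Gamma_\tau^\rho$-invariant neighborhood and choose the local section $\Gamma_\tau^\rho$-equivariantly; under the resulting trivialization, the block-permutation action of $\Gamma_\tau^\rho$ on $(A,B)$ translates directly into the permutation of summands in $\mcM_\tau$, since the $\ell_{\alpha\beta}$ are computed from the same data. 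Translating by coset representatives of $\Gamma_\tau/\Gamma_\tau^\rho$ and intersecting the finitely many translated opens yields a $\Gamma_\tau$-equivariant trivialization on a $\Gamma_\tau$-invariant Zariski neighborhood of the orbit of $\rho$.

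The main obstacle is producing the local diagonalizing section $\Gamma_\tau^\rho$-equivariantly. The resolution is that the ambiguity in the choice of diagonalizing matrix $Q$ lies in the diagonal torus, which commutes with the block-permutation action of $\Gamma_\tau^\rho$; consequently any local section can be adjusted by a torus-valued correction to a $\Gamma_\tau^\rho$-equivariant one, and this correction does not affect the induced map on $\mcM_\tau$ since the torus acts trivially on the permutation of Hom summands.
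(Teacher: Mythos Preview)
Your outline correctly identifies the permutation-of-factors action on $\mcM_\tau$ and the overall strategy, but the last paragraph contains a genuine gap. The assertion that the diagonal torus ``commutes with the block-permutation action of $\Gamma_\tau^\rho$'' is false: permutation matrices normalize the diagonal torus but do not centralize it. More seriously, the subsequent claim that ``any local section can be adjusted by a torus-valued correction to a $\Gamma_\tau^\rho$-equivariant one'' is unjustified. The failure of equivariance of a section $Q$ defines a $1$-cocycle of $\Gamma_\tau^\rho$ with values in the torus-valued functions on the base, and correcting it amounts to showing this cocycle is a coboundary. There is no algebraic averaging procedure over a torus that would do this automatically, and $H^1$ of a finite group with torus coefficients need not vanish. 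So the existence of an equivariant $Q$ is not established by your argument.

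The paper sidesteps this obstruction entirely by constructing $Q$ concretely rather than abstractly. It observes that, once the eigenvalues are fixed, a local diagonalizing morphism $Q$ can be produced by triangulating the linear systems $B-\lambda_i\operatorname{Id}=0$ for each eigenvalue $\lambda_i$ in a chosen order. The output of this algorithm depends only on the list of eigenvalues. Since $\Gamma_\tau$ acts precisely by permuting the eigenvalues, it permutes these linear systems and hence permutes the entries of $Q$ produced by the algorithm. Thus $Q$ is equivariant by construction, and the induced trivialization of $\ov{\operatorname{Gr}}'_\bullet$ intertwines the $\Gamma_\tau$-action on the total space with the permutation-of-factors action on $\mcM_\tau$. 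No torus correction or cohomological argument is needed.
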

\begin{proof}
    By the previous explicit description it is clear that the permutation of factors action on $\mcM_0$ preserves $M_\tau$ and, in consequence, it defines an action there. The trivializations of \cite{PM} are based on the fact that any semisimple matrix $A\in \GL_k$ has a Zariski neighborhood $U$ of all semisimple matrices and a morphism $Q:U\to \GL_k$ such that $Q(B)BQ(B)^{-1}$ is diagonal for all $B\in U$. When fixing all eigenvalues, they can be built by choosing an order to all eigenvalues $\epsilon_i$ and triangulating the linear systems $B-\lambda_i\operatorname{Id}=0$. Now, the action of $\Gamma_\tau$ just permutes eigenvalues. Henceforth, $\Gamma_\tau$ preserves the previous construction just permuting the entries of $Q$.
\end{proof}

\begin{coro}\label{cor:corefib}
    For any configuration of eigenvalues $\kappa$ and $\tau\in\mcT_\kappa^*$ which satisfy hypothesis $(H)$,
    \[[R(\tau)][F_0(\tau)]=[\A^{\sum (d_i-d_{i-1})(r-d_i)-\mcD}][F_1(\tau)][\mcM_\tau][\GL_r(\C)][\mcI(\tau)]\]
    in $K_0^{\Gamma_\tau}(\Var_\C)$.
\end{coro}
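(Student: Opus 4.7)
The plan is to stack the four preceding lemmas and the theorem about the core fibration into a single telescope of identities in $K_0^{\Gamma_\tau}(\Var_\C)$.

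First I would use hypothesis $(H)$ to trivialize the symmetric products and the non-isomorphic-components conditions, obtaining $\hat\mcM(\tau)=\mcM(\tau)$ and $\hat\mcI(\tau)=\mcI(\tau)$ as already discussed in the text preceding the fiber-bundle lemmas. Consequently $\hat R(\tau)=R(\tau)\times_{\mcM(\tau)}\hat\mcM(\tau)=R(\tau)$, so the first lemma of the section gives
\[[R(\tau)][F_0(\tau)]=[\tilde R(\tau)]\]
in $K_0^{\Gamma_\tau}(\Var_\C)$.

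Next I would telescope the remaining three results. The second lemma exhibits $\tilde R(\tau)$ as a $\Gamma_\tau$-equivariant product $\ov{R}(\tau)\times F_1(\tau)$, yielding $[\tilde R(\tau)]=[\ov{R}(\tau)][F_1(\tau)]$. The third lemma presents $\ov{R}(\tau)\to\ov{R}'(\tau)$ as a $\Gamma_\tau$-equivariant affine bundle of rank $\sum(d_i-d_{i-1})(r-d_i)-\mcD$, so that $[\ov{R}(\tau)]=[\ov{R}'(\tau)][\A^{\sum(d_i-d_{i-1})(r-d_i)-\mcD}]$. Finally, the theorem together with the last lemma shows that $\ov{\operatorname{Gr}}'_\bullet:\ov{R}'(\tau)\to\mcI(\tau)$ is Zariski locally trivial and, crucially, $\Gamma_\tau$-equivariantly so, with fiber $\mcM_\tau\times\GL_r(\C)$ (where $\mcM_\tau$ carries the permutation-of-factors action); this gives $[\ov{R}'(\tau)]=[\mcI(\tau)][\mcM_\tau][\GL_r(\C)]$.

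Multiplying all four equalities yields the corollary. The only substantive point is that each of the four intermediate fibrations is genuinely equivariantly locally trivial, so the product formula in the equivariant Grothendieck ring applies; this has already been verified one step at a time in the preceding lemmas, so the corollary itself is purely a bookkeeping statement and I would not expect any further obstacle.
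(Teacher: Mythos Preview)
Your proof is correct and follows exactly the approach of the paper: under $(H)$ the unsymmetrized spaces agree with the original ones (so $\hat R(\tau)=R(\tau)$ and $\hat\mcI(\tau)=\mcI(\tau)$), and then the four preceding results telescope to the stated identity in $K_0^{\Gamma_\tau}(\Var_\C)$. The paper's own proof is the one-line version of precisely this chain.
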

\begin{proof}
    This follows from the previous results as $\hat\mcI(\tau)=\mcI(\tau)$ under $(H)$.
\end{proof}

We have shown $[R_\kappa]$, $[F_0(\tau)]$ $[F_1(\tau)]$, $[\GL_r(\C)]$ and $[\mcM_\tau]$ belong to $A_{\Gamma_\tau}$ in Lemma \ref{Vnmrv}, Corollary \ref{pfinzq} and Lemma \ref{Rkappa-class-is-good}. Hence, as long as condition $(H)$ holds one gets inductively that $[R_\kappa^{irr}]$ is quasi-nice. Remark that by examples \ref{ex:reg-ss} and \ref{ex:2-2} we know we only need to localize on $q,q-1,q^2-1,q^3-1,q^4-1$ to compute $[R_\kappa^{irr}/\Gamma_\kappa]$. Note that the case of $(\GL_2(\C)/(\C^\times)^2)^2$ in the following result correspond to $r=4$,$s=1$, $r_1=2$, $m_{1,j}=2$ and $d_{1,j}=1$, which can be discard due to \cite[Prop 8.1]{PM}.

\begin{coro}
    For $r\leq 4$ and any $\kappa$, $[R_\kappa^{irr}]$ is quasi-nice for the multiplicative set of $K_0(\Var_\C)\otimes R_\Q(\Gamma_\kappa)\otimes \Q$ generated by $\Q$ and inductions or permutation of factors actions of $\GL_d(\C)^m$ and $(\GL_d(\C)/(\C^\times)^d)^m$ for $0\leq dm\leq r$. 
\end{coro}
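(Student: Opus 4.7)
The plan is to induct on the rank $r$. The base case $r = 1$ is immediate: all one-dimensional representations are irreducible, so $R^{irr}_\kappa = R_\kappa$, which lies in $A_{\Gamma_\kappa}$ by Corollary \ref{Rkappa-class-is-good} and is therefore quasi-nice with denominator $1$.

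For the inductive step, fix $r \le 4$ and a configuration $\kappa$. The idea is to exploit the $\Gamma_\kappa$-equivariant stratification
\[
[R^{irr}_\kappa] \;=\; [R_\kappa] \;-\; \sum_{[\tau]} \bigl[\Ind_{\Gamma_\tau}^{\Gamma_\kappa}(R(\tau))\bigr],
\]
where $[\tau]$ runs over $\Gamma_\kappa$-orbits of types in $\mcT_\kappa^*$. Since $[R_\kappa] \in A_{\Gamma_\kappa}$ (Corollary \ref{Rkappa-class-is-good}) and the earlier localization lemmas ensure that induction preserves quasi-niceness, the problem reduces to showing that each $[R(\tau)]$ is quasi-nice in $K_0^{\Gamma_\tau}(\Var_\C)$ with denominator in the specified $S$.

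For this, I would apply Corollary \ref{cor:corefib}. Hypothesis $(H)$ holds for $r \le 4$ except in one case where $\mcI(\tau) = \emptyset$ by \cite[Prop.~8.1]{PM}, so the factorization
\[
[R(\tau)] \cdot [F_0(\tau)] \;=\; [\A^N]\,[F_1(\tau)]\,[\mcM_\tau]\,[\GL_r(\C)]\,[\mcI(\tau)]
\]
is available in $K_0^{\Gamma_\tau}(\Var_\C)$. The denominator $[F_0(\tau)]$ is by its very definition a product of permutation-of-factors actions of $\GL_d(\C)^m$ and $(\GL_d(\C)/(\C^\times)^d)^m$ with $dm \le r$, hence lies in $A_{\Gamma_\tau}$ (combining Corollary \ref{gl}, Example \ref{ex:reg-ss} and Corollary \ref{peract-on-niceclass}) with class in $S$ by construction. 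The factors $[\A^N]$ and $[\GL_r(\C)]$ are obviously in $A_{\Gamma_\tau}$; $[F_1(\tau)] = \prod_i \GL_{r-d_i}(\C)$ lies in $A_{\Gamma_\tau}$ since $\Gamma_\tau$ acts trivially on the indexing $i$ (the dimensions $d_i$ being $\Gamma_\tau$-invariant). Under hypothesis $(H)$, $\mcI(\tau)$ becomes the product $\prod_{i,j} (R^{irr}_{\kappa_{i,j}})^{M_{i,j}m_{i,j}}$ of irreducible representation varieties of strictly smaller rank $d_{i,j} < r$, equipped with the permutation-of-factors $\Gamma_\tau$-action; the inductive hypothesis gives each $R^{irr}_{\kappa_{i,j}}$ quasi-nice, and quasi-niceness is preserved by products and by permutation of factors (by the earlier localization lemmas), so $[\mcI(\tau)]$ is quasi-nice.

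The main obstacle is establishing $[\mcM_\tau] \in A_{\Gamma_\tau}$. The plan is to unwind the explicit description of $\mcM_\tau$ as a locally closed subvariety of $\mcM_0$ cut out by the conditions ``no nontrivial linear combination of $p_{\beta_1}(M), \ldots, p_{\beta_k}(M)$ lies in $L_{\beta_j}$''. These conditions encode rank constraints on certain families of vectors, which should place $\mcM_\tau$ in the scope of Lemma \ref{Vnmrv}; one then has to verify that the $\Gamma_\tau$-action permutes the resulting pieces coherently (this should follow from $\Gamma_\tau$ acting on the indexing $\beta$ in a way compatible with the subspaces $L_\beta$, by the $\Gamma_\tau$-invariance of the isotypic structure), so that Corollary \ref{peract-on-niceclass} places $\mcM_\tau$ in $A_{\Gamma_\tau}$. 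A secondary bookkeeping task is tracking the multiplicative set $S$ across the induction operations, to ensure that the quasi-nice structures inherited for the smaller-rank $R^{irr}_{\kappa_{i,j}}$ combine into a denominator living in the $S$ attached to $\Gamma_\kappa$; this is handled by the compatibility condition $\Ind_\Gamma^{\Gamma'}(S) \subset S'$ in the induction lemma for quasi-niceness.
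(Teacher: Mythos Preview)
Your proposal is correct and follows essentially the same inductive scheme as the paper: reduce to showing each $[R(\tau)]$ is quasi-nice via the core fibration identity of Corollary~\ref{cor:corefib}, noting that $[R_\kappa]$, $[F_0(\tau)]$, $[F_1(\tau)]$, $[\GL_r(\C)]$, and $[\mcM_\tau]$ all lie in $A_{\Gamma_\tau}$, while $[\mcI(\tau)]$ is quasi-nice by the inductive hypothesis combined with the permutation-of-factors lemmas. The paper's ``proof'' is the sentence immediately preceding the corollary, which asserts $[\mcM_\tau]\in A_{\Gamma_\tau}$ as a consequence of Lemma~\ref{Vnmrv} without further comment; your plan to verify this via the rank-condition description of $\mcM_\tau$ is exactly what is intended there, so you have correctly identified and resolved the one point the paper leaves implicit.
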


After localization, we can cancel out $F_1(\tau)$ with part of $F_0(\tau)$ to get
\[[R(\tau)][F_0'(\tau)]q^\mcD=[\mcM_\tau][\GL_r(\C)][\mcI(\tau)]\]
where
\[F_0'(\tau)=\prod_{i=1}^s\left(\prod_{j=1}^{r_i} \GL_{d_{i,j}}(\C)^{m_{i,j}M_{i,j}}\times(\GL_{m_{i,j}}(\C)/(\C^\times)^{m_{i,j}})^{M_{i,j}}\right).\]

\section{Character vs representation varieties}\label{sec:pglquotient}

In this section, we prove Conjecture \ref{lem:mcM-vs-mcR} for $r\leq 3$. First, we make a reduction that holds for arbitrary $r$. It follows a similar fashion to \cite[Proposition 7.3]{GP2}. Note the formula for $[\PGL_r(\C)]$ is well known and that there is nothing to prove for $r=1$. 
    
We first pullback to $R_r^{irr}$. By Lemma \ref{lem:key-lemma}, it is enough to prove that $[R_r^{irr}]=[M_r^{irr}]\cdot [\PGL_r(\C)]$ in $K_0^{\mu_{nm}}(\Var/\C)$, where $\mu_{nm}$ acts trivially on $\PGL_r(\C)$, because the classes of $\C^\times$ and $\PGL_r(\C)$ belong to $A_{\mu_{nm}}$, $[\C^\times]^{\mu_{nm}}=[\C^\times]\otimes \Q^1$, and $[\PGL_r(\C)]^{\mu_{nm}}=[\PGL_r(\C)]\otimes \Q^1$. For this we use the stratification by eigenvalues. It suffices to show that $[R_\kappa^{irr}]=[R_\kappa^{irr}/\PGL_r][\PGL_r]$ in $K_0^{\Gamma_\kappa}(\Var/\C)$ for any $\kappa$. Or even less, that $[R_\kappa^{irr}]^{\Gamma_\kappa}=[R_\kappa^{irr}/\PGL_r]^{\Gamma_\kappa}[\PGL_r]^{\Gamma_\kappa}$. The points of $R_\kappa^{irr}/\PGL_r$ can be identified with $r$-dimensional irreducible representations $\rho:\Z_n\star\Z_m\to \GL V$ whose eigenvalues are $\kappa$ up to isomorphism.

Fix a $\kappa$. Call $\epsilon_1,\ldots,\epsilon_p$ the different eigenvalues of $A$ and $a_1,\ldots,a_p$ its multiplicities. Let $\mcO_1,\ldots,\mcO_l$ the orbits of $\Gamma_\kappa$ in $\{\epsilon_1,\ldots,\epsilon_p\}$. Note that any $\epsilon_i$ has stabilizer $\mu_m\subset\mu_{nm}$. Denote $m_i$ for $a_j$ for some $\epsilon_j\in\mcO_i$. Note that any $A\in R_\kappa^{irr}$ is diagonalizable as it has finite order. Then, there is an $\Gamma_\kappa$-equivariant algebraic map
\[R_\kappa^{irr}\to \prod_{i=1}^l\Per_{\mu_m}^{\Gamma_\kappa}(\Gr[a_i][r])=\Per_{\mu_m}^{\Gamma_\kappa}\left(\prod_{i=1}^l\Gr[a_i][r]\right)\]
given by looking at the eigenspaces of $A$, where $\mu_m$ acts trivially on $\Gr[a_i][r]$. Let $\mcL_i\to \Gr[a_i][r]$ be the bundle of basis of each subspace. They are equivariantly locally trivial fiber bundles with fiber $\GL_{a_i}$. Therefore, if $\mcL$ is the pullback by the previous map of $\Per_{\mu_m}^{\Gamma_\kappa}\left(\prod_{i=1}^l\mcL_i\right)$, $\mcL\to R_\kappa^{irr}$ is equivariantly locally trivial with fiber $\Per_{\mu_m}^{\Gamma_\kappa}\left(\prod_{i=1}^l\GL_{a_i}\right)$. An element of $\mcL$ is a couple $(A,B,\mcB_1,\ldots,\mcB_k)$ where each $\mcB_i$ is a basis of the eigenspace of $A$ with eigenvalue $\epsilon_i$. Being $A$ diagonalizable, this is the same that a matrix $M\in \GL_r(\C)$ such that $MAM^{-1}$ is the diagonal matrix with the first $a_1$ entries $\epsilon_1$, the next $a_2$ being $\epsilon_2$, and so on. The action of $\mu_{nm}$ in $M$ is given by translating by powers of a fixed permutation matrix. Quotient by $\C^\times\simeq Z(\GL_r(\C))$ with get a equivariantly locally trivial fibration $\ov{\mcL}\to R_\kappa^{irr}$.

We can repeat this with $B$ to get $\ov{\mcL}'\to R_\kappa^{irr}$ such that
\[[\ov{\mcL}']=[R_\kappa^{irr}]\cdot \left(\Per_{\mu_m}^{\Gamma_\kappa}\left(\prod_{i=1}^l \GL_{a_i}\right)/\C^\times\right) \cdot \left(\Per_{\mu_n}^{\Gamma_\kappa}\left(\prod_{j=1}^k\GL_{b_j}\right)/\C^\times\right)\]
The points of $\ov{\mcL}'$ are tuples $(A,B,M,N)\in R_\kappa^{irr}\times\PGL_r(\C)^2$ such that $MAM^{-1}$ and $NBN^{-1}$ have specific diagonal forms $\Sigma_1$ and $\Sigma_2$. This is $\Gamma_{\kappa}$-equivariantly isomorphic to the variety $X$ of tuples $(M,N)\in \PGL_r(\C)^2$ such that $(M^{-1}\Sigma_1M,N^{-1}\Sigma_2N)$ gives an irreducible representation. The $\Gamma_{\kappa}$-action is given by left translation by certain permutation matrices $\sigma_1$ and $\sigma_2$.  

Now the $\PGL_r$-action can be lifted to $\ov{\mcL}'$ by $g\cdot(A,B,M,N)=(gAg^{-1},gBg^{-1},Mg^{-1},Ng^{-1})$. Now, let 
\[Y:=\{M\in \PGL_r(\C):(M^{-1}\Sigma_1M,\Sigma_2)\text{ is irreducible}\}\]
with action given by $M\mapsto \sigma_1M \sigma_2^{-1}$. Then $\ov{\mcL}'$ is $\Gamma_{\kappa}\times\PGL_r$-isomorphic to $Y\times\PGL_r(\C)$ via $(M,N)\mapsto (MN^{-1},N)$. Hence $[\ov{\mcL}']=[\ov{\mcL}'/\PGL_r(\C)]\cdot [\PGL_r(\C)]$ in $K_0^{\Gamma_{\kappa}}(\Var/\C)$. Therefore, it suffices to show that
\[[\ov{\mcL}'/\PGL_r(\C)]^{\Gamma_\kappa} = [R_\kappa^{irr}/\PGL_r(\C)]^{\Gamma_\kappa} \left[ \left(\Per_{\mu_m}^{\Gamma_\kappa}\left(\prod_{i=1}^l\GL_{a_i}\right)/\C^\times\right) \right]^{\Gamma_\kappa} \left[\left(\Per_{\mu_n}^{\Gamma_\kappa}\left(\prod_{j=1}^k\GL_{b_j}\right)/\C^\times\right)\right]^{\Gamma_\kappa}\]
in $K_0^{\Gamma_{\kappa}}(\Var/\C)$ as long as the classes of $\Per_{\mu_m}^{\Gamma_\kappa}\left(\prod_{i=1}^l\GL_{a_i}\right)/\C^\times$ and $\Per_{\mu_n}^{\Gamma_\kappa}\left(\prod_{j=1}^k\GL_{b_j}\right)/\C^\times$ are quasi-nice.

Let $Z_1$ and $Z_2$ the subvarieties of $\prod_{i=1}^l\Gr[a_i][r]$ and $\prod_{j=1}^k\Gr[a_j][r]$ of subspaces that are in direct sum. Note that the $\PGL_r$-action is free on the image of $R_r^{irr}\to (\prod_{i=1}^l\Gr[a_i][r])\times (\prod_{j=1}^k\Gr[a_j][r])$. Indeed, if $g\in \PGL_r(\C)$ preserves every eigenspaces of $A$ and $B$, then it commutes with both of them. Being the associated representation irreducible, $g$ must be trivial. Therefore, $\ov{\mcL}'/\PGL_r(\C)$ is the pullback of 
\[\left(\Per_{\mu_m}^{\Gamma_\kappa}\left(\prod_{i=1}^l\mcL_i\right)\times \Per_{\mu_n}^{\Gamma_\kappa}\left(\prod_{i=j}^l\mcL_j\right)\right) /\PGL_r(\C)\to (\Per_{\mu_m}^{\Gamma_\kappa}(Z_1)\times \Per_{\mu_n}^{\Gamma_\kappa}(Z_2))/\PGL_r(\C) \]

Now $\PGL_r$ acts transitively on both $\Per_{\mu_m}^{\Gamma_\kappa}(Z_1)$ and $\Per_{\mu_n}^{\Gamma_\kappa}(Z_2)$. Moreover, we have $\Per_{\mu_m}^{\Gamma_\kappa}(Z_1)=((\prod \GL_{a_i})/\C^\times)\backslash \PGL_r$ and $\Per_{\mu_n}^{\Gamma_\kappa}(Z_2)=((\prod \GL_{b_j})/\C^\times)\backslash \PGL_r$. Hence, the base is isomorphic to $ ((\prod \GL_{b_j})/\C^\times)\backslash \PGL_r/((\prod \GL_{a_i})/\C^\times)$. On the other hand, we have $\Per_{\mu_m}^{\Gamma_\kappa}\left(\prod_{i=1}^l\mcL_i\right) = \PGL_r =  \Per_{\mu_n}^{\Gamma_\kappa}\left(\prod_{i=j}^l\mcL_j\right)$. To finish the proof, we will show, case by case, an equivariant stratification of a neighborhood of the image of $R_\kappa^{irr}$ such that $\PGL_r\to  ((\prod \GL_{b_j})/\C^\times)\backslash \PGL_r/((\prod \GL_{a_i})/\C^\times)$ is equivariantly locally trivial over each stratum with fiber in $A_{\Gamma_\kappa}$ and with motivic isotypic decomposition $[(\prod \GL_{b_j})/\C^\times]^{\Gamma_\kappa} [(\prod \GL_{a_i})/\C^\times]^{\Gamma_\kappa}$. Note that the action of $\Gamma_{\kappa}$ is given by $g\mapsto \sigma_1 g\sigma_2$ for certain permutation matrices $\sigma_1$ and $\sigma_2$ of coprime order.
    
\subsection{Rank two} The only possibility is that $a_1=a_2=b_1=b_2=1$ and $\Gamma_\kappa$ is either trivial or cyclic of order two, and, in the second case, acts by permuting either the two rows or the two columns. It suffices to deal with $\Gamma_\kappa$ being not trivial and acting permuting the rows. 

Note $(\prod \GL_{b_j})/\C^\times = (\prod \GL_{a_i})/\C^\times \simeq \C^\times$ as varieties. The $\Gamma_\kappa$-action on the first one is $x\mapsto x^{-1}$ and trivial on the second. To check quasi-niceness, put away the fixed point $1$ and consider the product with $\C^\times$ with action $y\mapsto -y$. Then, $(x,y)\mapsto \left(\dfrac{xy}{x-1},\dfrac{y}{x-1}\right)$ and $(\lambda_1,\lambda_2)\mapsto (\lambda_1\lambda_2^{-1},\lambda_1-\lambda_2)$ are inverse equivariant isomorphisms between the previous product and $\{(\lambda_1,\lambda_2)\in\C^\times:\lambda_1\neq\lambda_2\}$ with action $(\lambda_1,\lambda_2)\mapsto (\lambda_2,\lambda_1)$. The last action has class on $A_{\Gamma_\kappa}$.

Let $U\subset \PGL_2(\C)$ the subset of matrices with all entries non-zero. Note that the image of $R_r^{irr}$ falls into $U$. The quotient map $\PGL_2\mapsto \C^\times\backslash \PGL_2/\C^\times$ turns out to be
\[\left(\begin{array}{cc}
    a & b \\
    c & d
\end{array}\right) \mapsto \frac{ad}{bc}\in \C^\times\setminus\{1\}.\]
The $\Gamma_\kappa$-action in the quotient is $x\mapsto x^{-1}$.

Over $\C^{\times}\setminus\{\pm 1\}$ we have the equivariant section 
\[x\mapsto \left(\begin{array}{cc}
    1+x & 1 \\
    1+x^{-1} & 1
\end{array}\right) \]
Therefore, over this open set we have the desired multiplicativity property.

Now, the fiber over $-1$ is
\[F_{-1}=\left\{ \left(\begin{array}{cc}
    -xy & x \\
    y & 1
\end{array}\right) : x,y\in \C^\times\right\}\]
that is isomorphic to $\C^{\times}\times \C^{\times}$ with action $(x,y)\mapsto (x^{-1},-y)$. Now, being the action on the second factor linear with trivial isotypic decomposition, we have
\[ [F_{-1}]^{\Gamma_{\kappa}} = [(\prod \GL_{b_j})/\C^\times]^{\Gamma_\kappa} [(\prod \GL_{a_i})/\C^\times]^{\Gamma_\kappa}\]
as desired.

\subsection{Rank three} In this rank, there are three cases that cover all necessary ones. All $a_i$ and $b_j$ equal to one and $\Gamma_\kappa$ of order six, acting by shifting the rows and permuting the first two columns. Or all $b_j$ equal to one, $a_1=2$, $a_2=1$, $\Gamma_\kappa$ of order three, acting by shifting the rows, or two, acting by permuting the first two rows.  

Let us start with $a_i=b_j=1$ and $\Gamma_\kappa$ of order six. In this case, $(\prod \GL_{b_j})/\C^\times = (\prod \GL_{a_i})/\C^\times \simeq (\C^\times)^{2}$ with corresponding actions $(\lambda_1,\lambda_2)\mapsto (\lambda_2^{-1},\lambda_1\lambda_2^{-1})$ and $(\mu_1,\mu_2)\mapsto (\mu_2,\mu_1)$. We need to check that the first action is quasi-nice. Note that $1+\lambda_1+\lambda_2=0$ is an invariant subspace. On its complement, after multiplying with $\C^\times$ with trivial action, we have the inverse isomorphisms 
\[(\lambda_1,\lambda_2,s)\mapsto \left(\dfrac{\lambda_1 s}{1+\lambda_1+\lambda_2},\dfrac{\lambda_2 s}{1+\lambda_1+\lambda_2},\dfrac{s}{1+\lambda_1+\lambda_2}\right)\]
and $(x,y,z)\mapsto (xz^{-1},yz^{-1},x+y+z)$ with $\{(x,y,z)\in \C^\times:x+y+z\neq 0\}$ with action $(x,y,z)\mapsto (z,x,y)$ which has class on $A_{\Gamma_\kappa}$. The invariant subspace is isomorphic to $\C^\times\setminus\{-1\}$ with action $\lambda\mapsto -\dfrac{1}{1+\lambda}$. We choose a root of unity $\xi$ of order three and take out the fixed point $\xi$. Consider the linear $\Gamma_\kappa$-actions $(x,y)\mapsto (y,-x-y)$ on $\C^2$ and $z\mapsto \xi z$ on $\C^\times$. We have the inverse equivariant isomorphisms 
\[(\lambda,z)\in (\C^\times\setminus\{-1,\xi\})\times \C^\times\mapsto \left( \frac{\lambda z}{\lambda-\xi},\frac{z}{\lambda-\xi}\right)\in \{(x,y)\in (\C^\times)^2: x\neq \xi y, -y\} \]
and $(x,y)\mapsto (xy^{-1}, x -\xi y)$. Now, the right hand side is $\C^2$ with a linear action less $\C\simeq \{x=\xi y\}$ with a linear action and $\Ind_{\{e\}}^{\Gamma_\kappa}(\C\setminus\{0\})\simeq\{x=0,y\neq 0\}\cup \{x+y=0,x,y\neq 0\}\cup \{y=0,x\neq 0\}$. Hence, its class belongs to $A_{\Gamma_\kappa}$. This shows that $(\C^\times)^2$ with action $(\lambda_1,\lambda_2)\mapsto(\lambda_2^{-1},\lambda_1\lambda_2^{-1})$ is quasi-nice. 

We will work over the open set $U\subset \PGL_3(\C)$ given by those matrices with no two zeros in the same column or row. If a matrix does not satisfy this and is induced by a representation $(A,B)$, either $A$ and $B$ share a common eigenvector or a two dimensional invariant subspace, any of these conditions implies that the representation is not irreducible. This means that the image of $R_\kappa^{irr}$ is inside of $U$. We stratify $U$ by the vanishing of the entries. Call $V$ the open stratum of matrices with non-zero entries.

Over $V$ we have a similar description of the quotient than before. It is 
\[\left(\begin{array}{ccc}
    a & b & c\\
    d & e & f\\
    g & h & i
\end{array}\right) \mapsto \left(\begin{array}{cc}
    \frac{ai}{gc} & \frac{bi}{hc} \\
    \frac{di}{gf} & \frac{ei}{hf} \\
\end{array}\right).\]
The induced $\Gamma_\kappa$ action is 
\[ \left(\begin{array}{cc}
    x & y \\
    z & w \\
\end{array}\right)\mapsto \left(\begin{array}{cc}
    w^{-1} & z^{-1} \\
    yw^{-1} & xz^{-1} \\
\end{array}\right).\]

We have a equivariant section 
\[\left(\begin{array}{cc}
    x & y \\
    z & w \\
\end{array}\right)\mapsto \left(\begin{array}{ccc}
    x(1+x^{-1}+z^{-1}) & y(1+y^{-1}+w^{-1}) & 1\\
    z(1+x^{-1}+z^{-1}) & w(1+y^{-1}+w^{-1}) & 1\\
    1+x^{-1}+z^{-1} & 1+y^{-1}+w^{-1} & 1
\end{array}\right)\]
defined when $1+x^{-1}+z^{-1}$ and $1+y^{-1}+w^{-1}$ do not vanish.

We have another section. Let $\xi$ a primitive root of unity or order three. Consider
\[\left(\begin{array}{cc}
    x & y \\
    z & w \\
\end{array}\right)\mapsto \left(\begin{array}{ccc}
    x(1+\xi x^{-1}+\xi^{2}z^{-1}) & y(1+\xi y^{-1}+\xi^2 w^{-1}) & 1\\
    z(1+\xi x^{-1}+\xi^2z^{-1}) & w(1+\xi y^{-1}+\xi^2 w^{-1}) & 1\\
    1+\xi x^{-1}+\xi^2z^{-1} & 1+\xi y^{-1}+\xi^2w^{-1} & 1
\end{array}\right)\]
This is a non-equivariant section defined when $1+\xi x^{-1}+\xi^2z^{-1}$ and $1+\xi y^{-1}+\xi^2w^{-1}$ do not vanish. Indeed,
\[\left(\begin{array}{cc}
    w^{-1} & z^{-1} \\
    yw^{-1} & xz^{-1} \\
\end{array}\right)\mapsto \left(\begin{array}{ccc}
    \xi(1+\xi y^{-1}+\xi^2w^{-1}) & \xi(1+\xi x^{-1}+\xi^2z^{-1}) & 1\\
    \xi y(1+\xi y^{-1}+\xi^2 w^{-1}) & \xi x(1+\xi x^{-1}+\xi^{2}z^{-1}) & 1\\
    \xi w(1+\xi y^{-1}+\xi^2 w^{-1}) & \xi z(1+\xi x^{-1}+\xi^2z^{-1}) & 1
\end{array}\right)\]
But, if we change the action on $(\mu_1,\mu_2)$ to $(\mu_1,\mu_2)\mapsto (\xi\mu_2,\xi\mu_1)$, then
\[\left((\lambda_1,\lambda_2),\left(\begin{array}{cc}
    x & y \\
    z & w \\
\end{array}\right),(\mu_1,\mu_2)\right)\mapsto \left(\begin{array}{ccc}
    \lambda_1\mu_1 x(1+\xi x^{-1}+\xi^{2}z^{-1}) & \lambda_1\mu_2 y(1+\xi y^{-1}+\xi^2 w^{-1}) & \lambda_1\\
    \lambda_2\mu_1 z(1+\xi x^{-1}+\xi^2z^{-1}) & \lambda_2\mu_2 w(1+\xi y^{-1}+\xi^2 w^{-1}) & \lambda_2\\
    \mu_1 (1+\xi x^{-1}+\xi^2z^{-1}) & \mu_2(1+\xi y^{-1}+\xi^2w^{-1}) & 1
\end{array}\right)\]
is an equivariant isomorphism. The fiber has not the correct action, but has class on $A_{\Gamma_\kappa}$ and the change we made does not change the motivic isotypic decomposition. Hence, this section works for our purpose. Note the same strategy can be carry over with $\xi^2$.

Note that for any pair $(x,z)$ at least one among $1+x^{-1}+z^{-1}$, $1+\xi x^{-1}+ \xi^2 z^{-1}$ and $1+\xi^2 x^{-1}+ \xi z^{-1}$ must be non-zero, as they add up to $3$. We stratify $V$ according to the vanishing or not of each of the functions $1+x^{-1}+z^{-1}$, $1+\xi x^{-1}+ \xi^2 z^{-1}$, $1+\xi^2 x^{-1}+ \xi z^{-1}$, $1+y^{-1}+w^{-1}$, $1+\xi y^{-1}+ \xi^2 w^{-1}$ and $1+\xi^2 y^{-1}+ \xi w^{-1}$. If one of the products $(1+x^{-1}+z^{-1})(1+y^{-1}+w^{-1})$, $(1+\xi x^{-1}+ \xi^2 z^{-1})(1+\xi y^{-1}+ \xi^2 w^{-1})$ and $(1+\xi^2 x^{-1}+ \xi z^{-1})(1+\xi^2 y^{-1}+ \xi w^{-1})$ does not vanish, we can use one of the previous sections. This holds for all $\Gamma_\kappa$-invariant stratum. Over the non-invariant ones, we need to work only with the permutation of rows actions and then induce. Hence, we can simply combine the previous sections. For example, if $1+x^{-1}+z^{-1}$ and $1+\xi y^{-1}+ \xi^2 w^{-1}$ are not zero, then
\[\left((\lambda_1,\lambda_2),\left(\begin{array}{cc}
    x & y \\
    z & w \\
\end{array}\right),(\mu_1,\mu_2)\right)\mapsto \left(\begin{array}{ccc}
    \lambda_1\mu_1 x(1+ x^{-1}+z^{-1}) & \lambda_1\mu_2 y(1+\xi y^{-1}+\xi^2 w^{-1}) & \lambda_1\\
    \lambda_2\mu_1 z(1+ x^{-1}+z^{-1}) & \lambda_2\mu_2 w(1+\xi y^{-1}+\xi^2 w^{-1}) & \lambda_2\\
    \mu_1 (1+ x^{-1}+z^{-1}) & \mu_2(1+\xi y^{-1}+\xi^2w^{-1}) & 1
\end{array}\right)\]
is a $\Z/3\Z$-equivariant isomorphism if $(\mu_1,\mu_2)$ is endow with the action $(\mu_1,\mu_2)\mapsto (\xi \mu_1,\mu_2)$.

We now need to analyze the complement of $V$. Any of its strata is stabilized only by the identity of $\Gamma_\kappa$. Hence, it suffices to show just a section over each one. This is straightforward and we will skip this computation.

Let us analyze $a_1=2$ and $a_2=1=b_1=b_2=b_3$ with $\Gamma_\kappa$ of order three. In this case, $(\prod \GL_{b_j})/\C^\times \simeq (\C^\times)^{2}$ with action $(\lambda_1,\lambda_2)\mapsto (\lambda_2^{-1},\lambda_1\lambda_2^{-1})$ and $(\prod \GL_{a_i})/\C^\times\simeq \GL_2(\C)$  with the trivial action. We will work over the open subset $U\subset \PGL_3(\C)$ defined as
\[\left\{\left(\begin{array}{ccc}
    a & b & c\\
    d & e & f\\
    g & h & i
\end{array}\right): ae-bd,dh-ge,ah-bg,c,f,i\neq 0 \right\}\]
Note that if a matrix with $ae-bd=0$ is associated to a representation $(A,B)$, the two dimensional eigenspace of $A$ contains the third eigenvector of $B$, and therefore the representation is reducible.

Over $U$ the quotient is
\[\left(\begin{array}{ccc}
    a & b & c\\
    d & e & f\\
    g & h & i
\end{array}\right) \mapsto \left(\begin{array}{cc}
    \frac{a}{c} & \frac{b}{c}
\end{array}\right)\left(\begin{array}{cc}
    \frac{d}{f} & \frac{e}{f} \\
    \frac{g}{i} & \frac{h}{i} \\
\end{array}\right)^{-1}\in (\C^\times)^2.\]
The $\Gamma_\kappa$-action on the quotient is
\[(x,y)\mapsto (y^{-1},-y^{-1}x)\]
We have the equivariant section
\[(x,y)\mapsto \left(\begin{array}{ccc}
    x & y & 1\\
    1 & 0 & 1\\
    0 & 1 & 1
\end{array}\right)\left(\begin{array}{ccc}
    1+x^{-1} & y-y^{-1} & 0\\
    1-xy^{-1} & x^{-1}-x & 0\\
    0 & 0 & 1
\end{array}\right)\]
defined over $(1+x^{-1})(x^{-1}-x)\neq (y-y^{-1})(1-xy^{-1})$.

If we choose a root of unity of order three $\xi$ and change the action on $\GL_2(\C)$ to be $A\mapsto \xi A$, we can use
\[(x,y)\mapsto \left(\begin{array}{ccc}
    x & y & 1\\
    1 & 0 & 1\\
    0 & 1 & 1
\end{array}\right)\left(\begin{array}{ccc}
    1+\xi^2 x^{-1} & y-\xi^2 y^{-1} & 0\\
    \xi- xy^{-1} & \xi x^{-1}- x & 0\\
    0 & 0 & 1
\end{array}\right)\]
over $(1+\xi^2 x^{-1})(\xi x^{-1}- x)\neq (y-\xi^2y^{-1})(\xi-xy^{-1})$. We have a similar map over $(1+\xi x^{-1})(\xi^2 x^{-1}- x)\neq (y-\xi y^{-1})(\xi^2- xy^{-1})$. We can also swap the first two columns of the most right hand side matrix in each of the previous maps to define three more. 

The complement of the previous six domains is $x=y=-1$ cup $(x,y)=(\omega,\omega^{-1})$ for a root of order six $\omega$. Over $(-1,-1)$ the fiber is $(\C^\times)^2\times \GL_2(\C)$ with action $(\lambda_1,\lambda_2,A)\mapsto (\lambda_2^{-1}, \lambda_1\lambda_2^{-1}, \left(\begin{smallmatrix}
  -1 & -1\\
  1 & 0
\end{smallmatrix}\right)A)$ which has the correct motivic isotypic decomposition. Over $(\omega,\omega^{-1})$, the fiber is $(\C^\times)^2\times \GL_2(\C)$ with action $(\lambda_1,\lambda_2,A)\mapsto (\lambda_2^{-1}, \lambda_1\lambda_2^{-1}, \left(\begin{smallmatrix}
  \omega & \omega^{-1}\\
  1 & 0
\end{smallmatrix}\right)A)$ which again has the correct motivic isotypic decomposition.

To finish with rank three, we have to analyze $a_1=2$ and $a_2=1=b_1=b_2=b_3$ with $\Gamma_\kappa$ of order two. The action on $(\prod \GL_{b_j})/\C^\times \simeq (\C^\times)^{2}$ is now $(\lambda_1,\lambda_2)\mapsto(\lambda_2,\lambda_1)$. The open set $U$ and the quotient are the same, but $\Gamma_\kappa$-action is now $(x,y)\mapsto (x^{-1},-x^{-1}y)$. 

We have the equivariant section
\[(x,y)\mapsto \left(\begin{array}{ccc}
    x & y & 1\\
    1 & 0 & 1\\
    0 & 1 & 1
\end{array}\right)\left(\begin{array}{ccc}
    -y & -y+1+x^{-1} & 0\\
    x+x^{-1} & x+x^{-1} & 0\\
    0 & 0 & 1
\end{array}\right) \]
over $(x+1)(x^2+1)\neq 0$. If we change the action on $\GL_2(\C)$ to $A\mapsto -A$, we can use
\[(x,y)\mapsto \left(\begin{array}{ccc}
    x & y & 1\\
    1 & 0 & 1\\
    0 & 1 & 1
\end{array}\right)\left(\begin{array}{ccc}
    -y & -y+1-x^{-1} & 0\\
    x-x^{-1} & x-x^{-1} & 0\\
    0 & 0 & 1
\end{array}\right) \]
over $(x-1)(x^2-1)\neq 0$. 

The remaining set is $x=-1$ with trivial action, i.e. $\{-1\}\times \C^\times$. The preimage of this set is $(\C^\times)\times \C^\times \times \GL_2(\C)$ with action $(\lambda_1,\lambda_2,y,A)\mapsto (\lambda_2,\lambda_1,y,\left(\begin{smallmatrix}
    -1 & y \\
    0 & 1
\end{smallmatrix}\right)A)$. Hence, it suffices to show that if $\C^\times \times \GL_2(\C)$ is endow with the $\Gamma_\kappa$-action $(y,A)\mapsto (y,\left(\begin{smallmatrix}
    -1 & y \\
    0 & 1
\end{smallmatrix}\right)A)$ then its class lives in $A_{\Gamma_\kappa}$ and it has a trivial motivic isotypic decomposition. Now, consider the automorphism 
\[\left(y,\left(\begin{array}{cc}
    a & b \\
    c & d
\end{array}\right)\right)\mapsto\left(y,\left(\begin{array}{cc}
    a & b \\
    yc & yd
\end{array}\right)\right)\]
of $\C^\times \times \GL_2(\C)$. It changes the action to $(y,A)\mapsto (y, \left(\begin{smallmatrix}
    -1 & 1 \\
    0 & 1
\end{smallmatrix}\right)A)$. We have a product of $\C^\times$ with a trivial action and $\GL_2(\C)$ with the multiplication of a finite order matrix. Both of their classes belong to $A_{\Gamma_\kappa}$ and have trivial isotypic decomposition.

\section{Computations for low ranks}

Whenever $r$ is coprime with $n$ and $m$, we can simply use formulas from \cite{PM} thanks to Theorem \ref{coprime-case}. Hence, our focus will be on the non-coprime case. Moreover, we only need to focus on configurations of eigenvalues and types with non-trivial stabilizer by Lemmas \ref{correction-lemma} and \ref{big-correction-lemma}.

\subsection{Rank one} Here $R_\kappa^{irr}=R_\kappa$ is a point for any $\kappa$. In addition, there are no stabilizers as $\gcd(1,nm)=1$. Hence,
\[[R_1^{irr}]= \sum_\kappa\frac{1}{|\mu_{nm}|}\Ind_{\{e\}}^{\mu_{nm}}(*) \]
Now $\kappa$ belongs to $\mu_{n}\times \mu_{m}$. Therefore,
\[R_1^{irr}= \mu_{nm} \]
with the translation action. This implies
\[[\mcR_1^{irr}] = \langle T_{\mu_{nm}},((q-1)\otimes T_{\mu_{nm}})\Q^{\mu_{nm}}\rangle = q-1.\]

\subsection{Rank two} By remark 3.5 of \cite{PM}, the only relevant $\kappa$'s are $((\epsilon_1,\epsilon_2),(\varepsilon_1,\varepsilon_2))$ with $\epsilon_1\neq \epsilon_2$ and $\varepsilon_1\neq \varepsilon_2$. 

If $\gcd(2,nm)=1$, there are no stabilizers and we can simply use González-Prieto and Muñoz's formula(see subsection 7.1 in loc.cit.):
\[R_\kappa^{irr} = q^4 - 2q^3 - q^2 + 2q\]
Now there are 
\[\frac{1}{4}nm(n-1)(m-1)\]
possible $\kappa$'s. Hence,
\[[R_2^{irr}]^{\mu_{nm}} = \frac{1}{4}(n-1)(m-1)(q^4 - 2q^3 - q^2 + 2q) \otimes \Q^{nm} \]
in this case. Therefore,
\begin{align*}
    [\mcR_2^{irr}] &= \langle T_{\mu_{nm}},((q-1)\otimes T_{\mu_{nm}})(\frac{1}{4}(n-1)(m-1)(q^4 - 2q^3 - q^2 + 2q) \otimes \Q^{nm})\rangle\\
    &= \frac{1}{4}(n-1)(m-1)(q-1)(q^4 - 2q^3 - q^2 + 2q).
\end{align*}

Now assume $n$ is even and $m$ is odd. There is only one case with non-trivial stabilizer namely $\kappa=((\epsilon_1,-\epsilon_1),(\varepsilon_1,\varepsilon_2))$. There are two kind of types $\tau\in\mcT_\kappa^*$:
\begin{itemize}
    \item the semisimple ones: we have $s=1$, $r_1=2$, $d_{i,j}=m_{i,j}=M_{i,j}=1$. As $\varepsilon_1\neq \varepsilon_2$, $\Gamma_\tau$ is trivial. Hence, we can use again \cite{PM}:
    \[ [R(\tau)] = q^2+q \]
    There are two possible types in this case. Namely $((\epsilon_1,\varepsilon_1),(-\epsilon_1,\varepsilon_2))$ and $((\epsilon_1,\varepsilon_2),(-\epsilon_1,\varepsilon_1))$.
    
    \item the non-semisimple ones: $\Gamma_\tau$ is also trivial in this case. By loc.cit.
    \[ R(\tau) = q^3-q\]
    Note this case encloses four types. 
\end{itemize}

Hence,
\[ [R_\kappa^{red}]^{\Gamma_\kappa} = (2q^3+q^2-q)\otimes \Q^2\]
On the other hand,
\begin{align*}
    [ R_\kappa]^{\Gamma_\kappa} &= ((q^2-q)\otimes \Q^1+q\otimes \Q^2) (q(q+1)\otimes \Q^1)\\
    &= (q^2-q)(q^2+q)\otimes\Q^1+q^2(q+1)\otimes \Q^2
\end{align*}
by example \ref{ex:reg-ss}. Hence,
\begin{align*}
    [R_\kappa^{irr}]^{\Gamma_\kappa} &=(q^2-q)(q^2+q)\otimes\Q^1+q^2(q+1)\otimes \Q^2  -(2q^3+q^2-q)\otimes \Q^2\\
    &= (q^2-q)(q^2+q)\otimes\Q^1+(-q^3+q)\otimes \Q^2
\end{align*}
for $\kappa=((\epsilon_1,-\epsilon_1),(\varepsilon_1,\varepsilon_2))$. There are $\frac{1}{4}nm(m-1)$ possibilities for this kind of $\kappa$. For the others $\kappa$ we can use the same formula as before. Summing up all together we get

\begin{align*}
    [R^{irr}_2]^{\mu_{nm}} &= \frac{1}{2}(m-1)((q^2-q)(q^2+q)\otimes\Q^{\frac{nm}{2}}+(q-q^3)\otimes \Q^{nm})\\ &+\frac{1}{4}(n-2)(m-1) (q^4-2q^3-q^2+2q)\otimes \Q^{nm}
\end{align*}

and

\begin{align*}
    [\mcR_2^{irr}] &= (q-1)(\frac{1}{2}(m-1)((q^2-q)(q^2+q)+(-q^3+q))))+\frac{1}{4}(n-2)(m-1) (q^4-2q^3-q^2+2q))\\
    &= \frac{1}{4}(n-1)(m-1)(q-1)(q^4-2q^3-q^2+2q)+ \frac{1}{2}(m-1)(q-1)(q^4-q^3-q^2+q)
\end{align*}

if $n$ is even. In conclusion, for $r=2$, we have \[[\mcR_2^{irr}(\GL)]=\frac{q-1}{2}[\mcR_2^{irr}(\SL)]^*+\delta_{2|n}\frac{1}{4}(m-1)(q-1)(q^2-q)(q^2-1)+\delta_{2|m}\frac{1}{4}(n-1)(q-1)(q^2-q)(q^2-1)\]

Quotient by the $\PGL_2(\C)$ action we get
\[[\mcM^{irr}_2(\GL)]=\frac{q-1}{2}[\mcM^{irr}_2(\SL)]^*+\delta_{2|n}\frac{1}{2}(m-1)(q-1)^2+\delta_{2|m}\frac{1}{2}(n-1)(q-1)^2\]
which agrees with \cite[Proposition 7.3]{sl3}.

\subsection{Rank three}

As in the previous case, when $\gcd(3,nm)=1$ we can use González-Prieto and Muñoz's formulas:
\[[R_\kappa^{irr}]=\left\{\begin{array}{cc}
    \begin{array}{c}q^{12} + 4q^{11} - 10q^{10} - 8q^9 + 17q^8 + 13q^7 \\
    - 5q^6 - 21q^5 - 3q^4 + 12q^3\end{array} & \text{if there are no repeated eigenvalues}  \\
    q^{10} - 3q^9 + 2q^8 + 2q^7 - 2q^5 - 3q^4 + 3q^3 & \text{if there is exactly one repeated eigenvalue} \\
    0 & \text{in any other case}
\end{array}\right.\]
Henceforth,
\begin{align*}
    [R_3^{irr}]^{\mu_{nm}} &= \frac{1}{nm}\left(\binom{n}{3}\binom{m}{3}(q^{12} + 4q^{11} - 10q^{10} - 8q^9 + 17q^8 + 13q^7 - 5q^6 - 21q^5 - 3q^4 + 12q^3)\right.\\
    &+\left. \left(2\binom{n}{2}\binom{m}{3}+2\binom{n}{3}\binom{m}{2}\right)(q^{10} - 3q^9 + 2q^8 + 2q^7 - 2q^5 - 3q^4 + 3q^3)\right)\otimes \Q^{nm}
\end{align*}
and
\begin{align*}
    [\mcR_3^{irr}(\GL)]=\frac{q-1}{3}[\mcR_3^{irr}(\SL)]
\end{align*}
if $\gcd(3,nm)=1$.

Now assume that $3|n$ and fix a root of unity $\omega$ of order $3$. For $\Gamma_\kappa$ to be non-trivial, it must be $\kappa=((\epsilon_1,\omega\epsilon_1,\omega^2\epsilon_1),(\kappa_1,\kappa_2,\kappa_3))$ or $((\epsilon_1,\omega\epsilon_1,\omega^2\epsilon_1),(\kappa_1,\kappa_1,\kappa_2))$ with $\kappa_i\neq \kappa_j$ if $i\neq j$. In any case, no $\tau\in\mcT_\kappa^*$ has non trivial stabilizer. Hence,
\[[R^{red}_\kappa]^{\Gamma_\kappa}=\frac{1}{3}[R^{red}_\kappa]\otimes \Q^3\]
and we can use \cite{PM}. 

For $\kappa=((\epsilon_1,\omega\epsilon_1,\omega^2\epsilon_1),(\kappa_1,\kappa_2,\kappa_3))$ we find
\begin{align*}
    [R_\kappa]^{\Gamma_\kappa} &= ((q^3-q^2)(q^3-q)\otimes \Q^1 + q^4(q+1)\otimes \Q^3)(q^2(q^2+q)(q^2+q+1)\otimes \Q^1)\\
    &= (q^3-q^2)(q^3-q)q^2(q^2+q)(q^2+q+1)\otimes \Q^1 + q^4(q+1)(q^2(q^2+q)(q^2+q+1)\otimes \Q^3
\end{align*}
by example \ref{ex:reg-ss}. Hence
\begin{align*}
    [R_\kappa^{irr}]^{\Gamma_\kappa} &= (q^3-q^2)(q^3-q)q^2(q^2+q)(q^2+q+1)\otimes \Q^1 + q^4(q+1)q^2(q^2+q)(q^2+q+1)\otimes \Q^3\\
    &- (6q^{10} + 3q^9 - 3q^8 - 3q^7 + 2q^6 + 7q^5 + q^4 - 4q^3)\otimes \Q^3\\
    &=(q^3-q^2)(q^3-q)q^2(q^2+q)(q^2+q+1)\otimes \Q^1 \\&+q^3  (q^8 - 3q^7 + q^6 + 6q^5 + 4q^4 - 2q^3 - 7q^2 - q + 4)\otimes \Q^3
\end{align*}
 
For $\kappa=((\epsilon_1,\omega\epsilon_1,\omega^2\epsilon_1,(\kappa_1,\kappa_1,\kappa_2))$ we have
\begin{align*}
    [R_\kappa]^{\Gamma_\kappa} &= ((q^3-q^2)(q^3-q)\otimes \Q^1 + q^4(q+1)\otimes \Q^3)(q^2(q^2+q+1)\otimes \Q^1)\\
    &= (q^3-q^2)(q^3-q)q^2(q^2+q+1)\otimes \Q^1 + q^4(q+1)q^2(q^2+q+1)\otimes \Q^3
\end{align*}
by example \ref{ex:reg-ss}. Hence
\begin{align*}
    [R_\kappa^{irr}]^{\Gamma_\kappa} &= (q^3-q^2)(q^3-q)q^2(q^2+q+1)\otimes \Q^1 + q^4(q+1)q^2(q^2+q+1)\otimes \Q^3\\
    &- (2q^9 + q^8 + q^7 + q^6 + q^5 + q^4 - q^3)\otimes \Q^3\\
    &=(q^3-q^2)(q^3-q)q^2(q^2+q+1)\otimes \Q^1-(q + 1) (q - 1)^3 q^3 (q^2 + q + 1)\otimes \Q^3
\end{align*}

Hence, for $3|n$,

\begin{align*}
    [\mcR_3^{irr}(\GL)]- &\frac{q-1}{3}[\mcR_3^{irr}(\SL)]= \frac{2}{3}(m-1)(\frac{1}{2}(m-2)q(q+1)+1)(q-1)^3q^5(q+1)(q^2+q+1)
\end{align*}

and

\begin{align*}
    [\mcM^{irr}_3(\GL)]= \frac{q-1}{3}[\mcM^{irr}_3(\SL)] + \frac{2}{3}(\frac{1}{6}(m-1)(m-2)(q^2+q)+(m-1)))(q-1)q^2 
\end{align*}

which also matches the formulas of \cite[Proposition 10.1]{sl3}.

\subsection{Rank four}

The computation gets more subtle as there are non trivial $\Gamma_\tau$ now. There are several kinds of $\kappa$ with non-trivial stabilizer. Some of them can be rule out because $R_\kappa^{irr}=\emptyset$. There are given by Proposition 8.1 and Remark 3.5 of \cite{PM}. Remaining ones are in the following table:
\begin{table}[H]
    \centering
    \begin{tabular}{c|c|c|c}
        $n$ & $\kappa$ & $\#\kappa$ & $\Gamma_\kappa$  \\
        \hline
        $4|n$ & $\kappa_1=((\epsilon,i\epsilon,-\epsilon,-i\epsilon),(\varepsilon_1,\varepsilon_2,\varepsilon_3,\varepsilon_4))$ & $\frac{n}{4}\binom{m}{4}$ & $\mu_4$ \\
        $4|n$ & $\kappa_2=((\epsilon,i\epsilon,-\epsilon,-i\epsilon),(\varepsilon_1,\varepsilon_1,\varepsilon_2,\varepsilon_3))$ &$\frac{n}{4}m\binom{m-1}{2}$ & $\mu_4$ \\
        $4|n$ & $\kappa_3=((\epsilon,i\epsilon,-\epsilon,-i\epsilon),(\varepsilon_1,\varepsilon_1,\varepsilon_2,\varepsilon_2))$ & $\frac{n}{4}\binom{m}{2}$ & $\mu_4$ \\
        $4|n$ & $\kappa_4=((\epsilon,i\epsilon,-\epsilon,-i\epsilon),(\varepsilon_1,\varepsilon_1,\varepsilon_1,\varepsilon_2))$ & $\frac{n}{4}m(m-1)$ & $\mu_4$ \\
        $2|n$ & $\kappa_5=((\epsilon_1,-\epsilon_1,\epsilon_2,-\epsilon_2),(\varepsilon_1,\varepsilon_2,\varepsilon_3,\varepsilon_4))$ & $\frac{n(n-4+2\delta_{4\not|n})}{8}\binom{m}{4}$ & $\mu_2$\\ 
        $2|n$ & $\kappa_6=((\epsilon_1,-\epsilon_1,\epsilon_2,-\epsilon_2),(\varepsilon_1,\varepsilon_1,\varepsilon_2,\varepsilon_3))$ & $\frac{n(n-4+2\delta_{4\not|n})}{8}m\binom{m-1}{2}$ & $\mu_2$\\            
        $2|n$ & $\kappa_7=((\epsilon_1,-\epsilon_1,\epsilon_2,-\epsilon_2),(\varepsilon_1,\varepsilon_1,\varepsilon_2,\varepsilon_2))$ & $\frac{n(n-4+2\delta_{4\not|n})}{8}\binom{m}{2}$ & $\mu_2$\\            
        $2|n$ & $\kappa_8=((\epsilon_1,-\epsilon_1,\epsilon_2,-\epsilon_2),(\varepsilon_1,\varepsilon_1,\varepsilon_1,\varepsilon_2))$ & $\frac{n(n-4+2\delta_{4\not|n})}{8}m(m-1)$ & $\mu_2$\\
        $2|n$ & $\kappa_9=((\epsilon,-\epsilon,\epsilon,-\epsilon),(\varepsilon_1,\varepsilon_2,\varepsilon_3,\varepsilon_4))$ & $\frac{n}{2}\binom{m}{4}$ & $\mu_2$\\
        $2|n$ & $\kappa_{10}=((\epsilon,-\epsilon,\epsilon,-\epsilon),(\varepsilon_1,\varepsilon_1,\varepsilon_2,\varepsilon_3))$ & $\frac{n}{2}m\binom{m-1}{2}$ & $\mu_2$
    \end{tabular}
    \caption{Configurations of eigenvalues with non-trivial stabilizer on rank four.}
\end{table}
where $\epsilon_i\neq\pm\epsilon_j,\pm i\epsilon_j$ and $\varepsilon_i\neq\varepsilon_j$ for any $i\neq j$. 

Let us start our analysis with $\kappa_7$. Note that for all types we must have $m_{i,j}=M_{i,j}=1$. Relevant types are:
\begin{table}[H]
    \centering
    \begin{tabular}{c|c|c|c|c|c|c}
        $\tau$ & $\#\tau$ & $s$ & $r_i$ & $d_{i,j}$ & $\kappa_{1,j}$ & $\kappa_{2,j}$   \\
        \hline
        $\tau_1$ & 2 & 1 & 4 & 1 & $((\epsilon_1),(\varepsilon_i)),((-\epsilon_1),(\varepsilon_i)),((\epsilon_2),(\varepsilon_j)),((-\epsilon_2),(\varepsilon_j))$ & -\\
        $\tau_2$ & 1 & 1 & 2 & 2 & $((\epsilon_1,-\epsilon_1),(\varepsilon_1,\varepsilon_2)),((\epsilon_2,-\epsilon_2),(\varepsilon_1,\varepsilon_2))$& - \\
        $\tau_3$ & 2 & 1 & 2 & 2 & $((\epsilon_1,\pm\epsilon_2),(\varepsilon_1,\varepsilon_2)),((-\epsilon_1,\mp\epsilon_2),(\varepsilon_1,\varepsilon_2))$&-\\
        $\tau_4$ & 4 & 2 & 2 & 1 & $((\epsilon_k),(\varepsilon_i)),((-\epsilon_k),(\varepsilon_i))$ & $((\epsilon_l),(\varepsilon_j)),((-\epsilon_l),(\varepsilon_j))$\\
        $\tau_5$ & 2 & 2 & 1 & 2 & $((\epsilon_k,-\epsilon_k),(\varepsilon_1,\varepsilon_2))$ & $((\epsilon_l,-\epsilon_l),(\varepsilon_1,\varepsilon_2))$ 
    \end{tabular}
    \caption{Types with non-trivial $R(\tau)$ for $\kappa_7$.}
\end{table}
We apply the core fibration \ref{cor:corefib}. In all cases $[F_0']^{\mu_2}$ have been computed in by examples \ref{ex:reg-ss}, \ref{ex:2-2} and corollary \ref{gl}. We have:
\begin{table}[H]
    \centering
    \begin{tabular}{c|c|c}
        $\tau$ & $F_0'(\tau)$ & $[F_0'(\tau)]^{\mu_2}$ \\
        \hline
        $\tau_1$ & $\Per_{\{e\}}^{\mu_2}(\C^\times)\times \Per_{\{e\}}^{\mu_2}(\C^\times)$ & $(q^2-1)^2\otimes \Q^1 - 2(q^2-q)(q-1)\otimes \Q^2$\\
        $\tau_2$ & $\GL_2(\C)\times \GL_2(\C)$ & $(q^2-q)^2(q^2-1)^2\otimes \Q^1$\\
        $\tau_3$ & $\Per_{\{e\}}^{\mu_2}(\GL_2(\C))$ & $(q^4-q^2)(q^4-1)\otimes \Q^1 - q^3(q-1)^2(q+1)^2\otimes \Q^2$ \\
        $\tau_4$ & $ \Per_{\{e\}}^{\mu_2}(\C^\times)\times \Per_{\{e\}}^{\mu_2}(\C^\times)$ & $(q^2-1)^2\otimes \Q^1 - 2(q^2-q)(q-1)\otimes \Q^2$\\
        $\tau_5$ & $ \GL_2(\C)\times \GL_2(\C)$ & $(q^2-q)^2(q^2-1)^2\otimes \Q^1$
    \end{tabular}
    \caption{Isotypic decompositions of $[F_0'(\tau)]$.}
\end{table}

For the base $\mcI(\tau)$ we have, by previously done lower rank computations and Corollary \ref{pfinzq},

\begin{table}[H]
    \centering
    \begin{tabular}{c|c|c}
        $\tau$ & $\mcI(\tau)$ & $[\mcI(\tau)]^{\mu_2}$ \\
        \hline
        $\tau_1,\tau_4$ & $(*)^{4}$ & $1\otimes \Q^1$ \\
        $\tau_2,\tau_5$ & $(R^{irr}_{(\epsilon,-\epsilon)(\varepsilon_1,\varepsilon_2)})^{2}$ & $(q - 1)^2 (q + 1)^2 q^4\otimes\Q^1-2q^2(q + 1)^2(q - 1)^3\otimes\Q^2$ \\
        $\tau_3$ & $\Per_{\{e\}}^{\mu_2}(R^{irr}_{(\epsilon_1,\epsilon_2)(\varepsilon_1,\varepsilon_2)})$ & $ (q^8 - 2q^6 - q^4 + 2q^2)\otimes\Q^1+ (-2q^7 + 2q^6 + 4q^5 - 3q^4 - 2q^3 + q^2)\otimes\Q^2$
    \end{tabular}
    \caption{Isotypic decompositions for $[\mcI(\tau)]$.}
\end{table}

Types $\tau_1$, $\tau_2$, and $\tau_3$ have trivial $\mcM_\tau$. On the other hand, for $\tau_4$,
\begin{align*}
    \mcM_{0}=\Per_{\{e\}}^{\mu_2}(\C^2),&& \ell_{\alpha\beta}=0, && L_\beta=0, && \mcM_\rho=\Per_{\{e\}}^{\mu_2}(\C^2\setminus\{0\})
\end{align*}
and, therefore,
\[[\mcM_\rho]^{\mu_2}=(q^4-1)\otimes\Q^1-(q^2-1) \otimes \Q^2\]
And for $\tau_5$,
\begin{align*}
    \mcM_{0}=M_2(\C),&& \ell_{\alpha\beta}=0, && L_\beta=0, && \mcM_\rho=M_2(\C)\setminus\{0\}
\end{align*}
and, therefore,
\[[\mcM_\rho]^{\mu_2}=(q^4-1)\otimes\Q^1\]

All have trivial $\mcD$ as there is no repeated eigenvalues on $A$. After some straightforward computations,

\begin{table}[H]
    \centering
    \begin{tabular}{c|c}
        $\tau$ & $[R(\tau)]^{\mu_2}$  \\
        \hline
        $\tau_1$ & $\begin{array}{c}
              (q^2+1)(q^4-q)q^2(q^4-q^3)\otimes\Q^1\\+ 2q^7(q^2+1)(q^2+q+1)\otimes\Q^2
        \end{array}$\\
        &\\
        $\tau_2$ & $\begin{array}{c}
            (q - 1)^2 (q + 1)^2 q^8 (q^2 + 1) (q^2 + q + 1)\otimes\Q^1\\-2 (q + 1)^2 (q - 1)^3 q^6 (q^2 + 1) (q^2 + q + 1)\otimes\Q^2
        \end{array}$\\
        &\\
        $\tau_3$ & $\begin{array}{c}
              (q + 1)(q - 1)^3 q^6 (q^2 - 2) (q^2 + 1) (q^2 + q + 1)\otimes\Q^1\\- (q + 1) (q - 1)^2 q^6 (q^2 - q - 1) (q^2 + 1) (q^2 + q + 1)\otimes\Q^2
        \end{array}$ \\
        &\\
        $\tau_4$ & $\begin{array}{c}
              (q + 1) q^6 (q - 1)^3 (q^2 + q + 1) (q^2 + 1)^2\otimes\Q^1\\+ (q + 1) (q - 1)^2 q^6 (q^2 + 1) (2q^2 + q + 1) (q^2 + q + 1)\otimes\Q^2
        \end{array}$\\
        &\\
        $\tau_5$ & $\begin{array}{c}
              (q - 1)^3 (q + 1)^3 q^8 (q^2 + q + 1) (q^2 + 1)^2\otimes\Q^1\\-2 q^6 (q + 1)^3 (q - 1)^4 (q^2 + q + 1) (q^2 + 1)^2\otimes\Q^2
        \end{array}$
    \end{tabular}
    \caption{Isotypic decompositions of $[R(\tau)]$.}
    \label{kappa7}
\end{table}

On the other hand,
\begin{align*}
    [R_{\kappa_7}]^{\mu_2}&=[\GL_4(\C)/(\C^\times)^4]^{\mu_2}([\GL_4(\C)/\GL_2(\C)^2]\otimes\Q^1)\\
    &= ([\GL_4(\C)]^2/[\GL_2(\C)]^2\otimes\Q^1)/([\Per_{\{e\}}^{\mu_2}(\C^\times)]^{\mu_2})^2\\
    &=(q - 1)^2 q^{10} (q^2 + 1)^2 (q^2 + q + 1)^2\otimes\Q^1+ 2 q^{11} (q^2 + 1)^2 (q^2 + q + 1)^2\otimes\Q^2
\end{align*}
by example \ref{ex:reg-ss}. In conclusion,

\begin{align*}
    [R_{\kappa_7}^{irr}]^{\mu_2}&=\frac{1}{2}\Ind_{\{e\}}^{\mu_2}([R_{\kappa_7}^{irr}])\\&-(q + 1)^2 (q - 1)^3 q^6 (q^2 + 1) (q^2 + q + 1) (q^5 + 2q^4 - q^3 + 3q^2 + 2q - 2)\otimes(\Q^1-\frac{1}{2}\Q^2)
\end{align*}

Similar considerations can be made for the other configurations of eigenvalues with $\Gamma_\kappa=\mu_2$. We summarize the results in Tables \ref{kappa5}, \ref{kappa6}, \ref{kappa8}, \ref{kappa9}, \ref{kappa10}, and \ref{resultados}. Let us analyze now the four configurations of eigenvalues with $\mu_4$ stabilizer. In all cases $\mcD$ is trivial. Types with stabilizer $\mu_4$ only appear for $\kappa_3$. Henceforth, for $\kappa_1$, $\kappa_2$ and $\kappa_4$, $R^{red}_{\kappa}$ can be computed by means of the previous tables for $(\epsilon_1,-\epsilon_1,\epsilon_2,-\epsilon_2)$ up to a correction by $\frac{1}{2}\Ind_{\mu_2}^{\mu_4}$, see Lemma \ref{correction-lemma}. The results are shown in Table \ref{kappa-with-mu4}.

\begin{table}[H]
    \centering
    \begin{tabular}{c|c}
    $\kappa$ & $[R_\kappa^{red}]^{\mu_4}-\frac{1}{4}[R_\kappa^{red}]\otimes\Q^{\mu_4}$\\
    \hline
    $\kappa_1$ & $3(q - 1)^2 (q + 1)^2 q^8(q^2 + 1) (q^2 + q + 1)(2q^4 - 1)\otimes(\Q^2-\frac{1}{4}\Q^4)$\\
    $\kappa_2$ & $(q + 1) (q - 1)^2 q^7 (q^2 + 1) (q^2 + q + 1)^2 (2q^4 - 1)\otimes(\Q^2-\frac{1}{4}\Q^4)$\\
    $\kappa_4$ & $(q + 1) (q - 1)^2 q^7 (q^2 + 1) (q^2 + q + 1) (2q^4 - 1)\otimes(\Q^2-\frac{1}{4}\Q^4)$
    \end{tabular}
    \caption{Isotypic decomposition of $[R_\kappa^{red}]$ for $\kappa_1$, $\kappa_2$ and $\kappa_4$.}
    \label{kappa-with-mu4}
\end{table}

To finish let us analyze $\kappa_3$. There is only one type with stabilizer $\mu_4$. Namely $\tau_2'$ with $s=1$, $r_i=2$, $d_{i,j}=2$ and  
\[\kappa_{i,j}= ((\epsilon,-\epsilon),(\varepsilon_1,\varepsilon_2)),((i\epsilon,-i\epsilon),(\varepsilon_1,\varepsilon_2))\]
It has $M_\rho=M_2(\C)\setminus\{0\}$, $F'_0(\tau)=\Per_{\mu_2}^{\mu_4}(\GL_2(\C))$ and $\mcI(\tau)=\Per_{\mu_2}^{\mu_4}(R^{irr}_{((\epsilon,-\epsilon),(\varepsilon_1,\varepsilon_2))})$. We use the formula of Example \ref{binom2to4}:

\begin{align*}
    [F_0'(\tau)]^{\mu_2}&=(q^4-q^2)(q^4-1)\otimes\Q^1+\frac{1}{2}((q^2-q)^2(q^2-1)^2-(q^4-q^2)(q^4-1))\otimes\Q^2\\
    &=(q^4-q^2)(q^4-1)\otimes\Q^1-q^3(q^2-1)^2\otimes\Q^2
\end{align*}

and

\begin{align*}
    [\mcI(\tau)]^{\mu_4}&=\Omega_2((q^2-q)(q^2+q)\otimes\Q^1+(-q^3+q)\otimes \Q^2)\\
    &=(q^4-q^2)(q^4+q^2)\otimes\Q^1 - q^4(q-1)(q+1) \otimes\Q^2 + (-q^3+q)(q^2-q)(q^2-1)\otimes\Q^4
\end{align*}

Hence,

\begin{align*}
    [R(\tau_2')]^{\mu_4}&= (q + 1)^2 (q - 1)^4 q^8 (q^2 + q + 1) (q^2 + 1)^2\otimes\Q^1\\
    &+(q + 1)^2 (q - 1)^3 q^8 (q^2 + q + 1) (q^2 + 1)^2\otimes\Q^2\\
    &- (q + 1)^3 (q - 1)^4 q^6 (q^2 + q + 1) (q^2 + 1)^2)\otimes\Q^4
\end{align*}

The other types for $\kappa_3$ are $\tau_1$, $\tau_3$, $\tau_4$ and $\tau_5$. Their associated computations are in Table \ref{kappa7}. From this point, this configuration of eigenvalues can be treat as before. The final result is in Table \ref{resultados}.

Summing up together all contributions from Table \ref{resultados} we get for odd $m$:

\begin{align*}
    &\frac{[\mcR^{irr}_4(\GL)]}{[\PGL_4(\C)]}- \frac{q-1}{4}[\mcM^{irr}_4(\SL)]=
    \delta_{2|n}\left(-
    \frac{(n-2)(m-1)}{16}(q - 1)   (q^5 + 2q^4 - q^3 + 3q^2 + 2q - 2)\right.\\
    &\left.+\frac{n-2}{8}\binom{m-1}{2}(q - 1) q  (q^4 - 2q^2 - 2)-\frac{(n-2)(m-1)}{8}
     (q - 1) q (3q^2 + 2)\right.\\
    &\left.+\frac{n-2}{32}\binom{m-1}{3} (q - 1) q^2 (q^7 + 2q^6 + 4q^5 + 5q^4 - 6q^3 - 6q^2 - 6q - 6)\right.\\
    &\left.-\frac{1}{2}\binom{m-1}{2}  (q-1)q  (q^2 + q + 3)+\frac{1}{8}\binom{m-1}{3}q^4 (q^2 + q - 5)
    \right)\\ 
    &+\delta_{4|n}\left( \frac{1}{128}\binom{m-1}{3} (q - 1) q^2 (q^7 + 2q^6 - 3q^4 + 6q^3 + 6q^2 + 6q + 6)\right.\\
    &\left.+\frac{1}{16}\binom{m-1}{2} q (3q^7 + 3q^6 + q^4 + 3q^3 - 4q^2 + 4q - 4) \right.\\
    &\left.+\frac{m-1}{32}(q^5 - 3q^4 + 4q^3 + q^2 - 4q + 2) +\frac{m-1}{16}(q - 1)q(3  q^2  +2q^2 + 2) \right).
\end{align*}

\begin{landscape}
\begin{table}[H]
    \centering
    \begin{tabular}{c|c|c|c|c|c|c|c|c}
        $\tau$ & $\#\tau$ & $s$ & $r_i$ & $d_{i,j}$ & $\kappa_{1,j}$ & $\kappa_{2,j}$ & $F_0'(\tau)$ & $[F_0'(\tau)]^{\mu_2}$ \\
        \hline
        $\tau_{11}$ & 6 & 1 & 2 & 2 & $((\epsilon_1,-\epsilon_1),(\varepsilon_i,\varepsilon_j)),((\epsilon_2,-\epsilon_2),(\varepsilon_k,\varepsilon_l))$& - & $\GL_2(\C)\times \GL_2(\C)$ & $(q^2-q)^2(q^2-1)^2\otimes \Q^1$\\
        $\tau_{12}$ & 12 & 2 & 1 & 2 & $((\epsilon_k,-\epsilon_k),(\varepsilon_i,\varepsilon_j))$ & $((\epsilon_l,-\epsilon_l),(\varepsilon_p,\varepsilon_q))$ & $\GL_2(\C)\times \GL_2(\C)$ & $(q^2-q)^2(q^2-1)^2\otimes \Q^1$
    \end{tabular}
    
    \vspace{1cm}

    \begin{tabular}{c|c|c|c|c|c}
        $\tau$ & $\mcI(\tau)$ & $[\mcI(\tau)]^{\mu_2}$ & $M_\tau$ & $[M_\tau]^{\mu_2}$ & $[R(\tau)]^{\mu_2}$\\
        \hline
        $\tau_{11}$ & $(R^{irr}_{(\epsilon,-\epsilon)(\varepsilon_1,\varepsilon_2)})^{2}$ & $\begin{array}{c}(q - 1)^2 (q + 1)^2 q^4\otimes\Q^1\\-2 q^2 (q + 1)^2 (q - 1)^3\otimes\Q^2\end{array}$ & $*$ & $1\otimes\Q^1 $ & $\begin{array}{c} (q - 1)^2 (q + 1)^2 q^8 (q^2 + 1) (q^2 + q + 1)\otimes\Q^1\\-2 (q + 1)^2 (q - 1)^3 q^6 (q^2 + 1) (q^2 + q + 1)\otimes\Q^2\end{array}$\\
        $\tau_{12}$ & $(R^{irr}_{(\epsilon,-\epsilon)(\varepsilon_1,\varepsilon_2)})^{2}$ & $\begin{array}{c}(q - 1)^2 (q + 1)^2 q^4\otimes\Q^1\\-2 q^2 (q + 1)^2 (q - 1)^3\otimes\Q^2\end{array}$ & $M_2(\C)\setminus \{0\}$ & $(q^4-1)\otimes\Q^1$ & $\begin{array}{c}(q - 1)^3 (q + 1)^3 q^8 (q^2 + q + 1) (q^2 + 1)^2\otimes\Q^1\\-2 q^6 (q + 1)^3 (q - 1)^4 (q^2 + q + 1) (q^2 + 1)^2\otimes\Q^2\end{array}$
    \end{tabular}
    \caption{Computations for $\kappa_5$.}
    \label{kappa5}
\end{table}

\begin{table}[H]
    \centering
    \begin{tabular}{c|c|c|c|c|c|c|c|c}
        $\tau$ & $\#\tau$ & $s$ & $r_i$ & $d_{i,j}$ & $\kappa_{1,j}$ & $\kappa_{2,j}$ & $F_0'(\tau)$ & $[F_0'(\tau)]^{\mu_2}$   \\
        \hline
        $\tau_{21}$ & 6 & 1 & 2 & 2 & $((\epsilon,-\epsilon),(\varepsilon_i,\varepsilon_j)),((\epsilon,-\epsilon),(\varepsilon_k,\varepsilon_l))$&-& $\GL_2(\C)\times \GL_2(\C)$ & $(q^2-q)^2(q^2-1)^2\otimes \Q^1$\\
        $\tau_{22}$ & 6 & 2 & 1 & 2 & $((\epsilon,-\epsilon),(\varepsilon_i,\varepsilon_j))$ & $((\epsilon,-\epsilon),(\varepsilon_k,\varepsilon_l))$ & $\GL_2(\C)\times \GL_2(\C)$ & $(q^2-q)^2(q^2-1)^2\otimes \Q^1$
    \end{tabular}

    \vspace{1cm}

    \begin{tabular}{c|c|c|c|c|c}
        $\tau$ & $\mcI(\tau)$ & $[\mcI(\tau)]^{\mu_2}$  & $M_\tau$ & $[M_\tau]^{\mu_2}$ & $[R(\tau)]^{\mu_2}$\\
        \hline
        $\tau_{21}$ & $(R^{irr}_{(\epsilon,-\epsilon)(\varepsilon_1,\varepsilon_2)})^{2}$ & $\begin{array}{c}(q - 1)^2 (q + 1)^2 q^4\otimes\Q^1\\-2 q^2 (q + 1)^2 (q - 1)^3\otimes\Q^2\end{array}$ & $*$ & $1\otimes\Q^1 $ & $\begin{array}{c}(q - 1)^2 (q + 1)^2 q^8 (q^2 + 1) (q^2 + q + 1)\otimes\Q^1\\-2 (q + 1)^2 (q - 1)^3 q^6 (q^2 + 1) (q^2 + q + 1)\otimes \Q^2\end{array}$\\
        &&&&&\\
        $\tau_{22}$ & $(R^{irr}_{(\epsilon,-\epsilon)(\varepsilon_1,\varepsilon_2)})^{2}$ & $\begin{array}{c}(q - 1)^2 (q + 1)^2 q^4\otimes\Q^1\\-2 q^2 (q + 1)^2 (q - 1)^3\otimes\Q^2\end{array}$ & $M_2(\C)\setminus \C^2$ & $(q^4-q^2)\otimes \Q^1$ & $\begin{array}{c}(q - 1)^3 (q + 1)^3 q^{8} (q^2 + 1) (q^2 + q + 1)\otimes\Q^1\\-2 (q + 1)^3 (q - 1)^4 q^6 (q^2 + 1) (q^2 + q + 1)\otimes\Q^2\end{array}$
    \end{tabular}
    \caption{Computations for $\kappa_9$.}
    \label{kappa9}
\end{table}

\begin{table}[H]
    \centering
    \begin{tabular}{c|c|c|c|c|c|c|c|c}
        $\tau$ & $\#\tau$ & $s$ & $r_i$ & $d_{i,j}$ & $\kappa_{1,j}$ & $\kappa_{2,j}$  & $F_0'(\tau)$ & $[F_0'(\tau)]^{\mu_2}$ \\
        \hline
        $\tau_6$ & 2 & 1 & 3 & 1,1,2 & $\begin{array}{c}((\epsilon_i),(\varepsilon_1)),((-\epsilon_i),(\varepsilon_1)),\\((\epsilon_j,-\epsilon_j),(\varepsilon_2,\varepsilon_3))\end{array}$& - &$\Per_{\{e\}}^{\mu_2}(\C^\times)\times \GL_2(\C)$ & $\begin{array}{c}(q^2-1)(q^2-q)(q^2-1)\otimes \Q^1 \\- (q-1)(q^2-q)(q^2-1)\otimes \Q^2\end{array}$\\
        &&&&&&&&\\
        $\tau_7$ & 2 & 1 & 2 & 2 & $\begin{array}{c}((\epsilon_1,-\epsilon_1),(\varepsilon_1,\varepsilon_i)),\\((\epsilon_2,-\epsilon_2),(\varepsilon_1,\varepsilon_j))
        \end{array}$&-& $\GL_2(\C)\times \GL_2(\C)$ & $(q^2-q)^2(q^2-1)^2\otimes \Q^1$\\
        &&&&&&&&\\
        $\tau_8$ & 2 & 2 & 2,1 & 1,2 & $((\epsilon_i),(\varepsilon_1)),((-\epsilon_i),(\varepsilon_1))$ & $((\epsilon_j,-\epsilon_j),(\varepsilon_2,\varepsilon_3))$ & $\Per_{\{e\}}^{\mu_2}(\C^\times)\times \GL_2(\C)$ & $\begin{array}{c}(q^2-1)(q^2-q)(q^2-1)\otimes \Q^1\\ - (q-1)(q^2-q)(q^2-1)\otimes \Q^2\end{array}$ \\
        &&&&&&&&\\
        $\tau_9$ & 2 & 2 & 1,2 & 2,1 & $((\epsilon_j,-\epsilon_j),(\varepsilon_2,\varepsilon_3))$ & $((\epsilon_i),(\varepsilon_1)),((-\epsilon_i),(\varepsilon_1))$& $\GL_2(\C)\times\Per_{\{e\}}^{\mu_2}(\C^\times)$ & $\begin{array}{c}(q^2-1)(q^2-q)(q^2-1)\otimes \Q^1\\ - (q-1)(q^2-q)(q^2-1)\otimes \Q^2\end{array}$ \\
        &&&&&&&&\\
        $\tau_{10}$ & 4 & 2 & 1 & 2 & $((\epsilon_k,-\epsilon_k),(\varepsilon_1,\varepsilon_i))$ & $((\epsilon_l,-\epsilon_l),(\varepsilon_1,\varepsilon_j))$& $\GL_2(\C)\times \GL_2(\C)$ & $(q^2-q)^2(q^2-1)^2\otimes \Q^1$
    \end{tabular}

    \vspace{1cm}
    
    \begin{tabular}{c|c|c|c|c|c}
        $\tau$ & $\mcI(\tau)$ & $[\mcI(\tau)]^{\mu_2}$ & $M_\tau$ & $[M_\tau]^{\mu_2}$ & $[R(\tau)]^{\mu_2}$  \\
        \hline
        $\tau_6$ & $R^{irr}_{(\epsilon,-\epsilon)(\varepsilon_1,\varepsilon_2)}$ & $ \begin{array}{c}(q^2-q)(q^2+q)\otimes\Q^1\\+(-q^3+q)\otimes \Q^2\end{array}$ & $*$ & $1\otimes\Q^1 $ & $\begin{array}{c}(q + 1) (q - 1) q^7 (q^2 + 1) (q^3-1)\otimes\Q^1\\- (q + 1) q^6 (q^2 + 1) (q^3-1)\otimes\Q^2 \end{array}$\\
        &&&&&\\
        $\tau_7$ & $(R^{irr}_{(\epsilon,-\epsilon)(\varepsilon_1,\varepsilon_2)})^{2}$ & $\begin{array}{c}(q - 1)^2 (q + 1)^2 q^4\otimes\Q^1\\-2 q^2 (q + 1)^2 (q - 1)^3\otimes\Q^2\end{array}$ & $*$ & $1\otimes\Q^1 $ & $\begin{array}{c} (q - 1) (q + 1)^2 q^8 (q^2 + 1) (q^3-1)\otimes\Q^1\\-2 (q + 1)^2 (q - 1)^2 q^6 (q^2 + 1) (q^3-1)\otimes\Q^2 \end{array}$\\
        &&&&&\\
        $\tau_8$ & $R^{irr}_{(\epsilon,-\epsilon)(\varepsilon_1,\varepsilon_2)}$ & $\begin{array}{c} (q^2-q)(q^2+q)\otimes\Q^1\\+(-q^3+q)\otimes \Q^2\end{array}$ & $\C^4\setminus \{0\}$ & $(q^4-1)\otimes \Q^1$ & $\begin{array}{c} (q + 1)^2 (q - 1)^2 q^7 (q^3- 1) (q^2 + 1)^2\otimes\Q^1\\- (q - 1) q^6 (q + 1)^2 (q^3- 1) (q^2 + 1)^2\otimes\Q^2 \end{array}$ \\
        &&&&&\\
        $\tau_9$ & $R^{irr}_{(\epsilon,-\epsilon)(\varepsilon_1,\varepsilon_2)}$ & $\begin{array}{c} (q^2-q)(q^2+q)\otimes\Q^1\\+(-q^3+q)\otimes \Q^2\end{array}$ & $\Per_{\{e\}}^{\mu_2}(\C^2\setminus\{0\})$ & $\begin{array}{c}(q^4-1)\otimes \Q^1\\ - (q^2-1)\otimes \Q^2\end{array}$ & $\begin{array}{c} (q + 1)^2 (q - 1)^2 q^7 (q^3- 1) (q^2 + 1)^2\otimes\Q^1\\- (2q + 1) q^6 (q^2 - 1)^2 (q^2 + 1) (q^3-1)\otimes\Q^2 \end{array}$\\
        &&&&&\\
        $\tau_{10}$ & $(R^{irr}_{(\epsilon,-\epsilon)(\varepsilon_1,\varepsilon_2)})^{2}$ & $\begin{array}{c}(q - 1)^2 (q + 1)^2 q^4\otimes\Q^1\\-2 q^2 (q + 1)^2 (q - 1)^3\otimes\Q^2\end{array}$ & $M_2(\C)\setminus \{0\}$ & $(q^4-1)\otimes\Q^1$ & $\begin{array}{c} (q - 1)^2 (q + 1)^3 q^8 (q^3 - 1) (q^2 + 1)^2\otimes\Q^1\\-2 q^6 (q + 1)^3 (q - 1)^3 (q^3- 1) (q^2 + 1)^2\otimes\Q^2 \end{array}$
    \end{tabular}

    \caption{Computations for $\kappa_6$.}
    \label{kappa6}
\end{table}    

\begin{table}[H]
    \centering
    \begin{tabular}{c|c|c|c|c|c|c|c|c}
        $\tau$ & $\#\tau$ & $s$ & $r_i$ & $d_{i,j}$ & $\kappa_{1,j}$ & $\kappa_{2,j}$ & $F_0'(\tau)$ & $[F_0'(\tau)]^{\mu_2}$  \\
        \hline
        $\tau_{13}$ & 2 & 1 & 3 & 1,1,2 & $\begin{array}{c}((\epsilon_i),(\varepsilon_1)),((-\epsilon_i),(\varepsilon_1)),\\((\epsilon_j,-\epsilon_j),(\varepsilon_1,\varepsilon_2))\end{array}$& - & $\Per_{\{e\}}^{\mu_2}(\C^\times)\times \GL_2(\C)$ & $\begin{array}{c} (q^2-1)(q^2-q)(q^2-1)\otimes \Q^1 \\- (q-1)(q^2-q)(q^2-1)\otimes \Q^2\end{array}$\\
        &&&&&&&&\\
        $\tau_{14}$ & 2 & 2 & 2,1 & 1,2 & $((\epsilon_i),(\varepsilon_1)),((-\epsilon_i),(\varepsilon_1))$ & $((\epsilon_j,-\epsilon_j),(\varepsilon_1,\varepsilon_2))$ & $\Per_{\{e\}}^{\mu_2}(\C^\times)\times \GL_2(\C)$ & $\begin{array}{c}(q^2-1)(q^2-q)(q^2-1)\otimes \Q^1 \\- (q-1)(q^2-q)(q^2-1)\otimes \Q^2\end{array}$ \\
        &&&&&&&&\\
        $\tau_{15}$ & 2 & 2 & 1,2 & 2,1 & $((\epsilon_j,-\epsilon_j),(\varepsilon_1,\varepsilon_2))$ & $((\epsilon_i),(\varepsilon_1)),((-\epsilon_i),(\varepsilon_1))$ & $\GL_2(\C)\times\Per_{\{e\}}^{\mu_2}((\C^\times)$ & $\begin{array}{c}(q^2-1)(q^2-q)(q^2-1)\otimes \Q^1 \\- (q-1)(q^2-q)(q^2-1)\otimes \Q^2\end{array}$
    \end{tabular}

    \vspace{1cm}

    \begin{tabular}{c|c|c|c|c|c}
        $\tau$ & $\mcI(\tau)$ & $[\mcI(\tau)]^{\mu_2}$  & $M_\tau$ & $[M_\tau]^{\mu_2}$ & $[R(\tau)]^{\mu_2}$\\
        \hline
        $\tau_{13}$ & $R^{irr}_{(\epsilon,-\epsilon)(\varepsilon_1,\varepsilon_2)}$ & $\begin{array}{c} (q^2-q)(q^2+q)\otimes\Q^1\\+(-q^3+q)\otimes \Q^2\end{array}$ & $*$ & $1\otimes\Q^1 $  & $\begin{array}{c}(q + 1) (q - 1)^2 q^7 (q^2 + 1) (q^2 + q + 1)\otimes\Q^1\\- (q - 1) (q + 1) q^6 (q^2 + 1) (q^2 + q + 1)\otimes\Q^2\end{array}$\\
        &&&&&\\
        $\tau_{14}$ & $R^{irr}_{(\epsilon,-\epsilon)(\varepsilon_1,\varepsilon_2)}$ & $\begin{array}{c} (q^2-q)(q^2+q)\otimes\Q^1\\+(-q^3+q)\otimes \Q^2\end{array}$ & $\C^4\setminus \{0\}$ & $(q^4-1)\otimes \Q^1$ & $\begin{array}{c}(q + 1)^2 (q - 1)^3 q^7 (q^2 + q + 1) (q^2 + 1)^2\otimes\Q^1\\- (q - 1)^2 q^6 (q + 1)^2 (q^2 + q + 1) (q^2 + 1)^2\otimes\Q^2\end{array}$ \\
        &&&&&\\
        $\tau_{15}$ & $R^{irr}_{(\epsilon,-\epsilon)(\varepsilon_1,\varepsilon_2)}$ & $\begin{array}{c} (q^2-q)(q^2+q)\otimes\Q^1\\+(-q^3+q)\otimes \Q^2\end{array}$& $\Per_{\{e\}}^{\mu_2}(\C^2\setminus\{0\})$ & $\begin{array}{c}(q^4-1)\otimes \Q^1\\ - (q^2-1)\otimes \Q^2\end{array}$ & $\begin{array}{c}(q + 1)^2 (q - 1)^3 q^7 (q^2 + q + 1) (q^2 + 1)^2\otimes\Q^1\\- (2q + 1) q^6 (q + 1)^2 (q - 1)^3 (q^2 + 1) (q^2 + q + 1)\otimes\Q^2\end{array}$
    \end{tabular}
    
    \caption{Computations for $\kappa_8$.}
    \label{kappa8}
\end{table}

\begin{table}[H]
    \centering
    \begin{tabular}{c|c|c|c|c|c|c|c|c}
        $\tau$ & $\#\tau$ & $s$ & $r_i$ & $d_{i,j}$ & $\kappa_{1,j}$ & $\kappa_{2,j}$   & $F_0'(\tau)$ & $[F_0'(\tau)]^{\mu_2}$ \\
        \hline
        $\tau_{16}$ & 1 & 1 & 3 & 1,1,2 & $\begin{array}{c}((\epsilon),(\varepsilon_1)),((-\epsilon),(\varepsilon_1)),\\((\epsilon,-\epsilon),(\varepsilon_2,\varepsilon_3))\end{array}$& - & $\Per_{\{e\}}^{\mu_2}(\C^\times)\times \GL_2(\C)$ & $\begin{array}{c}(q^2-1)(q^2-q)(q^2-1)\otimes \Q^1 \\- (q-1)(q^2-q)(q^2-1)\otimes \Q^2\end{array}$\\
        &&&&&&&&\\
        $\tau_{17}$ & 1 & 1 & 2 & 2 & $\begin{array}{c}((\epsilon,-\epsilon),(\varepsilon_1,\varepsilon_2)),\\((\epsilon,-\epsilon),(\varepsilon_1,\varepsilon_3))\end{array}$&-& $\GL_2(\C)\times \GL_2(\C)$ & $(q^2-q)^2(q^2-1)^2\otimes \Q^1$\\
        &&&&&&&&\\
        $\tau_{18}$ & 1 & 2 & 2,1 & 1,2 & $((\epsilon),(\varepsilon_1)),((-\epsilon),(\varepsilon_1))$ & $((\epsilon,-\epsilon),(\varepsilon_2,\varepsilon_3))$ & $\Per_{\{e\}}^{\mu_2}(\C^\times)\times \GL_2(\C)$ & $\begin{array}{c}(q^2-1)(q^2-q)(q^2-1)\otimes \Q^1 \\- (q-1)(q^2-q)(q^2-1)\otimes \Q^2\end{array}$\\
        &&&&&&&&\\
        $\tau_{19}$ & 1 & 2 & 1,2 & 2,1 & $((\epsilon,-\epsilon),(\varepsilon_2,\varepsilon_3))$ & $((\epsilon),(\varepsilon_1)),((-\epsilon),(\varepsilon_1))$ & $\GL_2(\C)\times\Per_{\{e\}}^{\mu_2}(\C^\times)$ & $\begin{array}{c}(q^2-1)(q^2-q)(q^2-1)\otimes \Q^1 \\- (q-1)(q^2-q)(q^2-1)\otimes \Q^2\end{array}$\\
        &&&&&&&&\\
        $\tau_{20}$ & 2 & 2 & 1 & 2 & $((\epsilon,-\epsilon),(\varepsilon_1,\varepsilon_i))$ & $((\epsilon,-\epsilon),(\varepsilon_1,\varepsilon_j))$& $\GL_2(\C)\times \GL_2(\C)$ & $(q^2-q)^2(q^2-1)^2\otimes \Q^1$
    \end{tabular}

    \vspace{1cm}

    \begin{tabular}{c|c|c|c|c|c}
        $\tau$ & $\mcI(\tau)$ & $[\mcI(\tau)]^{\mu_2}$ & $M_\tau$ & $[M_\tau]^{\mu_2}$ & $[R(\tau)]^{\mu_2}$\\
        \hline
        $\tau_{16}$ & $R^{irr}_{(\epsilon,-\epsilon)(\varepsilon_1,\varepsilon_2)}$ & $\begin{array}{c} (q^2-q)(q^2+q)\otimes\Q^1\\+(-q^3+q)\otimes \Q^2\end{array}$ & $*$ & $1\otimes\Q^1 $ & $\begin{array}{c} (q + 1) (q - 1) q^7 (q^2 + 1) (q^3 - 1)\otimes\Q^1\\-(q + 1)q^6 (q^2 + 1) (q^3 - 1)\otimes \Q^2\end{array}$\\
        &&&&&\\
        $\tau_{17}$ & $(R^{irr}_{(\epsilon,-\epsilon)(\varepsilon_1,\varepsilon_2)})^{2}$ & $\begin{array}{c}(q - 1)^2 (q + 1)^2 q^4\otimes\Q^1\\-2 q^2 (q + 1)^2 (q - 1)^3\otimes\Q^2\end{array}$ & $*$ & $1\otimes\Q^1 $ & $\begin{array}{c}(q - 1)^2 (q + 1) q^8 (q^2 + 1) (q^3 - 1)\otimes\Q^1\\-2(q + 1)^2 (q - 1)^2 q^6 (q^2 + 1) (q^3 - 1)\otimes\Q^2 \end{array}$\\
        &&&&&\\
        $\tau_{18}$ & $R^{irr}_{(\epsilon,-\epsilon)(\varepsilon_1,\varepsilon_2)}$ & $\begin{array}{c} (q^2-q)(q^2+q)\otimes\Q^1\\+(-q^3+q)\otimes \Q^2\end{array}$ & $\Per_{\{e\}}^{\mu_2}(\C^2\setminus \C)$ & $\begin{array}{c}(q^4-q^2)\otimes\Q^1\\+(q^2-q^3)\otimes\Q^2\end{array}$ & $\begin{array}{c} (q + 1)^2 (q - 1)^2 q^7 (q^2 + 1) (q^3 - 1)\otimes\Q^1\\-(q + 1)^2 (q - 1)^2 q^6 (q^2 + 1) (q^3 - 1)\otimes\Q^2 \end{array}$ \\
        &&&&&\\
        $\tau_{19}$ & $R^{irr}_{(\epsilon,-\epsilon)(\varepsilon_1,\varepsilon_2)}$ & $\begin{array}{c} (q^2-q)(q^2+q)\otimes\Q^1\\+(-q^3+q)\otimes \Q^2\end{array}$ & $\Per_{\{e\}}^{\mu_2}(\C^2\setminus\C)$ & $\begin{array}{c}(q^4-q^2)\otimes\Q^1\\+(q^2-q^3)\otimes\Q^2\end{array}$ & $\begin{array}{c} (q + 1)^2 (q - 1)^2 q^7 (q^2 + 1) (q^3 - 1)\otimes\Q^1\\-(q + 1)^2 (q - 1)^2 q^6 (q^2 + 1) (q^3 - 1)\otimes\Q^2 \end{array}$\\
        &&&&&\\
        $\tau_{20}$ & $(R^{irr}_{(\epsilon,-\epsilon)(\varepsilon_1,\varepsilon_2)})^{2}$ & $\begin{array}{c}(q - 1)^2 (q + 1)^2 q^4\otimes\Q^1\\-2 q^2 (q + 1)^2 (q - 1)^3\otimes\Q^2\end{array}$ & $M_2(\C)\setminus \C$  & $(q^4-q)\otimes \Q^1$  & $\begin{array}{c} (q + 1)^2 (q - 1) q^7 (q^2 + 1) (q^3 - 1)^2\otimes\Q^1\\-2 (q + 1)^2 q^5 (q - 1)^2 (q^2 + 1) (q^3 - 1)^2\otimes\Q^2\end{array}$
    \end{tabular}    

    \caption{Computations for $\kappa_{10}$.}
    \label{kappa10}
\end{table}

    \begin{table}[H]
        \centering
        \begin{tabular}{c|c|c}
             $\kappa$ & $[R_\kappa]^{\Gamma_\kappa}$ & $[R_\kappa^{irr}]^{\Gamma_\kappa}-\frac{1}{|\Gamma_\kappa|}\Ind_{\{e\}}^{\Gamma_\kappa}([R_\kappa^{irr}])$  \\
             \hline
             $\kappa_1$ & $\begin{array}{c}(q - 1)^3(q + 1)^3 q^{12} (q^2 + 1) (q^2 + q + 1)^2\otimes\Q^1\\+(q - 1)^2(q + 1)^2q^{12} (q^2 + 1) (q^2 + q + 1)^2\otimes\Q^2\\+ (q + 1)^2 q^{13} (q^2 + 1)^2 (q^2 + q + 1)^2\otimes\Q^4 \end{array}$ & $\begin{array}{c}(q - 1)^3(q + 1)^3 q^{12} (q^2 + 1) (q^2 + q + 1)^2\otimes(\Q^1-\frac{1}{4}\Q^4)\\+(q^2 - 1)^2 q^8 (q^2 + 1) (q^3 - 1) (q^5 + 2q^4 - 3q^3 - 3q^2 - 3q - 3)\otimes(\Q^2-\frac{1}{4}\Q^4) \end{array}$ \\
             &&\\
             $\kappa_2$ &  $\begin{array}{c} (q + 1)^2 (q - 1)^3 q^{11} (q^2 + 1) (q^2 + q + 1)^2\otimes\Q^1\\+(q + 1) (q - 1)^2 q^{11} (q^2 + 1)(q^2 + q + 1)^2\otimes\Q^2\\+ (q + 1) q^{12} (q^2 + 1)^2 (q^2 + q + 1)^2\otimes\Q^4 \end{array}$  &  $\begin{array}{c} (q + 1)^2 (q - 1)^3 q^{11} (q^2 + 1) (q^2 + q + 1)^2\otimes(\Q^1-\frac{1}{4}\Q^4)\\- (q + 1)^2 (q - 1)^3 q^7 (q^2 + 1)^2 (q^2 + q + 1)^2\otimes(\Q^2-\frac{1}{4}\Q^4) \end{array}$\\
             &&\\
             $\kappa_3$ & $\begin{array}{c}(q + 1) (q - 1)^3 q^{10}  (q^2 + 1) (q^2 + q + 1)^2\otimes\Q^1\\+ (q - 1)^2 q^{10} (q^2 + 1) (q^2 + q + 1)^2\otimes\Q^2\\+ q^{11} (q^2 + 1)^2 (q^2 + q + 1)^2\otimes\Q^4\end{array}$ & $\begin{array}{c} - (q^2 - 1) q^6 (q^2 + 1) (q^3 - 1) (q^7 + 2q^6 - q^5 + q^3 - 4q^2 - q + 1)\otimes (\Q^2-\frac{1}{2}\Q^4)\\+( q^{10}  (q^4 - 1) (q^3 - 1)^2- (q + 1)^2 (q - 1)^3 q^8 (q^3 - 1) (q^2 + 1)^2)\otimes(\Q^1-\frac{1}{4}\Q^4)\end{array}$\\
             &&\\
             $\kappa_4$ & $\begin{array}{c} (q + 1)^2 (q - 1)^3 q^9 (q^2 + 1) (q^2 + q + 1)\otimes\Q^1\\+ (q + 1) (q - 1)^2 q^9 (q^2 + 1) (q^2 + q + 1)\otimes\Q^2\\+ (q + 1) q^{10} (q^2 + q + 1) (q^2 + 1)^2\otimes\Q^4 \end{array}$  & $\begin{array}{c} (q + 1)^2 (q - 1)^3 q^9 (q^2 + 1) (q^2 + q + 1)\otimes(\Q^1-\frac{1}{4}\Q^4)\\-(q + 1)^2 (q - 1)^3 q^7 (2q^2 + 1) (q^2 + 1) (q^2 + q + 1)\otimes(\Q^2-\frac{1}{4}\Q^4) \end{array}$\\
             &&\\
             $\kappa_5$ & $\begin{array}{c} (q - 1)^2 (q + 1)^2 q^{12} (q^2 + 1)^2 (q^2 + q + 1)^2\otimes\Q^1\\+ 2 (q + 1)^2 q^{13} (q^2 + 1)^2 (q^2 + q + 1)^2\otimes\Q^2\end{array}$ & $\begin{array}{c}(q + 1)^2 (q - 1)^3 q^8 (q^2 + 1)(q^2 + q + 1)\\(q^7 + 2q^6 + 4q^5 + 5q^4 - 6q^3 - 6q^2 - 6q - 6)\otimes(\Q^1-\frac{1}{2}\Q^2)\end{array}$\\
             &&\\
             $\kappa_6$ & $\begin{array}{c}(q + 1) (q - 1)^2 q^11 (q^2 + 1)^2 (q^2 + q + 1)^2\otimes\Q^1\\+ 2 (q + 1) q^12 (q^2 + 1)^2 (q^2 + q + 1)^2\otimes\Q^2 \end{array}$ & $(q + 1)^2 (q - 1)^3 q^7 (q^2 + 1) (q^2 + q + 1)^2 (q^4 - 2q^2 - 2)\otimes(\Q^1-\frac{1}{2}\Q^2)$\\
             &&\\
             $\kappa_8$ & $\begin{array}{c}(q + 1) (q - 1)^2 q^9 (q^2 + q + 1) (q^2 + 1)^2\otimes\Q^1\\+ 2 (q + 1) q^{10} (q^2 + q + 1) (q^2 + 1)^2\otimes\Q^2\end{array}$ & $- (q + 1)^2  (q - 1)^3 q^7 (3q^2 + 2) (q^2 + 1) (q^2 + q + 1)\otimes(\Q^1-\frac{1}{2}\Q^2)$\\
             &&\\
             $\kappa_9$ & $\begin{array}{c}(q - 1)^2 (q + 1)^2 q^{10} (q^2 + 1) (q^2 + q + 1)^2\otimes\Q^1\\+ (q + 1)^2 q^{11} (q^2 + 1) (q^2 + q + 1)^2\otimes\Q^2\end{array}$ & $(q - 1)^2 (q + 1)^2 q^{10} (q^2 + 1) (q^2 + q - 5) (q^2 + q + 1)\otimes(\Q^1-\frac{1}{2}\Q^2)$\\
             &&\\
             $\kappa_{10}$ & $\begin{array}{c}(q + 1) (q - 1)^2 q^9 (q^2 + 1) (q^2 + q + 1)^2\otimes\Q^1\\+(q + 1) q^{10}(q^2 + 1) (q^2 + q + 1)^2\otimes\Q^2\end{array}$ & $- (q + 1)^2 (q - 1)^3 q^7 (q^2 + 1) (q^2 + q + 1) (q^2 + q + 3)\otimes(\Q^1-\frac{1}{2}\Q^2)$
        \end{tabular}
        \caption{Isotypic decompositions of $[R_\kappa]$ and $[R_\kappa^{irr}]$.}
        \label{resultados}
    \end{table}
\end{landscape}

\printbibliography

\end{document}